\documentclass[10pt,a4paper]{article}

\usepackage[utf8]{inputenc}
\usepackage[T1]{fontenc}
\usepackage[english]{babel}

\usepackage{mathpazo} 

\usepackage[a4paper,margin=3cm]{geometry}
\usepackage{enumitem}
\usepackage{setspace}
\usepackage{comment}
\usepackage{subcaption}

\usepackage{amsmath,amssymb,amsthm,mathtools,bm,esint}

\numberwithin{equation}{section}

\usepackage{graphicx}
\usepackage{tikz}
\usepackage{pgfplots}
\pgfplotsset{compat=1.18}
\usepgfplotslibrary{fillbetween}
\usepackage{pgfplotstable}
\usetikzlibrary{intersections}
\usepackage{float}

\usepackage{hyperref}
\hypersetup{
	colorlinks=true,
	linkcolor=blue!50!black,
	citecolor=red!60!black,
	urlcolor=blue!70!black,
	pdftitle={Scientific Article},
	pdfauthor={Your Name},
	pdfsubject={Mathematics Paper},
	pdfkeywords={PDE, Eigenvalue problem, Maximum principle}
}

\theoremstyle{plain}
\newtheorem{theorem}{Theorem}[section]
\newtheorem{lemma}[theorem]{Lemma}
\newtheorem{proposition}[theorem]{Proposition}
\newtheorem{corollary}[theorem]{Corollary}

\numberwithin{theorem}{section}

\theoremstyle{definition}

\theoremstyle{remark}
\newtheorem{remark}[theorem]{Remark}

\numberwithin{equation}{section}
\numberwithin{theorem}{section}  

\newcommand{\R}{\mathbb{R}}

\usepackage[
backend=biber,
style=numeric,       
sorting=nty,         
doi=false,
isbn=false,
url=false,
eprint=false,
language=english,
giveninits=true,
maxbibnames=99,
maxcitenames=99    
]{biblatex}

\DeclareFieldFormat[article]{title}{#1}
\DeclareFieldFormat[inbook]{title}{#1}
\DeclareFieldFormat[incollection]{title}{#1}
\DeclareFieldFormat[inproceedings]{title}{#1}
\DeclareFieldFormat[book]{title}{#1}

\usepackage{enumitem} %

\newlist{properties}{enumerate}{1}
\setlist[properties,1]{label=\textnormal{(P\arabic*)},ref=P\arabic*}

\vspace{-0.6cm}
\title{\textbf{Eigenvalue Curves under the $C^{0,\alpha}$ Maximum Principle}}
\author{Your Name\thanks{Department of Mathematics, University Name, Country.}}
\date{\today}

\begin{document}
\vspace{-0.6cm}
	\title{Principal eigenvalues, maximum principles and estimates for Lane-Endem related\\ systems in nondivergence form	
\footnote{Key words: Nonlinear system, principal eigenvalue, maximum principle, minimum principle, antimaximum principle, ABP estimate}
}
\vspace{-0.6cm}
\author{\textbf{Leandro G. Fernandes Jr. \footnote{\textit{E-mail addresses}:
leandro.fernandes@ufrr.br (L.
Fernandes)}}\\ {\small\it Coordenação do Curso de Bacharelado em Matem\'{a}tica,
Universidade Federal de Roraima,}\\ {\small\it Av. Cap. Ene Garcêz 2413, Bairro Aeroporto, Boa Vista, Roraima, 69310-000, Brazil}\\
\textbf{Edir Júnior Ferreira Leite \footnote{\textit{E-mail addresses}:
edirleite@ufscar.br (E.J.F. Leite)}}\\ {\small\it Departamento de Matem\'{a}tica,
Universidade Federal de São Carlos,}\\ {\small\it São Carlos, SP, 13565-905, Brazil.}}

\date{}{

\maketitle

\markboth{abstract}{abstract}
\addcontentsline{toc}{chapter}{abstract}
\vspace{-0.6cm}
\hrule \vspace{0.1cm}

{\bf Abstract.} We study non-variational systems\vspace{-0.3cm}
\[
\hspace{3cm}\begin{cases}
-\,\mathcal{L}_1 u=\lambda\,a(x)\,\vert u\vert^{\alpha_1-1}u\vert v\vert^{\beta_1-1}v & \text{in }\Omega,\\[2pt]
-\,\mathcal{L}_2 v=\mu\,b(x)\,\vert v\vert^{\alpha_2-1}v\vert u\vert^{\beta_2-1}u & \text{in }\Omega,\hspace{4cm}\mbox{{\bf(S)}}\\[2pt] 
u=v=0 & \text{on }\partial\Omega,
\end{cases}
\]
where $\mathcal{L}_1$ and $\mathcal{L}_2$ are uniformly elliptic second-order operators in nondivergence form on a bounded open set $\Omega\subset\mathbb{R}^n$, $n \ge 1$, with $C^{1,1}$ boundary, and the coefficients $a, b$ belong to $L^p(\Omega),$ $p>n$. The exponents satisfy 
$0\le \alpha_1,\alpha_2<1$, and $\beta_1,\beta_2 > 0$ obey  
$\beta_1\beta_2=(1-\alpha_1)(1-\alpha_2)$, including the well known Lane-Emden system ($\alpha_1=0=\alpha_2$). Our aim in this paper is to study principal eigenvalues, maximum principles and related estimates for {\bf(S)}. More specifically, in comparison to the Lane-Emden system, the main findings in this paper for the general system {\bf(S)} are the following:
\begin{itemize}
\item[(i)] The ABP estimate valid for the Lane-Emden system is extended to {\bf(S)}. 
\item[(ii)] The region where the weak maximum principle {\bf (WMP)} holds is located below the principal curve if and only if {\bf(S)} is the Lane-Endem system. (see Figures 1 and 2). However, the region of validity of the weak minimum principle {\bf (WmP)} remains below the principal curve regardless of the values $\alpha_1$ and $\alpha_2$.
\item[(iii)] The strong maximum principle {\bf (SMP)} fails in general for {\bf(S)}. A counterexample is constructed if $\alpha_1,\alpha_2$ are not both zero. However, a local {\bf (SMP)} remains valid for {\bf(S)}. This strongly contrasts the Lane-Enden case where {\bf (SMP)} holds exactly where {\bf (WMP)} holds. 
 \item[(iv)] An antimaximum principle is established for the Lane-Enden system for operators in nondivergence form. Regarding the gerenal case {\bf(S)}, a counterexample for the antimaximum principle is constructed if $\alpha_1,\alpha_2$ are not both zero.
\end{itemize} 

Some additional consequences are entailed, such as a {\bf (WMP)} in small domains $\Omega$ for all $(\lambda, \mu)$ in an entire quadrant in the plane, and the existence of nonnegative solution when the right-hand side of the system {\bf(S)} is perturbed by nonnegative functions in $L^p(\Omega),$ $p>n$.

 \vspace{0.5cm}
\hrule\vspace{0.2cm}

\setcounter{page}{1}

\vspace{0.20cm}
	
	\section{Introduction and main results}
	
The study of principal spectral curves associated with cooperative elliptic systems 
has a long and rich history. Numerous contributions have been devoted to the analysis 
of their existence, qualitative properties, and applications; 
see, for instance, \cite{MR1726799, MR2742481, MR4072483, MR4732371, MR1255895, MR1765542, MR4525730}. 
An important contribution in this direction was provided by Montenegro~\cite{MR1765542}, 
who developed a general spectral theory for nonlinear Lane–Emden type systems involving 
uniformly elliptic second-order operators, namely,
\begin{equation}\label{Montenegro}
	\begin{cases}
		-\,{\cal L}_1 u = \lambda\, a(x)\, \vert v\vert^{\beta_1-1}v & \text{in } \Omega, \\[2pt]
		-\,{\cal L}_2 v = \mu\, b(x)\, \vert u\vert^{\beta_2-1}u & \text{in } \Omega, \\[2pt]
		u = v = 0 & \text{on } \partial\Omega,
	\end{cases}
\end{equation}
where $\Omega\subset\mathbb{R}^n$ is a bounded domain with $C^{1,1}$ boundary, 
$a$ and $b$ are positive weight functions, 
$\beta_1,\beta_2>0$ satisfy the homogeneity relation $\beta_1\beta_2=1$, 
and ${\cal L}_1$ and ${\cal L}_2$ are uniformly elliptic second-order operators. 
Using a constructive approach based on Leray–Schauder degree theory, 
Krasnoselskiĭ’s fixed point method, and the sub–supersolution technique, 
it was shown in~\cite{MR1765542} that the set of principal eigenvalues forms a smooth curve 
in the first quadrant of $\mathbb{R}^2$, 
exhibiting several qualitative properties such as simplicity, local isolation, continuity, 
asymptotic behaviour, and a min–max characterization.

We extend this theory to a broader nonlinear framework,  
where the system~\eqref{Montenegro} involves mixed power-type interactions 
$\vert u\vert^{\alpha_1-1}u\vert v\vert^{\beta_1-1}v$ and 
$\vert v\vert^{\alpha_2-1}v\vert u\vert^{\beta_2-1}u$, 
under the condition $\beta_1\beta_2 = (1-\alpha_1)(1-\alpha_2)$. 
This extension includes the Lane–Emden case as a special instance, 
but introduces an additional layer of asymmetry and nonlinearity 
that requires a refined application of the Krein–Rutman theorem.

We study several questions related to the nonlinear system
\begin{equation}\label{1.3}
	\begin{cases}
		-\,\mathcal{L}_1 u=\lambda\,a(x)\,\vert u\vert^{\alpha_1-1}u\vert v\vert^{\beta_1-1}v & \text{in }\Omega,\\[2pt]
		-\,\mathcal{L}_2 v=\mu\,b(x)\,\vert v\vert^{\alpha_2-1}v\vert u\vert^{\beta_2-1}u & \text{in }\Omega,\\[2pt]
		u=v=0 & \text{on }\partial\Omega,
	\end{cases}
\end{equation}
where $\Omega\subset\mathbb{R}^n$ is a bounded open set with $C^{1,1}$ boundary, $n \ge 1$, 
$0\le \alpha_1, \alpha_2 < 1$, and $\beta_1,\beta_2>0$ satisfy $\beta_1\beta_2=(1-\alpha_1)(1-\alpha_2)$, 
and $(\lambda, \mu)\in\mathbb{R}^2$.

Let $a, b \in L^p(\Omega)$ with $p>n$, satisfying
\[
\operatorname*{ess\,inf}_{x\in\Omega} a(x)>0
\quad\text{and}\quad
\operatorname*{ess\,inf}_{x\in\Omega} b(x)>0.
\]
Assume that $\mathcal{L}_1$ and $\mathcal{L}_2$ are uniformly elliptic second-order operators of nondivergence type, 
each of the form
\[
\mathcal{L}_l = a^l_{ij}(x) \partial_{ij} + b^l_i(x) \partial_i + c_l(x), \qquad l=1,2,
\]
and satisfy the structural conditions: there exist constants $c_0, C_0>0$ such that
\begin{equation}\label{H.1}
	c_0|\xi|^2 \le a^l_{ij}(x)\xi_i\xi_j \le C_0|\xi|^2,
	\quad\forall\,\xi\in\mathbb{R}^n,\ x\in\Omega,
\end{equation}
with $a^l_{ij}\in C(\overline{\Omega})$, $b^l_i, c_l\in L^\infty(\Omega)$, and a constant $T>0$ such that
\begin{equation}\label{H.2}
	|b^l_i(x)|,\,|c_l(x)|\le T,\quad\forall x\in\Omega,\ i=1,\dots,n.
\end{equation}


Let $\lambda_1(-{\cal L}_l-\lambda a(x))$ be the principal eigenvalue of the linear scalar problem
\[
\begin{cases}
-\,\mathcal{L}_l u=\lambda\,a(x) u & \text{in }\Omega,\\[2pt]
u=0 & \text{on }\partial\Omega.
\end{cases}
\]
We says that $\lambda_1(-{\cal L}_l-\lambda) > 0$ if and only if the operator ${\cal L}_l$ satisfies Scalar Weak Maximum Principle in $\Omega$, that is, for any function $u \in W^{2,p}(\Omega)$ such that ${\cal L}_l u \leq 0$ in $\Omega$ a.e. and $u \geq 0$ on $\partial \Omega$, one has $u \geq 0$ in $\Omega$. The positivity of $\lambda_1(-{\cal L}_l-\lambda)$ is also equivalent to the Scalar Strong Maximum Principle for ${\cal L}_l$ in $\Omega$, that is, for any function $u \in W^{2,p}(\Omega)$ such that ${\cal L}_l u \leq 0$ in $\Omega$ a.e. and $u \geq 0$ on $\partial \Omega$, one has either $u \equiv 0$ in $\Omega$ or $u > 0$ in $\Omega$ and $\partial_\nu u(x_0)<0$ for every $x_0\in\partial\Omega$ with $u(x_0)=0$, 
where \(\nu\equiv\nu(x_0)\) denotes the outward unit normal to $\partial\Omega$ at $x_0$. Alternatively, the operator ${\cal L}_l + \lambda a(x)$ satisfies Scalar Weak Maximum Principle (or Scalar Strong Maximum Principle) in $\Omega$ if and only if $\lambda < \lambda_1(-{\cal L}_l-\lambda a(x))$. These equivalences were proved in
\cite[Theorem~1.1]{MR1258192} and \cite[Theorem~2.1]{MR1255895}. We also refer to the monographs \cite{MR762825} and \cite{MR2597943},
where these results were previously established for self-adjoint operators. 

Unless otherwise stated, we assume that ${\cal L}_1$ and ${\cal L}_2$ satisfy the Scalar Weak Maximum Principle and Scalar Strong Maximum Principle in $\Omega$.

We denote \(V:=C^{1,\eta}_0(\overline{\Omega})\),
\(V_+:=\{u\in V:\,u(x)\ge0\text{ for all }x\in\Omega\}\),
and let $\text{int}_V(V_+)$ be the topological interior of \(V_+\) in \(V\).
It is well known (see Lemma~\ref{nonempty}) that \(\text{int}_V(V_+)\neq\emptyset\) and can be characterized by
\[
u\in \text{int}_V(V_+)
\iff
u\in V_+^\circ := \Big\{ u \in V :\ u > 0 \ \text{in } \Omega \ \text{and} \ \inf_{\partial\Omega}(-\partial_\nu u) > 0 \Big\}.
\]

We say that $(\lambda,\mu)\in\mathbb{R}^2$ is an \emph{eigenvalue} of system~\eqref{1.3} 
if the system admits a nontrivial strong solution $(\varphi,\psi)\in (W^{2,p}(\Omega))^2$, 
called an \emph{eigenfunction} corresponding to $(\lambda,\mu)$. 
We call $(\lambda,\mu)$ a \emph{principal eigenvalue} if it possesses a positive eigenfunction $(\varphi,\psi)$, 
that is, $\varphi,\psi>0$ in $\Omega$. 
Moreover, $(\lambda,\mu)$ is said to be \emph{simple in} \(V_+^\circ\times V_+^\circ\) if, for any eigenfunctions 
$(\varphi,\psi),(\tilde{\varphi},\tilde{\psi})\in V_+^\circ\times V_+^\circ$, there exists $\rho>0$ such that 
\[
\tilde{\varphi}=\rho\,\varphi
\quad\text{and}\quad
\tilde{\psi}=\rho^{\frac{1-\alpha_1}{\beta_1}}\psi
\quad\text{in }\Omega.
\]

As already mentioned, in this paper we extend the results of Montenegro~\cite{MR1765542} 
to the general case $0 \le \alpha_1, \alpha_2 < 1$ and $\beta_1, \beta_2 > 0$ satisfying 
$\beta_1 \beta_2 = (1 - \alpha_1)(1 - \alpha_2)$. 
More precisely, we study the nonlinear coupled system~\eqref{1.3}, 
which serves as the general framework for our analysis, 
as well as its nonhomogeneous counterpart where external source terms appear 
on the right-hand sides. 
We establish the existence of principal eigenvalues of system~\eqref{1.3} 
and describe several of their qualitative properties. 
In particular, we show that the set of all such principal eigenvalues 
is represented by the smooth curve
\[
\mathcal{C}_1
:= \Big\{ (\lambda, \mu) \in (\mathbb{R}_+^*)^2 : 
\lambda^{\frac{1}{\sqrt{\beta_1(1-\alpha_1)}}} \,
\mu^{\frac{1}{\sqrt{\beta_2(1-\alpha_2)}}}
= \Lambda_0
\Big\},
\]
for some constant $\Lambda_0 > 0$; see Section~\ref{sec:proof-teor2}. 
This curve will be refered as the principal curve associated to the system \eqref{1.3}.

Let ${\cal R}_1$ be the open region in the first quadrant below ${\cal C}_1$, $${\cal C}_2:=\Big\{ (\lambda, \mu)\in \R^2 : (\lambda,-\mu)\in {\cal C}_1 \Big\}$$ and ${\cal R}_2$ be the open region in the fourth quadrant above ${\cal C}_2$. We will study the following properties:

\begin{properties}
	\item \label{prop:uniqueness} (Uniqueness)
	$(\lambda, \mu) \in \mathbb{R}^2$ is a principal eigenvalue of the system~\eqref{1.3} 
	if and only if $(\lambda,\mu)\in \mathcal{C}_1$.
	
	\item \label{prop:simplicity} (Simplicity in $V_+^\circ\times V_+^\circ$)
	The curve $\mathcal{C}_1$ is simple in $V_+^\circ\times V_+^\circ$, that is, $(\lambda,\mu)$ is simple in $V_+^\circ\times V_+^\circ$ 
	for all $(\lambda,\mu)\in \mathcal{C}_1$.
	
	\item \label{prop:simplicity2} (Simplicity in $V\times V$)
Let $(\varphi,\psi)\in V\times V$ be an eigenfunction associated to $(\lambda,\mu)\in \mathcal{C}_1$. If $\alpha_1=0=\alpha_2$, then either
 $(\varphi,\psi)\in V_+^\circ\times V_+^\circ$ or  $-(\varphi,\psi)\in V_+^\circ\times V_+^\circ$. Now, if $0 \le \alpha_1, \alpha_2 < 1$, not both zero, then  $(\varphi,\psi)\in V_+^\circ\times V_+^\circ$.
	
	\item \label{prop:isolation} (Upper isolated) For each $(\lambda_1,\mu_1)\in\mathcal{C}_1$, there exists $\varepsilon>0$ 
	such that $B_{\varepsilon}(\lambda_1,\mu_1)\cap \overline{\mathcal{R}_1}^c$ does not contain any eigenvalue of~\eqref{1.3}.

\item \label{prop:monotonicity} (Monotonicity with respect to the weights)
If $a \leq \tilde{a}$ and $b \leq \tilde{b}$ in $\Omega$, then
\[
\Lambda_0(a, b,\Omega) \geq \Lambda_0(\tilde{a}, \tilde{b},\Omega),
\]
and if $(a,b)\not\equiv(\tilde{a},\tilde{b})$ the inequality is strict.

\item \label{LI} (Lower isolated) If $0 \le \alpha_1, \alpha_2 < 1$ are not both zero, then there are no eigenvalues of~\eqref{1.3} in $\overline{{\cal R}_1} \setminus {\cal C}_1\cup-(\overline{{\cal R}_1} \setminus {\cal C}_1)\cup \overline{{\cal R}_2} \setminus {\cal C}_2\cup-(\overline{{\cal R}_2} \setminus {\cal C}_2)$.
\end{properties}

The spectral curve associated with negative eigenfunctions of~\eqref{1.3} is given by

\begin{itemize}
    \item[{\rm (i)}]  $\mathcal{C}_1$ when $\alpha_1=0=\alpha_2$ (Lane-Emden System), 
	\item[{\rm (ii)}]  $-\mathcal{C}_1$ when $0<\alpha_1,\alpha_2<1$, 
	\item[{\rm (iii)}] $\mathcal{C}_2$ when $\alpha_1=0$ and $0<\alpha_2<1$, 
	\item[{\rm (iv)}] $-\mathcal{C}_2$ when $\alpha_2=0$ and $0<\alpha_1<1$.
\end{itemize}

In the same spirit as the results in~\cite{MR2742481}, we now state our first theorem:

\begin{theorem}[Principal curve]\label{teor2}
	Assume that
	\begin{equation}\label{cond1}
		\beta_1 \beta_2 = (1 - \alpha_1)(1 - \alpha_2).
	\end{equation}
	Then there exist a constant $\Lambda_0 > 0$ such that 
	system~\eqref{1.3} admits a positive solution $(u,v) \in V_+\times V_+$ corresponding to some 
	$(\lambda, \mu) \in (\mathbb{R}_+^*)^2$ if and only if
	\begin{equation}\label{cond2}
		\lambda^{\frac{1}{\sqrt{\beta_1(1-\alpha_1)}}}\,
		\mu^{\frac{1}{\sqrt{\beta_2(1-\alpha_2)}}}
		= \Lambda_0.
	\end{equation}
	Moreover, the curve determined by~\eqref{cond2} possesses 
	the qualitative properties stated in~\eqref{prop:uniqueness}--\eqref{prop:monotonicity}.
\end{theorem}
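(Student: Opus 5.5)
The plan is to reduce the nonlinear system to a fixed-point problem for a positively homogeneous, order-preserving compact map on the cone $V_+$. The Krein–Rutman theorem in its nonlinear (homogeneous) form, or equivalently a Krasnoselski\u{\i} / sub–supersolution argument as in \cite{MR1765542,MR2742481}, then supplies a unique "eigenvalue" of that map.

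\textbf{Step 1: the solution operator.} For $u\in V_+$ let $v=S(u)$ be the unique positive solution of
\[
-\mathcal{L}_2 v=\mu\, b(x)\, v^{\alpha_2}u^{\beta_2},\qquad v=0 \text{ on } \partial\Omega .
\]
Since $\alpha_2<1$, this is a sublinear problem. Existence follows by sub/supersolutions: take $\varepsilon e$ as subsolution and $M e$ as supersolution, where $e$ solves $-\mathcal{L}_2 e=1$. Uniqueness follows by the Brezis–Oswald/Krasnoselski\u{\i} argument using the Scalar Strong Maximum Principle for $\mathcal{L}_2$. By homogeneity, $S(tu)=t^{\beta_2/(1-\alpha_2)}S(u)$ and $S(\mu$-scaled$)$ scales like $\mu^{1/(1-\alpha_2)}$.

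Next define $T(u)=w$, where $w$ solves $-\mathcal{L}_1 w= a\, u^{\alpha_1}S(u)^{\beta_1}$. Then $T$ is positively homogeneous of degree
\[
\alpha_1+\frac{\beta_1\beta_2}{1-\alpha_2}=\alpha_1+(1-\alpha_1)=1
\]
by \eqref{cond1}. The map $T$ is order preserving by the weak maximum principle. It is compact $V_+\to V_+$ by $W^{2,p}$ estimates and $W^{2,p}\hookrightarrow C^{1,\eta}$ for $p>n$. It sends $V_+\setminus\{0\}$ into $V_+^\circ$ by the strong maximum principle and Hopf's lemma, using Lemma~\ref{nonempty}.

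\textbf{Step 2: existence of the principal eigenvalue.} A positive solution of \eqref{1.3} corresponds exactly to $u=\lambda\,\mu^{\beta_1/(1-\alpha_2)}\,T(u)$, after absorbing $\lambda$ with the right power. Writing $u^{1-\alpha_1}$ scaling, this gives $u = c(\lambda,\mu)T(u)$ with
\[
c(\lambda,\mu)=\lambda\,\mu^{\beta_1/(1-\alpha_2)} .
\]
By the nonlinear Krein–Rutman theorem for homogeneous compact strongly positive maps, there is a unique $r>0$ with an eigenvector $u\in V_+^\circ$ of $T(u)=r u$. The existence of a positive eigenvector in $\text{int}_V V_+$ uses a degree argument on $\{u\in V_+:\|u\|=1\}$. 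Hence positive solutions exist iff $\lambda\mu^{\beta_1/(1-\alpha_2)}=1/r$. Raising to the power $1/\sqrt{\beta_1(1-\alpha_1)}$ and using $\beta_1\beta_2=(1-\alpha_1)(1-\alpha_2)$ gives
\[
\frac{\beta_1}{(1-\alpha_2)\sqrt{\beta_1(1-\alpha_1)}}=\frac{1}{\sqrt{\beta_2(1-\alpha_2)}} .
\]
This is \eqref{cond2}, with $\Lambda_0=r^{-1/\sqrt{\beta_1(1-\alpha_1)}}$.

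\textbf{Step 3: uniqueness (P1) and simplicity (P2).} Uniqueness of $r$ and simplicity both come from a comparison argument. Take two eigenvectors $u,\tilde u\in V_+^\circ$ with eigenvalues $r,\tilde r$. Let
\[
t^*=\sup\{t: \tilde u\ge t u\},
\]
which is positive since both lie in the interior. Monotonicity and homogeneity give $\tilde r\,\tilde u=T\tilde u\ge t^*Tu=t^* r u$. This forces $\tilde r\ge r$ when equality of $t^*$ is used, and symmetrically $r\ge \tilde r$. If $\tilde u\neq t^*u$, the strong maximum principle (strong positivity of $T(\tilde u)-T(t^*u)$) gives $\tilde u-t^*u\in V_+^\circ$, which contradicts the maximality of $t^*$. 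Hence $\tilde u=\rho u$, and the relation $\tilde\psi=\rho^{(1-\alpha_1)/\beta_1}\psi$ follows from the scaling of $S$: the exponent $\beta_2/(1-\alpha_2)$ equals $(1-\alpha_1)/\beta_1$.

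\textbf{Steps 4–5: (P3), (P4) and (P5).} For (P3) with general sign, I would show a sign-changing eigenfunction on $\mathcal{C}_1$ leads to a contradiction by comparing $|\varphi|$ against the positive eigenfunction. When $\alpha_i>0$, the factor $|u|^{\alpha_1-1}u$ combined with the sign structure forces $\varphi,\psi$ of the same sign. Nonnegativity then follows via the strong maximum principle, using that negative solutions sit on $-\mathcal{C}_1$ or $\mathcal{C}_2$ rather than $\mathcal{C}_1$. Upper isolation (P4) would go by contradiction: take eigenvalues $(\lambda_k,\mu_k)\to(\lambda_1,\mu_1)$ from outside $\overline{\mathcal{R}_1}$, normalize, and pass to the limit with compactness. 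This yields an eigenfunction on $\mathcal{C}_1$, hence in $V_+^\circ$ by (P3). Thus the approximants are eventually positive, contradicting (P1). Monotonicity (P5) follows since $a\le\tilde a$, $b\le\tilde b$ give $T\le\tilde T$, so $r\le\tilde r$ by the comparison in Step 3. If the weights differ, strong positivity makes $\tilde T u - Tu\in V_+^\circ$, giving strictness.

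\textbf{Main obstacle.} I expect the main obstacle to be (P3) and the isolation step. For sign-changing eigenfunctions, $T$ is not defined outside the cone, so one must work directly with the system, and the behavior of $|u|^{\alpha_1-1}u$ near zero lacks Lipschitz regularity. The uniqueness of $S(u)$ for the sublinear problem is the other delicate point. I would try the Díaz–Saa/Brezis–Oswald style ratio comparison first.
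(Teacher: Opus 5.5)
Your reduction to a single $1$-homogeneous map $T$ is a legitimate alternative to the paper's route. You solve the sublinear problem for $v=S(u)$, then a \emph{linear} problem with right-hand side $a\,u^{\alpha_1}S(u)^{\beta_1}$; the paper instead uses the pair $J_1,J_2$, the compositions $K_1=J_1\circ J_2$, $K_2=J_2\circ J_1$, and the intertwining relations between $\rho_1$ and $\rho_2$. Your version needs only one sublinear uniqueness result. Because $S$ is strictly monotone, $T\tilde u-T(t^*u)\in V_+^\circ$ whenever $t^*u\preceq\tilde u\neq t^*u$. So your sup-comparison proves uniqueness of the eigenvalue, (P2) and the strict form of (P5) directly, instead of quoting the uniqueness part of the nonlinear Krein--Rutman theorem. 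Your eigenvalue equals $\rho_1^{1-\alpha_1}$, so your $\Lambda_0$ coincides with the paper's.

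Step~1 needs a repair. $\varepsilon e$ is not a subsolution, since $-\mathcal L_2(\varepsilon e)=\varepsilon$ while the weight $\mu b\,u^{\beta_2}$ vanishes on $\partial\Omega$ (and wherever $u=0$). Likewise $Me$ is not a supersolution when $b$ is only in $L^p$. Use instead $c\phi$, with $\phi$ the principal eigenfunction for the weight $\mu b\,u^{\beta_2}$, and $\tau w_0$, where $-\mathcal L_2 w_0=\mu b\,u^{\beta_2}$, $w_0=0$ on $\partial\Omega$ and $\tau$ is large, as in Lemma~\ref{lem:existJ_en}. Also, (P1) concerns all of $\mathbb R^2$; pairs with $\lambda\le 0$ or $\mu\le 0$ are excluded by the scalar weak maximum principle.

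The genuine gap is (P3), on which your (P4) argument relies. The only nontrivial case is an eigenfunction $(u,v)$ at $(\lambda,\mu)\in\mathcal C_1$ for which $uv$ changes sign. When $uv\ge0$ or $uv\le0$ in $\Omega$, the scalar strong maximum principle settles the matter at once. Your observation that negative eigenfunctions sit on $-\mathcal C_1$ or $\mathcal C_2$ only deals with constant-sign pairs. The claim that $|u|^{\alpha_1-1}u$ ``forces $\varphi,\psi$ of the same sign'' has no mechanism behind it. D\'{\i}az--Saa or Brezis--Oswald arguments rest on integration by parts, which is unavailable for the nondivergence operators $\mathcal L_i$.

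What is missing is a maximum-principle comparison of the positive eigenfunction $(\varphi,\psi)$ with $(|u|,|v|)$ in the homogeneity-adapted scaling. Put $q:=(1-\alpha_1)/\beta_1=\beta_2/(1-\alpha_2)$ and $\delta^*:=\max\{\delta>0:\ \varphi\ge\delta|u|,\ \psi\ge\delta^q|v|\}$, which is positive and finite by Hopf's lemma. The identities $\alpha_1+q\beta_1=1$ and $\beta_2+q\alpha_2=q$ give
\[
-\mathcal L_1(\varphi\pm\delta^*u)\ \ge\ \lambda a\big(\varphi^{\alpha_1}\psi^{\beta_1}-\delta^*|u|^{\alpha_1}|v|^{\beta_1}\big)\ \ge\ 0,
\qquad
-\mathcal L_2\big(\psi\pm(\delta^*)^q v\big)\ \ge\ 0
\]
in $\Omega$, whatever the sign pattern of $(u,v)$. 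If all four functions belonged to $V_+^\circ$, the openness of $V_+^\circ$ would contradict the maximality of $\delta^*$. Hence one of them vanishes identically. Substituting back into the equations, using $\operatorname{ess\,inf}a>0$ and $\operatorname{ess\,inf}b>0$, yields $(u,v)=\big((\delta^*)^{-1}\varphi,(\delta^*)^{-q}\psi\big)$, or its negative only when $\alpha_1=\alpha_2=0$.

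The paper reaches (P3) through its two-stage $\gamma^*,\gamma_0$ comparison in Proposition~\ref{psim2}. With this in place, your isolation argument is the paper's. You should normalize by $\|u\|_{L^\infty(\Omega)}+\|v\|_{L^\infty(\Omega)}^{1/q}=1$ so that the limit is nontrivial.
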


	By {\bf (WMP)} we mean that for any supersolution of problem~\eqref{1.3}, that is, for any pair of functions $(u,v)$ in $(W^{2,p}(\Omega))^2$ satisfying

\[
\left\{
\begin{array}{llll}
-{\cal L}_1u  \geq \lambda a(x)\vert u\vert^{\alpha_1-1}u\vert v\vert^{\beta_1-1}v & \text{in } \Omega, \\[2pt]
-{\cal L}_2v  \geq \mu b(x)\vert v\vert^{\alpha_2-1}v\vert u\vert^{\beta_2-1}u & \text{in } \Omega, \\[2pt]
\end{array}
\right. 
\]
and $u, v \geq 0$ on $ \partial\Omega$, one has $u, v \geq 0$ in $\Omega$. If, in addition, the set $\{x\in\Omega:u(x),v(x)>0\}$ is nonempty when $u$ and $v$ are nontrivial in $\Omega$, we say that the local {\bf (SMP)} associated to \eqref{1.3} holds in $\Omega$. Furthermore, in the case $\lambda,\mu\neq0$, we say that the {\bf (SMP)} associated to~\eqref{1.3} holds in $\Omega$ if either $u\equiv v\equiv0$ in $\Omega$ or $u,v>0$ in $\Omega$. When at least one of the parameters $\lambda$ and $\mu$ is zero, we say that the {\bf (SMP)} associated to~\eqref{1.3} holds in $\Omega$ if either $u\equiv v\equiv0$ in $\Omega$ or at least one of the functions $u$ and $v$ is positive in $\Omega$.

In Theorem~1.1 of~\cite{MR4072483}, the authors completely characterize the set of 
$(\lambda, \mu) \in \mathbb{R}^2$ for which {\bf (WMP)} and {\bf (SMP)} associated with~\eqref{1.3}, 
in the Lane--Emden case $(\alpha_1 = \alpha_2 = 0)$, hold in~$\Omega$. 
Precisely:
\[
(\lambda,\mu) \in \overline{{\cal R}_1} \setminus {\cal C}_1 \Leftrightarrow\ {\bf (WMP)}\text{ holds in}\ \Omega \Leftrightarrow\ {\bf (SMP)}\text{ associated to~\eqref{1.3} holds in}\ \Omega.
\]

Our main theorem fully characterizes the set of pairs $(\lambda, \mu) \in \R^2$ for which {\bf (WMP)} holds for system~\eqref{1.3} when $\alpha_1$ and $\alpha_2$ are not both zero.

Precisely,

\begin{theorem}[WMP] \label{MP}
 Suppose that \eqref{cond1} holds. Then:

\begin{itemize}
\item[{\rm (i)}] If $0<\alpha_1,\alpha_2<1$, then $(\lambda, \mu) \in -(\overline{{\cal R}_1} \setminus {\cal C}_1)$ if and only if {\bf (WMP)} associated to \eqref{1.3} holds in $\Omega$;

\item[{\rm (ii)}] If $\alpha_1=0$ and $0<\alpha_2<1$, then $(\lambda, \mu) \in \overline{{\cal R}_2} \setminus {\cal C}_2$ if and only if {\bf (WMP)} associated to \eqref{1.3} holds in $\Omega$;

\item[{\rm (iii)}] If $\alpha_2=0$ and $0<\alpha_1<1$, then $(\lambda, \mu) \in -(\overline{{\cal R}_2} \setminus {\cal C}_2)$ if and only if {\bf (WMP)} associated to \eqref{1.3} holds in $\Omega$.
\end{itemize}
\end{theorem}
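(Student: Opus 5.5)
The plan reduces each case to the one sign configuration in which the nonlinearity becomes cooperative after flipping signs. We then use Theorem~\ref{teor2} (together with \eqref{LI} and the description of the spectral curves for negative eigenfunctions) to get both directions.

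\textbf{Sign bookkeeping.} Write $f_1(u,v)=|u|^{\alpha_1-1}u|v|^{\beta_1-1}v$ and $f_2(u,v)=|v|^{\alpha_2-1}v|u|^{\beta_2-1}u$.

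In case (i), let $(u,v)$ be a supersolution and consider the negative parts. On the set $\{u<0\}$ we want to show $u^-\equiv 0$. The key observation is the following. For $\lambda<0,\mu<0$, the term $\lambda a f_1(u,v)$ on $\{u<0,v<0\}$ equals $\lambda a |u|^{\alpha_1}|v|^{\beta_1}\le 0$. Hence $(-u,-v)$ restricted to the negative set behaves like a subsolution of the system with parameters $(-\lambda,-\mu)\in\overline{\mathcal R_1}\setminus\mathcal C_1$ and positive coupling.

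On the regions where $u<0\le v$, the term $\lambda a f_1$ has sign $\ge 0$. It therefore only helps $-\mathcal L_1 u\ge \lambda a f_1\ge0$ near points where $u$ is negative. This would contradict the scalar weak maximum principle on the open set $\{u<0\}$ if $v\ge0$ there.

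Cases (ii) and (iii) are handled in the same way with mixed signs. For instance, with $\alpha_1=0$, $\lambda>0$, $\mu<0$, one applies the transformation $(u,v)\mapsto(u,-v)$ or $(-u,v)$ to reduce to a cooperative positive-parameter configuration governed by $\mathcal C_1$. This is consistent with the list of spectral curves for negative eigenfunctions.

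\textbf{Sufficiency.} Assume $(\lambda,\mu)$ lies in the indicated region and suppose $u$ or $v$ takes negative values. Let $\Omega'$ be a component of $\{u<0\}\cup\{v<0\}$. After the sign flip, $(U,V)=(u^-,v^-)$ (or the appropriate signed version) satisfies on $\Omega'$
\[
-\mathcal L_1 U\le |\lambda| a\, U^{\alpha_1}V^{\beta_1},\qquad -\mathcal L_2 V\le |\mu| b\, V^{\alpha_2}U^{\beta_2},
\]
with $U,V\ge0$ on $\partial\Omega'$ and hence $U=V=0$ there. Take the principal eigenpair $(\varphi,\psi)\in V_+^\circ\times V_+^\circ$ of Theorem~\ref{teor2} for a point of $\mathcal C_1$ lying above $(|\lambda|,|\mu|)$, which is possible by strict monotonicity in the parameters. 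Let $t^*=\inf\{t>0: U\le t\varphi,\ V\le t^{(1-\alpha_1)/\beta_1}\psi\}$, which is finite by the Hopf property of $\varphi,\psi$ and the $C^1$ regularity of $U,V$. Using the homogeneity $\beta_1\beta_2=(1-\alpha_1)(1-\alpha_2)$, the pair $(t\varphi,t^{(1-\alpha_1)/\beta_1}\psi)$ is a strict supersolution for the smaller parameters. At the touching point, the scalar strong maximum principle applied to $t^*\varphi-U$ (linearizing the sublinear terms, whose monotonicity holds because $\alpha_i<1$) gives a contradiction unless $U=V=0$. The case where $(\lambda,\mu)$ lies on an axis (one parameter zero) is handled directly by the scalar weak maximum principle applied first to the decoupled equation.

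\textbf{Necessity.} If $(\lambda,\mu)$ lies on the relevant curve, the eigenfunction of the corresponding sign (namely the negative eigenfunction given by $-\mathcal C_1$, $\mathcal C_2$, or $-\mathcal C_2$) is a supersolution that is negative with zero boundary data, so the WMP fails. Beyond the curve, I would build a negative supersolution from the eigenfunction by scaling. For points outside the closed quadrant region, for example $\lambda>0$ in case (i), one constructs a counterexample with mixed signs. Take $v=-\psi$ with $\psi\in V_+^\circ$ and solve $-\mathcal L_1 u=\lambda a|u|^{\alpha_1-1}u\,|\psi|^{\beta_1}(-1)$, or more simply pick $u$ negative with $v$ chosen so that the coupling term has the right sign, so that at least one component is negative.

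\textbf{Main obstacle.} I expect the hardest step to be this necessity direction in the remaining quadrants, that is, constructing explicit sign-changing supersolutions that exploit the sublinear $|u|^{\alpha_i}$ terms. This is exactly where the Lane--Emden picture breaks down. I would first try taking $(u,v)=(-\epsilon\varphi_1,\pm\psi)$ with $\varphi_1$ a scalar principal eigenfunction. For small $\epsilon$, the factor $\epsilon^{\alpha_1}\gg\epsilon$ when $\alpha_1>0$, which forces the inequalities to hold.
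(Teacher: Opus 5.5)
Your sufficiency argument matches the paper's. The paper slides the principal eigenpair against $(-\gamma u,-\gamma^{r}v)$ and takes $\gamma^\ast=\sup\Gamma$, which is your $t^\ast$ with $t=1/\gamma$. The contradiction rests on the same inequality $U^{\alpha_1}V^{\beta_1}\le t^\ast\varphi^{\alpha_1}\psi^{\beta_1}$, coming from $\alpha_1+r\beta_1=1$. Working with $(U,V)=(u^-,v^-)$ is cleaner than the paper's $M_1/M_2$ splitting. It also covers (i)--(iii) at once: in each case the stated sign conditions are exactly what make $(u^-,v^-)$ a subsolution of the cooperative system with parameters $(|\lambda|,|\mu|)$ where it is positive. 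Note that the reduction is always the global flip $(u,v)\mapsto(-u,-v)$. The mixed flips $(u,-v)$ and $(-u,v)$ you propose for $\alpha_1=0$, $\lambda>0>\mu$ produce parameters $(-\lambda,\mu)$ and $(-\lambda,-\mu)$, and neither is cooperative with positive parameters.

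The genuine gap is necessity away from the quadrant of the WMP region, e.g.\ $\lambda>0$ (any $\mu$) in case (i). You leave this open, and both of your suggestions fail there.

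\emph{First suggestion.} With $v=-\psi$, $\psi>0$ and $\lambda>0$, the right-hand side of the first inequality is positive on $\{u<0\}$, so the scalar WMP forces $u\ge0$. Then for $\mu\le0$ the second inequality gives $-\mathcal L_2(-\psi)\ge0$, hence $\psi\le0$.

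\emph{Second suggestion.} Take $(u,v)=(-\epsilon\varphi_1,\psi)$ with $-\mathcal L_1\varphi_1=\sigma_1\varphi_1$. The first inequality reads $\epsilon^{1-\alpha_1}\sigma_1\varphi_1^{1-\alpha_1}\le\lambda a\psi^{\beta_1}$. For bounded $a$, the two sides are of order $d^{1-\alpha_1}$ and $d^{\beta_1}$ near $\partial\Omega$, where $d=\operatorname{dist}(\cdot,\partial\Omega)$. So once $\beta_1>1-\alpha_1$ it fails near the boundary for every $\epsilon>0$; smallness of $\epsilon$ cannot repair a mismatch of decay rates. For $\mu<0$ the second inequality also needs $-\mathcal L_2\psi$ to dominate a multiple of $b\in L^p$ pointwise.

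\emph{Missing idea.} Use the system's own principal eigenpair with a one-component sign flip. Since $\mathcal C_1$ is asymptotic to both axes, pick $(\lambda_1,\mu_1)\in\mathcal C_1$ with $\lambda_1<\lambda$ and $\mu_1>-\mu$, and take $(-\varphi,\psi)$. Because $\alpha_1>0$ makes the first coupling odd in $u$, the supersolution inequalities reduce exactly to
\[
(\lambda-\lambda_1)\,a\,\varphi^{\alpha_1}\psi^{\beta_1}\ge 0,\qquad (\mu+\mu_1)\,b\,\psi^{\alpha_2}\varphi^{\beta_2}\ge 0,
\]
with no boundary-rate or integrability issue. The pair $(\varphi,-\psi)$ handles $\mu>0$ symmetrically, and analogous one-component flips settle the remaining regions in (ii) and (iii).

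Likewise, beyond the curve inside the WMP quadrant, scaling does nothing: by homogeneity $(t\varphi,t^r\psi)$ solves the same system. What works is $(-\varphi,-\psi)$ for a point of $\mathcal C_1$ dominated coordinatewise by $(|\lambda|,|\mu|)$.
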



\begin{figure}[H]
	\centering
	
	\begin{subfigure}{0.48\textwidth}
		\centering
		\begin{tikzpicture}
			\begin{axis}[
				axis lines=middle,
				xmin=-4, xmax=4,
				ymin=-4, ymax=4,
				xtick=\empty, ytick=\empty,
				xlabel={$\lambda$},
				ylabel={$\mu$},
				clip=false
				]
				
				\addplot[name path=curve,domain=0.5:4,samples=200,draw=none] {2/x};
				\path[name path=bottom] (0,0) -- (4,0);
				\path[name path=vert] (0,0) -- (0,4);
				
				\addplot[gray!35] fill between[of=curve and bottom,soft clip={domain=0.5:4}];
				\addplot[draw=none, fill=gray!35]
				coordinates {(0,0) (0.5,0) (0.5,4) (0,4)} -- cycle;
				
				\addplot[domain=0.5:4,samples=200,very thick,black,dashed] {2/x};
				
				\addplot[gray!80, very thick] coordinates {(0,0) (4,0)};
				\addplot[gray!80, very thick] coordinates {(0,0) (0,4)};
				
				\node[black!60!black] at (1.6,0.6) {$\overline{\mathcal R}_1 \setminus \mathcal C_1$};
				\node[black] at (1.6,2.0) {$\mathcal C_1$};
				
			\end{axis}
		\end{tikzpicture}
		\caption{$\overline{\mathcal R}_1 \setminus \mathcal C_1$, $\ \alpha_1=\alpha_2=0$ (Lane-Emden System)}
	\end{subfigure}
	\hfill
	\begin{subfigure}{0.48\textwidth}
		\centering
		\begin{tikzpicture}
			\begin{axis}[
				axis lines=middle,
				xmin=-4, xmax=4,
				ymin=-4, ymax=4,
				xtick=\empty, ytick=\empty,
				xlabel={$\lambda$},
				ylabel={$\mu$},
				clip=false
				]
				
				\addplot[name path=curve,domain=-4:-0.5,samples=250,draw=none] {2/x};
				\path[name path=top] (-4,0) -- (0,0);
				\path[name path=vert] (0,0) -- (0,-4);
				
				\addplot[gray!35] fill between[of=curve and top,soft clip={domain=-4:-0.5}];
				\addplot[gray!35] fill between[of=curve and vert,soft clip={domain=-0.5:0}];
				\addplot[draw=none, fill=gray!35]
				coordinates {(0,0) (-0.5,0) (-0.5,-4) (0,-4)} -- cycle;
				
				\addplot[domain=-4:-0.5,samples=250,very thick,black,dashed] {2/x};
				
				\addplot[gray!80, very thick] coordinates {(-4,0) (0,0)};
				\addplot[gray!80, very thick] coordinates {(0,-4) (0,0)};
				
				\node[black!60!black] at (-1.6,-0.6) {$-(\overline{\mathcal R}_1 \setminus \mathcal C_1)$};
				\node[black] at (-1.6,-2.0) {$-\mathcal C_1$};
				
			\end{axis}
		\end{tikzpicture}
		\caption{$-(\overline{\mathcal R}_1 \setminus \mathcal C_1)$, $\ 0<\alpha_1<1,\ 0<\alpha_2<1$}
	\end{subfigure}
	
	\caption{Admissible regions determined by $\mathcal C_1$ in the first and third quadrants.}
	\label{fig:C1pair}
\end{figure}


\begin{figure}[H]
	\centering
	
	\begin{subfigure}{0.48\textwidth}
		\centering
		\begin{tikzpicture}
			\begin{axis}[
				axis lines=middle,
				xmin=-4, xmax=4,
				ymin=-4, ymax=4,
				xtick=\empty, ytick=\empty,
				xlabel={$\lambda$},
				ylabel={$\mu$},
				clip=false
				]
				\addplot[name path=curve,domain=0.5:4,samples=200,draw=none] {-2/x};
				\path[name path=top] (0,0) -- (4,0);
				\path[name path=vert] (0,0) -- (0,-4);
				
				\addplot[gray!35] fill between[of=curve and top,soft clip={domain=0.5:4}];
				\addplot[gray!35] fill between[of=curve and vert,soft clip={domain=0:0.5}];
				\addplot[draw=none, fill=gray!35]
				coordinates {(0,0) (0.5,0) (0.5,-4) (0,-4)} -- cycle;
				
				\addplot[domain=0.5:4,samples=200,very thick,black,dashed] {-2/x};
				
				\addplot[gray!80, very thick] coordinates {(0,0) (4,0)};
				\addplot[gray!80, very thick] coordinates {(0,-4) (0,0)};
				
				\node[black!60!black] at (1.6,-0.6) {$\overline{\mathcal R}_2 \setminus \mathcal C_2$};
				\node[black] at (1.6,-2.0) {$\mathcal C_2$};
				
			\end{axis}
		\end{tikzpicture}
		\caption{$\overline{\mathcal R}_2 \setminus \mathcal C_2$, $\ \alpha_1=0,\ 0<\alpha_2<1$}
	\end{subfigure}
	\hfill
	\begin{subfigure}{0.48\textwidth}
		\centering
		\begin{tikzpicture}
			\begin{axis}[
				axis lines=middle,
				xmin=-4, xmax=4,
				ymin=-4, ymax=4,
				xtick=\empty, ytick=\empty,
				xlabel={$\lambda$},
				ylabel={$\mu$},
				clip=false
				]
				\def\k{2}
				
				\addplot[name path=curve,domain=-4:-0.5,samples=250,draw=none] {-\k/x};
				\path[name path=top] (-4,0) -- (0,0);
				\path[name path=vert] (0,0) -- (0,4);
				
				\addplot[gray!35] fill between[of=curve and top,soft clip={domain=-4:-0.5}];
				\addplot[draw=none, fill=gray!35]
				coordinates {(0,0) (-0.5,0) (-0.5,4) (0,4)} -- cycle;
				
				\addplot[domain=-4:-0.5,samples=250,very thick,black,dashed] {-\k/x};
				
				\addplot[gray!80, very thick] coordinates {(-4,0) (0,0)};
				\addplot[gray!80, very thick] coordinates {(0,0) (0,4)};
				
				\node[black!60!black] at (-1.6,0.6) {$-(\overline{\mathcal R}_2 \setminus \mathcal C_2)$};
				\node[black] at (-1.6,2.0) {$-\mathcal C_2$};
				
			\end{axis}
		\end{tikzpicture}
		\caption{$-(\overline{\mathcal R}_2 \setminus \mathcal C_2)$, $\ \alpha_2=0,\ 0<\alpha_1<1$}
	\end{subfigure}
	
	\caption{Admissible regions in the second and fourth quadrants.}

	\label{fig:twoRegions}
\end{figure}

By the Weak Minimum Principle {\bf (WmP)} we mean that for any subsolution of problem \eqref{1.3}, that is, for any pair of functions $(u,v)$ in $(W^{2,p}(\Omega))^2$ satisfying

\[
\left\{
\begin{array}{llll}
-{\cal L}_1u  \leq \lambda a(x)\vert u\vert^{\alpha_1-1}u\vert v\vert^{\beta_1-1}v & \text{in } \Omega, \\[2pt]
-{\cal L}_2v  \leq \mu b(x)\vert v\vert^{\alpha_2-1}v\vert u\vert^{\beta_2-1}u & \text{in } \Omega, \\[2pt]
\end{array}
\right. 
\]
and $u, v \leq 0$ on $ \partial\Omega$, one has $u, v \leq 0$ in $\Omega$. 

As a consequence of Theorem~\ref{MP}, we obtain

\begin{corollary}[WmP] \label{mP}
 Suppose that $0\le \alpha_1,\alpha_2<1$, and $\beta_1,\beta_2 > 0$ obey \eqref{cond1}. Then $(\lambda, \mu) \in \overline{{\cal R}_1} \setminus {\cal C}_1$ if and only if {\bf (WmP)} associated to \eqref{1.3} holds in $\Omega$.
\end{corollary}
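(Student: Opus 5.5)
The plan is to reduce {\bf (WmP)} for \eqref{1.3} to {\bf (WMP)} for the same system with suitably reflected parameters, by replacing $(u,v)$ with $(-u,-v)$. Then the equivalences of Theorem~\ref{MP}, together with Theorem~1.1 of \cite{MR4072483} in the Lane--Emden case, give the result. The only real work is to track how the nonlinearities change sign under $u\mapsto -u$, $v\mapsto -v$. This depends on whether $\alpha_l$ vanishes: for $\alpha_l=0$ the factor $|u|^{\alpha_l-1}u$ (resp. $|v|^{\alpha_2-1}v$) is absent and that equation loses one odd factor.

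First, let $(u,v)\in (W^{2,p}(\Omega))^2$ be a subsolution with $u,v\le 0$ on $\partial\Omega$, and set $(w,z)=(-u,-v)$, so that $w,z\ge0$ on $\partial\Omega$. Write $f_1(u,v)=|u|^{\alpha_1-1}u|v|^{\beta_1-1}v$ and $f_2(u,v)=|v|^{\alpha_2-1}v|u|^{\beta_2-1}u$. Each factor $s\mapsto |s|^{\gamma-1}s$ is odd. Hence $f_1(-u,-v)=\varepsilon_1 f_1(u,v)$, where $\varepsilon_1=+1$ if $\alpha_1>0$ (two odd factors) and $\varepsilon_1=-1$ if $\alpha_1=0$ (one odd factor). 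The same holds for $f_2$ with $\varepsilon_2$ determined by $\alpha_2$. Multiplying the subsolution inequalities by $-1$ gives
\[
-\mathcal L_1 w\ \ge\ (-\varepsilon_1\lambda)\,a\,f_1(w,z),\qquad -\mathcal L_2 z\ \ge\ (-\varepsilon_2\mu)\,b\,f_2(w,z)\quad\text{in }\Omega .
\]
Conversely, every supersolution at the parameters $(-\varepsilon_1\lambda,-\varepsilon_2\mu)$ arises this way. Therefore {\bf (WmP)} holds at $(\lambda,\mu)$ if and only if {\bf (WMP)} holds at $(\lambda',\mu'):=(-\varepsilon_1\lambda,-\varepsilon_2\mu)$.

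Next I will run through the four cases.
\begin{itemize}
\item If $\alpha_1=\alpha_2=0$, then $(\lambda',\mu')=(\lambda,\mu)$. By \cite[Theorem~1.1]{MR4072483}, {\bf (WMP)} holds if and only if $(\lambda,\mu)\in\overline{\mathcal R_1}\setminus\mathcal C_1$.
\item If $0<\alpha_1,\alpha_2<1$, then $(\lambda',\mu')=(-\lambda,-\mu)$. Theorem~\ref{MP}(i) gives {\bf (WMP)} if and only if $(-\lambda,-\mu)\in-(\overline{\mathcal R_1}\setminus\mathcal C_1)$, that is, $(\lambda,\mu)\in\overline{\mathcal R_1}\setminus\mathcal C_1$.
\item If $\alpha_1=0<\alpha_2$, then $(\lambda',\mu')=(\lambda,-\mu)$. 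Theorem~\ref{MP}(ii) gives the condition $(\lambda,-\mu)\in\overline{\mathcal R_2}\setminus\mathcal C_2$. Since $\mathcal C_2$ and $\mathcal R_2$ are exactly the reflections of $\mathcal C_1$ and $\mathcal R_1$ under $\mu\mapsto-\mu$, this is again $(\lambda,\mu)\in\overline{\mathcal R_1}\setminus\mathcal C_1$.
\item If $\alpha_2=0<\alpha_1$, then $(\lambda',\mu')=(-\lambda,\mu)$. Theorem~\ref{MP}(iii) gives $(-\lambda,\mu)\in-(\overline{\mathcal R_2}\setminus\mathcal C_2)$, i.e.\ $(\lambda,-\mu)\in\overline{\mathcal R_2}\setminus\mathcal C_2$, which is the same conclusion.
\end{itemize}

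The main point needing care is the convention when $\alpha_l=0$. In that case $|u|^{\alpha_1-1}u$ must be read as the constant $1$, as in the Lane--Emden system \eqref{Montenegro}, and not as $\operatorname{sign}(u)$. This is exactly what produces the mixed reflections in cases (ii)--(iii) of Theorem~\ref{MP}. I also need to check that the reflected regions match the definitions of $\mathcal C_2$ and $\mathcal R_2$, including the closure and the axis pieces. Everything else is a direct transfer of the equivalences already established.
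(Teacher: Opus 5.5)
Your proposal is correct and follows essentially the paper's route. The paper only states the corollary as a consequence of Theorem~\ref{MP}. Your reflection $(u,v)\mapsto(-u,-v)$, which turns {\bf (WmP)} at $(\lambda,\mu)$ into {\bf (WMP)} at $(-\varepsilon_1\lambda,-\varepsilon_2\mu)$, supplies the omitted bookkeeping, and your case analysis matches items (i)--(iii). You are also right to invoke \cite[Theorem~1.1]{MR4072483} for the Lane--Emden case $\alpha_1=\alpha_2=0$, which Theorem~\ref{MP} does not cover, and to read $|u|^{\alpha_l-1}u$ as $1$ when $\alpha_l=0$.
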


Note that {\bf (WMP)} is related to the spectral curve associated with negative eigenfunctions, while {\bf (WmP)} is associated with the curve ${\cal C}_1$, that is, the principal eigenvalues. 

Thus, in comparison to the Lane-Emden system, we discovered that the region where the {\bf (WMP)} holds is located below the principal curve if and only if (\ref{1.3}) is the Lane-Endem system, that is, $\alpha_1=0=\alpha_2$. However, the region of validity of the {\bf (WmP)} remains below the principal curve regardless of the values $\alpha_1$ and $\alpha_2$.

\begin{corollary}
The curve $\mathcal{C}_1$ satisfies property (\ref{LI}).
\end{corollary}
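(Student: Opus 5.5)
We derive (\ref{LI}) from Theorem~\ref{MP} and Corollary~\ref{mP}, using a sign-change symmetry of \eqref{1.3}. The observation is this. If $(\varphi,\psi)$ is an eigenfunction for $(\lambda,\mu)$, then both $(\varphi,\psi)$ and $(-\varphi,-\psi)$ are simultaneously a supersolution and a subsolution of \eqref{1.3}, with zero boundary data. Hence, whenever \textbf{(WMP)} or \textbf{(WmP)} holds at $(\lambda,\mu)$, applying it to both pairs forces $\varphi\ge0$ and $-\varphi\ge0$, and likewise for $\psi$. Therefore $\varphi\equiv\psi\equiv0$, and $(\lambda,\mu)$ is not an eigenvalue. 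So it suffices to show that every point of the four sets in (\ref{LI}) lies in a region where either \textbf{(WMP)} or \textbf{(WmP)} holds.

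We first treat $\overline{\mathcal R_1}\setminus\mathcal C_1$. Corollary~\ref{mP} gives \textbf{(WmP)} there for all $0\le\alpha_1,\alpha_2<1$, which excludes eigenvalues in this set.

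For the other three sets we split into cases according to which $\alpha_i$ vanish, and use Theorem~\ref{MP} together with symmetries of the system.
\begin{itemize}
\item If $0<\alpha_1,\alpha_2<1$, Theorem~\ref{MP}(i) gives \textbf{(WMP)} on $-(\overline{\mathcal R_1}\setminus\mathcal C_1)$.
\item If $\alpha_1=0<\alpha_2$, Theorem~\ref{MP}(ii) covers $\overline{\mathcal R_2}\setminus\mathcal C_2$.
\item If $\alpha_2=0<\alpha_1$, Theorem~\ref{MP}(iii) covers $-(\overline{\mathcal R_2}\setminus\mathcal C_2)$.
\end{itemize}
The remaining sets in each case are reached by the transformations $(u,v)\mapsto(u,-v)$ and $(u,v)\mapsto(-u,v)$. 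Under $(u,v)\mapsto(u,-v)$, the right-hand side of the first equation changes sign, since $|v|^{\beta_1-1}v$ is odd. The right-hand side of the second equation changes sign through the factor $|v|^{\alpha_2-1}v$ when $\alpha_2>0$. Combined with $-\mathcal L_2(-v)=\mathcal L_2 v$, this shows that $(\varphi,\psi)$ is an eigenfunction for $(\lambda,\mu)$ if and only if $(\varphi,-\psi)$ is an eigenfunction for $(-\lambda,\mu)$ or $(-\lambda,-\mu)$. Which one occurs depends on the parities of the factors; the exact sign pattern is read off from the oddness of $s\mapsto|s|^{\gamma-1}s$.

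With these symmetries, the eigenvalue set is invariant under the corresponding reflections of the $(\lambda,\mu)$-plane. These reflections map the four regions $\pm(\overline{\mathcal R_1}\setminus\mathcal C_1)$ and $\pm(\overline{\mathcal R_2}\setminus\mathcal C_2)$ onto one another. Hence, in each case, absence of eigenvalues in the one or two regions obtained directly from the theorems transfers to all four regions.

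The main obstacle is the bookkeeping in the symmetry step. We must check, for each case ($\alpha_1,\alpha_2$ both positive, or exactly one of them zero), that the available reflections, combined with the regions supplied by Corollary~\ref{mP} and Theorem~\ref{MP}, really exhaust all four sets. The case $\alpha_1=\alpha_2=0$ need not be treated, since (\ref{LI}) assumes they are not both zero. If some region is not reached by a reflection, the fallback is a direct argument. Suppose an eigenfunction existed at such a point. Then its positive or negative parts would yield a one-signed supersolution at a point where Theorem~\ref{MP} applies after the change of variables above. This contradicts \textbf{(WMP)} via the same sign argument as before.
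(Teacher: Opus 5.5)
Your opening reduction rests on a false claim. When $\alpha_1,\alpha_2$ are not both zero, $(-\varphi,-\psi)$ is in general neither a super- nor a subsolution at the \emph{same} $(\lambda,\mu)$. Your own parity bookkeeping shows this. For $0<\alpha_1,\alpha_2<1$ both nonlinearities are even under $(u,v)\mapsto(-u,-v)$, while $-\mathcal L_i(-w)=\mathcal L_i w$. So $(-\varphi,-\psi)$ is an eigenfunction at $(-\lambda,-\mu)$, which is why the paper lists $-\mathcal C_1$ as the curve of negative eigenfunctions in that case. For $(-\varphi,-\psi)$ to be a supersolution at $(\lambda,\mu)$ you would need $\lambda a|\varphi|^{\alpha_1-1}\varphi|\psi|^{\beta_1-1}\psi\le0$ and $\mu b|\psi|^{\alpha_2-1}\psi|\varphi|^{\beta_2-1}\varphi\le0$ in $\Omega$, and nothing guarantees this a priori. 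Your claim holds only in the Lane--Emden case, which (\ref{LI}) excludes. So the step ``\textbf{(WMP)} or \textbf{(WmP)} at $(\lambda,\mu)$ rules out eigenvalues there'' is not justified as written.

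The missing ingredient is to use the sign of the parameters in each region together with the Scalar Weak Maximum Principle; this is what the paper combines with Theorem~\ref{MP} and the structure of \eqref{1.3}. Apply \textbf{(WMP)} to $(\varphi,\psi)$ only.
\begin{itemize}
\item For $0<\alpha_1,\alpha_2<1$ and $(\lambda,\mu)\in-(\overline{\mathcal R_1}\setminus\mathcal C_1)$, \textbf{(WMP)} gives $\varphi,\psi\ge0$. Since $\lambda,\mu\le0$ there, $-\mathcal L_1\varphi=\lambda a\varphi^{\alpha_1}\psi^{\beta_1}\le0$ and $-\mathcal L_2\psi\le0$. The scalar principle then gives $\varphi,\psi\le0$, hence $\varphi\equiv\psi\equiv0$.
\item For $\alpha_1=0<\alpha_2$ on $\overline{\mathcal R_2}\setminus\mathcal C_2$, \textbf{(WMP)} gives $\varphi,\psi\ge0$. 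The second equation with $\mu\le0$ forces $\psi\equiv0$, and then $-\mathcal L_1\varphi=0$ forces $\varphi\equiv0$. Case (iii) is symmetric.
\end{itemize}
Once this sign information is available, $(-\varphi,-\psi)$ does become an admissible supersolution, so your idea can be rescued, but only through the signs, not through a symmetry.

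With this repair your reflection step completes the proof. The bookkeeping you flagged closes without any fallback. In each admissible case the flips $(u,v)\mapsto(-u,v)$, $(u,-v)$, $(-u,-v)$ realize all three nontrivial reflections of the parameter plane:
\begin{itemize}
\item for $0<\alpha_1,\alpha_2<1$ they send $(\lambda,\mu)$ to $(\lambda,-\mu)$, $(-\lambda,\mu)$, $(-\lambda,-\mu)$;
\item for $\alpha_1=0<\alpha_2$ they send it to $(-\lambda,-\mu)$, $(-\lambda,\mu)$, $(\lambda,-\mu)$.
\end{itemize}
So the single region supplied by Theorem~\ref{MP} is carried onto all four sets in (\ref{LI}), and Corollary~\ref{mP} is not even needed. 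Drop the positive/negative-part fallback: as written it is not an argument, since positive parts need not lie in $W^{2,p}$.
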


The claim follows from Theorem~\ref{MP}, the structure of problem~\eqref{1.3}, and the Scalar Weak Maximum Principle.

\begin{remark}\label{example}
We show below that the {\bf (SMP)} associated with~\eqref{1.3}, when $\alpha_1$ and $\alpha_2$ are not both zero, fails in $\Omega$ in the region where {\bf (WMP)} is valid.
 
 In the case $0<\alpha_1,\alpha_2<1$, by Theorem 7.5 of \cite{MR2025185}, the problem $-\Delta u=-u^{\alpha_1}$ admits a $C^2$ (nontrivial) nonnegative compactly supported solution in 
$\{x \in \R^n : \vert x\vert > 1\}$.

Thus, for any $\lambda<0$ and $\mu\leq 0$, there exists $R>1$ such that $u$ is nontrivial in $B_R(0)$ and
\[
	\begin{cases}
		-\,\Delta \gamma u=\lambda\,(\gamma u)^{\alpha_1}1^{\beta_1}& \text{in }B_R(0),\\[2pt]
		-\,\Delta 1=0\geq\mu\,b(x)\,1^{\alpha_2} (\gamma u)^{\beta_2} & \text{in }B_R(0),\\[2pt]
		\gamma u\geq 0 & \text{on }\partial B_R(0),
	\end{cases}
	\]
where $\gamma=(-\lambda)^{\frac{1}{1-\alpha_1}}>0$ and $B_R(0)=\{x \in \R^n : \vert x\vert < R\}$. Therefore, the {\bf (SMP)} fails in $B_R(0)$. Note that $(\lambda,\mu)\in -(\overline{{\cal R}_1} \setminus {\cal C}_1)$ when $(-\lambda)^{\frac{1}{\sqrt{\beta_1(1-\alpha_1)}}}(-\mu)^{\frac{1}{\sqrt{\beta_2(1-\alpha_2)}}}<\Lambda_0$.

In the case $\alpha_2=0<\alpha_1<1$, we take $\lambda<0$ and $\mu\geq 0$ and \(v(x) = E(2R - |x|^2)\), where $E=\max\{1,\sup\{\mu (\gamma u(x))^{\beta_2}:x\in B_R(0)\}\}$.
Thus,
\[
	\begin{cases}
		-\,\Delta \gamma u=\lambda\,(\gamma u)^{\alpha_1}\geq \lambda\,(\gamma u)^{\alpha_1}v^{\beta_1}& \text{in }B_R(0),\\[2pt]
		-\,\Delta v=2En\geq\mu\, (\gamma u)^{\beta_2} & \text{in }B_R(0),\\[2pt]
		\gamma u, v\geq 0 & \text{on }\partial B_R(0).
	\end{cases}
	\]
	Again, the {\bf (SMP)} fails in $B_R(0)$.
\end{remark}

However, by the Scalar Strong Maximum Principle and Theorem~\ref{MP}, we have the following result:

\begin{corollary} (Local {\bf (SMP)})
Let $(u,v)$ in $(W^{2,p}(\Omega))^2$ be a supersolution of problem \eqref{1.3}.
\begin{itemize}
\item[{\rm (i)}] If $0<\alpha_1,\alpha_2<1$ and $(\lambda, \mu) \in -(\overline{{\cal R}_1} \setminus {\cal C}_1)$, then $u,v\geq 0$ in $\Omega$ and the local {\bf (SMP)} associated to \eqref{1.3} holds in $\Omega$.
\item[{\rm (ii)}] If $\alpha_1=0<\alpha_2<1$ and $(\lambda, \mu) \in \overline{{\cal R}_2} \setminus {\cal C}_2$, then $u,v\geq 0$ in $\Omega$ and the local {\bf (SMP)} associated to \eqref{1.3} holds in $\Omega$.
\item[{\rm (iii)}] If $\alpha_2=0<\alpha_1<1$ and $(\lambda, \mu) \in -(\overline{{\cal R}_2} \setminus {\cal C}_2)$, then $u,v\geq 0$ in $\Omega$ and the local {\bf (SMP)} associated to \eqref{1.3} holds in $\Omega$.

\end{itemize}
\end{corollary}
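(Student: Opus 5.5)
We prove the statement exactly as worded. There are two claims in each case: that $u,v\ge 0$ in $\Omega$, and that if $u$ and $v$ are both nontrivial then $\{x\in\Omega:\ u(x)>0,\ v(x)>0\}\neq\emptyset$.

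\emph{Nonnegativity.} This comes directly from Theorem~\ref{MP}. In each of the cases (i)--(iii), the hypothesis on $(\alpha_1,\alpha_2)$ and $(\lambda,\mu)$ is exactly the one under which Theorem~\ref{MP} asserts that {\bf (WMP)} holds. Since $(u,v)$ is a supersolution, we get $u,v\ge0$ in $\Omega$. We may therefore drop the absolute values: the right-hand sides become $\lambda a\,u^{\alpha_1}v^{\beta_1}$ and $\mu b\,v^{\alpha_2}u^{\beta_2}$, with the convention $u^{0}=1$. By Sobolev embedding ($p>n$), $u$ and $v$ are continuous, so $\{u>0\}$ and $\{v>0\}$ are open subsets of $\Omega$. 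From now on assume $u\not\equiv0$ and $v\not\equiv0$. We take $\Omega$ connected, as in the spectral setting; otherwise we argue on a component on which both functions are nontrivial.

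\emph{Cases (ii) and (iii).} In case (ii) we have $\alpha_1=0$ and $\lambda\ge0$. Hence $-\mathcal L_1u\ge\lambda a\,v^{\beta_1}\ge0$ a.e. in $\Omega$, with $u\ge0$ on $\partial\Omega$. The Scalar Strong Maximum Principle for $\mathcal L_1$ and $u\not\equiv0$ give $u>0$ in all of $\Omega$. So $\{u>0,\,v>0\}=\{v>0\}$, which is nonempty because $v\not\equiv0$ and $v\ge0$. Case (iii) is symmetric: there $\alpha_2=0$ and $\mu\ge0$, so $-\mathcal L_2v\ge\mu b\,u^{\beta_2}\ge0$. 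The scalar principle for $\mathcal L_2$ gives $v>0$ in $\Omega$, and $\{u>0\}\neq\emptyset$ finishes the argument.

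\emph{Case (i).} This is the only delicate case, since $\lambda,\mu\le0$ and neither right-hand side has a favourable sign. We argue by contradiction on the conclusion itself. Suppose $\{u>0\}\cap\{v>0\}=\emptyset$. We claim that then $\lambda a\,u^{\alpha_1}v^{\beta_1}=0$ a.e. in $\Omega$:
\begin{itemize}
\item on $\{u>0\}$ we have $v=0$, hence $v^{\beta_1}=0$ because $\beta_1>0$;
\item on $\{u=0\}$ we have $u^{\alpha_1}=0$ because $\alpha_1>0$.
\end{itemize}
This is precisely where $\alpha_1>0$ is needed, consistent with Remark~\ref{example}. Thus $-\mathcal L_1u\ge0$ a.e. with $u\ge0$, $u\not\equiv0$, and the Scalar Strong Maximum Principle yields $u>0$ in $\Omega$. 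Then the disjointness forces $v\equiv0$ in $\Omega$, contradicting the nontriviality of $v$. Hence the common positivity set is nonempty, which is the local {\bf (SMP)}.

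The main point to check is this pointwise vanishing of the coupling term in case (i), in particular on the set $\{u=0\}$. It requires $\alpha_1>0$, $\beta_1>0$, and the convention for $|u|^{\alpha_1-1}u$ at $u=0$. The connectedness of $\Omega$ is also needed for the scalar strong principle to propagate positivity to the whole set.
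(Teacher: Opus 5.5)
Your proof is correct and follows the paper's route. The paper justifies this corollary only by citing Theorem~\ref{MP} (for $u,v\ge0$) and the Scalar Strong Maximum Principle, and your case analysis supplies exactly the detail left implicit there: the sign of $\lambda$ (resp.\ $\mu$) when $\alpha_1=0$ (resp.\ $\alpha_2=0$), and, in case (i), the vanishing of $u^{\alpha_1}v^{\beta_1}$ when $\alpha_1>0$ and the positivity sets are disjoint.

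One small correction: your fallback for disconnected $\Omega$ does not work in general, since there may be no component carrying both $u$ and $v$, and then the local \textbf{(SMP)} really fails. It is also unnecessary, because the standing assumption that the Scalar Strong Maximum Principle holds in $\Omega$ already forces $\Omega$ to be connected.
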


Another topic of great interest is whether or not maximum principles always hold depending on the domain, more specifically, whether there exist counterparts to Theorem 2.6 of~\cite{MR1258192} and to Theorem 1.3 of~\cite{MR4072483} for the system~\eqref{1.3}. Indeed, assuming that ${\cal L}_1$ and ${\cal L}_2$ satisfy \eqref{H.1} and \eqref{H.2} and $(\lambda,\mu)$ is an arbitrary pair in $\R^2$, we characterize when the {\bf (WMP)} holds for domains $\Omega$ of small measure.

Precisely, we have:

\begin{theorem} \label{sm}
({\bf (WMP)} in small domains)
	Suppose that \eqref{cond1} holds. Assume that $n \geq 2$ and $\operatorname{diam}(\Omega) \leq d$ for a fixed number $d > 0$. Then:

\begin{itemize}
\item[{\rm (i)}] If $0<\alpha_1,\alpha_2<1$, we have $\lambda \leq 0$ and $\mu \leq 0$ if and only if there exist constants $\sigma, \kappa_1,\kappa_2 > 0$ depending only on $n$, $c_0$, $C_0$, $T$, $d$, $\lambda$ and $\mu$ such that {\bf (WMP)} associated to \eqref{1.3} holds in $\Omega$ provided that $|\Omega| < \sigma$, $\|a\|_{L^n(\Omega)}<\kappa_1$ and $\|b\|_{L^n(\Omega)} < \kappa_2$;
\item[{\rm (ii)}] If $\alpha_1=0<\alpha_2<1$, we have $\lambda \geq 0$ and $\mu \leq 0$ if and only if there exist constants $\sigma, \kappa_1,\kappa_2 > 0$ depending only on $n$, $c_0$, $C_0$, $T$, $d$, $\lambda$ and $\mu$ such that {\bf (WMP)} associated to \eqref{1.3} holds in $\Omega$ provided that $|\Omega| < \sigma$, $\|a\|_{L^n(\Omega)}<\kappa_1$ and $\|b\|_{L^n(\Omega)} < \kappa_2$;
\item[{\rm (iii)}] If $\alpha_2=0<\alpha_1<1$, we have $\lambda \leq 0$ and $\mu \geq 0$ if and only if there exist constants $\sigma, \kappa_1,\kappa_2 > 0$ depending only on $n$, $c_0$, $C_0$, $T$, $d$, $\lambda$ and $\mu$ such that {\bf (WMP)} associated to \eqref{1.3} holds in $\Omega$ provided that $|\Omega| < \sigma$, $\|a\|_{L^n(\Omega)}<\kappa_1$ and $\|b\|_{L^n(\Omega)} < \kappa_2$.
\end{itemize}
\end{theorem}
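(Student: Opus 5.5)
We handle case (i) in detail; (ii) and (iii) follow by the same scheme after the sign changes indicated by Theorem~\ref{MP}.

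\emph{Sufficiency.} Let $\lambda\le 0$, $\mu\le 0$, and let $(u,v)$ be a supersolution with $u,v\ge 0$ on $\partial\Omega$. We must show $u^-=v^-=0$. On the set $\{u<0\}$ we have $|u|^{\alpha_1-1}u<0$, so
\[
-\mathcal{L}_1 u\ \ge\ \lambda a\,|u|^{\alpha_1-1}u\,|v|^{\beta_1-1}v\ \ge\ -|\lambda|\,a\,|u|^{\alpha_1}(v^-)^{\beta_1}.
\]
The last inequality holds because the product is nonnegative wherever $v\ge0$. Hence $u^-$ is a subsolution of an inequality whose right-hand side involves $a(u^-)^{\alpha_1}(v^-)^{\beta_1}$, and similarly for $v^-$. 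We then apply the ABP estimate of the paper (item (i) of the abstract) for $\mathcal{L}_1,\mathcal{L}_2$ on $\{u<0\}$ and $\{v<0\}$. Its constant $C$ depends only on $n,c_0,C_0,T,d$, and stays uniform for $|\Omega|$ small (Cabré's ABP constant). This gives
\[
\|u^-\|_\infty\le C|\lambda|\,\|a\|_{L^n}\,\|u^-\|_\infty^{\alpha_1}\|v^-\|_\infty^{\beta_1},\qquad
\|v^-\|_\infty\le C|\mu|\,\|b\|_{L^n}\,\|v^-\|_\infty^{\alpha_2}\|u^-\|_\infty^{\beta_2}.
\]
Suppose both norms are positive. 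Divide to get $X^{1-\alpha_1}\le A\,Y^{\beta_1}$ and $Y^{1-\alpha_2}\le B\,X^{\beta_2}$. Raise the first inequality to the power $1-\alpha_2$ and insert the second. Using \eqref{cond1}, the powers of $X$ cancel exactly, leaving
\[
1\le A^{1-\alpha_2}B^{\beta_1}.
\]
Choosing $\kappa_1,\kappa_2$ (and $\sigma$, which keeps the ABP constant controlled) so that $A^{1-\alpha_2}B^{\beta_1}<1$ gives a contradiction. If only one of the norms vanishes, say $v^-=0$, then $-\mathcal L_1u\ge0$ on $\{u<0\}$ and the scalar WMP gives $u\ge0$ directly. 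This is the homogeneity argument of Theorem~1.3 of \cite{MR4072483}; the factors $|u|^{\alpha_i}$ simply contribute the powers $X^{\alpha_1},Y^{\alpha_2}$ that are absorbed.

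\emph{Necessity.} Suppose, say, $\lambda>0$ in case (i). We must show that no choice of $\sigma,\kappa_1,\kappa_2$ makes WMP hold on all small domains. We adapt Remark~\ref{example}, but with reversed signs so that a negative supersolution appears. Take a small ball $B$ and a nonnegative compactly supported solution of $-\Delta w=-w^{\alpha_1}$, from the dead-core construction of \cite{MR2025185}, rescaled to fit inside $B$. Set $u=-\gamma w$ and $v\equiv 1$, and choose the weight $a$ suitably so that its $L^n$ norm is small. With $\lambda>0$ we have $\lambda a|u|^{\alpha_1-1}u\,v^{\beta_1}=-\lambda a(\gamma w)^{\alpha_1}$, which we match with $-\Delta u=\gamma w^{\alpha_1}$ by choosing $\gamma$. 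Take $v=1$ or $v$ a positive paraboloid so the second inequality holds. Then $u\le0$, $u\not\equiv0$, and $u=0$ on $\partial B$, so WMP fails.

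The main obstacle is achieving this for every small $|\Omega|$ and every small $\|a\|_{L^n}$ simultaneously. The compact-support scaling must be compatible with the norm constraint. Because $\alpha_1<1$, the scaling $w_r(x)=r^{2/(1-\alpha_1)}w(x/r)$ preserves the equation, so arbitrarily small supports are available. Dead-core solutions also exist for weights $a$ with small $L^n$ norm after adjusting constants. I expect this adjustment, together with handling the sign cases (ii) and (iii) where $\alpha_1=0$ or $\alpha_2=0$ forces the perturbation onto the other component, to require the most care.
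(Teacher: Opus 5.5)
Your sufficiency argument is correct, and it takes a more direct route than the paper. You apply the scalar ABP estimate to $u^-$ on $\{u<0\}$ and to $v^-$ on $\{v<0\}$, then let \eqref{cond1} cancel the powers. This is \cite[Theorem~9.1]{MR737190}, with $c_l^+$ absorbed through $|\Omega|<\sigma$; it is not Theorem~\ref{ABP}, which concerns nonnegative solutions in $\mathcal R_1$. The paper instead runs the same two ABP inequalities on a positive eigenfunction to obtain the lower bound of Theorem~\ref{lower}. It then concludes $(\lambda,\mu)\in-(\overline{\mathcal R_1}\setminus\mathcal C_1)$ for small data and invokes Theorem~\ref{MP}. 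Your version bypasses the delicate sign-set argument in the proof of Theorem~\ref{MP}. The paper's version yields the eigenvalue bound as a by-product.

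The necessity part fails. There is no nontrivial $w\ge 0$ with $-\Delta w=-w^{\alpha_1}$ in a ball $B$ and $w=0$ on $\partial B$. Such a $w$ satisfies $\Delta w=w^{\alpha_1}\ge 0$, so it is subharmonic with zero boundary values, hence $w\equiv 0$. Equivalently, your $u=-\gamma w$ would satisfy $-\Delta u=\gamma w^{\alpha_1}\ge 0$, $u=0$ on $\partial B$, $u\le 0$ and $u\not\equiv 0$, contradicting the scalar maximum principle for the Laplacian. This is also why your ``matching'' of $\gamma$ is vacuous: the first inequality holds for every $\gamma$ by sign alone.

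The compactly supported solutions of \cite{MR2025185} used in Remark~\ref{example} live in $\{|x|>1\}$ and do not vanish on the inner sphere $|x|=1$ (by the argument just given, they cannot). After your sign flip, $-\gamma w<0$ on part of the boundary, so the pair is not an admissible supersolution. Dead-core solutions only witness failure of the strong maximum principle, that is, a nonnegative supersolution vanishing on an open set. They never witness failure of nonnegativity, and no rescaling or choice of $a$ changes this.

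The missing idea is the eigenfunction counterexample from the necessity part of Theorem~\ref{MP}, which is what the paper uses. Take the given $\Omega,a,b$ and $\lambda>0$. Choose $(\lambda_1,\mu_1)\in\mathcal C_1$ with $\lambda_1<\lambda$ and $\mu_1>-\mu$; this is possible because $\mu_1\to\infty$ as $\lambda_1\to 0$ along $\mathcal C_1$. If $(\varphi,\psi)$ is a positive eigenfunction, then $(-\varphi,\psi)$ vanishes on $\partial\Omega$ and $-\varphi<0$. It satisfies the supersolution inequalities with slacks $(\lambda-\lambda_1)a\varphi^{\alpha_1}\psi^{\beta_1}\ge 0$ and $(\mu+\mu_1)b\psi^{\alpha_2}\varphi^{\beta_2}\ge 0$.

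For $\mu>0$, use $(\varphi,-\psi)$ with $\lambda_1$ large and $\mu_1<\mu$. Items (ii) and (iii) need only the corresponding sign patterns. This construction works in every domain and for every pair of weights, so the obstacle you anticipate never arises. That obstacle was compatibility with $|\Omega|<\sigma$, $\|a\|_{L^n(\Omega)}<\kappa_1$ and $\|b\|_{L^n(\Omega)}<\kappa_2$; in fact WMP fails in every admissible small domain whenever $\lambda>0$ or $\mu>0$.
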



\begin{theorem}[A priori estimate for the system]\label{thm:apriori-general}
	Let $n\ge2$. 	Suppose that $0\leq\alpha_1,\alpha_2 < 1$ are not both zero. 	
	Given data
	\[
	f,g\in L^{p}(\Omega),\qquad \varphi,\psi\in W^{2,p}(\Omega),
	\]
	consider the nonhomogeneous system
	\begin{equation}\label{eq:system-general-en}
		\begin{cases}
			-\mathcal L_1 u=\lambda\,a(x)\,|u|^{\alpha_1-1}u\,|v|^{\beta_1-1}v+f(x), & \text{in }\Omega,\\[2pt]
			-\mathcal L_2 v=\mu\,b(x)\,|v|^{\alpha_2-1}v\,|u|^{\beta_2-1}u+g(x), & \text{in }\Omega,\\[2pt]
			u=\varphi,\quad v=\psi & \text{on }\partial\Omega.
		\end{cases}
	\end{equation}
	Under condition~\eqref{cond1}, set
	\begin{equation}\label{eq:def-r-en}
		s:=\frac{\beta_1}{1-\alpha_1}=\frac{1-\alpha_2}{\beta_2}.
	\end{equation}
	
Assume that $(\lambda,\mu)\in \left(\overline{{\cal R}_1} \setminus {\cal C}_1\right)\cup\left(-\left(\overline{{\cal R}_1} \setminus {\cal C}_1\right)\right)\cup\left(\overline{{\cal R}_2} \setminus {\cal C}_2\right)\cup\left(-\left(\overline{{\cal R}_2} \setminus {\cal C}_2\right)\right)$.	Then there exists a constant $A>0$, depending only on $\Omega$, the structural constants of 
$\mathcal L_1,\mathcal L_2$, the parameters $(\lambda,\mu)$, 
$\|a\|_{L^p(\Omega)}$, and $\|b\|_{L^p(\Omega)}$,
	such that any strong solution $(u,v)\in (W^{2,p}(\Omega))^2$ of \eqref{eq:system-general-en} satisfies
	\begin{equation}\label{eq:apriori-final-en}
		\|u\|_{L^\infty(\Omega)}+\|v\|_{L^\infty(\Omega)}^{\,s}
		\;\le\;
		A\Big(
		\|\varphi\|_{W^{2,p}(\Omega)}
		+\|\psi\|_{W^{2,p}(\Omega)}^{\,s}
		+\|f\|_{L^{p}(\Omega)}
		+\|g\|_{L^{p}(\Omega)}^{\,s}
		\Big).
	\end{equation}
\end{theorem}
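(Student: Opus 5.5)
We argue by contradiction and blow-up, using the homogeneity of the system. The key observation is that under \eqref{cond1}, with $s$ as in \eqref{eq:def-r-en}, the map $(u,v)\mapsto(t u, t^{1/s} v)$ sends solutions of the homogeneous system \eqref{1.3} to solutions. Indeed, the right-hand side of the first equation scales as $t^{\alpha_1}t^{\beta_1/s}=t$, since $\beta_1/s=1-\alpha_1$. The right-hand side of the second scales as $t^{\alpha_2/s}t^{\beta_2}=t^{1/s}$, since $\beta_2 s=1-\alpha_2$. Accordingly, the natural size of a pair is
\[
N(u,v):=\|u\|_{L^\infty(\Omega)}+\|v\|_{L^\infty(\Omega)}^{s},
\]
and the data size $D:=\|\varphi\|_{W^{2,p}}+\|\psi\|_{W^{2,p}}^{s}+\|f\|_{L^p}+\|g\|_{L^p}^{s}$ is homogeneous of the same degree. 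Suppose \eqref{eq:apriori-final-en} fails. Then there are solutions $(u_k,v_k)$ with data $(f_k,g_k,\varphi_k,\psi_k)$ such that $N(u_k,v_k)>k\,D_k$. Set $t_k:=N(u_k,v_k)$, $\tilde u_k:=u_k/t_k$ and $\tilde v_k:=v_k/t_k^{1/s}$. Then $N(\tilde u_k,\tilde v_k)=1$, and $(\tilde u_k,\tilde v_k)$ solves \eqref{eq:system-general-en} with data $f_k/t_k$, $g_k/t_k^{1/s}$, $\varphi_k/t_k$, $\psi_k/t_k^{1/s}$. By the choice of $t_k$, all of these tend to $0$ in $L^p$ or $W^{2,p}$ respectively.

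Next we get compactness. The nonlinear terms $a|\tilde u_k|^{\alpha_1-1}\tilde u_k|\tilde v_k|^{\beta_1-1}\tilde v_k$ and the analogous second term are bounded in $L^p(\Omega)$, because $\tilde u_k,\tilde v_k$ are bounded in $L^\infty$ and $a,b\in L^p$. Subtracting the boundary data (extend $\varphi_k,\psi_k$ into $\Omega$ and consider $\tilde u_k-\varphi_k/t_k$), the Calder\'on--Zygmund $W^{2,p}$ estimates for $\mathcal L_1,\mathcal L_2$ apply. They hold under \eqref{H.1}--\eqref{H.2} together with the Scalar Weak Maximum Principle, which gives uniqueness for the Dirichlet problem and hence a bound without a zero-order term. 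They show that $(\tilde u_k,\tilde v_k)$ is bounded in $(W^{2,p}(\Omega))^2$. Since $p>n$, Morrey's embedding and Arzel\`a--Ascoli give a subsequence converging in $C^{1}(\overline\Omega)$, and weakly in $W^{2,p}$, to some $(u,v)$. The nonlinearities are continuous in $(u,v)$ because all exponents are nonnegative, and for $\alpha_i=0$ the factor $|u|^{-1}u$ multiplies $|v|^{\beta}$-type terms continuously away from the set $\{u=0\}$. So we pass to the limit: $(u,v)\in (W^{2,p}(\Omega))^2$ is a strong solution of the homogeneous system \eqref{1.3}, it vanishes on $\partial\Omega$, and by uniform convergence $N(u,v)=1$. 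In particular $(u,v)$ is nontrivial.

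The main obstacle is the last step: we must show that \eqref{1.3} has no nontrivial solution for $(\lambda,\mu)$ in the stated union of regions. This is exactly property \eqref{LI}, the lower isolation of the curve $\mathcal C_1$, which the paper derives from Theorem~\ref{MP}, Corollary~\ref{mP} and the Scalar Weak Maximum Principle. If one prefers to argue directly, take $(\lambda,\mu)$ in the region where {\bf (WMP)} holds for the given pair $(\alpha_1,\alpha_2)$. Both $(u,v)$ and $(-u,-v)$ are solutions, since the nonlinearities are odd, so {\bf (WMP)} gives $u,v\ge0$ and $-u,-v\ge0$, hence $u=v=0$. In the region where {\bf (WmP)} holds, Corollary~\ref{mP} gives the same conclusion by the symmetric argument. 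In the remaining regions one splits into sign cases and uses the Scalar Weak Maximum Principle componentwise: when $\lambda\mu\le0$ the sign structure is decoupled, and this forces $u\equiv0$ or $v\equiv0$, then both. Some care is needed when $\alpha_i=0$, because then $|u|^{\alpha_i-1}u$ is $\operatorname{sign}(u)$, which is discontinuous; the hypothesis that $\alpha_1,\alpha_2$ are not both zero is what guarantees that every listed region is eigenvalue-free. In every case the limit must be trivial, which contradicts $N(u,v)=1$. Hence such a constant $A$ exists, and since nothing in the argument depended on more than $\Omega$, the structural constants, $(\lambda,\mu)$, $\|a\|_{L^p(\Omega)}$ and $\|b\|_{L^p(\Omega)}$, it depends only on these.
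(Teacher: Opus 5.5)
Your blow-up argument is the paper's proof. You normalize by $t_k=\|u_k\|_{L^\infty}+\|v_k\|_{L^\infty}^{s}$ with $\tilde v_k=v_k/t_k^{1/s}$, the exponents vanish, you get $W^{2,p}$ compactness and a nontrivial limit with zero boundary values, and you close by appeal to property~\eqref{LI}; the paper concludes in exactly this way. The problem is the ``direct'' verification of that last step, which contains two false claims.

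First, the system is not odd under $(u,v)\mapsto(-u,-v)$. For $\alpha_1,\alpha_2>0$, each right-hand side is a product of an odd function of $u$ and an odd function of $v$, so it is invariant under $(u,v)\mapsto(-u,-v)$, while $-\mathcal L_i$ changes sign. Hence $(-u,-v)$ solves \eqref{1.3} with $(-\lambda,-\mu)$, not with $(\lambda,\mu)$. This is exactly why $-\mathcal C_1$ carries negative eigenfunctions and \textbf{(WMP)} fails there (first step of the proof of Theorem~\ref{MP}). So ``\textbf{(WMP)} gives $-u,-v\ge0$'' is unfounded, and your ``symmetric argument'' for the \textbf{(WmP)} region inherits the same error.

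Second, no componentwise sign argument can settle the case $\lambda\mu\le0$. If $(\varphi,\psi)$ is a positive eigenfunction at $(\lambda_1,\mu_1)\in\mathcal C_1$, then $(-\varphi,\psi)$ (if $\alpha_1>0$), resp.\ $(-\varphi,-\psi)$ (if $\alpha_1=0$), is a nontrivial eigenfunction at $(\lambda_1,-\mu_1)\in\mathcal C_2$, where $\lambda\mu<0$. Any valid argument must use that $(\lambda,\mu)$ lies strictly inside the region, so it has to go through Theorem~\ref{MP} or Corollary~\ref{mP}.

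The correct direct route is what the paper compresses into ``Theorem~\ref{MP}, the structure of \eqref{1.3} and the Scalar Weak Maximum Principle''. At a point of the \textbf{(WMP)} region of Theorem~\ref{MP}, an eigenfunction $(u,v)$ is nonnegative by \textbf{(WMP)}. The equation carrying the nonpositive parameter then gives $-\mathcal L_i w\le 0$ for the corresponding component $w$, so $w\le0$ by the scalar principle, hence $w\equiv0$. The other equation then has zero right-hand side, since $\beta_i>0$, and forces the other component to vanish. Next, when $\alpha_1,\alpha_2$ are not both zero, the sign changes $(u,v)\mapsto(-u,v),(u,-v),(-u,-v)$ carry eigenpairs at $(\lambda,\mu)$ to eigenpairs at all three reflections $(\lambda,-\mu),(-\lambda,\mu),(-\lambda,-\mu)$, in an order depending on which $\alpha_i$ vanishes. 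The union in the statement is precisely the orbit of the \textbf{(WMP)} region under these reflections. This, not oddness, is where ``not both zero'' enters: for Lane--Emden only the point reflection survives.

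Two smaller points.
\begin{itemize}
\item The paper's convention is that $\alpha_i=0$ means the Lane--Emden form, with the factor in $u$, resp.\ $v$, simply absent, as in Theorem~\ref{MP}(ii),(iii). Under that convention the nonlinearities are continuous and your $\operatorname{sign}(u)$ concern disappears. Under the literal reading, ``continuous away from $\{u=0\}$'' would not let you pass to the limit on a set $\{u=0\}$ of positive measure.
\item The weights $a,b$ and the operators $\mathcal L_1,\mathcal L_2$ stay fixed along your contradiction sequence and enter the limit problem. So the argument yields $A$ depending on $a,b$ themselves, not only on $\|a\|_{L^p(\Omega)},\|b\|_{L^p(\Omega)}$. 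The paper's proof has the same limitation.
\end{itemize}
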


\begin{theorem}[Existence for the nonhomogeneous problem outside the eigencurve]\label{thm:existence-outside-curve} 

	Under the hypotheses of Theorem \ref{thm:apriori-general}, the problem \eqref{eq:system-general-en} admits at least one strong solution \((u,v)\in (W^{2,p}(\Omega))^2\) for any pairs $(f,g)\in (L^{p}(\Omega))^2$ and $(\varphi,\psi)\in (W^{2,p}(\Omega))^2$ of nonnegative functions.

\begin{itemize}
\item[{\rm (i)}] If $0<\alpha_1,\alpha_2<1$ and $(\lambda, \mu) \in -(\overline{{\cal R}_1} \setminus {\cal C}_1)$, then $u,v\geq 0$ in $\Omega$. 
\item[{\rm (ii)}] If $\alpha_1=0<\alpha_2<1$ and $(\lambda, \mu) \in \overline{{\cal R}_2} \setminus {\cal C}_2$, then $u,v\geq 0$ in $\Omega$. 
\item[{\rm (iii)}] If $\alpha_2=0<\alpha_1<1$ and $(\lambda, \mu) \in -(\overline{{\cal R}_2} \setminus {\cal C}_2)$, then $u,v\geq 0$ in $\Omega$.

Moreover, if $f,g>0$ in $\Omega$, then $u,v$ are positive in $\Omega$.
\end{itemize}	
\end{theorem}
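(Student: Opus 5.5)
We will prove existence by a Leray--Schauder degree argument, using the a priori bound of Theorem~\ref{thm:apriori-general}, and then read off the sign information from Theorem~\ref{MP} and the Scalar Strong Maximum Principle.

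\textbf{Reduction to homogeneous boundary data.} Write $u=\varphi+\tilde u$, $v=\psi+\tilde v$ with $\tilde u,\tilde v\in W^{2,p}(\Omega)\cap W^{1,p}_0(\Omega)$. Let $G_l$ denote the solution operator of $-\mathcal L_l w=h$, $w=0$ on $\partial\Omega$. These operators exist and are compact from $L^p(\Omega)$ into $C^{1,\eta}_0(\overline\Omega)$, since $\mathcal L_l$ satisfies the Scalar Weak Maximum Principle and $W^{2,p}\hookrightarrow C^{1,\eta}$ for $p>n$.

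\textbf{Homotopy.} For $t\in[0,1]$ define on $X:=C(\overline\Omega)^2$ the map
\[
T_t(\tilde u,\tilde v)=\Big(G_1\big[t\lambda a|u|^{\alpha_1-1}u|v|^{\beta_1-1}v+tf+\mathcal L_1\varphi\big],\;G_2\big[t\mu b|v|^{\alpha_2-1}v|u|^{\beta_2-1}u+tg+\mathcal L_2\psi\big]\Big),
\]
with $u=\varphi+\tilde u$ and $v=\psi+\tilde v$. The nonlinearities are continuous in $(u,v)$ because all exponents are nonnegative; for $\alpha_l=0$ the factor $|u|^{-1}u$ should be read as $\operatorname{sgn}(u)$ and handled by truncation or a regularisation. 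If the sign function is a problem, we would first regularise by $\operatorname{sgn}_\varepsilon$ and pass to the limit using the uniform bound. Products of bounded functions with $a,b\in L^p$ lie in $L^p$, so $T_t$ is compact and continuous in $t$. A fixed point of $T_t$ is a solution of \eqref{eq:system-general-en} with $\lambda,\mu,f,g$ replaced by $t\lambda,t\mu,tf,tg$.

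\textbf{Uniform bound along the homotopy — the main obstacle.} We must bound all fixed points uniformly in $t$. The regions in the hypothesis are star-shaped with respect to the origin in the sense needed: $(t\lambda,t\mu)$ stays in the same region, since scaling by $t\le1$ moves a point further below its curve. We then apply Theorem~\ref{thm:apriori-general}. The difficulty is that its constant $A$ depends on $(\lambda,\mu)$, so uniformity in $t$ is not automatic. We would either revisit its proof, whose contradiction/blow-up argument is compact in the parameter on a compact subset of the region, or argue directly by contradiction. In the latter case, take $t_k\to t_*$ and solutions with norms tending to infinity. Normalise with the weighted scaling $\|u\|_\infty+\|v\|_\infty^s$; the homogeneity \eqref{cond1} makes the nonlinear terms scale consistently. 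Pass to the limit to obtain a nontrivial solution of the homogeneous problem \eqref{1.3} at $(t_*\lambda,t_*\mu)$, which lies in the admissible region where no eigenvalue exists by property~\eqref{LI} and its analogue from Theorem~\ref{MP}; this is a contradiction. The limit $t_*=0$ is also covered, because then the limit problem is $-\mathcal L_l w=0$, whose only solution is zero. With the bound in hand, $\deg(I-T_t,B_R,0)$ is constant in $t$. At $t=0$ the map $T_0$ is constant, so the degree is $1$, and a fixed point exists at $t=1$.

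\textbf{Signs.} In cases (i)--(iii), $f,g,\varphi,\psi\ge0$, so $(u,v)$ is a supersolution of \eqref{1.3} with nonnegative boundary values, and Theorem~\ref{MP} gives $u,v\ge0$. If in addition $f,g>0$, consider case (i) first. There $\lambda,\mu\le0$, and we have $-\mathcal L_1u+(-\lambda)a u^{\alpha_1-1}v^{\beta_1}u=f>0$. Because $\alpha_1<1$, the coefficient $u^{\alpha_1-1}$ is unbounded, so the linear SMP cannot be applied directly. Instead we argue as follows. Where $v>0$, the equation forbids an interior zero of $u$, since at a minimum point the left side would be $\le0$ in the $W^{2,p}$ (Bony) sense while $f>0$. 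Where $v\equiv0$ on an open set, the second equation gives $0=-\mathcal L_2 v=\mu b\cdot 0+g>0$, which is impossible. The same reasoning applies to $v$. Cases (ii) and (iii) are identical with the signs of $\lambda,\mu$ adjusted.
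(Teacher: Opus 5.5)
Your existence argument follows the paper's route: a Leray--Schauder continuation in $t$ with degree $1$ at $t=0$, the bound from the normalisation $\|u\|_{L^\infty}+\|v\|_{L^\infty}^{s}$, and $u,v\ge0$ read off from Theorem~\ref{MP}. Your handling of uniformity in $t$ is correct and fills in what the paper only asserts in the Remark after Theorem~\ref{thm:apriori-general}: you run the blow-up along $t_k\to t_*$, use star-shapedness of the regions and the absence of eigenvalues there by \eqref{LI}, and note the trivial limit problem at $t_*=0$. One minor correction: for $\alpha_l=0$ the paper means the factor $|u|^{\alpha_1-1}u$ is absent (Lane--Emden coupling), not $\operatorname{sgn}u$. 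That reading is what produces the regions in Theorem~\ref{MP}(ii),(iii), and with it the nonlinearity is continuous, so no regularisation is needed.

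The genuine gap is the last step, strict positivity when $f,g>0$. At an interior zero $x_0$ of $u\ge0$, Bony's principle only gives $\operatorname*{ess\,lim\,inf}_{x\to x_0}(-\mathcal L_1u)(x)\le0$. Since $-\mathcal L_1u=f-|\lambda|\,a\,u^{\alpha_1}v^{\beta_1}$, a contradiction would need $\operatorname*{ess\,lim\,inf}_{x\to x_0}\big(f-|\lambda|a\,u^{\alpha_1}v^{\beta_1}\big)>0$. That requires both that $f$ be bounded below near $x_0$ and that $a\,u^{\alpha_1}\to0$ at $x_0$. Neither follows from $f\in L^p$, $f>0$ a.e., $a\in L^p$. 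Your sentence that ``the left side would be $\le0$'' silently drops the nonnegative term $(-\lambda)a\,u^{\alpha_1}v^{\beta_1}$.

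In fact the conclusion fails. Take $\Omega=B_1\subset\R^n$, $\mathcal L_1=\mathcal L_2=\Delta$, $a=b\equiv1$, $0<\alpha_1,\alpha_2<1$, and $\lambda<0$, $\mu\le0$ small. Set $k=\frac{2}{1-\alpha_1}$ (so $k>2$ and $k\alpha_1=k-2$), $u=c|x|^k(1-|x|^2)$ and $v=\epsilon(1-|x|^2)$. Then $(u,v)\in C^2(\overline{B_1})^2$ solves \eqref{eq:system-general-en} with $\varphi=\psi=0$ and
\[
f=|x|^{k-2}\Big[c\big((k+2)(k+n)|x|^2-k(k+n-2)\big)+|\lambda|c^{\alpha_1}\epsilon^{\beta_1}(1-|x|^2)^{\alpha_1+\beta_1}\Big],\qquad g=2n\epsilon+|\mu|\,v^{\alpha_2}u^{\beta_2}.
\]
For $c$ small the bracket is positive on $\overline{B_1}$. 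Hence $f>0$ except at the origin and $g>0$, yet $u(0)=0$.

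Requiring $\min f>0$ does not rescue the claim, because the paper allows unbounded $a$. Take $a=|x|^{-2\alpha_1}$, which has $\operatorname*{ess\,inf}a\ge1$ and lies in $L^p$ for $n<p<n/(2\alpha_1)$ when $\alpha_1<\frac12$. With $u=c|x|^2(1-|x|^2)$ one gets $a\,u^{\alpha_1}=c^{\alpha_1}(1-|x|^2)^{\alpha_1}$. Then $f=c\big(4(n+2)|x|^2-2n\big)+|\lambda|c^{\alpha_1}\epsilon^{\beta_1}(1-|x|^2)^{\alpha_1+\beta_1}$ has a positive minimum for small $c$, and still $u(0)=0$.

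The paper's proof offers no argument for this clause either; Theorem~\ref{MP} only yields $u,v\ge0$. What you can prove is positivity under extra hypotheses, for instance $\operatorname*{ess\,inf}_K f>0$ and $\operatorname*{ess\,inf}_K g>0$ on compacts $K\subset\Omega$, together with $a,b\in L^\infty_{\rm loc}$. Then $-\mathcal L_1u\ge\delta>0$ near any zero $x_0$ of $u$. The scalar strong maximum principle for nonnegative supersolutions then forces $u\equiv0$ near $x_0$, which contradicts $-\mathcal L_1u\ge\delta$.
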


\begin{corollary}
Let $n\ge2$. 	Suppose that $0\leq\alpha_1,\alpha_2 < 1$ are not both zero and that \eqref{cond1} holds. Assume that $(\lambda, \mu) \in (\overline{{\cal R}_1} \setminus {\cal C}_1)$. Then the problem \eqref{eq:system-general-en} admits at least one nonpositive solution \((u,v)\in (W^{2,p}(\Omega))^2\) for any pair $(f,g)\in(L^p(\Omega))^2$ and any pair $(\varphi,\psi)\in(W^{2,p}(\Omega))^2$ of nonpositive functions. Moreover, if $f,g<0$ in $\Omega$, then $u,v$ are negative in $\Omega$.
\end{corollary}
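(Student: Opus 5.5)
The plan is to reduce the corollary to Theorem~\ref{thm:existence-outside-curve} by the change of unknowns $(u,v)\mapsto(-u,-v)$ and a reflection of the parameters. Put $\tilde u=-u$ and $\tilde v=-v$. Since $t\mapsto |t|^{\gamma-1}t$ is odd, we have
\[
|u|^{\alpha_1-1}u\,|v|^{\beta_1-1}v=|\tilde u|^{\alpha_1-1}\tilde u\,|\tilde v|^{\beta_1-1}\tilde v ,
\]
because the two sign changes cancel; the same holds for the second nonlinearity. Hence $(u,v)$ solves \eqref{eq:system-general-en} with data $(f,g,\varphi,\psi)$ exactly when $(\tilde u,\tilde v)$ solves
\[
-\mathcal L_1\tilde u=-\lambda a|\tilde u|^{\alpha_1-1}\tilde u|\tilde v|^{\beta_1-1}\tilde v-f,\qquad
-\mathcal L_2\tilde v=-\mu b|\tilde v|^{\alpha_2-1}\tilde v|\tilde u|^{\beta_2-1}\tilde u-g,
\]
with boundary data $(-\varphi,-\psi)$. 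This is again a system of the form \eqref{eq:system-general-en}, now with parameters $(-\lambda,-\mu)$ and data $(-f,-g,-\varphi,-\psi)$. I read the hypothesis ``$(f,g)$ nonpositive'' as intended, in parallel with the boundary data, so these new data are nonnegative.

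Since $(\lambda,\mu)\in\overline{\mathcal R_1}\setminus\mathcal C_1$, we get $(-\lambda,-\mu)\in-(\overline{\mathcal R_1}\setminus\mathcal C_1)$. This set lies in the union of admissible regions of Theorem~\ref{thm:apriori-general}, so Theorem~\ref{thm:existence-outside-curve} applies. It gives a strong solution $(\tilde u,\tilde v)\in(W^{2,p}(\Omega))^2$.

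Next comes the sign of $(\tilde u,\tilde v)$, split by cases on $(\alpha_1,\alpha_2)$.
\begin{itemize}
\item \emph{Case $0<\alpha_1,\alpha_2<1$.} Item (i) of Theorem~\ref{thm:existence-outside-curve} directly gives $\tilde u,\tilde v\ge0$, and positivity when $-f,-g>0$.
\item \emph{Case $\alpha_1=0<\alpha_2$ or $\alpha_2=0<\alpha_1$.} Here the reflected point $(-\lambda,-\mu)$ lies in the third quadrant or on the negative axes, not in $\overline{\mathcal R_2}\setminus\mathcal C_2$ or its negative. So items (ii)--(iii) do not directly give a sign, and I argue separately.
\end{itemize}

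For the second case I would use Corollary~\ref{mP} for the original system: (WmP) holds at $(\lambda,\mu)\in\overline{\mathcal R_1}\setminus\mathcal C_1$ for all $0\le\alpha_1,\alpha_2<1$. Take $(u,v)=(-\tilde u,-\tilde v)$, which solves \eqref{eq:system-general-en}. Since $f,g\le0$, it satisfies the subsolution inequalities, and $u,v\le0$ on $\partial\Omega$ because $\varphi,\psi\le0$. The (WmP) then gives $u,v\le0$ in $\Omega$. This argument in fact covers every case uniformly, so I would use it throughout and invoke Theorem~\ref{thm:existence-outside-curve} only for existence.

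Strict negativity when $f,g<0$ comes next. Once $u\le0$ and $v\le0$, write $-\mathcal L_1(-u)=-\lambda a|u|^{\alpha_1}|v|^{\beta_1}-f$, using $u=-|u|$ and $v=-|v|$. For $\lambda\ge0$ the first term is $\le0$, so the sign is not immediate. I would instead observe that $-\mathcal L_1 u=\lambda a|u|^{\alpha_1}|v|^{\beta_1}+f$, where the $\lambda$-term is nonnegative and $f<0$.

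\textbf{Main obstacle.} This is the step I expect to be hardest, since the scalar strong maximum principle does not apply directly. I would first try to exploit $(\lambda,\mu)$ lying strictly below $\mathcal C_1$. Compare $-u$ with the positive solution $w\in V_+^\circ$ of $-\mathcal L_1 w=-f>0$, and argue by contradiction. Suppose $-u$ vanishes at an interior point. Using the principal eigenfunctions of Theorem~\ref{teor2} in a sliding (scaling) argument of the kind used to prove Corollary~\ref{mP}, one should reach a contradiction with $(\lambda,\mu)\notin\mathcal C_1$. Failing that, in the subcase $\lambda\le0$ the scalar strong maximum principle applies directly, since then $-\mathcal L_1(-u)\ge -f>0$.
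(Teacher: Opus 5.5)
Your treatment of the case $0<\alpha_1,\alpha_2<1$ is what the paper intends. The corollary is left as an immediate consequence of Theorem~\ref{thm:existence-outside-curve}: reflect $(u,v)\mapsto(-u,-v)$, land at $(-\lambda,-\mu)\in-(\overline{\mathcal R_1}\setminus\mathcal C_1)$ with nonnegative data, and read off existence, sign and strict sign from item (i).

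The gap is in the mixed cases, and it comes from your oddness claim. In this paper $\alpha_i=0$ means the corresponding factor is simply absent ($u^0=1$, not $\mathrm{sign}(u)$). That is how \eqref{1.3} contains the Lane--Emden system \eqref{Montenegro}, and it is the only reading compatible with Theorem~\ref{MP}(ii)--(iii). That factor is even, so the reflection does not flip the parameter it multiplies:
\begin{itemize}
\item for $\alpha_1=0<\alpha_2$ it sends $(\lambda,\mu)$ to $(\lambda,-\mu)\in\overline{\mathcal R_2}\setminus\mathcal C_2$;
\item for $\alpha_2=0<\alpha_1$ it sends $(\lambda,\mu)$ to $(-\lambda,\mu)\in-(\overline{\mathcal R_2}\setminus\mathcal C_2)$.
\end{itemize}
These are exactly the regions of items (ii) and (iii). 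With this correction the mixed cases go through verbatim like case (i), including negativity for $f,g<0$. The warning sign was that your reflected point missed every region carrying a sign statement. Corollary~\ref{mP} is obtained from Theorem~\ref{MP} through this same reflection, so the reflection must map $\overline{\mathcal R_1}\setminus\mathcal C_1$ onto the (WMP) region in each case.

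As written, the mixed cases are incomplete on two counts.

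First, your existence step applies Theorem~\ref{thm:existence-outside-curve} to a system that is not the reflected one. Pulled back, its solution solves the problem at $(-\lambda,\mu)$ when $\alpha_1=0$ (symmetrically at $(\lambda,-\mu)$ when $\alpha_2=0$), not at $(\lambda,\mu)$. The correct reflection repairs this, after which your (WmP) argument for $u,v\le0$ is fine, though redundant.

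Second, and more seriously, strict negativity is not proved. The sliding argument is only announced, and your fallback $\lambda\le0$ covers nothing beyond $\lambda=0$, since $\overline{\mathcal R_1}$ lies in the closed first quadrant. The difficulty is real. For $\alpha_1=0<\alpha_2$ and $\mu>0$, once $u,v\le0$ one has $-\mathcal L_2(-v)=-\mu b|v|^{\alpha_2}|u|^{\beta_2}-g$. This is a sublinear absorption of the kind for which Remark~\ref{example} produces dead cores, so the scalar strong maximum principle gives nothing directly. The other component is easy in this case: $-\mathcal L_1(-u)=\lambda a|v|^{\beta_1}-f>0$, so $u<0$.

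The missing step is simply to invoke the positivity clause of Theorem~\ref{thm:existence-outside-curve}(ii) or (iii) for the correctly reflected system.
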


Note that Theorem \ref{thm:existence-outside-curve} guarantees the existence of a nonnegative solution in the region where {\bf (WMP)} holds. However, such regions are outside the first quadrant when $\alpha_1\neq 0$ or $\alpha_2\neq 0$ (see Figures 1 and 2). In Section~\ref{s8} we will study the existence of nonnegative or positive solutions if $(\lambda,\mu)$ belongs to the first quadrant, that is, we will obtain the following result.

\begin{theorem}[Existence for the nonhomogeneous problem in $\mathcal{R}_1$] \label{E1} For each pair $ (\lambda, \mu) \in \mathcal{R}_1$ and any nonnegative pair $(f, g) \in (L^p(\Omega))^2$, the system (\ref{eq:system-general-en}) with $\varphi=0=\psi$ on $\partial\Omega$ possesses a unique nonnegative solution $(u,v)\in (W^{2,p}(\Omega))^2$. Moreover, if $f+g\not\equiv 0$ in $\Omega$, then $u, v > 0$ in $\Omega$.
\end{theorem}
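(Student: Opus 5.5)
The plan is to obtain existence by sub- and supersolutions combined with a monotone iteration, and uniqueness by a scaling (Krasnoselskiĭ-type) argument that exploits the sublinear homogeneity $\beta_1\beta_2=(1-\alpha_1)(1-\alpha_2)$. For $(\lambda,\mu)\in\mathcal R_1$ (so $\lambda,\mu\ge 0$) the right-hand side $F(u,v)=\lambda a\,u^{\alpha_1}v^{\beta_1}+f$, $G(u,v)=\mu b\,v^{\alpha_2}u^{\beta_2}+g$ is nondecreasing in both variables on the positive cone. Therefore the cooperative iteration $-\mathcal L_1u_{k+1}=F(u_k,v_k)$, $-\mathcal L_2 v_{k+1}=G(u_k,v_k)$, with zero boundary data, is monotone by the Scalar Weak Maximum Principle, starting from the subsolution $(0,0)$.

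The key step is building a supersolution $(\bar u,\bar v)$. Pick $(\lambda',\mu')\in\mathcal C_1$ with $\lambda<\lambda'$ and $\mu<\mu'$; this is possible because $\mathcal R_1$ lies strictly below the curve. Take the principal eigenpair $(\varphi,\psi)\in V_+^\circ\times V_+^\circ$ from Theorem~\ref{teor2}, and also the solutions $w_1,w_2\in V_+$ of $-\mathcal L_1w_1=f$ and $-\mathcal L_2w_2=g$. Set $\bar u=t\varphi+w_1$ and $\bar v=t^{s'}\psi+w_2$ with $s'=\beta_2/(1-\alpha_2)=(1-\alpha_1)/\beta_1$, so that the homogeneity is balanced. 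Then $-\mathcal L_1\bar u=t\lambda' a\varphi^{\alpha_1}\psi^{\beta_1}+f$. We need this to dominate $\lambda a(t\varphi+w_1)^{\alpha_1}(t^{s'}\psi+w_2)^{\beta_1}+f$. Since $\varphi,\psi\in V_+^\circ$ and $w_i\le C\,\mathrm{dist}(x,\partial\Omega)$ by the boundary Lipschitz bound, we have $w_1\le \epsilon t\varphi$ and $w_2\le\epsilon t^{s'}\psi$ for $t$ large, with $\epsilon\to0$. The comparison then reduces to $\lambda(1+\epsilon)^{\alpha_1+\beta_1}\le\lambda'$, which holds for large $t$; the second equation is handled the same way. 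The iteration stays below $(\bar u,\bar v)$ by induction, using monotonicity of $F,G$ and the maximum principle. It converges monotonically, and the $W^{2,p}$ estimates together with compactness in $C^1$ give a strong solution $(u,v)$ with $0\le u\le\bar u$ and $0\le v\le\bar v$. If $f+g\not\equiv0$, say $f\not\equiv0$, the Scalar Strong Maximum Principle gives $u>0$. If $\mu>0$ this forces $-\mathcal L_2v\ge\mu b v^{\alpha_2}u^{\beta_2}$, and $v>0$ follows because $v\not\equiv0$; the case $\mu=0$ requires $g$ alone, and boundary cases of $\mathcal R_1$ on the axes must be checked separately.

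For uniqueness, let $(u_1,v_1)$ and $(u_2,v_2)$ be nonnegative solutions; I first aim to show both are positive in the interior with negative normal derivative, in the case $f+g\not\equiv0$. If $f=g=0$, the claim is that $(0,0)$ is the only nonnegative solution. A nontrivial one would be a positive eigenfunction for $(\lambda,\mu)\notin\mathcal C_1$, contradicting \eqref{prop:uniqueness}; when $\alpha_i>0$ this requires care, possibly via the local strong maximum principle. Otherwise set $\tau^*=\sup\{\tau\in(0,1]:\ \tau u_2\le u_1,\ \tau^{s'}v_2\le v_1\}$, which is positive by the Hopf property. Suppose $\tau^*<1$. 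Then $(\tau^* u_2,\tau^{*s'}v_2)$ satisfies the system with sources $\tau^*f\le f$ and $\tau^{*s'}g\le g$ and with the nonlinear terms scaled exactly as in $-\mathcal L_1(\tau u_2)=\lambda a(\tau u_2)^{\alpha_1}(\tau^{s'}v_2)^{\beta_1}\tau^{1-\alpha_1-s'\beta_1}+\tau f$, where the exponent vanishes by the choice of $s'$. So it is a strict subsolution, since $\tau^* f<f$ wherever $f>0$. Subtracting and applying the strong maximum principle to the linear cooperative inequality for the differences, whose coefficients are bounded near the boundary thanks to comparability, yields $\tau^*u_2\ll u_1$ in the $V_+^\circ$ sense. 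This contradicts maximality, so $\tau^*=1$ and $u_2\le u_1$; by symmetry the solutions coincide.

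I expect the main obstacle to be this last strict-comparison step. The difference $u_1-\tau^*u_2$ satisfies an inequality whose zeroth-order coefficient involves $u^{\alpha_1-1}$, singular at $\partial\Omega$ when $\alpha_1>0$. I would handle it using $u_1,u_2\asymp\mathrm{dist}(x,\partial\Omega)$ and a mean-value bound for the concave powers, and I would fall back on the strictness from $f$ or $g$ when only one of them is nontrivial.
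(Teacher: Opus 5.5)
Your existence construction is correct, but the positivity step has a genuine gap, and it cannot be closed as the statement is written. The claim ``$v>0$ follows because $v\not\equiv0$'' is unsupported, and it is false for the solution your own iteration produces.

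Take $g\equiv0$, $f\not\equiv0$ and $\alpha_2>0$. Starting from $(0,0)$ you get $u_1=S_1(f)$ and $v_1=S_2\bigl(\mu b\,0^{\alpha_2}0^{\beta_2}\bigr)=0$, and then $(u_k,v_k)=(S_1(f),0)$ for every $k$. Indeed $(S_1(f),0)$ solves the system exactly: $0^{\beta_1}=0$ removes the coupling from the first equation, and $0^{\alpha_2}=0$ removes the whole right-hand side of the second. This is a nonnegative solution with $f+g\not\equiv0$ and $v\equiv0$. A positive solution also exists: iterate instead from the positive subsolution $\bigl(S_1(f),\,\mu^{1/(1-\alpha_2)}J_2(S_1(f))\bigr)$, which lies below your $(\bar u,\bar v)$ by the usual sublinear comparison. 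So in this case both the positivity claim and uniqueness among nonnegative solutions fail. The symmetric case $f\equiv0$, $g\not\equiv0$, $\alpha_1>0$ behaves the same way.

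Consequently the first step of your uniqueness plan (every nonnegative solution lies in $V_+^\circ\times V_+^\circ$ when $f+g\not\equiv0$) is exactly the step that fails. The paper's uniqueness proposition in Section~\ref{s8} has the same hole: it only compares two \emph{positive} solutions and never proves the positivity assertion. What you can actually prove is the following. If $f\not\equiv0$ and $g\not\equiv0$, or if the vanishing source has the corresponding $\alpha_i=0$, then every nonnegative solution lies in $V_+^\circ\times V_+^\circ$ and your $\tau^*$ argument closes. Otherwise you only get uniqueness among positive solutions.

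Apart from this, your route differs from the paper's only for existence. The paper never writes down a supersolution. It introduces the parameter sets $G\subset O\subset\widetilde O$, gets $O\neq\emptyset$ from Leray--Schauder degree, proves $O=\widetilde O$ by scaling, shows $O$ is open, cuts off each ray $(t,ct)$, identifies $\partial O=\mathcal C_1$ by blow-up, and passes to $L^p$ data by approximation. Your supersolution $(t\varphi+w_1,\,t^{s'}\psi+w_2)$, built from an eigenpair at $(\lambda',\mu')\in\mathcal C_1$ with $\lambda<\lambda'$, $\mu<\mu'$, is correct. It relies on $\alpha_1+s'\beta_1=1$, $\beta_2+s'\alpha_2=s'$, and $w_i\le C\,\mathrm{dist}(x,\partial\Omega)$. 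It is much shorter and handles $L^p$ data directly, but it does not give the converse characterization $\mathcal R_1=G=O$.

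Your uniqueness argument is the paper's $\gamma^\ast$ argument, and the obstacle you anticipate is not there. By monotonicity of the nonlinearity, $-\mathcal L_1(u_1-\tau^*u_2)=\lambda a\bigl[u_1^{\alpha_1}v_1^{\beta_1}-(\tau^*u_2)^{\alpha_1}(\tau^{*s'}v_2)^{\beta_1}\bigr]+(1-\tau^*)f\ge0$ holds directly. So the scalar strong maximum principle applies without linearizing, and no singular coefficients appear. If exactly one difference vanishes identically, the coupling term in its equation is strictly positive (as $\lambda,\mu>0$), a contradiction. If both vanish, then $f=g=0$. The case $f=g=0$ needs no local strong maximum principle either, and since $\mathcal R_1$ lies in the open quadrant there are no axis cases.
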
 

Under the hypotheses of previous theorem, it is important to observe that when $\alpha_1=0=\alpha_2$, problem (\ref{eq:system-general-en}) does not admit any other solution besides this nonnegative one. However, if $0\leq\alpha_1,\alpha_2<1$ are not both zero, system \eqref{eq:system-general-en} may have another solution that is not nonnegative (see, for instance, the initial part of the proof of Theorem \ref{MP} for the case $\lambda>0$). This is natural, since $\mathcal{R}_1$ does not belong to the region of validity of {\bf (WMP)} in the case $\alpha_1$ and $\alpha_2$ are not both zero.

In \cite{MR4072483} and \cite{MR4732371} Leite and Montenegro have obtained an ABP estimate for the system \eqref{1.3} with $\alpha_1=0=\alpha_2$. Here we obtain the following extension.

\begin{theorem}[ABP estimate] \label{ABP}
Let $n \geq 2$. Assume that $(u,v) \in (W^{2,p}(\Omega))^2$ is a nonnegative nontrivial solution of the problem

\begin{equation}\label{abp}
\left\{
\begin{array}{llll}
-{\cal L}_1u \leq \lambda a(x)u^{\alpha_1} v^{\beta_1} + f(x) & {\rm in} \ \ \Omega,\\
-{\cal L}_2v \leq \mu b(x)v^{\alpha_2} u^{\beta_2} + g(x) & {\rm in} \ \ \Omega,\\
u =0= v & {\rm on} \ \ \partial\Omega,
\end{array}
\right.
\end{equation}
where $(f,g) \in (L^p(\Omega))^2$, $f, g \geq 0$ and $f+g\not\equiv 0$ in $\Omega$. If $(\lambda, \mu) \in {\cal R}_1$, then the ABP estimate holds in $\Omega$:

\begin{equation}
\|u\|_{L^\infty(\Omega)}+\|v\|_{L^\infty(\Omega)}^{\,s}
		\;\le\;
		A\Big(
		\|f\|_{L^{p}(\Omega)}
		+\|g\|_{L^{p}(\Omega)}^{\,s}
		\Big), \label{estABP}
\end{equation}
where $s$ and $A$ are the constants in \eqref{eq:def-r-en} and \eqref{eq:apriori-final-en}, respectively.
\end{theorem}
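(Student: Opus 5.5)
Since $(\lambda,\mu)\in\mathcal R_1$, Theorem~\ref{E1} supplies a unique nonnegative solution $(U,V)$ of the \emph{equality} problem \eqref{eq:system-general-en} with data $(f,g)$ and zero boundary values. Because $f+g\not\equiv0$, Theorem~\ref{E1} also gives $U,V>0$ in $\Omega$. The proof then has two steps.

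\textbf{Step 1 (comparison).} We show $u\le U$ and $v\le V$ in $\Omega$. Consider the set
\[
\Theta:=\{t\in(0,1]: tu\le U,\ t^{1/s}v\le V\ \text{in }\Omega\}.
\]
The relation $\beta_1\beta_2=(1-\alpha_1)(1-\alpha_2)$ makes the rescaling $(tu,t^{1/s}v)$ natural. It maps subsolutions to strict subsolutions when $t<1$, because
\[
-\mathcal L_1(tu)\le t\lambda a u^{\alpha_1}v^{\beta_1}+tf=\lambda a(tu)^{\alpha_1}(t^{1/s}v)^{\beta_1}+tf,
\]
using $t^{1-\alpha_1}=t^{\beta_1/s}$, and similarly in the second equation with $t^{1/s}$.

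First, $\Theta\neq\emptyset$. Both $u,v\in C^{1}(\overline\Omega)$ vanish on $\partial\Omega$, and $U,V\in V_+^\circ$ by the Scalar Strong Maximum Principle (Hopf lemma). Hence $U\ge c\,d(x)\ge c' u$ for some small constants, and likewise for $V$, so small $t$ lie in $\Theta$.

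Let $t^*=\sup\Theta$ and suppose $t^*<1$. Set $w_1=U-t^*u\ge0$ and $w_2=V-(t^*)^{1/s}v\ge0$. Since the nonlinearity is nondecreasing in each variable on the nonnegative cone,
\[
-\mathcal L_1 w_1\ge \lambda a\big(U^{\alpha_1}V^{\beta_1}-(t^*u)^{\alpha_1}((t^*)^{1/s}v)^{\beta_1}\big)+(1-t^*)f\ge (1-t^*)f\ge0,
\]
and similarly $-\mathcal L_2w_2\ge(1-(t^*)^{1/s})g\ge0$. As $f+g\not\equiv0$, at least one of $w_1,w_2$ is a nontrivial supersolution of a scalar problem. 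The Scalar Strong Maximum Principle then puts it in $V_+^\circ$. Feeding this back through the coupling upgrades the other function too: if $w_2\in V_+^\circ$, then $V>(t^*)^{1/s}v$ makes the bracket in the first inequality positive, so $w_1$ is nontrivial unless $u\equiv0$, a degenerate case handled directly. With both $w_1,w_2$ in $V_+^\circ$, a $C^1$-openness argument lets $t^*$ be increased, which is a contradiction. Thus $t^*=1$, i.e.\ $u\le U$ and $v\le V$.

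The expected main obstacle is the case where $\alpha_i>0$ and the source is supported only in one equation. There the bracket need not be positive where $u$ or $v$ vanishes, so the upgrade of the second function is delicate. I would try to handle it using the positivity $U,V>0$ on all of $\Omega$ together with the strict monotonicity of $s\mapsto s^{\alpha_1}$ for $s>0$.

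\textbf{Step 2 (a priori bound).} Apply Theorem~\ref{thm:apriori-general} to $(U,V)$ with $\varphi=\psi=0$. This requires $(\lambda,\mu)$ to lie in the admissible set there, and $\mathcal R_1\subset\overline{\mathcal R_1}\setminus\mathcal C_1$. It gives
\[
\|U\|_\infty+\|V\|_\infty^{s}\le A\big(\|f\|_{L^p}+\|g\|_{L^p}^s\big).
\]
Combining with $0\le u\le U$ and $0\le v\le V$ yields \eqref{estABP} with the same $s$ and $A$. If $\alpha_1=\alpha_2=0$ is allowed, Theorem~\ref{thm:apriori-general} excludes it, and I would instead invoke the Lane–Emden ABP estimate of \cite{MR4072483}.
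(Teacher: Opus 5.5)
Your proposal is correct and is essentially the paper's argument: you compare $(u,v)$ with the positive solution of the equality problem from Theorem~\ref{E1} through the supremum of admissible scalings $(tu,t^{1/s}v)$ (your $t^{1/s}$ is the paper's $\gamma^{r}$), and then transfer the bound of Theorem~\ref{thm:apriori-general} from that solution to $(u,v)$. The obstacle you anticipate does not actually occur: $U,V>0$ in $\Omega$ and the strict gain enters through the $\beta_i$-th power, so once one of $w_1,w_2$ lies in $V_+^\circ$ the coupling bracket in the other equation is strictly positive at every point of $\Omega$, including where $u$ or $v$ vanishes, and your upgrade step goes through as written.
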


Note that we obtain an ABP estimate in a region outside the validity of the {\bf (WMP)} in the case $\alpha_1\neq 0$ or $\alpha_2\neq 0$, but this region is essential for the development of the theory for nonlinear systems. For example, this estimate plays a role in obtaining principal eigenvalues for problems in general bounded domains. The basic idea consists in approximating $\Omega$ by a sequence of expanding smooth subdomains and then considering the principal eigenvalue associated to each subdomain and studying its convergence. To this end, it is necessary to obtain the ABP estimate in the region below the principal curve (see \cite{MR4732371} for the Lane--Emden system).

We refer to \cite{MR4072483} for readers interested in a more complete historical background on the above results in a linear scalar context or in the context of systems.

Finally, we show that an antimaximum principle occurs for the system \eqref{1.3} with $\alpha_1=0=\alpha_2$ when the parameters move slightly above the curve $\mathcal{C}_1$. This type of result is fundamental in resonance phenomena and bifurcation; see, e.g., \cite{MR1865948,MR2609541,MR2644914}. A classical reference in the scalar case is \cite{MR550042} for smooth domains and \cite{MR1340547} for general domains, while counterparts for systems involving the $p$-Laplacian and fully nonlinear operators can be found in \cite[Theorem 3.4]{MR2742481} and \cite{MR4525730}, respectively. Here we extend these results to the system \eqref{1.3} with $\alpha_1=0=\alpha_2$ as follows.

\begin{theorem}[Antimaximum principle: case $\alpha_1=0=\alpha_2$]
\label{AMP}
		Given $f,g\in L^p(\Omega)$ with $f\ge0$, $g\ge0$, and $f+g\not\equiv 0$, consider the boundary value problem
	\begin{equation}\label{eq:system}
		\begin{cases}
			-\mathcal L_1 u=\lambda\,a(x)\,|v|^{\beta_1-1}v+f(x) & \text{in }\Omega,\\[2pt]
			-\mathcal L_2 v=\mu\,b(x)\,|u|^{\beta_2-1}u+g(x) & \text{in }\Omega,\\[2pt]
			u=v=0 & \text{on }\partial\Omega.
		\end{cases}
	\end{equation}
	Fix $(\lambda_1,\mu_1)\in\mathcal C_1$. Then there exists $\delta>0$ such that, for every $(\lambda,\mu)$ satisfying
	\[
	\lambda_1<\lambda<\lambda_1+\delta,\qquad \mu_1<\mu<\mu_1+\delta,
	\]
	every strong solution $(u,v)\in (W^{2,p}(\Omega))^2$ of \eqref{eq:system} satisfies $u<0$ and $v<0$ in $\Omega$, and $\partial_\nu u>0$, $\partial_\nu v>0$ on $\partial\Omega$.
\end{theorem}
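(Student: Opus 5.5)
We argue by contradiction together with a blow-up/normalization, in the spirit of the classical scalar proof of Clément–Peletier and of \cite[Theorem 3.4]{MR2742481}. Suppose the conclusion fails. Then there exist $(\lambda_k,\mu_k)\to(\lambda_1,\mu_1)$ with $\lambda_k>\lambda_1$, $\mu_k>\mu_1$, and strong solutions $(u_k,v_k)$ of \eqref{eq:system} that do not satisfy \[u_k<0,\quad v_k<0 \text{ in } \Omega,\qquad \partial_\nu u_k>0,\quad \partial_\nu v_k>0 \text{ on } \partial\Omega.\] We will show the following chain of claims. The pairs $(u_k,v_k)$ must blow up in the homogeneous norm $t_k:=\|u_k\|_{L^\infty}+\|v_k\|_{L^\infty}^{s}$, where $s=\beta_1$ since $\alpha_1=\alpha_2=0$. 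After the rescaling \[\tilde u_k=u_k/t_k,\qquad \tilde v_k=v_k/t_k^{1/s},\] they converge in $V\times V$ to an eigenfunction on the curve. The sign of that eigenfunction is then forced to be negative.

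For the blow-up, assume $t_k$ stays bounded along a subsequence. By $W^{2,p}$ estimates and the compact embedding $W^{2,p}\hookrightarrow C^{1,\eta}$, the pairs $(u_k,v_k)$ converge in $V\times V$ to a solution of \eqref{eq:system} at $(\lambda_1,\mu_1)$. We exclude this using a Fredholm-type alternative obtained from the positive eigenfunction $(\varphi,\psi)\in V_+^\circ\times V_+^\circ$ of Theorem~\ref{teor2}.
\begin{itemize}
\item[(1)] Since $\varphi,\psi\in V_+^\circ$, for large $\tau>0$ the pair $(\tau\varphi-u,\tau^{1/s}\psi-v)$ has both components in $V_+^\circ$.
\item[(2)] Let $\tau^*$ be the infimum of such $\tau$. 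At $\tau^*$ we subtract the equations. The difference $w=\tau^*\varphi-u$ satisfies \[-\mathcal L_1 w=\lambda_1 a\big(|\tau^{*1/s}\psi|^{\beta_1-1}\tau^{*1/s}\psi-|v|^{\beta_1-1}v\big)-f.\] This uses monotonicity of $t\mapsto|t|^{\beta_1-1}t$.
\item[(3)] By the scalar strong maximum principle the difference is in $V_+^\circ$, unless $f\equiv0$ and the difference vanishes.
\end{itemize}
Step (3) contradicts the minimality of $\tau^*$ when $f+g\not\equiv0$. A cleaner route is to invoke property \eqref{prop:isolation} and the weight monotonicity \eqref{prop:monotonicity}: a supersolution in $V_+^\circ$ with nontrivial forcing implies that $(\lambda_1,\mu_1)$ lies strictly below the curve. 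One shows this by a sub-supersolution comparison with the eigenfunction. I expect to use this comparison lemma, which I will need anyway, to rule out solvability at $(\lambda_1,\mu_1)$ with $f+g\ge0$ nontrivial.

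Hence $t_k\to\infty$. The rescaled pair solves \eqref{eq:system} with $(\lambda_k,\mu_k)$ and forcing $(f/t_k,\,g/t_k^{1/s})\to0$. It has unit homogeneous norm, so it converges in $V\times V$ to a nontrivial eigenfunction at $(\lambda_1,\mu_1)$. By \eqref{prop:simplicity2} with $\alpha_1=\alpha_2=0$, the limit is $\pm(\rho\varphi,\rho^{1/s}\psi)$. The main obstacle is excluding the $+$ sign; this is where $\lambda_k>\lambda_1$, $\mu_k>\mu_1$ is used.

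I will suppose the limit is positive. Then for large $k$ both $\tilde u_k,\tilde v_k$ lie in $V_+^\circ$, because $V_+^\circ$ is open in $V$ (Lemma~\ref{nonempty}). Since the forcing is nonnegative, $(\tilde u_k,\tilde v_k)$ is then a positive strict supersolution of the homogeneous system \eqref{1.3} at $(\lambda_k,\mu_k)$. Here $(\lambda_k,\mu_k)$ lies strictly above $\mathcal C_1$, since both coordinates exceed those of a point on the curve. The comparison lemma from the first step says a positive supersolution in $V_+^\circ$ can only exist at parameters in $\overline{\mathcal R_1}$, equivalently via \eqref{prop:monotonicity} with weights $\lambda_k a\ge\lambda_1 a$. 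Concretely, I will run the $\tau^*$ argument between $(\tilde u_k,\tilde v_k)$ and $(\varphi,\psi)$ with $\varphi$ scaled to touch from below. Combined with strict inequality in the weights, this gives a contradiction.

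Therefore the limit is $-(\rho\varphi,\rho^{1/s}\psi)$, which lies in $-V_+^\circ\times -V_+^\circ$. By openness of $V_+^\circ$ in the $C^1$ topology, $-\tilde u_k$ and $-\tilde v_k$ lie in $V_+^\circ$ for large $k$. This means $u_k,v_k<0$ in $\Omega$ and $\partial_\nu u_k,\partial_\nu v_k>0$ on $\partial\Omega$, contradicting the choice of the sequence. Hence some $\delta>0$ works.

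Two points remain. First, the estimate must be uniform in all solutions for each $(\lambda,\mu)$, not just one. The contradiction sequence already handles this, since it lets the solution vary with $k$. Second, the $C^{1,\eta}$ compactness requires $W^{2,p}$ bounds for $|v|^{\beta_1-1}v$ with $a\in L^p$. These follow from the $L^\infty$ normalization.
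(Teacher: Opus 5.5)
Your skeleton matches the paper's: contradiction, bounded versus blow-up sequences, a limiting eigenfunction, and exclusion of the positive sign. Your blow-up half is sound, and a little cleaner than the paper's. The homogeneous normalization $t_k=\|u_k\|_{L^\infty(\Omega)}+\|v_k\|_{L^\infty(\Omega)}^{s}$ lands the limit exactly at $(\lambda_1,\mu_1)$. Sweeping $(\sigma\varphi,\sigma^{\beta_2}\psi)$ up under $(\tilde u_k,\tilde v_k)$ gives $-\mathcal L_1(\tilde u_k-\sigma^*\varphi)\ge(\lambda_k-\lambda_1)\sigma^*a\psi^{\beta_1}>0$, and the analogue for $v$. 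This disposes of the positive alternative directly, without the paper's detour through a sub/supersolution solution on the curve.

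\textbf{The gap: the bounded case.} You must show that \emph{no} solution of the system, of whatever sign, exists at $(\lambda_1,\mu_1)$ when $f,g\ge0$ and $f+g\not\equiv0$. Your sweep runs in the wrong direction. For $w=\tau^*\varphi-u$ you get
\[
-\mathcal L_1w=\lambda_1a\big((\tau^{*1/s}\psi)^{\beta_1}-|v|^{\beta_1-1}v\big)-f .
\]
The term $-f\le0$ destroys the sign, so the strong maximum principle gives nothing. The fallback via (P4), (P5) or comparison with the eigenfunction needs the solution to lie in $V_+^\circ\times V_+^\circ$. That is not available: on $\mathcal C_1$ the \textbf{(WMP)} fails ($-(\varphi,\psi)$ is a negative solution), so a solution with nonnegative data has a priori unknown sign.

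\textbf{The repair.} Sweep $-(u,v)$, which is a subsolution of the eigenvalue system, from above. With $r=\beta_2=1/\beta_1$ set
\[
\tau^*:=\min\bigl\{\tau\in\mathbb{R}:\ \tau\varphi+u\ge0\ \text{and}\ |\tau|^{r-1}\tau\,\psi+v\ge0\ \text{in }\Omega\bigr\}.
\]
This exists: the set is nonempty because $\varphi,\psi\in V_+^\circ$ and $u,v\in C^1(\overline\Omega)$ vanish on $\partial\Omega$, and it is closed, bounded below and upward closed. Note that $\bigl||\tau^*|^{r-1}\tau^*\psi\bigr|^{\beta_1-1}|\tau^*|^{r-1}\tau^*\psi=\tau^*\psi^{\beta_1}$, and $t\mapsto|t|^{\beta_1-1}t$ is increasing. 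Hence $W_1:=\tau^*\varphi+u$ and $W_2:=|\tau^*|^{r-1}\tau^*\psi+v$ satisfy
\[
-\mathcal L_1W_1=\lambda_1a\big(\tau^*\psi^{\beta_1}+|v|^{\beta_1-1}v\big)+f\ge f\ge0,\qquad -\mathcal L_2W_2=\mu_1b\big(|\tau^*|^{\beta_2-1}\tau^*\varphi^{\beta_2}+|u|^{\beta_2-1}u\big)+g\ge g\ge0 .
\]
By the scalar strong maximum principle, each $W_i$ is either $\equiv0$ or in $V_+^\circ$.
\begin{itemize}
\item If both $W_1,W_2$ lie in $V_+^\circ$, openness of $V_+^\circ$ contradicts the minimality of $\tau^*$.
\item If $W_1\equiv0$, both the bracket and $f$ vanish, since $\operatorname{ess\,inf}a>0$. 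Then $v=-|\tau^*|^{r-1}\tau^*\psi$, i.e.\ $W_2\equiv0$, and hence $g\equiv0$ as well. The case $W_2\equiv0$ is symmetric.
\end{itemize}
Either way $f\equiv g\equiv0$, contradicting $f+g\not\equiv0$. This argument covers sign-changing solutions, and with $f=g=0$ it even reproves (P2)--(P3) in the Lane--Emden case. It also soundly replaces the paper's own Step~1, whose reduction $u=\lambda_1\mu_1(\tilde J_1\circ\tilde J_2)u+\cdots$ treats $\tilde J_1$ as additive and $1$-homogeneous, although $\tilde J_1(cw)=c^{\beta_1}\tilde J_1(w)$.
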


However, we do not have a result of this type for system (\ref{1.3}) with $0\leq\alpha_1,\alpha_2<1$ and $\alpha_1,\alpha_2$ not both zero. For example, when $0<\alpha_1,\alpha_2<1$, the region of validity of the {\bf (WMP)} is $ -(\overline{{\cal R}_1} \setminus {\cal C}_1)$. Note that the first example in Remark \ref{example} shows that we cannot have an antimaximum principle below $-{\cal C}_1$. We also cannot have an antimaximum principle in the other regions of $\R^2$, see the proof of Theorem \ref{MP}.

The paper is organized as follows. In Section 2 we recall some preliminary facts and definitions and also construct important tools for proving the existence of principal eigenvalues. Section 3 is devoted to the construction of the principal curve $\mathcal{C}_1$ stated in Theorem \ref{teor2}. In Section 4 we obtain a complete characterization of {\bf (WMP)} stated in Theorem \ref{MP}. In Section 5 we obtain a lower estimate and a {\bf (WMP)} in small domains for such systems stated in Theorems \ref{lower} and \ref{sm}, respectively. In Section 6, using Theorem \ref{MP}, we obtain the existence of nonnegative solutions for the nonhomogeneous elliptic system (\ref{eq:system-general-en}) in the region of validity of the {\bf (WMP)}, namely Theorem \ref{thm:existence-outside-curve}. In Section 7 we deal with the antimaximum principle, proving Theorem \ref{AMP}. Section 8 is dedicated to the study of the existence of nonnegative solutions for the system (\ref{eq:system-general-en}) in ${\cal R}_1$, from which we obtain Theorem \ref{E1} as a consequence. Finally, in Section 9, applying Theorem \ref{E1}, we derive an ABP estimate (Theorem \ref{ABP}) for nonhomogeneous systems.

	\section{Preliminaries: Framework, Ordered Spaces, and the Resolvent}
	
In this section, we collect several fundamental tools that will be instrumental in establishing the principal curve associated with system \eqref{1.3}. 
While many of the underlying arguments are classical and can be found elsewhere, we provide a self-contained exposition for the reader’s convenience.

\subsection{The ordered space and the interior of the cone}

Since $p>n$, the Sobolev embedding theorem ensures that $W^{2,p}(\Omega)\hookrightarrow C^{1,\eta}(\overline{\Omega})$. 
We shall work in the ordered Banach space
\[
V := C^{1,\eta}_0(\overline{\Omega}), \qquad 
\|u\|_{1,\eta} := \|u\|_{C^{1,\eta}(\overline{\Omega})}.
\]
Its positive cone is defined by
\[
V_+ := \{u \in V :\ u(x) \ge 0 \ \text{for all } x \in \Omega\},
\]
and we denote by $\nu$ the outward unit normal to $\partial\Omega$. 
A key object in our analysis is the interior of this cone, which will be shown to admit the following explicit characterization:
\[
V_+^\circ := \Big\{ u \in V :\ u > 0 \ \text{in } \Omega \ \text{and} \ \inf_{\partial\Omega}(-\partial_\nu u) > 0 \Big\}=\text{int}_V(V_+).
\]
We shall prove below (Lemma~\ref{nonempty}) that $V_+^\circ$ is nonempty and coincides with the topological interior of $V_+$ in $V$.

We use the following notation for the partial order in $V$: 
\[
u \preceq v \quad\Longleftrightarrow\quad v-u \in V_+, \qquad
u \ll v \quad\Longleftrightarrow\quad v-u \in V_+^\circ,
\]
and denote order intervals by \([u,v] := \{w \in V :\ u \preceq w \preceq v\}\).

A map \(T:V_+\to V_+\) is said to be
\emph{order-preserving} if \(u\preceq v \Rightarrow Tu\preceq Tv\);
\emph{strongly positive} if \(u\in V_+\setminus\{0\} \Rightarrow Tu\in V_+^\circ\);
\emph{positively \(k\)-homogeneous} if \(T(tu)=t^{\,k}Tu\) for all \(t>0\);
and \emph{compact} if \(T(B)\) is relatively compact in \(V\) whenever \(B\subset V_+\) is bounded.

For each \(i\in\{1,2\}\) and \(f\in C^{0,\eta}(\overline{\Omega})\), let \(S_i(f)\in V\) denote the unique solution of
\[
-\,\mathcal L_i w = f \quad \text{in } \Omega, \qquad w = 0 \quad \text{on } \partial\Omega.
\]
Then \(S_i : C^{0,\eta}(\overline{\Omega}) \to V\) is linear, continuous, positive
(\(f \ge 0 \Rightarrow S_i(f) \ge 0\)) and compact; moreover, \(f > 0\) implies
\(S_i(f) \in V_+^\circ\) (by the Scalar Strong Maximum Principle and Hopf’s lemma).
These properties will be essential for establishing the order, compactness, and strong positivity
of the operators \(J_i\) and \(K_i\) introduced later.

\begin{lemma}[Non-emptiness and characterization of $V_+^\circ$]\label{nonempty}
	The set $V_+^\circ$ is nonempty. Moreover, for $u\in V$ the following are equivalent:
	\[
	u\in \text{int}_V(V_+)
	\quad\Longleftrightarrow\quad
	u>0 \text{ in }\Omega \ \text{and}\ \inf_{\partial\Omega}(-\partial_\nu u)>0.
	\]
	Equivalently, $V_+^\circ$ coincides with the topological interior of $V_+$ in $V$.
\end{lemma}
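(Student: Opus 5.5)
Nonemptiness is immediate from the properties of the resolvents recorded above: take $f\equiv 1\in C^{0,\eta}(\overline{\Omega})$ and set $e:=S_1(1)$. By the Scalar Strong Maximum Principle and Hopf's lemma, $e>0$ in $\Omega$ and $\partial_\nu e<0$ at every boundary point. Since $\partial\Omega$ is compact and $\nabla e$ is continuous up to the boundary, $\inf_{\partial\Omega}(-\partial_\nu e)>0$. Hence $e\in V_+^\circ$.

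For the inclusion $V_+^\circ\subset \text{int}_V(V_+)$, fix $u\in V_+^\circ$ and set $m:=\inf_{\partial\Omega}(-\partial_\nu u)>0$. Using the $C^{1,1}$ regularity of $\partial\Omega$, I would choose a tubular neighbourhood $\Omega_\delta=\{x\in\Omega:\operatorname{dist}(x,\partial\Omega)<\delta\}$ on which the nearest-point projection $\pi(x)\in\partial\Omega$ is well defined. By uniform continuity of $\nabla u$ on $\overline\Omega$, shrinking $\delta$ gives $\nabla u(y)\cdot(-\nu(\pi(x)))\ge m/2$ for all $y$ on the segment from $\pi(x)$ to $x$. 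Integrating along this inward normal segment, with $u(\pi(x))=0$, yields $u(x)\ge \tfrac m2\,d(x)$ on $\Omega_\delta$, where $d(x)=\operatorname{dist}(x,\partial\Omega)$. On the compact set $\Omega\setminus\Omega_\delta$ we have $u\ge c>0$.

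Now take $w\in V$ with $\|w-u\|_{1,\eta}<\varepsilon$. Since $w$ vanishes on $\partial\Omega$, the same segment argument gives $|(w-u)(x)|\le \|\nabla(w-u)\|_\infty\, d(x)\le\varepsilon\, d(x)$ on $\Omega_\delta$. Hence $w\ge(\tfrac m2-\varepsilon)d>0$ there, and $w\ge c-\varepsilon>0$ on $\Omega\setminus\Omega_\delta$. For $\varepsilon<\min\{m/2,c\}$ this gives $w\in V_+$, so $u$ is an interior point of $V_+$.

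For the converse $\text{int}_V(V_+)\subset V_+^\circ$, let $u$ be interior, so the ball $B_\varepsilon(u)$ in $V$ lies in $V_+$. Then $w:=u-\tfrac{\varepsilon}{2\|e\|_{1,\eta}}\,e\in V_+$, which gives $u\ge t e$ with $t>0$. Consequently $u\ge te>0$ in $\Omega$. On the boundary, both $u$ and $te$ vanish and $u-te\ge0$ inside, so $-\partial_\nu(u-te)\ge0$ on $\partial\Omega$. Therefore $-\partial_\nu u\ge t(-\partial_\nu e)\ge t\inf_{\partial\Omega}(-\partial_\nu e)>0$, and $u\in V_+^\circ$.

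The main technical point is the boundary-layer estimate in the second paragraph. It needs the uniform interior normal segments, which is where $C^{1,1}$ (uniform interior ball) regularity of $\partial\Omega$ enters, together with uniform continuity of the gradient to pass from $\partial_\nu u$ on $\partial\Omega$ to the linear lower bound $u\ge \tfrac m2 d$ near the boundary. Everything else is elementary.
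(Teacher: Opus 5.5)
Your proof is correct. Nonemptiness via $e=S_1(1)$ with the strong maximum principle and Hopf's lemma is exactly the paper's argument. Your proof of $V_+^\circ\subset\operatorname{int}_V(V_+)$ uses the same boundary-layer idea as the paper: $u\ge\frac m2\,d$ near $\partial\Omega$ and $u\ge c$ away from it. Your estimate $|(w-u)(x)|\le\|\nabla(w-u)\|_\infty\, d(x)$ is a cleaner way to pass this to nearby $w$ than the paper's appeal to $-\partial_\nu w\ge\kappa/2$ plus ``the previous boundary estimate''. The paper's version tacitly requires the boundary-layer width to be uniform over the whole ball.

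The genuinely different part is the converse.

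\begin{itemize}
\item The paper argues by contradiction. If $-\partial_\nu u(x_0)=0$ at some $x_0\in\partial\Omega$, it perturbs $u$ by $\varepsilon\chi d$, with $\chi$ a cutoff near $x_0$, to get $C^{1,\eta}$-close functions that leave $V_+$. As written the sign is off: $u+\varepsilon\chi d\ge u$ stays in $V_+$. One needs $u-\varepsilon\chi d$, whose outward normal derivative at $x_0$ is $+\varepsilon$. This route uses $d\in C^{1,1}$ near $\partial\Omega$, and it never checks that $u>0$ at interior points.
\item You instead use the order structure. An interior point dominates $te$ for some $t>0$. This gives interior positivity and the quantitative bound $-\partial_\nu u\ge t\min_{\partial\Omega}(-\partial_\nu e)$ at once, with no cutoffs and no regularity of $d$.
\end{itemize}

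The price is that your converse leans on the PDE-constructed element $e$, hence on Hopf's lemma for $\mathcal L_1$. The paper's route is in principle a pure function-space argument. Within this lemma the dependence is harmless, since $e$ is built in the first step.

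A small correction to your closing remark: the segment argument only needs a $C^1$ boundary, because nearest-point segments are normal to any $C^1$ boundary. The $C^{1,1}$ assumption is really needed for Hopf's lemma and for the $W^{2,p}$ solvability behind $e$, not for the boundary-layer estimate.
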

	
	\begin{proof}
		To prove non-emptiness, consider the Dirichlet problem
		\[
		\begin{cases}
			-\,\mathcal{L} u = 1 & \text{in }\Omega,\\
			u=0 & \text{on }\partial\Omega,
		\end{cases}
		\]
		for any uniformly elliptic nondivergence operator $\mathcal{L}$ (e.g., $\mathcal{L}=\mathcal{L}_1$).
		Then, there exists a unique solution
		$\psi\in W^{2,p}(\Omega)\cap W^{1,p}_0(\Omega)\subset V$. Since the right-hand side is strictly positive, the Scalar Strong Maximum Principle yields $\psi>0$ in $\Omega$, and Hopf's lemma gives
		$-\partial_\nu\psi(x_0)>0$ for every $x_0\in\partial\Omega$. The continuity of $\partial_\nu\psi$ on
		$\partial\Omega$ and the compactness of $\partial\Omega$ imply
		\[
		\kappa:=\min_{\partial\Omega}(-\partial_\nu\psi)>0.
		\]
		Hence $\psi\in V_+^\circ$, showing $V_+^\circ\neq\varnothing$.
		
		We now prove the characterization. First, assume $u\in V$ satisfies $u>0$ in $\Omega$ and
		$\inf\limits_{\partial\Omega}(-\partial_\nu u)=\kappa>0$. Since $u\in C^{1,\eta}(\overline\Omega)$, there exists
		$\delta>0$ such that $u(x)\ge m>0$ on the inner strip $\{x\in\Omega:\ \operatorname{dist}(x,\partial\Omega)\ge\delta\}$.
		Moreover, by the $C^{1,\eta}$ regularity up to the boundary and a flattening-of-the-boundary argument along normal
		coordinates, one has the non-tangential lower bound
		\[
		u(x)\ \ge\ \frac{\kappa}{2}\,\operatorname{dist}(x,\partial\Omega)\quad\text{whenever }\operatorname{dist}(x,\partial\Omega)<\delta,
		\]
		upon decreasing $\delta$ if necessary. Consequently, there exists $\varepsilon>0$ such that every $w\in V$ with
		$\|w-u\|_{1,\eta}<\varepsilon$ satisfies $w\ge m/2>0$ on
		$\{x\in\Omega:\ \operatorname{dist}(x,\partial\Omega)\ge\delta\}$ and $-\partial_\nu w\ge \kappa/2>0$ on $\partial\Omega$, which implies $w>0$ also on
		$\{x\in\Omega:\ \operatorname{dist}(x,\partial\Omega)<\delta\}$ by the previous boundary estimate. Therefore $w\in V_+^\circ$, showing that $u$ is an
		interior point of $V_+$. This proves the inclusion
		\[
		\{u\in V:\ u>0 \text{ in }\Omega,\ \inf_{\partial\Omega}(-\partial_\nu u)>0\}\ \subset\ \operatorname{int}_V(V_+).
		\]
		
		Conversely, suppose $u\in \operatorname{int}_V(V_+)$ but $\inf\limits_{\partial\Omega}(-\partial_\nu u)=0$. Then there exists
		$x_0\in\partial\Omega$ such that $-\partial_\nu u(x_0)=0$. Let $\chi\in C^\infty_c(\mathbb{R}^N)$ be a cutoff supported
		in a small neighborhood of $x_0$ and equal to $1$ near $x_0$, and let $d(x)=\operatorname{dist}(x,\partial\Omega)$.
		For $\varepsilon>0$ small, set
		\[
		w_\varepsilon:=u+\varepsilon\,\chi\, d(\cdot).
		\]
		Since $d\in C^{1,1}$ near $\partial\Omega$, we have $w_\varepsilon\to u$ in $C^{1,\eta}(\overline\Omega)$ as
		$\varepsilon\downarrow0$. However, along the outward normal at $x_0$, one has
		\[
		\partial_\nu w_\varepsilon(x_0)=\partial_\nu u(x_0)+\varepsilon\,\partial_\nu d(x_0)
		=0-\varepsilon<0,
		\]
		and $w_\varepsilon=0$ on $\partial\Omega$, hence $w_\varepsilon<0$ in $\Omega$ near $x_0$ for $\varepsilon>0$ small.
		Thus $w_\varepsilon\notin V_+$ while $\|w_\varepsilon-u\|_{1,\eta}$ can be made arbitrarily small, contradicting the
		assumption that $u$ is an interior point of $V_+$. Therefore $\inf\limits_{\partial\Omega}(-\partial_\nu u)>0$.
		The equivalence and the identification of $V_+^\circ$ with the topological interior of $V_+$ in $V$ follow.
	\end{proof}

	\subsection{The auxiliary operators $J_1$ and $J_2$}

We now show that, for each fixed positive function, the auxiliary boundary value problems defining $J_1$ and $J_2$ admit a unique positive solution.

Let $v \in V_+$. We define $u = J_1(v)$ as the unique solution of
\begin{equation}\label{eq:J1_en}
	\begin{cases}
		-\,\mathcal{L}_1 u = a(x)\,u^{\alpha_1}\,v^{\beta_1}, & \text{in } \Omega, \\[2mm]
		u = 0, & \text{on } \partial\Omega,
	\end{cases}
\end{equation}
and, for $u \in V_+$, we define $v = J_2(u)$ as the unique solution of
\begin{equation}\label{eq:J2_en}
	\begin{cases}
		-\,\mathcal{L}_2 v = b(x)\,v^{\alpha_2}\,u^{\beta_2}, & \text{in } \Omega, \\[2mm]
		v = 0, & \text{on } \partial\Omega.
	\end{cases}
\end{equation}

\begin{lemma}[Existence, uniqueness, and strong positivity]\label{lem:existJ_en}
	For every $v\in V_+$ with $v\neq 0$, problem~\eqref{eq:J1_en} admits a unique solution $u\in V$, and moreover $u\in V_+^\circ$. The same conclusion holds for~\eqref{eq:J2_en}.
\end{lemma}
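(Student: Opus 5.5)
Fix $v\in V_+\setminus\{0\}$ and write $h(x):=a(x)v(x)^{\beta_1}\ge0$, $h\in L^p(\Omega)$, $h\not\equiv0$; then \eqref{eq:J1_en} is the sublinear scalar problem $-\mathcal L_1u=h(x)u^{\alpha_1}$, $u=0$ on $\partial\Omega$. In line with the definition of $J_1$, we look for solutions in $V_+$ (so that $u^{\alpha_1}$ makes sense), and we split into the cases $\alpha_1=0$ and $0<\alpha_1<1$. If $\alpha_1=0$ the problem is linear: $u=S_1(h)$ exists in $W^{2,p}\cap W^{1,p}_0\subset V$ by the $W^{2,p}$ theory (the solution operator extends from $C^{0,\eta}$ to $L^p$ data). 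It is unique by the Scalar Weak Maximum Principle, and $u\in V_+^\circ$ by the Scalar Strong Maximum Principle and Hopf's lemma, since $h\ge0$, $h\not\equiv0$.

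For $0<\alpha_1<1$, existence will come from sub/supersolutions. Let $e:=S_1(1)\in V_+^\circ$ be the function from Lemma~\ref{nonempty}, and let $\phi\in V_+^\circ$ be the solution of $-\mathcal L_1\phi=h$, $\phi=0$ on $\partial\Omega$. Since $\phi,e\in V_+^\circ$, there are constants $c_1,c_2>0$ with $c_1e\le\phi\le c_2e$. Take $\underline u=\varepsilon\phi$. Then $-\mathcal L_1\underline u=\varepsilon h\le h(\varepsilon\phi)^{\alpha_1}$ as soon as $\varepsilon^{1-\alpha_1}\|\phi\|_\infty^{\alpha_1}\le1$. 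Take $\overline u=M\phi$. Then $-\mathcal L_1\overline u=Mh\ge h(M\phi)^{\alpha_1}$ for $M$ large, and also $\underline u\le\overline u$. Monotone iteration $u_{k+1}=S_1(h u_k^{\alpha_1})$ starting at $\underline u$ stays in $[\underline u,\overline u]$ because $t\mapsto t^{\alpha_1}$ is increasing and $S_1$ is positive. The right-hand sides are bounded in $L^p$, so $W^{2,p}$ estimates and compactness give a limit $u\in[\varepsilon\phi,M\phi]$ solving \eqref{eq:J1_en}. Alternatively, Schauder's fixed point on the order interval works. Then $u\ge\varepsilon\phi$ gives $u\in V_+^\circ$ directly, and in any case any nonnegative nontrivial solution lies in $V_+^\circ$ by the strong maximum principle and Hopf's lemma, since $-\mathcal L_1u\ge0$.

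Uniqueness for $0<\alpha_1<1$ is the main obstacle, since $\mathcal L_1$ is nonvariational and Brezis--Oswald style integral identities are unavailable. I would use the sublinear scaling argument of Krasnosel'skii type. Let $u_1,u_2\in V_+$ be solutions. First I rule out the trivial solution being a second solution: if $u_1\equiv0$ and $u_2\in V_+^\circ$, then ordinary nonuniqueness arises only from $u\equiv0$. So I would note that $J_1(v)$ is defined as the positive solution, and $0$ is excluded; I would state explicitly that uniqueness is among nontrivial nonnegative solutions, which is all that Lemma~\ref{lem:existJ_en} needs. For $u_1,u_2\in V_+^\circ$, define $t^*=\sup\{t>0: tu_1\le u_2\}$, which is positive and finite because both functions lie in $V_+^\circ$. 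Suppose $t^*<1$. Then $w=t^*u_1$ satisfies $-\mathcal L_1 w=t^*hu_1^{\alpha_1}\le (t^*)^{\alpha_1}hu_1^{\alpha_1}=hw^{\alpha_1}$, and the inequality is strict on $\{h>0\}$. Hence
\[
-\mathcal L_1(u_2-w)\ge h\,(u_2^{\alpha_1}-w^{\alpha_1})+ \big((t^*)^{\alpha_1}-t^*\big)h u_1^{\alpha_1}\ge0,
\]
and the last term is nontrivial. So $u_2-w\ge0$ is a nontrivial supersolution, and the strong maximum principle plus Hopf give $u_2-w\in V_+^\circ$. By Lemma~\ref{nonempty} this means $u_2-(t^*+\delta)u_1\ge0$ for small $\delta>0$, contradicting the maximality of $t^*$. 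Hence $t^*\ge1$, that is $u_1\le u_2$, and by symmetry $u_1=u_2$. The identical argument with $\mathcal L_2,b,\alpha_2,\beta_2$ treats \eqref{eq:J2_en}.
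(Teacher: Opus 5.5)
\textbf{Gap in the existence step.} For $0<\alpha_1<1$, the function $\underline u=\varepsilon\phi$ with $\phi=S_1(h)$ is not a subsolution. The inequality $\varepsilon h\le h(\varepsilon\phi)^{\alpha_1}$ is equivalent to $\varepsilon^{1-\alpha_1}\le\phi(x)^{\alpha_1}$ at a.e.\ $x$ with $h(x)>0$. That is a \emph{lower} bound on $\phi$, so your condition $\varepsilon^{1-\alpha_1}\|\phi\|_{L^\infty(\Omega)}^{\alpha_1}\le1$ is beside the point. Since $\phi=0$ on $\partial\Omega$, this lower bound fails in a boundary layer for every $\varepsilon>0$. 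Whenever $h>0$ there, $\varepsilon\phi$ is not a subsolution. This is exactly the case used later: $v=J_2(u)\in V_+^\circ$ and $\operatorname{ess\,inf}a>0$, so $h>0$ a.e.\ in $\Omega$.

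The defect is structural: $-\mathcal L_1(\varepsilon\phi)=\varepsilon h$ does not decay at $\partial\Omega$, while $h(\varepsilon\phi)^{\alpha_1}$ decays like $\operatorname{dist}(x,\partial\Omega)^{\alpha_1}$. As a result, your argument justifies neither the monotonicity of the iteration started at $\underline u$ nor the invariance of $[\underline u,\overline u]$, on which your Schauder alternative also relies. Nothing then rules out that the scheme yields the trivial solution $0$, which is precisely what the subsolution was supposed to exclude.

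\textbf{How to repair it.} You need a subsolution whose image under $-\mathcal L_1$ vanishes at the boundary at least as fast as $h\,\underline u^{\alpha_1}$. The paper takes $c\varphi$, where $\varphi$ is the principal eigenfunction of $-\mathcal L_1\varphi=\lambda_1 h\varphi$ normalized by $0<\varphi\le1$. Then $c\lambda_1h\varphi\le h(c\varphi)^{\alpha_1}$ reduces to $c^{1-\alpha_1}\lambda_1\varphi^{1-\alpha_1}\le1$, which holds for small $c$; the paper then iterates downward from $\tau S_1(h)$.

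Within your own framework, and without eigenfunctions, take $\chi:=S_1(h\phi^{\alpha_1})\in V_+^\circ$.
\begin{itemize}
\item Since $\phi,\chi\in V_+^\circ$, there is $C$ with $\phi\le C\chi$.
\item Hence $-\mathcal L_1(\varepsilon\chi)=\varepsilon h\phi^{\alpha_1}\le\varepsilon C^{\alpha_1}h\chi^{\alpha_1}\le h(\varepsilon\chi)^{\alpha_1}$ once $\varepsilon^{1-\alpha_1}C^{\alpha_1}\le1$.
\item By the weak maximum principle, $\chi\le\|\phi\|_{L^\infty(\Omega)}^{\alpha_1}\phi$, so $\varepsilon\chi\le M\phi$.
\end{itemize}
With this subsolution the rest of your existence argument goes through.

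\textbf{The other parts are correct.} Your uniqueness argument via $t^*=\sup\{t>0:\,tu_1\le u_2\}$ is sound, and so is the final positivity step. It spells out what the paper compresses into ``the (nonlinear) comparison principle.'' Your remark is also accurate: for $\alpha_1>0$, $u\equiv0$ solves \eqref{eq:J1_en}, so uniqueness can only hold among nontrivial nonnegative solutions, a point the paper's statement glosses over.
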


	\begin{proof} The solution is obtained by constructing ordered sub- and supersolutions and applying monotone iteration.
		
		Fix \(v\in V_+\) with \(v\neq 0\). Set $m:=a\,v^{\beta_1}\in L^{p}(\Omega)$.
		Let $\varphi>0$ be the principal eigenfunction of
		\[
		-\,\mathcal{L}_1\varphi=\lambda_1\,m\,\varphi,\qquad \varphi|_{\partial\Omega}=0,
		\]
		normalized so that $0<\varphi\le 1$ in $\Omega$ (existence and simplicity by Krein–Rutman and elliptic theory).
		For $c>0$ sufficiently small, 
		\[
		-\,\mathcal{L}_1(c\varphi)=c\,\lambda_1 m\,\varphi \;\le\; c^{\alpha_1} m\,\varphi^{\alpha_1}
		= m\,(c\varphi)^{\alpha_1},
		\]
		because $\alpha_1<1$ and $\varphi\le1$. Hence $c\varphi$ is a subsolution to~\eqref{eq:J1_en}.
		
		Let $S_1(m)$ denote the unique solution of $-\,\mathcal{L}_1 w=m$ in $\Omega$, $w=0$ on $\partial\Omega$.
		Choose $\tau>0$ such that $\tau^{1-\alpha_1}\ge \|S_1(m)\|_{L^\infty(\Omega)}^{\alpha_1}$, and set $z_0:=\tau S_1(m)\ge0$. Then
		\[
		-\,\mathcal{L}_1 z_0=\tau m \;\ge\; m\,z_0^{\alpha_1},
		\]
		so $z_0$ is a supersolution to~\eqref{eq:J1_en}. Moreover, taking $\tau\ge c\,\lambda_1$ yields
		$z_0\ge c\varphi$ in $\Omega$ (by the Scalar Weak Maximum Principle applied to $z_0-c\varphi$).

		Define recursively, for $k\ge1$,
		\[
		\begin{cases}
			-\,\mathcal{L}_1 z_k = m\,z_{k-1}^{\alpha_1} & \text{in }\Omega,\\
			z_k = 0 & \text{on }\partial\Omega.
		\end{cases}
		\]
		The linear comparison principle gives, for all $k\ge1$,
		\[
		c\varphi \ \le\ z_k \ \le\ z_{k-1} \ \le\ z_0 \quad \text{in }\Omega.
		\]
		Thus $(z_k)$ is pointwise decreasing and uniformly bounded below by $c\varphi>0$.

		Let $f_k:=m\,z_{k-1}^{\alpha_1}$. Applying the Calderón-Zygmund theory and the Sobolev embedding for the Dirichlet problem yields a constant
		$C_1>0$ such that
		\[
			\|z_k\|_{W^{2,p}(\Omega)} \;\le\; C_1\,\|f_k\|_{L^{p}(\Omega)}.
		\]
		Note that,
		\[
		\|f_k\|_{L^{p}(\Omega)}
		\;\le\;
		\|m\|_{L^{p}(\Omega)}\|z_{k-1}\|_{L^\infty(\Omega)}^{\alpha_1}.
		\]
		Since $0\le z_{k-1}\le z_0$, we have $\|z_{k-1}\|_{L^\infty(\Omega)}\le\|z_0\|_{L^\infty(\Omega)}$. 
		Thus,
		\[
			\sup_{k\ge0}\,\|z_k\|_{W^{2,p}(\Omega)} \;<\; \infty.
		\]
		The embedding $W^{2,p}(\Omega)\hookrightarrow C^{1}(\overline\Omega)$ is compact; hence, up to subsequences,
		$z_k\to u$ in $C^{1}(\overline\Omega)$. Since $(z_k)$ is decreasing in $C(\overline{\Omega})$ and bounded below by $c\varphi$,
		the limit is unique and the whole sequence converges to $u$. Passing to the limit in the iteration equation yields
		\[
		-\,\mathcal{L}_1 u = m\,u^{\alpha_1}\quad\text{in }\Omega,\qquad u|_{\partial\Omega}=0,
		\]
		i.e., $u$ solves~\eqref{eq:J1_en}. If $u_1,u_2$ are two solutions, the (nonlinear) comparison principle implies
		$u_1\equiv u_2$. Finally, since $u\ge c\varphi>0$ in $\Omega$ and $u=0$ on $\partial\Omega$, the Scalar Strong Maximum Principle
		and Hopf's lemma give $u\in V_+^\circ$.
	\end{proof}

	
	\begin{lemma}[Monotonicity]\label{lem:mono_en}
		If $0 \le v_1 \le v_2$ in $\Omega$ and $v_1 \neq 0$, then
		\[
		J_1(v_1) \le J_1(v_2) \quad \text{in } \Omega.
		\]
	\end{lemma}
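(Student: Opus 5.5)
The idea is to compare solutions through the sub/supersolution structure already used in Lemma~\ref{lem:existJ_en}. Set $u_1:=J_1(v_1)$ and $u_2:=J_1(v_2)$; both lie in $V_+^\circ$ by Lemma~\ref{lem:existJ_en}. Since $v_1\le v_2$ and $a>0$, we have
\[
-\mathcal L_1 u_1 = a\,u_1^{\alpha_1}v_1^{\beta_1}\le a\,u_1^{\alpha_1}v_2^{\beta_1},
\]
so $u_1$ is a subsolution of the problem defining $u_2$ with weight $m_2:=a v_2^{\beta_1}$. The goal is then a comparison principle for the sublinear problem $-\mathcal L_1 w=m_2 w^{\alpha_1}$: every positive subsolution lies below the positive solution.

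\textbf{Route 1: monotone iteration.} Start the decreasing iteration of Lemma~\ref{lem:existJ_en} for weight $m_2$ from $z_0=\tau S_1(m_2)$, with $\tau$ large enough that $z_0\ge u_1$. This is possible because $u_1\le \|u_1\|_\infty^{\alpha_1} S_1(m_2)$ by the linear comparison principle. By induction $z_k\ge u_1$ for all $k$: if $z_{k-1}\ge u_1$, then
\[
-\mathcal L_1(z_k-u_1)\ge m_2\bigl(z_{k-1}^{\alpha_1}-u_1^{\alpha_1}\bigr)\ge 0,
\]
and the Scalar Weak Maximum Principle gives $z_k\ge u_1$. The limit is a positive solution with weight $m_2$, and it equals $u_2$ by the uniqueness in Lemma~\ref{lem:existJ_en}. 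Hence $u_2\ge u_1$.

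The weak point is that uniqueness. The lemma invokes a ``nonlinear comparison principle'' without proof, so I would verify it (or bypass it) with a sublinear scaling argument.

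\textbf{Route 2: scaling argument (Brezis--Oswald/Krasnoselskiĭ type).} Since $u_1,u_2\in V_+^\circ$, the Hopf characterization in Lemma~\ref{nonempty} gives a finite
\[
t^*:=\inf\{t\ge1:\ t\,u_2\ge u_1\}.
\]
Suppose $t^*>1$. Then $w:=t^*u_2$ satisfies
\[
-\mathcal L_1 w=t^* m_2 u_2^{\alpha_1}=(t^*)^{1-\alpha_1}m_2 w^{\alpha_1}> m_2 w^{\alpha_1},
\]
so $w$ is a strict supersolution with $w\ge u_1$. It follows that
\[
-\mathcal L_1(w-u_1)\ge m_2\bigl(w^{\alpha_1}-u_1^{\alpha_1}\bigr)+\bigl((t^*)^{1-\alpha_1}-1\bigr)m_2w^{\alpha_1}\ge c\,m_2 w^{\alpha_1}>0.
\]
By the Scalar Strong Maximum Principle and Hopf's lemma, $w-u_1\in V_+^\circ$. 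Because $V_+^\circ$ is open, $(t^*-\varepsilon)u_2\ge u_1$ for small $\varepsilon>0$, contradicting the minimality of $t^*$. Therefore $t^*=1$ and $u_1\le u_2$.

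This scaling argument is the main step and makes the proof self-contained. It uses only $\alpha_1<1$, $\operatorname{ess\,inf}a>0$, and $v_2\ne0$ (which follows from $v_2\ge v_1\ne0$). In the case $t^*=1$ there is nothing to prove.
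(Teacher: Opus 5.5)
Your Route~1 is essentially the paper's argument. The paper runs the two decreasing iterations $z^{(1)}_k$, $z^{(2)}_k$ from the common supersolution $z_0=\tau S_1(m_2)$ and proves $z^{(1)}_k\le z^{(2)}_k$ by induction. It keeps the limits away from zero via the lower bound $c\varphi^{(1)}$, and then identifies the limits with $J_1(v_1)$ and $J_1(v_2)$ through the uniqueness asserted, but not proved, in Lemma~\ref{lem:existJ_en}. You instead place the fixed subsolution $u_1$ under the single $m_2$-iteration, which is a leaner comparison but depends on uniqueness in the same way.

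Your Route~2 is a genuinely different argument, and it is correct. It is a direct sweeping comparison between the positive subsolution $u_1$ and the scaled solution $t\,u_2$. It uses only sublinearity (so that $(t^*)^{1-\alpha_1}>1$ when $t^*>1$), the scalar strong maximum principle with Hopf's lemma, and the openness of $V_+^\circ$ from Lemma~\ref{nonempty}. It needs no iteration. Taking $v_1=v_2$, it also proves uniqueness of positive solutions, which repairs the gap in Lemma~\ref{lem:existJ_en}. That uniqueness claim is in fact only true among positive solutions, since $u\equiv 0$ solves the equation when $\alpha_1>0$.

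The paper's route is constructive and parallels the existence proof. However, without an actual proof of uniqueness it does not by itself yield monotonicity of $J_1$, whereas your Route~2 is self-contained.

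One small correction: the term $\bigl((t^*)^{1-\alpha_1}-1\bigr)m_2 w^{\alpha_1}$ is only $\ge 0$ and $\not\equiv 0$, because it vanishes where $v_2=0$; it is not $>0$ everywhere. That is all the strong maximum principle needs. If $w-u_1\equiv 0$, this term would vanish a.e., which is impossible since $v_2$ is continuous, nonnegative and nontrivial. You also use implicitly that the infimum $t^*$ is attained, so that $w\ge u_1$; this holds because the set $\{t\ge 1: t u_2\ge u_1\}$ is closed.
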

	
	\begin{proof}
		Let $v_1, v_2 \in V_+$ with $0 \le v_1 \le v_2$ in $\Omega$ and $v_1 \neq 0$. For $l=1,2$, define
		\[
		m_l := a\,v_l^{\beta_1}.
		\]
		Since $a \ge 0$ and $v_1 \le v_2$, it follows that $m_1 \le m_2$ pointwise in $\Omega$.
		
		Let $w := S_1(m_2)$ be the unique solution of $-\mathcal{L}_1 w = m_2$ in $\Omega$, with $w=0$ on $\partial\Omega$. Choose $\tau > 0$ large enough so that $\tau^{1-\alpha_1} \ge \|w\|_{L^\infty(\Omega)}^{\alpha_1}$ and set $z_0 := \tau w \ge 0$. Then
		\[
		-\,\mathcal{L}_1 z_0 = \tau m_2 \ge m_2 z_0^{\alpha_1}.
		\]
		Since $m_1 \le m_2$ and $z_0 \ge 0$, we also have $-\,\mathcal{L}_1 z_0 \ge m_1 z_0^{\alpha_1}$. Hence $z_0$ is a supersolution for both problems with weights $m_1$ and $m_2$.
		
		Starting from $z_0$, for each $l=1,2$ and $k \ge 1$ define
		\[
		\begin{cases}
			-\,\mathcal{L}_1 z^{(l)}_k = m_l \,(z^{(l)}_{k-1})^{\alpha_1} & \text{in }\Omega,\\
			z^{(l)}_k = 0 & \text{on }\partial\Omega,
		\end{cases}
		\]
		with $z^{(l)}_0 = z_0$. By the linear comparison principle,
		\[
		0 \le z^{(l)}_k \le z^{(l)}_{k-1} \le z_0 \quad \text{for all } k \ge 1.
		\]
		We claim that $z^{(1)}_k \le z^{(2)}_k$ for all $k \ge 0$. Indeed, for $k=1$,
		\[
		-\,\mathcal{L}_1 z^{(1)}_1 = m_1 (z_0)^{\alpha_1} \le m_2 (z_0)^{\alpha_1} = -\,\mathcal{L}_1 z^{(2)}_1,
		\]
		so that $z^{(1)}_1 \le z^{(2)}_1$ by comparison. If $z^{(1)}_{k-1} \le z^{(2)}_{k-1}$, then the monotonicity of $s \mapsto s^{\alpha_1}$ gives
		\[
		m_1 (z^{(1)}_{k-1})^{\alpha_1} \le m_2 (z^{(2)}_{k-1})^{\alpha_1},
		\]
		and another application of the comparison principle yields $z^{(1)}_k \le z^{(2)}_k$.
		
		As argued in Lemma~\ref{lem:existJ_en}, there exist $c > 0$ and a positive eigenfunction $\varphi^{(1)}$ of
		\[
		-\,\mathcal{L}_1 \varphi^{(1)} = \lambda_1 m_1 \varphi^{(1)}, \quad \varphi^{(1)}|_{\partial\Omega} = 0,
		\]
		normalized so that $0 < \varphi^{(1)} \le 1$, such that
		\[
		0 < c\,\varphi^{(1)} \le z^{(1)}_k \le z^{(2)}_k \quad \forall\, k \ge 0.
		\]
		
		The sequences $(z^{(l)}_k)$ are monotone decreasing and uniformly bounded in $W^{2,p}(\Omega)$.  
		Then, we may extract convergent subsequences in $C^{1}(\overline{\Omega})$ such that
		\[
		z^{(l)}_k \longrightarrow u_l \quad \text{in } C^{1}(\overline{\Omega}).
		\]
		The limit $u_l$ satisfies
		\[
		-\,\mathcal{L}_1 u_l = m_l\,u_l^{\alpha_1}, \quad u_l|_{\partial\Omega} = 0,
		\]
		that is, $u_l = J_1(v_l)$.  
		Passing to the limit in $z^{(1)}_k \le z^{(2)}_k$ yields $u_1 \le u_2$ in $\Omega$, completing the proof.
	\end{proof}
	
	\begin{lemma}[Homogeneity]\label{lem:homo_en}
		For $c>0$,
		\[
		J_1(c v)=c^{\frac{\beta_1}{1-\alpha_1}}\,J_1(v),
		\qquad
		J_2(c u)=c^{\frac{\beta_2}{1-\alpha_2}}\,J_2(u).
		\]
	\end{lemma}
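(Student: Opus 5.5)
The plan is a direct scaling argument combined with the uniqueness part of Lemma~\ref{lem:existJ_en}. Fix $c>0$ and $v\in V_+$ with $v\neq0$, and let $u=J_1(v)\in V_+^\circ$. Set $\theta:=\frac{\beta_1}{1-\alpha_1}$ and $w:=c^{\theta}u$. Because $\mathcal L_1$ is linear, $w\in V$ and $w=0$ on $\partial\Omega$. It therefore remains to check that $w$ solves \eqref{eq:J1_en} with $v$ replaced by $cv$. Once this is done, uniqueness in Lemma~\ref{lem:existJ_en} forces $J_1(cv)=w=c^{\theta}J_1(v)$.

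The verification is a single computation:
\[
-\mathcal L_1 w = c^{\theta}\,a\,u^{\alpha_1}v^{\beta_1},
\qquad
a\,w^{\alpha_1}(cv)^{\beta_1}=c^{\theta\alpha_1+\beta_1}\,a\,u^{\alpha_1}v^{\beta_1}.
\]
These two expressions agree exactly when $\theta=\theta\alpha_1+\beta_1$, which is equivalent to $\theta(1-\alpha_1)=\beta_1$. This is precisely the choice $\theta=\beta_1/(1-\alpha_1)$, and it is well defined since $\alpha_1<1$. The statement for $J_2$ follows in the same way, with $\theta'=\beta_2/(1-\alpha_2)$ and Lemma~\ref{lem:existJ_en} applied to \eqref{eq:J2_en}.

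A small point concerns the degenerate case $v=0$. Lemma~\ref{lem:existJ_en} only treats $v\neq0$, so if one wants $J_1(0)$ defined, the natural convention is $J_1(0)=0$, and for that value the identity is trivial. It is worth noting that when $\alpha_1>0$ the problem with $v=0$ reduces to $-\mathcal L_1u=0$, which has only the zero solution, so the convention is consistent.

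The only step needing care is the appeal to uniqueness. Lemma~\ref{lem:existJ_en} asserts uniqueness of the solution $u\in V$, and when $\alpha_1>0$ the relevant class is really positive solutions, because the sublinear problem always also admits the trivial solution. Since $w=c^\theta u\in V_+^\circ$, the function $w$ lies in the class in which uniqueness is asserted, so the identification with $J_1(cv)$ is legitimate. No other obstacle is expected.
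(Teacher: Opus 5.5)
Your proposal is correct and follows essentially the same route as the paper. Both rescale $J_1(v)$ by $c^{\beta_1/(1-\alpha_1)}$, check that the rescaled function solves the problem with datum $cv$ (the paper's condition $k^{1-\alpha_1}=c^{\beta_1}$ is your identity $\theta(1-\alpha_1)=\beta_1$), and conclude by the uniqueness in Lemma~\ref{lem:existJ_en}. Your extra remark is a useful precision that the paper leaves implicit: uniqueness must be read among positive solutions, since $u\equiv0$ also solves the problem when $\alpha_1>0$, and $c^{\theta}J_1(v)\in V_+^\circ$ lies in that class.
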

	
	\begin{proof}
		Fix $c>0$ and \(v\in V_+\) with \(v\neq 0\). Let $u=J_1(v)$ be the unique solution of
		\[
		-\,\mathcal{L}_1 u = a\,u^{\alpha_1} v^{\beta_1}\quad \text{in }\Omega,
		\qquad
		u=0\quad \text{on }\partial\Omega
		\]
		(see Lemma~\ref{lem:existJ_en}). For $k>0$, define $u_k:=k\,u$. By the linearity of $\mathcal{L}_1$ we have
		\[
		-\,\mathcal{L}_1(u_k)=k\,(-\mathcal{L}_1 u)=k\,a\,u^{\alpha_1} v^{\beta_1}.
		\]
		Since $u=u_k/k$, it follows that
		\[
		u^{\alpha_1}=\Big(\tfrac{u_k}{k}\Big)^{\alpha_1}=k^{-\alpha_1} u_k^{\alpha_1},
		\]
		and hence
		\[
		-\,\mathcal{L}_1(u_k)=k^{1-\alpha_1} a\,u_k^{\alpha_1} v^{\beta_1}\quad \text{in }\Omega,
		\qquad
		u_k=0\quad \text{on }\partial\Omega.
		\]
		
		Consider now the problem with data $cv$:
		\[
		-\,\mathcal{L}_1 w = a\,w^{\alpha_1} (cv)^{\beta_1}
		= c^{\beta_1} a\,w^{\alpha_1} v^{\beta_1}\quad \text{in }\Omega,
		\qquad
		w=0\quad \text{on }\partial\Omega.
		\]
		Choose
		\[
		k=c^{\frac{\beta_1}{1-\alpha_1}}>0.
		\]
		Then $k^{1-\alpha_1}=c^{\beta_1}$, so the function $u_k$ satisfies
		\[
		-\,\mathcal{L}_1(u_k)=c^{\beta_1} a\,u_k^{\alpha_1} v^{\beta_1}
		= a\,u_k^{\alpha_1} (cv)^{\beta_1}\quad \text{in }\Omega,
		\qquad
		u_k=0\quad \text{on }\partial\Omega.
		\]
		By the existence and uniqueness result (Lemma~\ref{lem:existJ_en}), the solution of the problem with datum $cv$ is unique; therefore
		\[
		J_1(cv)=u_k=k\,u = c^{\frac{\beta_1}{1-\alpha_1}}\,J_1(v).
		\]
		
		The proof of the identity for $J_2$ is identical, replacing $(\alpha_1,\beta_1,a,\mathcal{L}_1,v)$ by $(\alpha_2,\beta_2,b,\mathcal{L}_2,u)$.
	\end{proof}

	\subsection{Composition and Spectral Problem}
	
	Define the return map $T: V_+ \times V_+ \to V_+ \times V_+$ by
	\[
	T(u,v) := \big(J_1(v),\, J_2(u)\big),
	\]
	and the associated compositions
	\[
	K_1 := J_1 \circ J_2: V_+ \to V_+,
	\qquad
	K_2 := J_2 \circ J_1: V_+ \to V_+.
	\]
	
	\begin{lemma}[Degree of homogeneity]\label{lem:gamma}
		Both $K_1$ and $K_2$ are homogeneous of degree
		\[
		\gamma \;=\; \frac{\beta_1\beta_2}{(1-\alpha_1)(1-\alpha_2)}.
		\]
		In particular, they are $1$-homogeneous if and only if
		\[
		\beta_1\beta_2 \;=\; (1-\alpha_1)(1-\alpha_2).
		\]
	\end{lemma}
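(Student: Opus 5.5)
The statement follows by composing the two homogeneity identities of Lemma~\ref{lem:homo_en}. Set
\[
k_1:=\frac{\beta_1}{1-\alpha_1},\qquad k_2:=\frac{\beta_2}{1-\alpha_2},
\]
so that $J_1(cv)=c^{k_1}J_1(v)$ and $J_2(cu)=c^{k_2}J_2(u)$ for every $c>0$ and every nonzero $u,v\in V_+$.

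First I check that the compositions are well defined on $V_+\setminus\{0\}$. By Lemma~\ref{lem:existJ_en}, $J_2(u)\in V_+^\circ$ whenever $u\in V_+\setminus\{0\}$, and in particular $J_2(u)\neq0$. Hence $J_1$ may be applied to it, and $K_1(u)=J_1(J_2(u))$ makes sense. The same reasoning applies to $K_2=J_2\circ J_1$. At $u=0$ I take the natural convention $J_i(0)=0$, under which homogeneity holds trivially.

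Next I compute directly. For $c>0$,
\[
K_1(cu)=J_1\big(J_2(cu)\big)=J_1\big(c^{k_2}J_2(u)\big)=\big(c^{k_2}\big)^{k_1}J_1\big(J_2(u)\big)=c^{k_1k_2}K_1(u).
\]
The second equality uses the $J_2$-identity of Lemma~\ref{lem:homo_en}. The third uses the $J_1$-identity with the positive constant $c^{k_2}$ and the nonzero datum $J_2(u)$. The analogous chain gives $K_2(cv)=c^{k_1k_2}K_2(v)$. The exponent is therefore
\[
\gamma=k_1k_2=\frac{\beta_1\beta_2}{(1-\alpha_1)(1-\alpha_2)}.
\]

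Finally, $\gamma=1$ if and only if $\beta_1\beta_2=(1-\alpha_1)(1-\alpha_2)$, which is exactly condition~\eqref{cond1}; both sides are positive because $\alpha_i<1$. There is no real obstacle here. The only point needing care is confirming that the intermediate function $J_2(u)$ is nonzero, so that the uniqueness underlying Lemma~\ref{lem:homo_en} applies, and strong positivity from Lemma~\ref{lem:existJ_en} supplies this.
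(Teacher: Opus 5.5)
Your proof is correct and follows the paper's argument: compose the two identities of Lemma~\ref{lem:homo_en} to get $K_1(cu)=c^{k_1k_2}K_1(u)$ (and likewise for $K_2$), then observe that $\gamma=1$ if and only if \eqref{cond1} holds. You also explicitly check that $J_2(u)\in V_+^\circ$ is nonzero, so that the uniqueness behind Lemma~\ref{lem:homo_en} applies; the paper leaves this implicit, and the same nonvanishing of $K_1(u)$ is what makes the ``only if'' direction of the $1$-homogeneity claim genuine rather than just a statement about the exponent.
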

	
	\begin{proof}
		Fix $u \in V_+$ with \(u\neq 0\) and $c>0$. By Lemma~\ref{lem:homo_en},
		\[
		J_2(cu) \;=\; c^{\frac{\beta_2}{1-\alpha_2}}\, J_2(u).
		\]
		Applying $J_1$ and using Lemma~\ref{lem:homo_en} again,
		\[
		K_1(cu)
		= J_1\big(J_2(cu)\big)
		= J_1\!\left(c^{\frac{\beta_2}{1-\alpha_2}}\,J_2(u)\right)
		= \left(c^{\frac{\beta_2}{1-\alpha_2}}\right)^{\frac{\beta_1}{1-\alpha_1}} J_1(J_2(u))
		= c^{\frac{\beta_1\beta_2}{(1-\alpha_1)(1-\alpha_2)}}\, K_1(u).
		\]
		Hence $K_1$ is homogeneous of degree $\gamma$ as claimed. The proof for $K_2$ is identical, exchanging the roles of $(J_1,\alpha_1,\beta_1)$ and $(J_2,\alpha_2,\beta_2)$.  
		The characterization of $1$-homogeneity follows from $\gamma=1$, i.e., $\beta_1\beta_2=(1-\alpha_1)(1-\alpha_2)$.
	\end{proof}
	
	\begin{lemma}[Characterization via $J_1,J_2$]\label{lem:carac}
		A pair $(u,v)\in V_+^\circ \times V_+^\circ$ solves
		\[
	\begin{cases}
		-\,{\cal L}_1 u = \lambda\, a(x)\, u^{\alpha_1}v^{\beta_1} & \text{in } \Omega, \\[2pt]
		-\,{\cal L}_2 v = \mu\, b(x)\, v^{\alpha_2}u^{\beta_2} & \text{in } \Omega, \\[2pt]
		u = v = 0 & \text{on } \partial\Omega,
	\end{cases}
\]
		if and only if
		\[
		J_1(v)=\lambda^{-1/(1-\alpha_1)}\,u,
		\qquad
		J_2(u)=\mu^{-1/(1-\alpha_2)}\,v.
		\]
	\end{lemma}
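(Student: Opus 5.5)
The plan is to reduce both directions to the scaling identity already used in Lemma~\ref{lem:homo_en}, together with the uniqueness part of Lemma~\ref{lem:existJ_en}. Here $\lambda,\mu>0$, since the exponents $\lambda^{-1/(1-\alpha_1)}$ and $\mu^{-1/(1-\alpha_2)}$ only make sense for positive parameters. The key observation is the following. If $w\in V_+^\circ$ satisfies $-\mathcal L_1 w=\lambda a\,w^{\alpha_1}v^{\beta_1}$, then for $k>0$ the function $kw$ satisfies
\[
-\mathcal L_1(kw)=k^{1-\alpha_1}\lambda\,a\,(kw)^{\alpha_1}v^{\beta_1}.
\]
So $kw$ solves the unweighted problem \eqref{eq:J1_en} exactly when $k^{1-\alpha_1}\lambda=1$, that is, when $k=\lambda^{-1/(1-\alpha_1)}$.

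\emph{Forward direction.} Let $(u,v)\in V_+^\circ\times V_+^\circ$ solve the weighted system.
\begin{itemize}
\item Set $\tilde u:=\lambda^{-1/(1-\alpha_1)}u$. By the computation above, $\tilde u\in V$ solves \eqref{eq:J1_en} with datum $v$.
\item Here $v\in V_+\setminus\{0\}$, so Lemma~\ref{lem:existJ_en} says the solution of \eqref{eq:J1_en} is unique. Hence $J_1(v)=\tilde u=\lambda^{-1/(1-\alpha_1)}u$.
\item The identical argument with $(\alpha_2,\beta_2,b,\mathcal L_2,\mu)$ gives $J_2(u)=\mu^{-1/(1-\alpha_2)}v$.
\end{itemize}

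\emph{Converse direction.} Suppose $J_1(v)=\lambda^{-1/(1-\alpha_1)}u$ and $J_2(u)=\mu^{-1/(1-\alpha_2)}v$.
\begin{itemize}
\item Write $k=\lambda^{-1/(1-\alpha_1)}$. Then $ku$ solves $-\mathcal L_1(ku)=a\,(ku)^{\alpha_1}v^{\beta_1}$.
\item Dividing by $k$ and using $k^{\alpha_1-1}=\lambda$ gives $-\mathcal L_1u=\lambda a\,u^{\alpha_1}v^{\beta_1}$.
\item The boundary condition $u=0$ is inherited from $J_1(v)\in V$.
\item The second equation follows the same way with $\mu$ and $J_2$.
\end{itemize}
Positivity is automatic in this direction: $J_1(v),J_2(u)\in V_+^\circ$ by Lemma~\ref{lem:existJ_en}, and the cone $V_+^\circ$ is invariant under positive scaling.

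\emph{Main obstacle.} The only real content is the appeal to uniqueness in Lemma~\ref{lem:existJ_en}. One must check that the solution produced by scaling lies in the class in which uniqueness was asserted, namely $V$, with $\tilde u\ge0$ since $u\in V_+^\circ$. The comparison argument for sublinear problems with $\alpha_1<1$ (a Brezis–Oswald/Krasnosel'skii-type argument) applies to positive solutions. Because $u\in V_+^\circ$, this hypothesis is available, so no extra work should be needed beyond noting it.
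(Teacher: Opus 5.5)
Your proof is correct and follows essentially the same scaling-plus-uniqueness argument as the paper. The only difference is cosmetic: you rescale $u$ down to the unweighted problem \eqref{eq:J1_en} before invoking uniqueness, while the paper rescales $J_1(v)$ up to the $\lambda$-weighted problem. Your remark that uniqueness must be taken among positive solutions (since $u\equiv 0$ also solves \eqref{eq:J1_en} when $\alpha_1>0$) is a worthwhile precision that the paper leaves implicit.
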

	
	\begin{proof}
		Let $w := J_1(v)$ be the unique solution of
		\[
		-\,\mathcal{L}_1 w = a\,w^{\alpha_1} v^{\beta_1}\quad \text{in }\Omega,
		\qquad
		w=0 \text{ on } \partial\Omega.
		\]
		For any $t>0$, by linearity of $\,\mathcal{L}_1$,
		\[
		-\,\mathcal{L}_1(tw)=t(-\mathcal{L}_1 w)=t\,a\,w^{\alpha_1}v^{\beta_1}
		= t^{1-\alpha_1}\, a\, (tw)^{\alpha_1} v^{\beta_1},
		\]
		since $w^{\alpha_1}=t^{-\alpha_1}(tw)^{\alpha_1}$. Choosing $t=\lambda^{1/(1-\alpha_1)}$ gives
		\[
		-\,\mathcal{L}_1(tw)=\lambda\, a\, (tw)^{\alpha_1} v^{\beta_1}.
		\]
		By uniqueness (Lemma~\ref{lem:existJ_en}), the solution of the first equation is $u=tw=\lambda^{1/(1-\alpha_1)} J_1(v)$, i.e.,
		\[
		J_1(v)=\lambda^{-1/(1-\alpha_1)}\,u.
		\]
		The second identity follows analogously by setting $z:=J_2(u)$ and taking $s=\mu^{1/(1-\alpha_2)}$.
		
		Conversely, assume $J_1(v)=\lambda^{-1/(1-\alpha_1)} u$ and $J_2(u)=\mu^{-1/(1-\alpha_2)} v$. Then, using Lemma~\ref{lem:homo_en} and the defining equation for $J_1(v)$,
		\[
		-\,\mathcal{L}_1 u
		= -\,\mathcal{L}_1\!\left(\lambda^{1/(1-\alpha_1)} J_1(v)\right)
		= \lambda^{1/(1-\alpha_1)} \big(-\mathcal{L}_1 J_1(v)\big)
		= \lambda^{1/(1-\alpha_1)} a\, J_1(v)^{\alpha_1} v^{\beta_1}.
		\]
		Since $J_1(v)=\lambda^{-1/(1-\alpha_1)} u$, we have $J_1(v)^{\alpha_1}=\lambda^{-\alpha_1/(1-\alpha_1)} u^{\alpha_1}$, hence
		\[
		-\,\mathcal{L}_1 u = \lambda^{\frac{1-\alpha_1}{1-\alpha_1}} a\, u^{\alpha_1} v^{\beta_1}
		= \lambda\, a\, u^{\alpha_1} v^{\beta_1}.
		\]
		The boundary condition is preserved by the scaling. The second equation follows identically from $J_2(u)=\mu^{-1/(1-\alpha_2)} v$. Thus $(u,v)$ solves the system.
	\end{proof}
	
	\section{Proof of Theorem \ref{teor2}}\label{sec:proof-teor2}
	
We begin this section by introducing the quantities that will play a central role in the analysis. 
For parameters $(\lambda,\mu)\in(0,\infty)^2$, define
\[
\Lambda_1(\lambda,\mu):=\lambda^{\frac{1}{1-\alpha_1}}\mu^{\frac{1}{\beta_2}},
\qquad
\Lambda_2(\lambda,\mu):=\mu^{\frac{1}{1-\alpha_2}}\lambda^{\frac{1}{\beta_1}}.
\]
Associated with these quantities, we define the admissible sets
\[
\mathcal{D}_1:=\Big\{(\lambda,\mu)\in(0,\infty)^2:\ \Lambda_1(\lambda,\mu)=\rho_1^{-1}\Big\},
\qquad
\mathcal{D}_2:=\Big\{(\lambda,\mu)\in(0,\infty)^2:\ \Lambda_2(\lambda,\mu)=\rho_2^{-1}\Big\},
\]
where $\rho_1$ and $\rho_2$ are fixed positive constants, to be determined later via the nonlinear Krein-Rutman theorem.

To establish Theorem~\ref{teor2}, we shall prove that the operators $K_1$ and $K_2$ are compact, order-preserving, strongly positive, and $1$-homogeneous. 
Moreover, we show that $\mathcal{D}_1=\mathcal{D}_2$; denoting this common set by $\mathcal{C}$, 
the coupled system (\ref{1.3}) admits a positive solution $(u,v)\in V_+^\circ\times V_+^\circ$ if and only if $(\lambda,\mu)\in\mathcal{C}$. 
Equivalently, this condition can be expressed as
\begin{equation}\label{eq:Lambda-conds}
	\lambda^{\frac{1}{1-\alpha_1}}\mu^{\frac{1}{\beta_2}}=\rho_1^{-1}
	\qquad\Longleftrightarrow\qquad
	\lambda^{\frac{1}{\beta_1}}\mu^{\frac{1}{1-\alpha_2}}=\rho_2^{-1}.
\end{equation}
In that case, all positive solutions belong to the scaling family
\[
(u,v)=\big(s\,u',\ s^{\frac{\beta_2}{\,1-\alpha_2\,}}\,v'\big),\qquad s>0.
\]

Finally, it is convenient to introduce the constant
\[
\Lambda_0
:= \rho_1^{-\sqrt{\frac{\beta_2}{\,1-\alpha_2\,}}}
= \rho_2^{-\sqrt{\frac{\beta_1}{\,1-\alpha_1\,}}},
\]
which coincides with the quantity appearing in the statement of Theorem~\ref{teor2}. 
In what follows, these constructions will be combined with Propositions~\ref{psim2},~\ref{prop - 1} and~\ref{propos2.15} 
to complete the proof of the theorem.

	\begin{proof}[Proof of Theorem \ref{teor2}]
		By Lemma~\ref{lem:existJ_en}, the return maps $J_1,J_2:V_+\to V_+^\circ$ are
		well-defined, strongly positive, continuous and compact.
		Lemma~\ref{lem:homo_en} implies that $J_1$ and $J_2$ are homogeneous of degrees
		$\frac{\beta_1}{1-\alpha_1}$ and $\frac{\beta_2}{1-\alpha_2}$, respectively.
		Hence, by Lemma~\ref{lem:gamma}, the compositions
		$K_1:=J_1\circ J_2$ and $K_2:=J_2\circ J_1$ are homogeneous of degree
		\[
		\gamma=\frac{\beta_1\beta_2}{(1-\alpha_1)(1-\alpha_2)}=1,
		\]
		and therefore $1$-homogeneous. Their strong positivity and compactness follow
		from those of $J_1,J_2$.
		
		By the nonlinear Krein–Rutman theorem for continuous, compact,
		order-preserving, strongly positive, $1$-homogeneous maps on a Banach
		space with a solid cone, there exist principal eigenpairs
		\[
		K_1u'=\rho_1 u',\qquad u'\in V_+^\circ,\ \rho_1>0,
		\qquad\text{and}\qquad
		K_2v'=\rho_2 v',\qquad v'\in V_+^\circ,\ \rho_2>0,
		\]
		each eigenvector being unique up to positive multiples.
		Moreover, by definition, the intertwining identities
		\[
		J_1\circ K_2=K_1\circ J_1,\qquad J_2\circ K_1=K_2\circ J_2
		\]
		hold. Using the homogeneity of $J_1,J_2$ this yields the principal eigenvalue
		relations
		\begin{equation}\label{eq:rho-rel-main}
			\rho_1=\rho_2^{\frac{\beta_1}{1-\alpha_1}},
			\qquad
			\rho_2=\rho_1^{\frac{\beta_2}{1-\alpha_2}}.
		\end{equation}

		By Lemma~\ref{lem:carac}, $(u,v)\in V_+^\circ\times V_+^\circ$ solves the
		coupled system for $(\lambda,\mu)\in(0,\infty)^2$ if
		\begin{equation}\label{eq:carac-scaled-main}
			J_1(v)=\lambda^{-1/(1-\alpha_1)}u,\qquad
			J_2(u)=\mu^{-1/(1-\alpha_2)}v.
		\end{equation}
		Composing and using the homogeneity of $J_1,J_2$,
		\[
		\begin{aligned}
			K_1u
			&=J_1\!\big(J_2(u)\big)
			=J_1\!\big(\mu^{-1/(1-\alpha_2)}v\big)
			=\mu^{-\frac{\beta_1}{(1-\alpha_1)(1-\alpha_2)}}\,J_1(v)\\
			&=\mu^{-\frac{1}{\beta_2}}\ \lambda^{-\frac{1}{1-\alpha_1}}\,u
			= \Lambda_1(\lambda,\mu)^{-1}\,u,
		\end{aligned}
		\]
		where the condition (\ref{cond1}) was used.
		Similarly,
		\[
		K_2v
		=J_2\!\big(J_1(v)\big)
		=\lambda^{-\frac{1}{\beta_1}}\ \mu^{-\frac{1}{1-\alpha_2}}\,v
		=\Lambda_2(\lambda,\mu)^{-1}\,v.
		\]
		By uniqueness of the positive eigenpair for $K_i$ (up to scaling),
		\[
		\Lambda_1(\lambda,\mu)=\rho_1^{-1},
		\qquad
		\Lambda_2(\lambda,\mu)=\rho_2^{-1},
		\]
		which is exactly the condition \eqref{eq:Lambda-conds}. This proves the
		necessity.

		Assume \eqref{eq:Lambda-conds} holds. Take principal eigenvectors $u',v'$ as
		above and set
		\[
		u:=u',\qquad v:=\mu^{\frac{1}{1-\alpha_2}}\,J_2(u').
		\]
		Then $J_2(u)=\mu^{-1/(1-\alpha_2)}v$ by construction. Moreover,
		\[
		J_1(v)
		=J_1\!\Big(\mu^{\frac{1}{1-\alpha_2}}J_2(u')\Big)
		=\mu^{\frac{\beta_1}{(1-\alpha_1)(1-\alpha_2)}}\,K_1u'
		=\mu^{\frac{1}{\beta_2}}\rho_1\,u'
		=\lambda^{-1/(1-\alpha_1)}u,
		\]
		where we used the condition (\ref{cond1}) and $\Lambda_1^{-1}=\rho_1$.
		Hence \eqref{eq:carac-scaled-main} holds and $(u,v)$ solves the system.

		Let $(u,v)$ be any positive solution for $(\lambda,\mu)$ and write it as
		\[
		u=s\,u',\qquad v=\tilde s\,v',\qquad s,\tilde s>0,
		\]
		where $K_1u'=\rho_1u'$, $K_2v'=\rho_2v'$. From
		\eqref{eq:carac-scaled-main} we have $J_2(u)=\mu^{-1/(1-\alpha_2)}v$.
		Using the homogeneity of $J_2$,
		\begin{equation}\label{eq:rel1-main}
			J_2(su')=s^{\frac{\beta_2}{1-\alpha_2}}\,J_2(u')
			=\mu^{-\frac{1}{1-\alpha_2}}\,\tilde s\,v'.
		\end{equation}
		Now $J_2(u')$ is an eigenvector of $K_2$:
		\[
		K_2\big(J_2u'\big)=J_2(K_1u')=J_2(\rho_1u')
		=\rho_1^{\frac{\beta_2}{1-\alpha_2}}\,J_2(u').
		\]
		By \eqref{eq:rho-rel-main}, $\rho_1^{\frac{\beta_2}{1-\alpha_2}}=\rho_2$, so
		$J_2(u')$ and $v'$ are eigenvectors of $K_2$ for the same eigenvalue; by
		uniqueness in the cone, there exists $\theta>0$ with $J_2(u')=\theta v'$.
		Plugging this into \eqref{eq:rel1-main} gives
		\[
		s^{\frac{\beta_2}{1-\alpha_2}}\theta=\mu^{-\frac{1}{1-\alpha_2}}\,\tilde s.
		\]
		Since $v'$ is defined up to a positive multiple, renormalize it by
		\[
		\widehat v':=\mu^{\frac{1}{1-\alpha_2}}\theta\,v'
		\quad\Longrightarrow\quad
		J_2(u')=\mu^{-\frac{1}{1-\alpha_2}}\widehat v'.
		\]
		With this normalization the relation simplifies to
		\[
		\tilde s=s^{\frac{\beta_2}{1-\alpha_2}},
		\]
		so every positive solution has the form
		\[
		(u,v)=\big(s\,u',\ s^{\frac{\beta_2}{\,1-\alpha_2\,}}\,\widehat v'\big),\qquad s>0.
		\]
		Since $\widehat v'$ is still a principal eigenvector of $K_2$ (eigenvectors are
		unique up to positive multiples), this yields the claimed scaling family.
		
		Thus, Property~\eqref{prop:simplicity}, has already been established. Moreover, we obtain a part of~\eqref{prop:uniqueness}, that is,	$(\lambda, \mu) \in (\mathbb{R}_+^*)^2$ is a principal eigenvalue of the system~\eqref{1.3} 
	if and only if $(\lambda,\mu)\in \mathcal{C}_1$. To conclude~\eqref{prop:uniqueness}, we need to verify that there is no principal eigenvalue in $\mathbb{R}^2$ outside of $\mathcal{C}_1$.

		The properties \eqref{prop:simplicity2}, \eqref{prop:isolation} and \eqref{prop:monotonicity} rely on Propositions~\ref{psim2},~\ref{prop - 1} and~\ref{propos2.15}, respectively.
			\end{proof}

\begin{proposition}\label{psim2}
	Let $(\varphi,\psi)\in V\times V$ be an eigenfunction associated to $(\lambda,\mu)\in \mathcal{C}_1$. If $\alpha_1=0=\alpha_2$, then either
	$(\varphi,\psi)\in V_+^\circ\times V_+^\circ$ or  $-(\varphi,\psi)\in V_+^\circ\times V_+^\circ$. Now, if $0 \le \alpha_1, \alpha_2 < 1$, not both zero, then  $(\varphi,\psi)\in V_+^\circ\times V_+^\circ$.
\end{proposition}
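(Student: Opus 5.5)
We must show that any eigenfunction $(\varphi,\psi)\in V\times V$ for $(\lambda,\mu)\in\mathcal C_1$ is of one sign. The plan is to produce a nonnegative nontrivial pair from it, identify that pair with a principal eigenfunction, and then read off the sign structure from the system.

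\textbf{Step 1: the absolute values are a supersolution.} Fix $(\lambda,\mu)\in\mathcal C_1$ (so $\lambda,\mu>0$) and a nontrivial eigenfunction $(\varphi,\psi)$. Since $\beta_1\beta_2>0$, if $\varphi\equiv0$ then the second equation gives $-\mathcal L_2\psi=0$, hence $\psi\equiv0$ by the Scalar Weak Maximum Principle. So both components are nontrivial. By Kato's inequality for nondivergence operators in $W^{2,p}$ we have $-\mathcal L_1|\varphi|\le \operatorname{sgn}(\varphi)(-\mathcal L_1\varphi)$. This gives
\[
-\mathcal L_1|\varphi|\le \lambda a\,|\varphi|^{\alpha_1}|\psi|^{\beta_1},
\]
and similarly for $|\psi|$. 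Thus $(|\varphi|,|\psi|)$ is a nonnegative, nontrivial subsolution.

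\textbf{Step 2: $(|\varphi|,|\psi|)$ is a multiple of the principal eigenfunction.} We compare with the principal pair $(u,v)=(su',s^{\beta_2/(1-\alpha_2)}\widehat v')\in V_+^\circ\times V_+^\circ$ from the proof of Theorem~\ref{teor2}. Because the principal eigenfunctions lie in the interior of the cone, we may take $t^*=\inf\{t>0: |\varphi|\le tu,\ |\psi|\le t^{(1-\alpha_1)/\beta_1}v\}$, which is finite. Using the monotonicity and homogeneity of $J_1,J_2$ (Lemmas~\ref{lem:mono_en}, \ref{lem:homo_en}) and the sub/supersolution comparison for the sublinear equations defining $J_i$, we obtain $|\varphi|\le \lambda^{1/(1-\alpha_1)}J_1(|\psi|)$, and similarly $|\psi|\le\mu^{1/(1-\alpha_2)}J_2(|\varphi|)$. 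Hence $|\varphi|\le\rho_1^{-1}\Lambda\ldots K_1|\varphi|$, i.e.\ $|\varphi|\preceq \rho_1^{-1}K_1|\varphi|$.

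The nonlinear Krein--Rutman uniqueness (strong positivity and $1$-homogeneity of $K_1$) then forces equality. If the inequality were strict somewhere, strong positivity would give $\rho_1^{-1}K_1|\varphi|-|\varphi|$ in the interior of the cone, and a standard sup-ratio argument against $u'$ would contradict $\rho_1$ being the principal eigenvalue. Therefore $|\varphi|=cu'$ and $|\psi|$ is the corresponding multiple of $\widehat v'$, both in $V_+^\circ$. In particular $\varphi$ and $\psi$ never vanish in $\Omega$, so by continuity and connectedness each has a constant sign. If $\Omega$ is not connected, I would argue componentwise, which is the natural setting for the scalar principal eigenvalue.

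\textbf{Step 3: sign combinations.} There are four cases: $(\varphi,\psi)=(\pm|\varphi|,\pm|\psi|)$. Substituting into the system and using $\lambda,\mu>0$ and $-\mathcal L_1|\varphi|=\lambda a|\varphi|^{\alpha_1}|\psi|^{\beta_1}>0$, the first equation with $\varphi=\epsilon_1|\varphi|$ and $\psi=\epsilon_2|\psi|$ requires $\epsilon_1=\epsilon_1^{\alpha_1}\epsilon_2$. Here $\epsilon^{\alpha}$ stands for $|\cdot|^{\alpha-1}\cdot$, which equals $\epsilon_1$ if $\alpha_1>0$ and $1$ if $\alpha_1=0$. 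The second equation likewise requires $\epsilon_2=\epsilon_2^{\alpha_2}\epsilon_1$.

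In the Lane--Emden case this reduces to $\epsilon_1=\epsilon_2$, giving $\pm(\varphi,\psi)\in V_+^\circ\times V_+^\circ$. If $\alpha_1>0$, the first relation gives $\epsilon_2=1$. Then, if $\alpha_2=0$, the second gives $\epsilon_1=1$. If $\alpha_2>0$, the second relation is automatic, but the first requires $\epsilon_1=\epsilon_1\cdot1$, so the sign of $\varphi$ is not yet determined.

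\textbf{Main obstacle.} The hard case is $\alpha_1,\alpha_2>0$ with $\epsilon_1=-1$, for example. Here I would recheck directly: if $\varphi=-|\varphi|$ and $\psi=|\psi|$, the right-hand side of the first equation is $-\lambda a|\varphi|^{\alpha_1}|\psi|^{\beta_1}$, while the left-hand side is $-\mathcal L_1(-|\varphi|)=-\lambda a|\varphi|^{\alpha_1}|\psi|^{\beta_1}$. These are consistent, so this case needs care. I would instead use the second equation: $-\mathcal L_2\psi=\mu b\,\psi^{\alpha_2}|u|^{\beta_2-1}u$, which is negative when $\varphi<0$. This contradicts $\psi>0$ with zero boundary data by the Scalar Weak Maximum Principle. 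The same reasoning applied to each equation in turn should exclude every mixed and negative combination. I expect Step 2, namely justifying Kato's inequality and the Krein--Rutman equality argument rigorously for the sublinear maps $J_i$, to be the main technical obstacle.
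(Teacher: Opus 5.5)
Your proposal is correct in its architecture, but it follows a different route from the paper.

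\textbf{The paper's route.} The paper never passes to $|\varphi|$. It works with the signed eigenfunction $(u,v)$ and a positive one $(\varphi,\psi)$. It first uses the sign of the nonlinearity and the scalar SMP to show that if $uv$ has one sign in $\Omega$, then $u,v$ share a strict sign and only $u,v>0$ is consistent. Hence $uv$ must change sign. It then runs sup-ratio arguments (the sets $\Gamma$, $\Gamma_0$). Once $\varphi\ge\gamma|u|$ and $\psi\ge\gamma^r|v|$, the differences $\varphi\pm\gamma u$ and $\psi\pm\gamma^r v$ are $W^{2,p}$ supersolutions, so the SMP and Hopf's lemma push $\gamma$ further. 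The Lane--Emden case is quoted from Montenegro.

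\textbf{Your route and what it buys.} You show that $|\varphi|$ is a sub-eigenvector of $K_1=J_1\circ J_2$, i.e.\ $\rho_1|\varphi|\le K_1|\varphi|$. You then identify $(|\varphi|,|\psi|)$ with a principal eigenfunction, deduce that $\varphi,\psi$ never vanish, and finish by sign bookkeeping. This treats all $(\alpha_1,\alpha_2)$, Lane--Emden included, uniformly and reuses the operator machinery of Sections 2--3. The price is that $|\varphi|\notin W^{2,p}$, so Kato's inequality and the sublinear comparison need a viscosity-type framework.

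You can avoid this altogether. Both $\varphi$ and $-\varphi$ are strong subsolutions of $-\mathcal L_1 w\le \lambda a|\varphi|^{\alpha_1}|\psi|^{\beta_1}$. Set $W=\lambda^{1/(1-\alpha_1)}J_1(|\psi|)$ and $\gamma^*=\inf\{\gamma\ge1:\ \pm\varphi\le\gamma W\}$. If $\gamma^*>1$, then $-\mathcal L_1(\gamma^*W\mp\varphi)\ge(\gamma^*-(\gamma^*)^{\alpha_1})\,\lambda a|\psi|^{\beta_1}W^{\alpha_1}\ge0$, $\not\equiv0$. The SMP with Hopf's lemma then contradicts the minimality of $\gamma^*$, so $|\varphi|\le W$ using strong solutions only.

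\textbf{Two points need repair.}

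First, the equality step does not follow from strong positivity and $1$-homogeneity. The map $K(x,y)=(\max\{x,y\},\max\{x,y\})$ on $[0,\infty)^2$ is order-preserving, strongly positive and $1$-homogeneous with eigenvector $(1,1)$, yet $w=(1,0)$ satisfies $Kw\ge w$. What closes your sup-ratio argument is strict monotonicity, which does hold here. If $0\le w_1\le w_2$ and $w_1\ne w_2$, then $-\mathcal L_2(J_2w_2-J_2w_1)\ge b\,(J_2w_1)^{\alpha_2}(w_2^{\beta_2}-w_1^{\beta_2})\ge0$, $\not\equiv0$, so $J_2w_2-J_2w_1\in V_+^\circ$; likewise for $J_1$.
\begin{itemize}
\item With $t^*=\min\{t:\ |\varphi|\le tu'\}$ and $|\varphi|\ne t^*u'$, you get $\rho_1|\varphi|\le K_1|\varphi|\le (t^*\rho_1-\varepsilon)u'$, a contradiction.
\item Equality throughout your chain then forces $|\psi|=\mu^{1/(1-\alpha_2)}J_2(|\varphi|)$, again by strictness of $J_1$.
\item So $(|\varphi|,|\psi|)$ solves the system by Lemma~\ref{lem:carac}, which is what Step 3 uses.
\end{itemize}

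Second, Step 3 contains an algebra slip. For $\alpha_2>0$ the second relation is $\epsilon_2=\epsilon_2\epsilon_1$, which gives $\epsilon_1=1$ at once. So it is not ``automatic'', and your ``main obstacle'' disappears; your fallback via the second equation is precisely this relation.

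Finally, passing from ``never vanishes'' to ``constant sign'' uses connectedness of $\Omega$. This is implicit in the paper's formulation of the scalar SMP.
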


\begin{proof}
	For the case $\alpha_1=0=\alpha_2$ see \cite{MR1765542}.
	We now prove the case $0<\alpha_1,\alpha_2<1$.
	Let $(u,v)$ be an eigenfunction of problem~\eqref{1.3} associated with $(\lambda_1,\mu_1)\in\mathcal C_1$.
	Proceeding by contradiction, assume that $u$ or $v$ is negative somewhere in $\Omega$.
	
	If $uv\le0$ in $\Omega$, then
	\[
	\left\{
	\begin{array}{lll}
		-{\cal L}_1(-u)\ge0,\\
		-{\cal L}_2(-v)\ge0,
	\end{array}
	\right.
	\]
	in $\Omega$ a.e. and $u=v=0$ on $\partial\Omega$.
	By the Scalar Strong Maximum Principle, we obtain $u<0$ and $v<0$ in $\Omega$, contradicting the assumption $uv\le0$ in $\Omega$.
	
	If $uv\ge0$ in $\Omega$, then
	\[
	\left\{
	\begin{array}{lll}
		-{\cal L}_1u\ge0,\\
		-{\cal L}_2v\ge0,
	\end{array}
	\right.
	\]
	in $\Omega$ a.e. and $u=v=0$ on $\partial\Omega$.
	By the Scalar Strong Maximum Principle, we obtain $u>0$ and $v>0$ in $\Omega$, contradicting the hypothesis that $u$ or $v$ is negative somewhere in $\Omega$.
	
	Therefore, neither $uv\le0$ nor $uv\ge0$ holds in $\Omega$. Hence,
	\[
	H_1:=\{x\in\Omega:uv>0\}
	\qquad\text{and}\qquad
	H_2:=\{x\in\Omega:uv<0\}
	\]
	are both nonempty.
	
	Let $(\varphi,\psi)$ be a positive eigenfunction corresponding to $(\lambda_1,\mu_1)$ and consider
	\(
	\Gamma
	:=
	\{\gamma>0:\ \varphi>-\gamma u
	\ \text{and}\
	\psi>-\gamma^r v
	\ \text{in }\Omega\}.
	\)
	By Hopf's lemma, $\Gamma$ is nonempty and bounded above.
	Set
	\(
	\gamma^\ast:=\sup\Gamma.
	\)
	
	First, suppose that
	\(
	\varphi\ge\gamma^\ast u \mbox{ and }
	\psi\ge(\gamma^\ast)^r v \mbox{ in }H_2.
	\)
	Then
	\[
	\left\{
	\begin{array}{lll}
		-{\cal L}_1(\varphi+\gamma^\ast u)
		=
		\lambda_1a(x)\varphi^{\alpha_1}\psi^{\beta_1}
		+
		\lambda_1a(x)\gamma^\ast
		|u|^{\alpha_1-1}u
		|v|^{\beta_1-1}v
		\ge0,
		\\[3pt]
		-{\cal L}_2(\psi+(\gamma^\ast)^rv)
		=
		\mu_1b(x)\psi^{\alpha_2}\varphi^{\beta_2}
		+
		\mu_1b(x)(\gamma^\ast)^r
		|v|^{\alpha_2-1}v
		|u|^{\beta_2-1}u
		\ge0,
	\end{array}
	\right.
	\]
	in $\Omega$ a.e., and \(
	\varphi+\gamma^\ast u
	=
	\psi+(\gamma^\ast)^r v
	=
	0
	\mbox{ on }\partial\Omega.
	\)
	Applying the Scalar Strong Maximum Principle and Hopf's lemma to each equation, we obtain
	\[
	\varphi+\gamma^\ast u\in V_+^\circ,
	\qquad
	\psi+(\gamma^\ast)^r v\in V_+^\circ.
	\]
	Hence, for $\varepsilon>0$ sufficiently small,
	\[
	\varphi>-(\gamma^\ast+\varepsilon)u
	\qquad\text{and}\qquad
	\psi>-(\gamma^\ast+\varepsilon)^rv
	\quad\text{in }\Omega,
	\]
	contradicting the definition of $\gamma^\ast$.
	Thus, $u,v\ge0$ in $\Omega$.
	
	Otherwise, there exists a nonempty region in $H_2$ such that
	\[
	\varphi<\gamma^\ast u
	\qquad\text{or}\qquad
	\psi<(\gamma^\ast)^rv.
	\]
	Consider
	\(
	\Gamma_0
	:=
	\{\gamma>0:\ 
	\varphi>\gamma\gamma^\ast u
	\ \text{and}\
	\psi>\gamma^r(\gamma^\ast)^r v
	\ \text{in }\Omega
	\}.
	\)
	Again, by Hopf's lemma, $\Gamma_0$ is nonempty and bounded above.
	Set
	\(
	\gamma_0:=\sup\Gamma_0<1.
	\)
	Then
	\(
	\varphi\ge -\gamma_0\gamma^\ast u,
	\psi\ge -\gamma_0^r(\gamma^\ast)^r v,
	\)
	and
	\(
	\varphi\ge\gamma_0\gamma^\ast u,
	\psi\ge\gamma_0^r(\gamma^\ast)^r v
	 \mbox{ in }\Omega.
	\)
	Hence,
	\[
	\left\{
	\begin{array}{lll}
		-{\cal L}_1(\varphi-\gamma_0\gamma^\ast u)
		=
		\lambda_1a(x)\varphi^{\alpha_1}\psi^{\beta_1}
		-
		\lambda_1a(x)\gamma_0\gamma^\ast
		|u|^{\alpha_1-1}u
		|v|^{\beta_1-1}v
		\ge0,
		\\[3pt]
		-{\cal L}_2(\psi-\gamma_0^r(\gamma^\ast)^rv)
		=
		\mu_1b(x)\psi^{\alpha_2}\varphi^{\beta_2}
		-
		\mu_1b(x)\gamma_0^r(\gamma^\ast)^r
		|v|^{\alpha_2-1}v
		|u|^{\beta_2-1}u
		\ge0,
	\end{array}
	\right.
	\]
	in $\Omega$ a.e., and
	\(
	\varphi-\gamma_0\gamma^\ast u
	=\psi-\gamma_0^r(\gamma^\ast)^rv
	=0\mbox{ on }\partial\Omega.
	\)
	Applying the Scalar Strong Maximum Principle and Hopf's lemma, we obtain
	\[
	\varphi-\gamma_0\gamma^\ast u\in V_+^\circ,
	\qquad
	\psi-\gamma_0^r(\gamma^\ast)^rv\in V_+^\circ.
	\]
	Therefore, for $\varepsilon>0$ sufficiently small,
	\(
	\varphi>(\gamma_0+\varepsilon)\gamma^\ast u
	\mbox{ and }
	\psi>(\gamma_0+\varepsilon)^r(\gamma^\ast)^rv
	\text{ in }\Omega,
	\)
	contradicting the definition of $\gamma_0$.
	
	Hence $u,v\ge0$ in $\Omega$ and, by the Scalar Strong Maximum Principle,
	\(
	(u,v)\in V_+^\circ\times V_+^\circ.
	\)
	
	The cases $\alpha_1=0$ or $\alpha_2=0$ follow similarly.
\end{proof}

From the proof of Proposition~\ref{psim2}, for $0\le\alpha_1,\alpha_2<1$, not both zero, we observe that if $(u,v)$ is an eigenfunction of problem~\eqref{1.3} associated with $(\lambda,\mu)$ in the first quadrant above the curve $\mathcal{C}_1$, then the sets  $\{x\in \Omega:uv> 0\}$ and $\{x\in \Omega:uv< 0\}$ are nonempty.

Thus, there is no principal eigenvalue in  $\mathbb{R}^2$ outside $\mathcal{C}_1$. This completes the proof of property~\eqref{prop:uniqueness}.

	\begin{proposition} \label{prop - 1} The principal curve $\mathcal{C}_1$ is locally isolated above.
\end{proposition}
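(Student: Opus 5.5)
We argue by contradiction, following the approach of Montenegro~\cite{MR1765542} for the Lane--Emden case. Suppose that for some $(\lambda_1,\mu_1)\in\mathcal C_1$ there is a sequence of eigenvalues $(\lambda_k,\mu_k)\to(\lambda_1,\mu_1)$ with $(\lambda_k,\mu_k)\notin\overline{\mathcal R_1}$. For $k$ large, both parameters are positive and the pair lies strictly above $\mathcal C_1$. Let $(u_k,v_k)$ be corresponding nontrivial eigenfunctions. We normalize using the natural homogeneity of the system: by Lemma~\ref{lem:homo_en}, $(tu,t^{(1-\alpha_1)/\beta_1}v)$ is again an eigenfunction, so we may assume
\[
\|u_k\|_{L^\infty(\Omega)}+\|v_k\|_{L^\infty(\Omega)}^{\,s}=1, \qquad s=\frac{\beta_1}{1-\alpha_1}.
\]
The right-hand sides $\lambda_k a|u_k|^{\alpha_1-1}u_k|v_k|^{\beta_1-1}v_k$ are then bounded in $L^p(\Omega)$, and similarly for the second equation. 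By Calderón--Zygmund estimates, $(u_k,v_k)$ is bounded in $(W^{2,p}(\Omega))^2$. By compact embedding, a subsequence converges in $C^1(\overline\Omega)$ to some $(u,v)$. Passing to the limit in the strong formulation, which is possible since the nonlinearities are continuous, shows that $(u,v)$ is an eigenfunction for $(\lambda_1,\mu_1)$. The normalization survives the limit, so $(u,v)$ is nontrivial.

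By Proposition~\ref{psim2}, $(u,v)\in V_+^\circ\times V_+^\circ$. In the Lane--Emden case we may also have $-(u,v)\in V_+^\circ\times V_+^\circ$, and we replace $(u_k,v_k)$ by its negative, which is allowed by oddness in that case. Since $V_+^\circ$ is open in $V$ by Lemma~\ref{nonempty}, and the convergence holds in $C^{1}$, we obtain $u_k,v_k\in V_+^\circ$ for $k$ large. For this step we use that $W^{2,p}$-boundedness gives $C^{1,\eta'}$-compactness for $\eta'<\eta$; we accept working with a slightly smaller Hölder exponent. Thus $(\lambda_k,\mu_k)$ has a positive eigenfunction, so by the first part of the proof of Theorem~\ref{teor2} (necessity of \eqref{cond2}) we get $(\lambda_k,\mu_k)\in\mathcal C_1$. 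This contradicts $(\lambda_k,\mu_k)\notin\overline{\mathcal R_1}$.

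The main obstacle is the case $0\le\alpha_1,\alpha_2<1$ not both zero, where the limit must be taken carefully. The only issue is making sure the normalization prevents degeneration, i.e.\ that $u\equiv0$ and $v\equiv 0$ cannot both occur. This is guaranteed by the normalization above. One should also rule out the case where exactly one component vanishes in the limit, since then $(u,v)$ would not be an eigenfunction in the strict sense. If $u\equiv 0$, the equation $-\mathcal L_2 v=\mu_1 b|v|^{\alpha_2-1}v|u|^{\beta_2-1}u=0$ forces $v\equiv0$ by the Scalar Weak Maximum Principle. The symmetric argument applies if $v\equiv 0$. Hence both components are nontrivial, and Proposition~\ref{psim2} applies. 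An alternative route, which I would keep in reserve, is to use the observation after Proposition~\ref{psim2}: eigenfunctions above $\mathcal C_1$ change sign in the sense that $\{u_kv_k<0\}\neq\emptyset$. The $C^1$ convergence to a pair in $V_+^\circ\times V_+^\circ$ contradicts this directly.
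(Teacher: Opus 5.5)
Your proof is correct and follows the same route as the paper. You argue by contradiction, normalize the eigenfunctions, pass to a nontrivial limit eigenfunction at $(\lambda_1,\mu_1)$, use Proposition~\ref{psim2} and the openness of $V_+^\circ$ to obtain positive eigenfunctions for $(\lambda_k,\mu_k)$, and conclude from \eqref{prop:uniqueness} that $(\lambda_k,\mu_k)\in\mathcal C_1$.

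Two of your choices are small gains in rigor over the paper. Your scaling-adapted normalization $\|u_k\|_{L^\infty(\Omega)}+\|v_k\|_{L^\infty(\Omega)}^{s}=1$ makes nontriviality of the limit immediate, whereas the paper's $C^{1,\eta}$ normalization needs an extra appeal to the equation. Your explicit sign flip also covers the Lane--Emden case, which the paper's proof passes over.
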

\begin{proof} Suppose that the claim is false. Then, there are $(\lambda_1,\mu_1)\in\mathcal{C}_1$ and a sequence of eigenvalues pairs $((\lambda_k,\mu_k))_{k \geq 1}$ contained in $B_{\varepsilon_k}(\lambda_1,\mu_1)\cap\overline{{\cal R}_1}^c$, where $(\varepsilon_k)_{k \geq 1}$ is a sequence of positive numbers converging to zero. Take an eigenfunction pair $(\varphi_k,\psi_k)$ corresponding to $(\lambda_k,\mu_k)$ such that $\| \varphi_k \|_{1,\eta} + \| \psi_k \|_{1,\eta} = 1$, that is, a strong solution of the system
\[
	\begin{cases}
		-\,\mathcal{L}_1 \varphi_k=\lambda_k\,a(x)\,\vert \varphi_k\vert^{\alpha_1-1}\varphi_k\vert \psi_k\vert^{\beta_1-1}\psi_k & \text{in }\Omega,\\[2pt]
		-\,\mathcal{L}_2 \psi_k=\mu_k\,b(x)\,\vert \psi_k\vert^{\alpha_2-1}\psi_k\vert \varphi_k\vert^{\beta_2-1}\varphi_k & \text{in }\Omega,\\[2pt]
		\varphi_k=\psi_k=0 & \text{on }\partial\Omega.
	\end{cases}
	\]
Therefore, by the compact embedding $C^{1,\eta}(\overline\Omega)\hookrightarrow C^1(\overline\Omega)$, up to a subsequence, we get
\begin{equation}\label{conv}
\varphi_k \rightarrow \varphi\ \text{and}\ \psi_k \rightarrow\psi\ \text{in}\ C^1_0(\overline{\Omega})\ \text{as}\ k\rightarrow\infty.
\end{equation}
Then, $(\varphi, \psi)\in (W^{2,p}(\Omega))^2$ is a strong solution of the system
\[
\left\{
\begin{array}{llll}
-\,\mathcal{L}_1 \varphi = \lambda_1 a(x) \vert\varphi\vert^{\alpha_1-1}\varphi \vert \psi\vert^{\beta_1-1}\psi & {\rm in} \ \ \Omega,\\
-\,\mathcal{L}_2 \psi = \mu_1 b(x) \vert\psi\vert^{\alpha_2-1}\psi\vert\varphi\vert^{\beta_2-1}\varphi & {\rm in} \ \ \Omega,\\
\varphi =\psi =0 & {\rm on} \ \ \partial\Omega.
\end{array}
\right.
\]
By Proposition \ref{psim2}, we have $(\varphi,\psi)\in V^\circ_+\times V_+^\circ$, and so, from the convergence in (\ref{conv}), we get $(\varphi_k,\psi_k)\in\ V^\circ_+\times V_+^\circ$ for $k$ sufficiently large. So, by property (\ref{prop:uniqueness}), we derive $(\lambda_k,\mu_k) \in {\cal C}_1$ for $k$ large enough, contradicting that $(\lambda_k,\mu_k) \in \overline{{\cal R}_1}^c$ for all $k\in \mathbb{N}$.
\end{proof}

\begin{proposition} \label{propos2.15}
Let $a$, $b$, $\tilde{a}$ and $\tilde{b}$ be functions in $L^p(\Omega)$ such that $0 < a \leq \tilde{a}$ and $0 < b \leq \tilde{b}$ in $\Omega$. Then, $\Lambda_0(a,b) \geq \Lambda_0( \tilde{a}, \tilde{b})$.
\end{proposition}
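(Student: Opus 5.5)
We argue through the operators $J_1,J_2,K_1$ of Section~\ref{sec:proof-teor2}, comparing the principal eigenvalues $\rho_1(a,b)$ and $\rho_1(\tilde a,\tilde b)$ of $K_1$. Since $\Lambda_0=\rho_1^{-\sqrt{\beta_2/(1-\alpha_2)}}$ is decreasing in $\rho_1$, it suffices to show $\rho_1(a,b)\le\rho_1(\tilde a,\tilde b)$. Denote by $\tilde J_1,\tilde J_2,\tilde K_1$ the operators built with the weights $\tilde a,\tilde b$.

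\textbf{Step 1 (pointwise comparison $J_i\preceq\tilde J_i$).} Fix $v\in V_+\setminus\{0\}$. We repeat the monotone iteration of Lemma~\ref{lem:mono_en}, now with weights $m=a v^{\beta_1}\le \tilde m=\tilde a v^{\beta_1}$. Both iterations start from a common supersolution $z_0=\tau S_1(\tilde m)$. Inductively, using the linear comparison principle and the monotonicity of $s\mapsto s^{\alpha_1}$, the iterates satisfy $z_k\le\tilde z_k$. Passing to the limit gives $J_1(v)\le\tilde J_1(v)$, and in the same way $J_2(u)\le\tilde J_2(u)$. We then combine this with Lemma~\ref{lem:mono_en}, which says $J_1$ and $\tilde J_1$ are order preserving:
\[
K_1(u)=J_1(J_2u)\le J_1(\tilde J_2u)\le \tilde J_1(\tilde J_2 u)=\tilde K_1(u).
\]

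\textbf{Step 2 (comparison of principal eigenvalues).} Let $u'\in V_+^\circ$ satisfy $K_1u'=\rho_1u'$, and let $\tilde u'\in V_+^\circ$ satisfy $\tilde K_1\tilde u'=\tilde\rho_1\tilde u'$. Both vectors lie in the interior of the cone, so we may set
\[
t^*:=\sup\{t>0:\ t\tilde u'\preceq u'\}\in(0,\infty).
\]
By Step 1, the $1$-homogeneity of the maps, and their order preservation,
\[
\rho_1u'=K_1u'\ge K_1(t^*\tilde u')=t^*K_1\tilde u'\quad\text{and}\quad \tilde K_1u'\ge \tilde\rho_1 t^*\tilde u'.
\]
These two inequalities alone do not yet compare $\rho_1$ and $\tilde\rho_1$. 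For that we use the Collatz--Wielandt type characterization of the principal eigenvalue in the nonlinear Krein--Rutman setting:
\[
\rho_1=\inf_{w\in V_+^\circ}\,\inf\{r>0:\ K_1w\preceq rw\},
\]
and the analogous formula for $\tilde\rho_1$. Take $w=\tilde u'$. Then $K_1\tilde u'\preceq\tilde K_1\tilde u'=\tilde\rho_1\tilde u'$, so the characterization immediately gives $\rho_1\le\tilde\rho_1$. If this inf-characterization is not available, we prove the needed half directly by a sup argument. Suppose $K_1w\preceq rw$ for some $w\in V_+^\circ$ with $r<\rho_1$. Set $t=\sup\{s:su'\preceq w\}$. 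Then
\[
\rho_1tu'=K_1(tu')\preceq K_1w\preceq rw.
\]
Since $\rho_1/r>1$, this gives $(\rho_1/r)\,tu'\preceq w$, contradicting the maximality of $t$. So $K_1w\preceq rw$ forces $r\ge\rho_1$, which is exactly what we need.

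\textbf{Main obstacle (strict inequality).} The step I expect to be hardest is strictness when $(a,b)\not\equiv(\tilde a,\tilde b)$. The difficulty is upgrading the weak inequality $K_1\tilde u'\preceq\tilde\rho_1\tilde u'$ to the strict one $K_1\tilde u'\ll\tilde\rho_1\tilde u'$. Suppose $a\le\tilde a$ with $a\ne\tilde a$ on a set of positive measure. Then $u:=J_1(v)$ is a strict subsolution for the $\tilde a$-problem. Subtracting the two equations and absorbing the nonlinear term by writing $u^{\alpha_1}-\tilde u^{\alpha_1}=c(x)(u-\tilde u)$ with $c\ge0$, the Scalar Strong Maximum Principle together with Hopf's lemma should yield $\tilde J_1(v)-J_1(v)\in V_+^\circ$. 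The concern is that $c(x)$ may be unbounded near $\partial\Omega$ because of the singular derivative of $s^{\alpha_1}$ at $0$. To avoid this I would instead use the scaling trick: compare $\tilde J_1(v)$ with $(1+\varepsilon)J_1(v)$ via sub/supersolution uniqueness. Once strict order $\ll$ is available in Step 1, the sup argument of Step 2 gives the strict inequality $\rho_1<\tilde\rho_1$, and hence $\Lambda_0(a,b)>\Lambda_0(\tilde a,\tilde b)$.
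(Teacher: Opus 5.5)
Your proof of the inequality is correct, but it follows a different route from the paper's.

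\textbf{The paper's route.} The paper argues directly on the PDE system. Assuming $\Lambda_0(a,b)<\Lambda_0(\tilde a,\tilde b)$, it picks eigenvalues of the two curves on a common ray $\mu/\lambda=\mathrm{const}$, so that $\lambda_1(\tilde a,\tilde b)>\lambda_1(a,b)$ and $\mu_1(\tilde a,\tilde b)>\mu_1(a,b)$. The positive eigenfunction pair $(\tilde\varphi,\tilde\psi)$ is then a strict supersolution of the $(a,b)$-system at $(\lambda_1(a,b),\mu_1(a,b))$. The supremal $\gamma^*$ with $\tilde\varphi\ge\gamma^*\varphi$ and $\tilde\psi\ge(\gamma^*)^{(1-\alpha_1)/\beta_1}\psi$ is then pushed upward by the Scalar Strong Maximum Principle and Hopf's lemma.

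\textbf{Your route.} You pass to the reduced map $K_1$. Step~1 gives the operator inequality $K_1u\preceq\tilde K_1u$ on $V_+$, by rerunning the iteration of Lemma~\ref{lem:mono_en} with weights $av^{\beta_1}\le\tilde a v^{\beta_1}$ and composing with the monotonicity of $J_1$. Step~2 is a Collatz--Wielandt argument on the solid cone. There the contradiction comes from the factor $\rho_1/r>1$ and $1$-homogeneity, with no maximum principle needed at that stage.

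\textbf{Comparison.} Your route isolates a reusable abstract fact: the principal eigenvalue of an order-preserving $1$-homogeneous map is monotone in the map. It also plugs straight into the definition $\Lambda_0=\rho_1^{-\sqrt{\beta_2/(1-\alpha_2)}}$. Its cost is Step~1, which leans on uniqueness for the sublinear problems defining $J_i$. The paper's route never compares solution operators and needs only the positive eigenfunctions and the scalar strong maximum principle. Your preliminary computation with $t^*$ leads nowhere and can simply be deleted.

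\textbf{On strictness.} The proposition as stated only asserts the non-strict inequality, so your last paragraph is not required here. If you want the strict version in (P5), your worry about an unbounded linearization coefficient can be avoided. Step~1 already gives $\tilde J_1(v)\ge J_1(v)$, so directly
$-\mathcal L_1(\tilde J_1(v)-J_1(v))\ge(\tilde a-a)\,J_1(v)^{\alpha_1}v^{\beta_1}\ge 0$.
The scalar strong maximum principle and Hopf's lemma then give $\tilde J_1(v)-J_1(v)\in V_+^\circ$ when $a\not\equiv\tilde a$. An analogous estimate, together with strict monotonicity of $J_1$ in its argument, handles $b\not\equiv\tilde b$. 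Then $K_1\tilde u'\ll\tilde\rho_1\tilde u'$ yields $K_1\tilde u'\preceq(\tilde\rho_1-\varepsilon)\tilde u'$ for small $\varepsilon>0$, and your Step~2 gives $\rho_1<\tilde\rho_1$.
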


\begin{proof} Suppose by contradiction that $\Lambda_0(a,b) < \Lambda_0( \tilde{a}, \tilde{b})$. Let $(\varphi, \psi)$ and $(\tilde{\varphi}, \tilde{\psi})$ be positive eigenfunctions corresponding to the eigenvalues $(\lambda_1(a, b),\mu_1(a, b))$ and $(\lambda_1( \tilde{a}, \tilde{b}), \mu_1( \tilde{a}, \tilde{b}))$, respectively, with $\frac{\mu_1(a,b)}{\lambda_1(a,b)}=\frac{\mu_1( \tilde{a}, \tilde{b})}{\lambda_1( \tilde{a}, \tilde{b})}$.

Consider $\Gamma =\{\gamma>0 : \tilde{\varphi} > \gamma \varphi\ {\rm and}\ \tilde{\psi} > \gamma^{\frac{1-\alpha_1}{\beta_1}} \psi\ {\rm in}\ \Omega\}$ and set $\gamma^{\ast}=\sup \Gamma$. Since $\lambda_1( \tilde{a}, \tilde{b}) > \lambda_1(a, b)$ and $\mu_1( \tilde{a}, \tilde{b}) > \mu_1(a, b)$, we obtain
\begin{eqnarray*}
-\mathcal{L}_1 (\tilde{\varphi}  - \gamma^{\ast} \varphi) &=& \lambda_1( \tilde{a}, \tilde{b}) \tilde{a}(x) \tilde{\varphi}^{\alpha_1}\tilde{\psi}^{\beta_1} - \gamma^{\ast} \lambda_1(a, b) a(x) \varphi^{\alpha_1}\psi^{\beta_1} \\
&\geq& (\lambda_1( \tilde{a}, \tilde{b}) - \lambda_1(a, b)) \gamma^{\ast} a(x) \varphi^{\alpha_1}\psi^{\beta_1} > 0, \\
-\mathcal{L}_2 (\tilde{\psi}  - {\gamma^{\ast}}^{\frac{1-\alpha_1}{\beta_1}}\psi) &=& \mu_1( \tilde{a}, \tilde{b}) \tilde{b}(x)\tilde{\psi}^{\alpha_2} \tilde{\varphi}^{\beta_2} - {\gamma^{\ast}}^{\frac{1-\alpha_1}{\beta_1}}\mu_1(a,b) b(x)\psi^{\alpha_2} \varphi^{\beta_2} \\
&\geq& (\mu_1( \tilde{a}, \tilde{b}) - \mu_1(a, b))  {\gamma^{\ast}}^{\frac{1-\alpha_1}{\beta_1}} b(x) \psi^{\alpha_2}\varphi^{\beta_2} > 0,
\end{eqnarray*}
in $\Omega$ a.e. and $\tilde{\varphi}  - \gamma^{\ast} \varphi=0=\tilde{\psi}  - {\gamma^{\ast}}^{\frac{1-\alpha_1}{\beta_1}}\psi$ in $\partial\Omega$. Then, by the Scalar Strong Maximum Principle, 
$\tilde{\varphi} > (\gamma^{\ast} + \varepsilon)\varphi$
and
$\tilde{\psi} > (\gamma^{\ast} + \varepsilon)^{\frac{1-\alpha_1}{\beta_1}}\psi$
in $\Omega$ for sufficiently small $\varepsilon>0$, yielding a contradiction.
\end{proof}
	
	\section{Proof of Theorem \ref{MP}: {\bf (WMP)}}

\begin{proof}[Proof of Theorem \ref{MP}]
	We first prove item (i), namely, the case $0<\alpha_1,\alpha_2<1$. Let $(\lambda, \mu) \in -\mathcal{C}_1$ and $(\tilde{\varphi},\tilde{\psi})$ be a positive eigenfunction corresponding to $(-\lambda, -\mu)$. Thus, $(-\tilde{\varphi},-\tilde{\psi})$ is a negative eigenfunction corresponding to $(\lambda, \mu)$. Therefore, {\bf (WMP)} fails in $\Omega$.

	Suppose now that $(\lambda, \mu) \in \mathbb{R}^2$ is a fixed pair outside $-\overline{\mathcal{R}_1}$. If $(\lambda, \mu) \in (\mathbb{R}_-^*)^2$, we obtain $-\lambda > \lambda_1$ and $-\mu > \mu_1$, where $(\lambda_1, \mu_1)$ is a principal eigenvalue of \eqref{1.3} with $\frac{\mu}{\lambda}=\frac{\mu_1}{\lambda_1}$. We denote by $(\varphi,\psi)$ a positive eigenfunction associated to $(\lambda_1, \mu_1)$. Then, $(-\varphi,-\psi)$ satisfies
	\[
	\left\{
	\begin{array}{llll}
		-\mathcal{L}_1 (-\varphi) - \lambda a(x)\vert-\varphi\vert^{\alpha_1-1}(-\varphi)\vert-\psi\vert^{\beta_1-1}(-\psi) &=& - \lambda_1 a(x)\varphi^{\alpha_1} \psi^{\beta_1} - \lambda a(x)\varphi^{\alpha_1} \psi^{\beta_1}\\
		&=& (-\lambda - \lambda_1)a(x)\varphi^{\alpha_1} \psi^{\beta_1} \geq (\not\equiv)\ 0  \ \ & {\rm in}\ \Omega,\\
		-\mathcal{L}_2 (-\psi) - \mu b(x)\vert-\psi\vert^{\alpha_2-1}(-\psi)\vert -\varphi\vert^{\beta_2-1}(-\varphi) &=& -\mu_1 b(x)\psi^{\alpha_2} \varphi^{\beta_2} - \mu b(x)\psi^{\alpha_2} \varphi^{\beta_2} \\
		&=& (-\mu - \mu_1)b(x)\psi^{\alpha_2} \varphi^{\beta_2} \geq (\not\equiv)\ 0 \ \ & {\rm in}\ \Omega
	\end{array}
	\right.
	\]
	and $-\varphi = 0 =-\psi$ on $\partial\Omega$. Since $-\varphi,-\psi<0$ in $\Omega$, {\bf (WMP)} fails in $\Omega$ for $(\lambda, \mu)$ in the third quadrant below $-\mathcal{C}_1$.
	
	Now, assume that $\lambda > 0$. Thus, there is $(\lambda_1, \mu_1)\in \mathcal{C}_1$ with $\lambda_1 > 0$ small enough (and so $\mu_1>0$ large enough) so that $-\lambda < - \lambda_1$ and $\mu > - \mu_1$. Then, $(-\varphi,\psi)$ satisfies
	\[
	\left\{
	\begin{array}{llll}
		-\mathcal{L}_1 (-\varphi) - \lambda a(x)\vert-\varphi\vert^{\alpha_1-1}(-\varphi) \psi^{\beta_1} &=& - \lambda_1 a(x)\varphi^{\alpha_1} \psi^{\beta_1} + \lambda a(x)\varphi^{\alpha_1} \psi^{\beta_1}\\
		&=& (\lambda - \lambda_1)a(x)\varphi^{\alpha_1} \psi^{\beta_1} \geq (\not\equiv)\ 0  \ \ & {\rm in}\ \Omega,\\
		-\mathcal{L}_2 (\psi) - \mu b(x)\psi^{\alpha_2}\vert -\varphi\vert^{\beta_2-1}(-\varphi) &=& \mu_1 b(x)\psi^{\alpha_2} \varphi^{\beta_2} + \mu b(x)\psi^{\alpha_2} \varphi^{\beta_2} \\
		&=& (\mu + \mu_1)b(x)\psi^{\alpha_2} \varphi^{\beta_2} \geq (\not\equiv)\ 0 \ \ & {\rm in}\ \Omega
	\end{array}
	\right.
	\]
	and $-\varphi = 0 =\psi$ on $\partial\Omega$. But $-\varphi<0$ in $\Omega$ and so {\bf (WMP)} fails in $\Omega$.
	
	In the case $\mu > 0$, there is $(\lambda_1, \mu_1)\in \mathcal{C}_1$ with $\lambda_1 > 0$ large enough (and so $\mu_1>0$ small enough) so that $\lambda > -\lambda_1$ and $-\mu < -\mu_1$. Therefore, $(\varphi,-\psi)$ satisfies
	\[
	\left\{
	\begin{array}{llll}
		-\mathcal{L}_1 (\varphi) - \lambda a(x)\varphi^{\alpha_1} \vert -\psi\vert^{\beta_1-1}(-\psi) &=&  \lambda_1 a(x)\varphi^{\alpha_1} \psi^{\beta_1} + \lambda a(x)\varphi^{\alpha_1} \psi^{\beta_1}\\
		&=& (\lambda + \lambda_1)a(x)\varphi^{\alpha_1} \psi^{\beta_1} \geq (\not\equiv)\ 0  \ \ & {\rm in}\ \Omega,\\
		-\mathcal{L}_2 (-\psi) - \mu b(x)\vert-\psi\vert^{\alpha_2-1}(-\psi)\varphi^{\beta_2} &=& -\mu_1 b(x)\psi^{\alpha_2} \varphi^{\beta_2} + \mu b(x)\psi^{\alpha_2} \varphi^{\beta_2} \\
		&=& (\mu - \mu_1)b(x)\psi^{\alpha_2} \varphi^{\beta_2} \geq (\not\equiv)\ 0 \ \ & {\rm in}\ \Omega
	\end{array}
	\right.
	\]
	and $\varphi = 0 =-\psi$ on $\partial\Omega$. Note that, $-\psi<0$ in $\Omega$ and again {\bf (WMP)} fails in $\Omega$.
	
	Finally, we show the sufficiency of the condition
	\(
	(\lambda,\mu)\in -(\overline{{\cal R}_1}\setminus{\cal C}_1).
	\)
	
	Let $(u,v)$ be a supersolution of problem~\eqref{1.3}.
	
	It is clear that {\bf (WMP)} holds in $\Omega$ when $\lambda=\mu=0$, or whenever $u$ or $v$ is trivial in $\Omega$. Therefore, we may assume that both $u$ and $v$ are nontrivial in $\Omega$.
	
	Let us first consider the case $(\lambda,\mu)\in-{\cal R}_1$. Proceeding by contradiction, assume that $u$ or $v$ is negative somewhere in $\Omega$.
	
	If $uv\leq0$ throughout $\Omega$, then
	\[
	-\mathcal L_1u\ge0,
	\qquad
	-\mathcal L_2v\ge0
	\]
	in $\Omega$ a.e. and $u,v\ge0$ on $\partial\Omega$. Hence, by the Scalar Strong Maximum Principle, we derive the contradiction
	\[
	u,v>0
	\quad
	\text{in }\Omega.
	\]
	
	Therefore, there exists $y_0\in\Omega$ such that
	\(
	u(y_0)
	v(y_0)>0.
	\)
	Set
	\(
	M_1:=\{x\in\Omega:\ u(x)>0 \mbox{ and } v(x)>0\},
	\)
	and
	\(
	M_2:=\{x\in\Omega:\ u(x)\le0 \mbox{ or } v(x)\le0\}.
	\)
	Then
	\[
	\Omega=M_1\cup M_2.
	\]
	
	Let $(\varphi,\psi)$ be a positive eigenfunction corresponding to $(\lambda_1,\mu_1)$, with
	\(
	\mu/\lambda
	=\mu_1/\lambda_1.
	\)
	Consider the set
	\(
	\Gamma
	:=
	\left\{
	\gamma>0:
	\varphi>-\gamma u
	\text{ and }
	\psi>-\gamma^rv
	\text{ in }\Omega
	\right\},
	\)
	where \( r=(1-\alpha_1)/\beta_1=\beta_2/(1-\alpha_2). \)
	By Hopf's lemma, $\Gamma$ is nonempty and clearly bounded above. Set
	\[
	\gamma^\ast
	=
	\sup\Gamma.
	\]
	
	We first claim that
	\(
	\varphi>-(\gamma^\ast+\varepsilon)u, \mbox{ }
	\psi>-(\gamma^\ast+\varepsilon)^rv
	\)
	in $\Omega$ for $\varepsilon\sim0$.
	
	Suppose first that $M_1=\emptyset$. Then, since
	\(
	-\lambda<\lambda_1,
	-\mu<\mu_1,
	\)
	we obtain
	\begin{equation}\label{p01}
		\left\{
		\begin{array}{lll}
			-{\cal L}_1 (\varphi + \gamma^{\ast} u)
			\geq
			(\lambda_1+\lambda)a(x)\varphi^{\alpha_1}\psi^{\beta_1}
			>0,
			\\[3pt]
			-{\cal L}_2 (\psi + (\gamma^\ast)^r v)
			\geq
			(\mu_1+\mu)b(x)\psi^{\alpha_2}\varphi^{\beta_2}
			>0,
		\end{array}
		\right.
	\end{equation}
	in $\Omega$ a.e. Moreover, \(\varphi+\gamma^\ast u\ge0\) and \(\psi+(\gamma^\ast)^rv\ge0\) on \(\partial\Omega\). Applying the Scalar Strong Maximum Principle to each equation, we obtain
	\[
	\varphi+\gamma^\ast u>0,
	\qquad
	\psi+(\gamma^\ast)^rv>0
	\]
	in $\Omega$. Hence, \(\varphi>-(\gamma^\ast+\varepsilon)u\) and \(\psi>-(\gamma^\ast+\varepsilon)^rv\) in \(\Omega\) for \(\varepsilon\sim0\), contradicting the definition of \(\gamma^\ast\). Therefore,
	\[
	u,v\ge0
	\quad
	\text{in }\Omega.
	\]
	
	We now assume that $M_1\neq\emptyset$. Since \(\varphi>-(\gamma^\ast+\varepsilon)u\) and \(\psi>-(\gamma^\ast+\varepsilon)^rv\) hold trivially in $M_1$ for every $\varepsilon>0$, it suffices to prove these inequalities in $M_2$.
	
	Suppose, by contradiction, that there exists \(
	y\in M_2
	\)
	such that \(\varphi(y)=-\gamma^\ast u(y)\) or \(\psi(y)=-(\gamma^\ast)^rv(y)\).
	Then
	\(
	u(y)<0
	\mbox{ or }
	v(y)<0.
	\)
	Hence
	\(
	y\notin\overline{M_1}\cup\partial M_2,
	\)
	and so there exists an open ball
	\(
	B\subset M_2
	\)
	containing $y$. Since inequalities~\eqref{p01} hold in $M_2$, we have
	\[
	-{\cal L}_1 (\varphi+\gamma^\ast u)>0,
	\qquad
	-{\cal L}_2 (\psi+(\gamma^\ast)^rv)>0
	\]
	in $B$ a.e. Moreover,
	\[
	\varphi+\gamma^\ast u\ge0,
	\qquad
	\psi+(\gamma^\ast)^rv\ge0
	\]
	in $\partial B$. Applying the Scalar Strong Maximum Principle, we obtain
	\[
	\varphi+\gamma^\ast u>0,
	\qquad
	\psi+(\gamma^\ast)^rv>0
	\]
	in $B$, contradicting the choice of $y$. Thus,
	\[
	\varphi>-\gamma^\ast u,
	\qquad
	\psi>-(\gamma^\ast)^rv
	\]
	in $M_2$.
	
	If \(
	\partial\Omega\cap\partial M_2
	=
	\emptyset,
	\)
	then the conclusion follows.
	
	Now suppose that
	\(
	\partial\Omega\cap\partial M_2
	\neq
	\emptyset.
	\)
	Let
	\(
	x_0
	\in
	\partial\Omega\cap\partial M_2.
	\)
	If there exists $\varepsilon_0>0$ such that
	\(
	u,v\ge0
	\)
	in
	\(
	B(x_0,\varepsilon_0)\cap\Omega,
	\)
	then
	\[
	\varphi>-(\gamma^\ast+\varepsilon)u,
	\qquad
	\psi>-(\gamma^\ast+\varepsilon)^rv
	\]
	hold there for all $\varepsilon>0$.
	
	Assume therefore that, for every $\varepsilon_0>0$, there exists	\(
	x\in B(x_0,\varepsilon_0)\cap\Omega
	\)
	such that
	\(
	u(x)<0
	\mbox{ or }
	v(x)<0.
	\)
	
	If
	\(
	u(x_0)=0
	\mbox{ and }
	\partial_\nu u(x_0)\le0,
	\)
	then, since $\varphi\in V_+^\circ$,
	\(
	\partial_\nu\varphi(x_0)<0,
	\)
	and therefore
	\[
	\partial_\nu(\varphi+\gamma^\ast u)(x_0)
	<
	0.
	\]
	The analogous conclusion holds for $v$ and $\psi$.
	
	Finally, if
	\(
	u(x_0)=0
	\mbox{ and }
	\partial_\nu u(x_0)>0,
	\)
	or
	\(
	v(x_0)=0
	\mbox{ and }
	\partial_\nu v(x_0)>0,
	\)
	then
	\(
	x_0
	\notin
	\partial M_1.
	\)
	Hence there exists an open set $\Omega'$, with boundary of class $C^{1,1}$, such that
	\[
	\Omega'
	\subset
	M_2,
	\]
	and a neighborhood $W_0$ of $x_0$ in $\partial\Omega$ satisfying
	\(
	W_0
	\subset
	\partial\Omega'.
	\)
	Moreover, inequalities~\eqref{p01} hold in $\Omega'$. By Hopf's lemma,
	\[
	\partial_\nu(\varphi+\gamma^\ast u)(x_0)<0
	\]
	or
	\[
	\partial_\nu(\psi+(\gamma^\ast)^rv)(x_0)<0.
	\]
	
	Consequently,
	\[
	\varphi>-(\gamma^\ast+\varepsilon)u,
	\qquad
	\psi>-(\gamma^\ast+\varepsilon)^rv
	\]
	in $\Omega$ for $\varepsilon\sim0$, contradicting again the definition of $\gamma^\ast$. Therefore,
	\[
	u,v\ge0
	\quad
	\text{in }\Omega.
	\]
	
Now, if $\lambda=0$ and $\mu<0$, then $u\ge0$ in $\Omega$. Since $v$ is nontrivial, if $uv\le0$ throughout $\Omega$, then applying the Scalar Strong Maximum Principle to the second equation, we derive the contradiction $v>0$ in $\Omega$. Therefore, proceeding by contradiction, assume that $v$ changes sign in $\Omega$. Arguing as in the previous case, we again obtain a contradiction. Hence,
\(
u,v\ge0
\)
in $\Omega$.

The case $\mu=0$ and $\lambda<0$ can be treated analogously. This completes the proof of item~(i).

Items~(ii) and~(iii) follow similarly to item~(i), with the necessary modifications.
\end{proof}

\section{Lower Estimate and Proof of Theorem \ref{sm}}

As a key ingredient in the proof of the theorem and of independent interest, we present the following lower bound for principal eigenvalues corresponding to the problem \eqref{1.3}:

\begin{theorem} \label{lower} Let $a,b\in L^p(\Omega)$ and 
\[
\mathcal{C}_1=\left\{(\lambda,\mu)\in (\R_+^*)^2:\lambda^{\frac{1}{\sqrt{\beta_1(1-\alpha_1)}}}\mu^{\frac{1}{\sqrt{\beta_2(1-\alpha_2)}}}=\Lambda_0 \right\},
\] be the principal curve associated to \eqref{1.3}. Assume that $T B |\Omega|^\frac{1}{n} < 1$ with $B=\max\{B_1,B_2\}$. Then

\begin{eqnarray}
&& \Lambda_0 \geq \frac{(1-B_1T\vert\Omega\vert^{\frac{1}{n}})^{\frac{1}{\theta}}}{B_1^{\frac{1}{\theta}}\|a\|^{\frac{1}{\theta}}_{L^n(\Omega)}}\frac{(1-B_2T\vert\Omega\vert^{\frac{1}{n}})^{\frac{1}{\zeta}}}{B_2^{\frac{1}{\zeta}}\|b\|^{\frac{1}{\zeta}}_{L^n(\Omega)}},  \label{lb1} 
\end{eqnarray}
where $\theta:=\sqrt{\beta_1(1-\alpha_1)}$, $\zeta:=\sqrt{\beta_2(1-\alpha_2)}$.
\end{theorem}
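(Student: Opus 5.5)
The plan is to derive \eqref{lb1} from the Alexandrov--Bakelman--Pucci (ABP) estimate applied to each equation of the system, evaluated on a positive eigenfunction. I interpret $B_1,B_2$ as the ABP constants of the principal parts of $\mathcal L_1,\mathcal L_2$ including the drift. These depend only on $n$, $c_0$, $C_0$, $T$ and $\operatorname{diam}\Omega$. The estimate I will use is
\[
\sup_\Omega w\le \sup_{\partial\Omega} w^+ + B_l\,\|h\|_{L^n(\Omega)}\quad\text{whenever } -\bigl(a^l_{ij}\partial_{ij}w+b^l_i\partial_iw\bigr)\le h .
\]
This is the form that makes the hypothesis $TB|\Omega|^{1/n}<1$ natural.

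First I fix any $(\lambda,\mu)\in\mathcal C_1$. By Theorem~\ref{teor2} it has a positive eigenfunction $(\varphi,\psi)\in V_+^\circ\times V_+^\circ$. I move the zero order term to the right-hand side, writing $-(a^1_{ij}\partial_{ij}+b^1_i\partial_i)\varphi=\lambda a\varphi^{\alpha_1}\psi^{\beta_1}+c_1\varphi$. Applying ABP with $\varphi=0$ on $\partial\Omega$ and $|c_1|\le T$ gives, with $X=\|\varphi\|_\infty$ and $Y=\|\psi\|_\infty$,
\[
X\le B_1\bigl(\lambda\|a\|_{L^n}X^{\alpha_1}Y^{\beta_1}+T|\Omega|^{1/n}X\bigr),
\]
that is, $X^{1-\alpha_1}\le \lambda A_1Y^{\beta_1}$, where $A_1=B_1\|a\|_{L^n}/(1-B_1T|\Omega|^{1/n})$. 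The absorption uses $1-B_1T|\Omega|^{1/n}>0$. In the same way, $Y^{1-\alpha_2}\le \mu A_2X^{\beta_2}$ with the analogous $A_2$.

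Next I eliminate $Y$ and use $X>0$. Substituting $Y\le(\mu A_2)^{1/(1-\alpha_2)}X^{\beta_2/(1-\alpha_2)}$ into the first inequality gives
\[
X^{1-\alpha_1}\le\lambda A_1(\mu A_2)^{\beta_1/(1-\alpha_2)}X^{\beta_1\beta_2/(1-\alpha_2)}.
\]
By \eqref{cond1} the exponent on the right is $1-\alpha_1$, so $1\le \lambda A_1(\mu A_2)^{\beta_1/(1-\alpha_2)}$. I then raise this to the power $1/\theta$. A short computation with \eqref{cond1} shows $\bigl(\beta_1/((1-\alpha_2)\theta)\bigr)^2=\beta_1/(\beta_2(1-\alpha_2)\cdot\beta_1/\beta_1)\cdot$ simplifies to $1/\zeta^2$, i.e. $\beta_1/((1-\alpha_2)\theta)=1/\zeta$. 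The inequality therefore becomes
\[
\lambda^{1/\theta}\mu^{1/\zeta}\ \ge\ A_1^{-1/\theta}A_2^{-1/\zeta}.
\]
Since the left-hand side equals $\Lambda_0$ on $\mathcal C_1$, this is exactly \eqref{lb1}.

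The main obstacle is justifying the ABP step in the form used: with drift $b^l_i\in L^\infty$, with a constant independent of the point on $\mathcal C_1$, and with the zero order term moved to the right-hand side rather than sign-conditioned. I would rely on the standard ABP for $W^{2,n}_{loc}\cap C(\overline\Omega)$ strong solutions with bounded drift, for instance Gilbarg--Trudinger Theorem~9.1. Positivity of $\varphi$ and $\psi$ makes the relevant quantities $\sup\varphi=\|\varphi\|_\infty$ and $\sup\psi=\|\psi\|_\infty$. The weights enter only through $\|a\|_{L^n}$ and $\|b\|_{L^n}$, which are finite because $p>n$ and $\Omega$ is bounded.
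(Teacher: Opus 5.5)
Your proposal is correct and follows essentially the paper's own argument: apply the ABP estimate (Gilbarg--Trudinger, Theorem~9.1) to each equation at a positive eigenfunction, absorb the zero-order term using $TB_l|\Omega|^{1/n}<1$, and eliminate $\|\psi\|_{L^\infty(\Omega)}$ via $\beta_1\beta_2=(1-\alpha_1)(1-\alpha_2)$ together with $\lambda^{1/\theta}\mu^{1/\zeta}=\Lambda_0$. The differences are cosmetic: the paper moves only $c_l^+\varphi$ to the right-hand side (keeping $\mathcal{L}_l-c_l^+$, whose zero-order coefficient is $\le 0$ as Theorem~9.1 requires), whereas you move all of $c_l\varphi$, which yields the same bound; your exponent identity $\beta_1/((1-\alpha_2)\theta)=1/\zeta$ is right, although the displayed intermediate expression you wrote for it is garbled.
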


\begin{proof}

Let $(\lambda_1, \mu_1)\in \mathcal{C}_1$ and $(\varphi,\psi) \in V_+^\circ\times V_+^\circ$ be a positive eigenfunction of the system \eqref{1.3} associated to $(\lambda_1, \mu_1)$. Denote $\tilde{{\cal L}}_l = {\cal L}_l - c_l^+(x)$ for $l=1,2$. Thus, $(\varphi,\psi)$ satisfies

\[
\left\{
\begin{array}{llll}
-\tilde{{\cal L}}_1 \varphi = \lambda_1 a(x) \varphi^{\alpha_1}\psi^{\beta_1} + c_1^+(x) \varphi & {\rm in} \ \ \Omega,\\
-\tilde{{\cal L}}_2 \psi = \mu_1 b(x) \psi^{\alpha_2} \varphi^{\beta_2} + c_2^+(x) \psi & {\rm in} \ \ \Omega,\\
\varphi= \psi=0 & {\rm on} \ \ \partial\Omega.
\end{array}
\right.
\]
Using the ABP estimate for elliptic operators (see \cite[Theorem~9.1]{MR737190}) to the first above equation, we get
\begin{eqnarray*}
||\varphi||_{L^\infty(\Omega)} &=& \sup_\Omega \varphi \leq B_1 ||\lambda_1 a(x)\varphi^{\alpha_1} \psi^{\beta_1} + c_1^+(x) \varphi||_{L^n(\Omega)}\\
&\leq& \lambda_1 B_1 ||a||_{L^n(\Omega)}||\varphi||^{\alpha_1}_{L^\infty(\Omega)} ||\psi||^{\beta_1}_{L^\infty(\Omega)} + B_1 ||c_1^+||_{L^p(\Omega)} ||\varphi||_{L^\infty(\Omega)}\\
&\leq& \lambda_1 B_1 ||a||_{L^n(\Omega)} ||\varphi||^{\alpha_1}_{L^\infty(\Omega)} ||\psi||^{\beta_1}_{L^\infty(\Omega)}  + T B_1 |\Omega|^\frac{1}{n} ||\varphi||_{L^\infty(\Omega)}.
\end{eqnarray*}
Assuming that $T B_1 |\Omega|^\frac{1}{n} < 1$, we then get

\begin{equation}\label{i1}
||\varphi||_{L^\infty(\Omega)} \leq \frac{\lambda_1 B_1 ||a||_{L^p(\Omega)}}{ 1 - T B_1 |\Omega|^\frac{1}{n}} ||\varphi||^{\alpha_1}_{L^\infty(\Omega)} ||\psi||^{\beta_1}_{L^\infty(\Omega)} .
\end{equation}
Proceeding in a similar way with the second equation of the above system, one also has

\begin{equation}\label{i2}
||\psi||_{L^\infty(\Omega)} \leq \frac{\mu_1 B_2 ||b||_{L^n(\Omega)}}{ 1 - T B_2 |\Omega|^\frac{1}{n}} ||\varphi||^{\beta_2}_{L^\infty(\Omega)} ||\psi||^{\alpha_2}_{L^\infty(\Omega)} 
\end{equation}
assuming that $T B_2 |\Omega|^\frac{1}{n} < 1$. Thus, combining inequalities \eqref{i1} and \eqref{i2} and applying the hypothesis 
\[
\beta_1 \beta_2 = (1-\alpha_1)(1-\alpha_2)\ \text{ and }\ \lambda_1^{\frac{1}{\theta}}\mu_1^{\frac{1}{\zeta}} = \Lambda_0,
\]
we obtain \eqref{lb1}. The proof is complete. 
\end{proof}	

\begin{proof}[ Proof of Theorem \ref{sm}] We first prove item (i), namely, the case $0<\alpha_1,\alpha_2<1$. By Theorem \ref{MP} (i), {\bf (WMP)} implies $(\lambda, \mu) \in-( \overline{\mathcal{R}_1} \setminus \mathcal{C}_1)$, that is, $\lambda,\mu\leq 0$. 

Conversely, we assume that $\lambda \leq 0$ and $\mu \leq 0$. By Theorem 2.6 of \cite{MR1258192}, there exists a constant $\sigma_1 > 0$ depending only on $n$, $c_0$, $C_0$ and $T$ such that ${\cal L}_1$ and ${\cal L}_2$ satisfy Scalar Strong Maximum Principle in $\Omega$ provided that $|\Omega| < \sigma_1$. If $\lambda = 0$ or $\mu = 0$, then by Theorem \ref{MP}, the desired {\bf (WMP)} follows.

Finally, suppose that $\lambda < 0$ and $\mu < 0$. In this case, Theorem \ref{lower} plays an important role in the proof.

Consider the constants $\sigma_2 > 0$ and $\sigma_3 > 0$ given by

\[
\sigma_2 = \frac{1}{2^n T^n B_1^n}\ \  \sigma_3 = \frac{1}{2^n T^n B_2^n}
\]
and set $\sigma = \min\{\sigma_1,\sigma_2,\sigma_3\}$. In addition, we define

\[
\kappa_1 = \frac{1}{-2 \lambda B_1},\ \ \kappa_2 = \frac{1}{-2 \mu B_2}.
\]
Then, applying the estimate \eqref{lb1} of Theorem \ref{lower}, we get
\begin{eqnarray*}
&& \Lambda_0 > \frac{(1-B_1T\eta_2^{\frac{1}{n}})^{\frac{1}{\theta}}}{(B_1\kappa_1)^{\frac{1}{\theta}}}\frac{(1-B_2T\eta_3^{\frac{1}{n}})^{\frac{1}{\zeta}}}{(B_2\kappa_2)^{\frac{1}{\zeta}}}= \frac{\frac{1}{2^{\frac{1}{\theta}}}}{\frac{1}{(-2 \lambda)^{\frac{1}{\theta}}}}\frac{\frac{1}{2^{\frac{1}{\zeta}}}}{\frac{1}{(-2 \mu)^{\frac{1}{\zeta}}}} = (-\lambda)^{\frac{1}{\theta}}(-\mu)^{\frac{1}{\zeta}}
\end{eqnarray*}
whenever $|\Omega| < \sigma$, $\|a\|_{L^n(\Omega)}<\kappa_1$ and $\|b\|_{L^n(\Omega)} < \kappa_2$. Consequently, we get $(\lambda, \mu) \in -(\overline{{\cal R}_1} \setminus \mathcal{C}_1)$ for such domains and hence, by Theorem \ref{MP}, the desired {\bf (WMP)} follows.

Items (ii) and (iii) follow similarly to item (i), with standard modifications.
\end{proof}

\section{The Nonhomogeneous Problem: A Priori Bounds and Existence}

\begin{proof}[ Proof of Theorem \ref{thm:apriori-general}]
	The argument proceeds by contradiction, through a normalization procedure that balances the two equations of the system simultaneously. Suppose that \eqref{eq:apriori-final-en} fails. Then there exist sequences
	\[
	(f_k,g_k)\in (L^{p}(\Omega))^2,\qquad (\varphi_k,\psi_k)\in (W^{2,p}(\Omega))^2,
	\]
	and corresponding strong solutions $(u_k,v_k)\in (W^{2,p}(\Omega))^2$ of \eqref{eq:system-general-en}, with data $(f_k,g_k,\varphi_k,\psi_k)$, such that
	\begin{equation}\label{eq:contradiction-ineq-nosteps}
		\|u_k\|_{L^\infty(\Omega)}+\|v_k\|_{L^\infty(\Omega)}^{\,s}
		>
		k\Big(
		\|\varphi_k\|_{W^{2,p}(\Omega)}
		+\|\psi_k\|_{W^{2,p}(\Omega)}^{\,s}
		+\|f_k\|_{L^p(\Omega)}
		+\|g_k\|_{L^p(\Omega)}^{\,s}
		\Big).
	\end{equation}
	Define the scaling factor
	\[
		S_k:=\|u_k\|_{L^\infty(\Omega)}+\|v_k\|_{L^\infty(\Omega)}^{\,s}\,>\,0,
	\]
	and set the normalized unknowns and data as
	\begin{equation}\label{eq:tilde-unknowns-nosteps}
		\tilde u_k:=\frac{u_k}{S_k},\qquad
		\tilde v_k:=\frac{v_k}{S_k^{\,r}},\qquad
		\tilde\varphi_k:=\frac{\varphi_k}{S_k},\quad
		\tilde\psi_k:=\frac{\psi_k}{S_k^{\,r}},\quad
		\tilde f_k:=\frac{f_k}{S_k},\quad
		\tilde g_k:=\frac{g_k}{S_k^{\,r}},
	\end{equation}
	where $r=\dfrac{1}{s}$. By construction,
	\begin{equation}\label{eq:Linf-normalization-nosteps}
		\|\tilde u_k\|_{L^\infty(\Omega)}+\|\tilde v_k\|_{L^\infty(\Omega)}^{\,s}=1,
	\end{equation}
	while from \eqref{eq:contradiction-ineq-nosteps} one obtains
	\begin{equation}\label{eq:data-go-to-0-nosteps}
		\|\tilde\varphi_k\|_{W^{2,p}(\Omega)}
		+\|\tilde\psi_k\|_{W^{2,p}(\Omega)}^{\,s}
		+\|\tilde f_k\|_{L^p(\Omega)}
		+\|\tilde g_k\|_{L^p(\Omega)}^{\,s}
		\;\le\;\frac1k\;\xrightarrow[k\to\infty]{}\;0.
	\end{equation}
	
	Dividing the first equation of \eqref{eq:system-general-en} by $S_k$ and using \eqref{eq:tilde-unknowns-nosteps}, one finds
	\begin{eqnarray*}
		-\mathcal L_1\tilde u_k
		&=&\lambda\,a(x)\,
		\frac{|v_k|^{\beta_1-1}v_k\,|u_k|^{\alpha_1-1}u_k}{S_k}
		+\tilde f_k(x)\nonumber\\
		&=&\lambda\,a(x)\,S_k^{\,E_1}\,
		|\tilde v_k|^{\beta_1-1}\tilde v_k\,|\tilde u_k|^{\alpha_1-1}\tilde u_k
		+\tilde f_k(x),
		\label{eq:tilde-eq-1-nosteps}
	\end{eqnarray*}
	where
	\[
	E_1:=\beta_1 r+\alpha_1-1.
	\]
	Similarly, dividing the second equation by $S_k^{\,r}$ yields
	\begin{eqnarray*}
		-\mathcal L_2\tilde v_k
		&=&\mu\,b(x)\,
		\frac{|u_k|^{\beta_2-1}u_k\,|v_k|^{\alpha_2-1}v_k}{S_k^{\,r}}
		+\tilde g_k(x)\nonumber\\
		&=&\mu\,b(x)\,S_k^{\,E_2}\,
		|\tilde u_k|^{\beta_2-1}\tilde u_k\,|\tilde v_k|^{\alpha_2-1}\tilde v_k
		+\tilde g_k(x),
		\label{eq:tilde-eq-2-nosteps}
	\end{eqnarray*}
	with
	\[
	E_2:=\beta_2+\alpha_2 r-r.
	\]
	By the definition of $r$ in \eqref{eq:def-r-en}, both exponents vanish, namely
	\[
	E_1=\beta_1\Big(\tfrac{1-\alpha_1}{\beta_1}\Big)+\alpha_1-1=0,\qquad
	E_2=\beta_2+(\alpha_2-1)\frac{\beta_2}{1-\alpha_2}=0.
	\]
	Consequently, the normalized pair satisfies
	\begin{equation}\label{eq:tilde-system-final-nosteps}
		\begin{cases}
			-\mathcal L_1\tilde u_k=\lambda\,a(x)\,|\tilde v_k|^{\beta_1-1}\tilde v_k\,|\tilde u_k|^{\alpha_1-1}\tilde u_k+\tilde f_k(x)& \text{in } \Omega, \\[2pt]
			-\mathcal L_2\tilde v_k=\mu\,b(x)\,|\tilde u_k|^{\beta_2-1}\tilde u_k\,|\tilde v_k|^{\alpha_2-1}\tilde v_k+\tilde g_k(x)& \text{in } \Omega, \\[2pt]
			\tilde u_k=\tilde\varphi_k,\quad \tilde v_k=\tilde\psi_k & \text{on }\partial\Omega,
		\end{cases}
	\end{equation}
	with the normalization \eqref{eq:Linf-normalization-nosteps} and vanishing data \eqref{eq:data-go-to-0-nosteps}.
	
	Since $a,b\in L^p(\Omega)$ and $0\le |\tilde u_k|,|\tilde v_k|\le 1$, the nonlinear right-hand sides of \eqref{eq:tilde-system-final-nosteps} belong to $L^p(\Omega)$ with norms uniformly bounded by $C(\|a\|_{L^p(\Omega)}+\|b\|_{L^p(\Omega)})$. Therefore, applying standard linear elliptic estimates, one obtains
	\[
		\|\tilde u_k\|_{W^{2,p}(\Omega)}+\|\tilde v_k\|_{W^{2,p}(\Omega)}
		\;\le\;C\Big(1+\|\tilde f_k\|_{L^p(\Omega)}+\|\tilde g_k\|_{L^p(\Omega)}+\|\tilde\varphi_k\|_{W^{2,p}(\Omega)}+\|\tilde\psi_k\|_{W^{2,p}(\Omega)}\Big),
	\]
	for some constant $C$ independent of $k$. In view of \eqref{eq:data-go-to-0-nosteps}, the sequence $\{(\tilde u_k,\tilde v_k)\}$ is thus bounded in $(W^{2,p}(\Omega))^2$. By compact embedding, there exists $\eta\in(0,1)$ such that, up to a subsequence,
	\begin{equation}\label{eq:compactness-nosteps}
		\tilde u_k\to \tilde u,\qquad \tilde v_k\to \tilde v
		\quad\text{in } W^{2,p}(\Omega)\ \text{ and in }C^{1,\eta}(\overline\Omega).
	\end{equation}
	In particular, passing to the limit in \eqref{eq:Linf-normalization-nosteps}, one obtains
	\begin{equation}\label{eq:nontrivial-limit-nosteps}
		\|\tilde u\|_{L^\infty(\Omega)}+\|\tilde v\|_{L^\infty(\Omega)}^{\,s}=1.
	\end{equation}
	
	Passing to the limit in \eqref{eq:tilde-system-final-nosteps}, using \eqref{eq:compactness-nosteps} and \eqref{eq:data-go-to-0-nosteps}, one concludes that $(\tilde u,\tilde v)$ is a strong solution of the homogeneous coupled system
	\[
	\begin{cases}
		-\mathcal L_1\tilde u=\lambda\,a(x)\,|\tilde v|^{\beta_1-1}\tilde v\,|\tilde u|^{\alpha_1-1}\tilde u& \text{in } \Omega, \\[2pt]
		-\mathcal L_2\tilde v=\mu\,b(x)\,|\tilde u|^{\beta_2-1}\tilde u\,|\tilde v|^{\alpha_2-1}\tilde v& \text{in } \Omega, \\[2pt]
		\tilde u=\tilde v=0&\text{on }\partial\Omega,
	\end{cases}
	\]
	which is nontrivial thanks to \eqref{eq:nontrivial-limit-nosteps}. This contradicts the assumption that $(\lambda,\mu)$ does not belong to the spectrum of the associated homogeneous problem. The contradiction proves that the a priori estimate \eqref{eq:apriori-final-en} must hold with a constant $A>0$ depending only on the structural data of the problem, that is, a constant independent of $(f,g), (\varphi,\psi)$ and $(u,v)$.
\end{proof}

\begin{remark}
	For every compact set $K\subset\mathbb{R}^2$ contained in the admissible region and disjoint from the eigencurve, there exists a constant $A=A(K)>0$ such that, for all $(\lambda,\mu)\in K$ and all admissible data, any strong solution $(u,v)$ of \eqref{eq:system-general-en} satisfies the uniform bound
	\begin{equation}\label{eq:apriori-Linf}
		\|u\|_{L^\infty(\Omega)}+\|v\|_{L^\infty(\Omega)}^{\,s}
		\;\le\;
		A\Big(
		\|\varphi\|_{W^{2,p}(\Omega)}
		+\|\psi\|_{W^{2,p}(\Omega)}^{\,s}
		+\|f\|_{L^{p}(\Omega)}
		+\|g\|_{L^{p}(\Omega)}^{\,s}
		\Big).
	\end{equation}
	In particular, the constant $A$ depends only on $K$, the domain $\Omega$, and the structural constants associated with the operators.
\end{remark}

\begin{proof}[Proof of Theorem~\ref{thm:existence-outside-curve}]
	The argument relies on the a priori estimate \eqref{eq:apriori-Linf} (proved in Theorem~\ref{thm:apriori-general} via the normalization procedure $S=\|u\|_{L^\infty(\Omega)}+\|v\|_{L^\infty(\Omega)}^s$) and a Leray-Schauder continuation principle based on the topological degree.

	Let $X:=C(\overline{\Omega})\times C(\overline{\Omega})$, endowed with the product norm
	\[
	\|(\xi,\zeta)\|_X:=\|\xi\|_{L^\infty(\Omega)}+\|\zeta\|_{L^\infty(\Omega)}.
	\]
	For $t\in[0,1]$ and $(\xi,\zeta)\in X$, define $H(t,\xi,\zeta)=(U,V)$, where $U,V\in W^{2,p}(\Omega)$ solve the linear boundary problems
	\[
	\begin{cases}
		-\mathcal L_1 U = t\lambda\,a(x)\,|\xi|^{\alpha_1-1}\xi\,|\zeta|^{\beta_1-1}\zeta + t f(x)& \text{in } \Omega, \\[2pt]
		U = t\varphi & \text{on }\partial\Omega,
	\end{cases}
	\]
	\[
	\begin{cases}
		-\mathcal L_2 V = t\mu\,b(x)\,|\zeta|^{\alpha_2-1}\zeta\,|\xi|^{\beta_2-1}\xi + t g(x)& \text{in } \Omega, \\[2pt]
		V = t\psi & \text{on }\partial\Omega.
	\end{cases}
	\]
	Standard elliptic estimates together with the compact embedding $W^{2,p}(\Omega)\hookrightarrow C(\overline{\Omega})$ ensure that $H:[0,1]\times X\to X$ is well defined, continuous, and compact.

	Assume that $H(t,\xi,\zeta)=(\xi,\zeta)$ for some $t\in[0,1]$. Then $(u,v)=(\xi,\zeta)$ solves the nonlinear system with parameters $(t\lambda,t\mu)$ and data $(t f,t g,t\varphi,t\psi)$. Since $(\lambda,\mu)$ is outside the eigencurve, the a priori estimate \eqref{eq:apriori-Linf} yields
	\[
	\|u\|_{L^\infty(\Omega)}+\|v\|_{L^\infty(\Omega)}^s
	\;\le\; A\Big(\|\varphi\|_{W^{2,p}(\Omega)}+\|\psi\|_{W^{2,p}(\Omega)}^s+\|f\|_{L^p(\Omega)}+\|g\|_{L^p(\Omega)}^s\Big) =: M_0,
	\]
	with $A$ independent of $t\in[0,1]$. In particular,
	\[
	\|(\xi,\zeta)\|_X \;\le\; M_0+M_0^{r}.
	\]
	Choose $R>M_0+M_0^{r}$ and let
	\[
	B_R := \{ (\xi,\zeta)\in X : \|(\xi,\zeta)\|_X < R \}.
	\]
	It follows that no fixed point of $H(t,\cdot)$ can lie on $\partial B_R$ for any $t\in[0,1]$.

	Define $F_t:=I-H(t,\cdot):\overline{B_R}\to X$. By the previous step, $0\notin F_t(\partial B_R)$ for all $t\in[0,1]$, so the Leray--Schauder degree $\deg(F_t,B_R,0)$ is well defined. The fundamental properties of the degree (existence, normalization, and homotopy invariance) guarantee that $\deg(F_t,B_R,0)$ remains constant along $t$. At $t=0$, one has $H(0,\cdot)\equiv (0,0)$, hence $F_0=I$ and $\deg(F_0,B_R,0)=1$. Therefore,
	\[
	\deg(F_1,B_R,0)=\deg(F_0,B_R,0)=1\neq 0,
	\]
	which implies the existence of $(u,v)\in B_R$ such that $F_1(u,v)=0$, i.e. $H(1,u,v)=(u,v)$. By construction, this pair $(u,v)\in (W^{2,p}(\Omega))^2$ is a strong solution of the nonhomogeneous problem \eqref{eq:system-general-en}. By Theorem \ref{MP}, we obtain the properties stated in items (i), (ii) and (ii) of theorem.

	Finally, inserting $(u,v)$ into the definition of $H$ and applying linear elliptic estimates yields
	\[
	\|u\|_{W^{2,p}(\Omega)}+\|v\|_{W^{2,p}(\Omega)}
	\;\le\; C\Big(1+\|\varphi\|_{W^{2,p}(\Omega)}+\|\psi\|_{W^{2,p}(\Omega)}+\|f\|_{L^p(\Omega)}+\|g\|_{L^p(\Omega)}\Big),
	\]
	with $C$ depending only on the domain, the structural constants of $\mathcal L_1,\mathcal L_2$, and the chosen compact set of parameters. This completes the proof.
\end{proof}

\section{Antimaximum Principle}


\begin{proof}[ Proof of Theorem \ref{AMP}] The proof will be divided into steps.
	
	\noindent\textbf{Step 1.}
	Assume $(\lambda,\mu)=(\lambda_1,\mu_1)\in\mathcal C_1$ and let $f,g\in L^p(\Omega)$ satisfy $f,g\ge0$ and $f+g\not\equiv0$. Then there exists no strong solution $(u,v)\in V\times V$ of (\ref{eq:system}).
	
	Suppose, by contradiction, that \eqref{eq:system} admits a strong solution $(u,v)$.  
	For $i=1,2$, denote by $T_i:L^p(\Omega)\to C^{1,\eta}(\overline\Omega)$ the resolvent operator associated with $-\mathcal L_i$, defined by
	\[
	T_i h := w \quad\text{such that}\quad 
	-\mathcal L_i w = h\text{ in }\Omega,\quad w=0\text{ on }\partial\Omega.
	\]
	The Scalar Strong Maximum Principle and Hopf’s lemma imply that $T_i$ is linear, compact, and strictly positive, namely
	\[
	h\ge0,\ h\not\equiv0 \ \Longrightarrow\ T_i h\in V_+^\circ.
	\]
	
	Define the nonlinear operators
	\[
	\tilde{J}_1(v):=T_1\!\big(a\,|v|^{\beta_1-1}v\big),\qquad 
	\tilde{J}_2(u):=T_2\!\big(b\,|u|^{\beta_2-1}u\big).
	\]
	Each of these maps $\tilde{J}_i:V\to V$ is continuous, compact, and order-preserving.

	Hence, system \eqref{eq:system} can be rewritten as
	\[
		u = \lambda_1 \tilde{J}_1(v) + T_1 f,\qquad 
		v = \mu_1 \tilde{J}_2(u) + T_2 g.
	\]
	Substituting the second identity into the first gives
	\begin{equation}\label{eq:operator-reduction}
		u = (\lambda_1\mu_1)\,(\tilde{J}_1\circ \tilde{J}_2)(u) + \tilde{J}_1(T_2 g) + T_1 f.
	\end{equation}
	Since $(\lambda_1,\mu_1)\in\mathcal C_1$, the product $\lambda_1\mu_1$ coincides with the inverse of the spectral radius of $\tilde{J}_1\circ \tilde{J}_2$, which corresponds to the principal eigenvalue of this compact and strongly positive operator.
	
	Because $f,g\ge0$ and $f+g\not\equiv0$, we have either $T_1 f\in V_+^\circ$ or $T_2 g\in V_+^\circ$. 
	Consequently, $\tilde{J}_1(T_2 g)\ge0$ and the function
	\[
	\phi = \tilde{J}_1(T_2 g)+T_1 f
	\]
	belongs to $V_+$ and satisfies $\phi\not\equiv0$. 
	Equation \eqref{eq:operator-reduction} can thus be written as
	\[
	u - (\tilde{J}_1\circ \tilde{J}_2)u = \phi\ge0,\qquad \phi\not\equiv0,
	\]
	which means that $u\ge (\tilde{J}_1\circ \tilde{J}_2)u$, with strict inequality somewhere in $\Omega$.
	
	By the Kreĭn–Rutman theorem, the operator $\tilde{J}_1\circ \tilde{J}_2$ admits a unique (up to scalar multiples) positive eigenvector $\varphi_1\in V_+^\circ$ corresponding to its spectral radius $\Lambda_1>0$, and moreover, if $u\in V_+\setminus\{0\}$ satisfies $u\ge (\tilde{J}_1\circ \tilde{J}_2)u$, then necessarily $u$ is a positive multiple of $\varphi_1$ and equality must hold:
	\[
	u = (\tilde{J}_1\circ \tilde{J}_2)u.
	\]
	This contradicts the strict inequality derived above from $\phi\not\equiv0$. 
	Hence, no strong solution $(u,v)\in V\times V$ of \eqref{eq:system} exists when $(\lambda,\mu)=(\lambda_1,\mu_1)\in\mathcal C_1$ and $f,g\ge0$ with $f+g\not\equiv0$. This finishes the proof of Step~1.
	
	Now, arguing by contradiction, assume that there exists a sequence $(\lambda_k',\mu_k')\to(\lambda_1,\mu_1)$ with $\lambda_k'>\lambda_1$ and $\mu_k'>\mu_1$, such that for each $k$ the corresponding system \eqref{eq:system} (with $(\lambda_k',\mu_k')$ in place of $(\lambda,\mu)$) admits a solution $(u_k,v_k)$ for which at least one of $-u_k$ or $-v_k$ does \emph{not} belong to $V_+^\circ$. We shall derive a contradiction.
	
	\medskip
	\emph{Case (a):} Suppose that the sequence $(u_k,v_k)$ is uniformly bounded in $L^\infty(\Omega)$, i.e.
	\[
	\|u_k\|_{L^\infty(\Omega)}+\|v_k\|_{L^\infty(\Omega)}\le C
	\]
	for some $C>0$.  
	By elliptic regularity and the compact embedding $C^{1,\eta}(\overline\Omega)\hookrightarrow C^1(\overline\Omega)$, we may extract a subsequence (not relabeled) such that
	\[
	(u_k,v_k)\to (u,v)\quad\text{in }C^1(\overline\Omega)\times C^1(\overline\Omega).
	\]
	Passing to the limit, we find that $(u,v)$ satisfies \eqref{eq:system} with $(\lambda,\mu)=(\lambda_1,\mu_1)$ and with the same data $f,g\ge0$, contradicting the nonexistence result established in Step~1.
	
	\medskip
	\emph{Case (b):} $\|u_k\|_{L^\infty(\Omega)}+\|v_k\|_{L^\infty(\Omega)}\to\infty$.  
	Define the normalization factors
	\[
	A_k := \|u_k\|_{L^\infty(\Omega)},\qquad B_k := \|v_k\|_{L^\infty(\Omega)}.
	\]
	We first claim that both $A_k$ and $B_k$ diverge to $+\infty$.
	
	\medskip
	\noindent\textbf{Claim 1.} If $A_k + B_k \to \infty$, then necessarily $A_k \to \infty$ and $B_k \to \infty$.
	
	\smallskip
	Indeed, from \eqref{eq:system} and estimate
	$\|T_i h\|_{L^\infty(\Omega)}\le C_i\,\|h\|_{L^p(\Omega)}$, we obtain
	\[
	A_k \le C_1\big(\lambda_k'\|a\|_{L^p(\Omega)}\,B_k^{\beta_1}+\|f\|_{L^p(\Omega)}\big),\qquad
	B_k \le C_2\big(\mu_k'\|b\|_{L^p(\Omega)}\,A_k^{\beta_2}+\|g\|_{L^p(\Omega)}\big).
	\]
	If, for instance, $(B_k)$ were bounded while $(A_k)$ were unbounded, 
	the first inequality would contradict $A_k\to\infty$. 
	The symmetric argument applies if $(A_k)$ were bounded while $(B_k)$ were unbounded. 
	Hence both sequences must diverge, which proves the claim.
	
	\medskip
	Now define the normalized functions
	\[
	\tilde u_k:=\frac{u_k}{A_k},\qquad \tilde v_k:=\frac{v_k}{B_k},\qquad
	\|\tilde u_k\|_{L^\infty(\Omega)}=\|\tilde v_k\|_{L^\infty(\Omega)}=1,
	\]
	and the ratios
	\[
	s_k:=\frac{B_k^{\beta_1}}{A_k}>0,\qquad t_k:=\frac{A_k^{\beta_2}}{B_k}>0.
	\]
	Note that $t_k=s_k^{-\beta_2}$ because $\beta_1\beta_2=1$.
	Then \eqref{eq:system} becomes
	\[
	\begin{cases}
		-\mathcal L_1 \tilde u_k = \lambda_k'\,a\,s_k\,|\tilde v_k|^{\beta_1-1}\tilde v_k + \frac{f}{A_k}& \text{in }\Omega,\\
		-\mathcal L_2 \tilde v_k = \mu_k'\,b\,t_k\,|\tilde u_k|^{\beta_2-1}\tilde u_k + \frac{g}{B_k}& \text{in }\Omega,\\
		\tilde u_k=\tilde v_k=0 & \text{on }\partial\Omega.
	\end{cases}
	\]
	
	\noindent\textbf{Claim 2.} There exist constants $0<c\le C<\infty$ and a subsequence such that $c\le s_k,t_k\le C$ and
	\[
	s_k\to s\in(0,\infty),\qquad t_k\to t=s^{-\beta_2}\in(0,\infty).
	\]
	
	Indeed, dividing the first resolvent inequality by $A_k$ and the second by $B_k$, we get
	\[
	1\le C_1\lambda_k'\|a\|_{L^p(\Omega)}\,s_k + C_1\frac{\|f\|_{L^p(\Omega)}}{A_k},\qquad
	1\le C_2\mu_k'\|b\|_{L^p(\Omega)}\,t_k + C_2\frac{\|g\|_{L^p(\Omega)}}{B_k}.
	\]
	Since $A_k,B_k\to\infty$, these inequalities yield uniform positive lower bounds on $s_k,t_k$ for large $k$. 
	Uniform upper bounds follow similarly. 
	Compactness then ensures the existence of limits, and the relation $t=s^{-\beta_2}$ holds by definition.
	
	Thus, it follows from 
	\begin{equation}\label{eq:norm-det}
		\begin{cases}
			-\mathcal L_1 \tilde u_k=\lambda_k'\,a(x)\,s_k\,|\tilde v_k|^{\beta_1-1}\tilde v_k+\dfrac{f}{A_k}& \text{in } \Omega, \\[2pt]
			-\mathcal L_2 \tilde v_k=\mu_k'\,b(x)\,t_k\,|\tilde u_k|^{\beta_2-1}\tilde u_k+\dfrac{g}{B_k}& \text{in } \Omega, \\[2pt]
			\tilde u_k=\tilde v_k=0 & \text{ on }\partial\Omega
		\end{cases}
	\end{equation}
	that, by Sobolev compactness and the Scalar Weak Maximum Principle, up to a subsequence,
	\[
	\tilde u_k\to u_\ast,\qquad \tilde v_k\to v_\ast\quad\text{in }C^1(\overline\Omega),
	\]
	with $\|u_\ast\|_{L^\infty(\Omega)}=\|v_\ast\|_{L^\infty(\Omega)}=1$ and $u_\ast,v_\ast$ not changing sign. 
	Passing to the limit and using the above claim, we obtain the \emph{homogeneous} limit system
	\[
		\begin{cases}
			-\mathcal L_1 u_\ast=(\lambda_1 s)\,a(x)\,|v_\ast|^{\beta_1-1}v_\ast& \text{in } \Omega, \\[2pt]
			-\mathcal L_2 v_\ast=(\mu_1 t)\,b(x)\,|u_\ast|^{\beta_2-1}u_\ast& \text{in } \Omega, \\[2pt]
			u_\ast=v_\ast=0& \text{ on }\partial\Omega.
		\end{cases}
	\]
	
	Thus, after a common change of sign if necessary, $(u_\ast,v_\ast)\in V_+^\circ\times V_+^\circ$, hence $(\lambda_1 s,\mu_1 t)\in\mathcal C_1$ with principal eigenfunction $(u_\ast,v_\ast)$.
	
	\medskip
	\textbf{Step 2.} Either
	\[
	\text{(I)}\quad (u_k,v_k)\in V_+^\circ\times V_+^\circ\ \text{ for all }k\ge k_0,
	\qquad\text{or}\qquad
	\text{(II)}\quad (-u_k,-v_k)\in V_+^\circ\times V_+^\circ\ \text{ for all }k\ge k_0.
	\]
	We show that each alternative contradicts our standing hypotheses and thereby yields the antimaximum conclusion.
	
	\medskip
	\noindent Case (I): $(u_k,v_k)\in V_+^\circ\times V_+^\circ$ eventually.
	Rewrite the unscaled system at $(\lambda_k',\mu_k')$ as
	\begin{equation}\label{eq:split}
		\begin{cases}
			-\mathcal L_1 u_k=\lambda_1\,a\,|v_k|^{\beta_1-1}v_k + (\lambda_k'-\lambda_1)\,a\,|v_k|^{\beta_1-1}v_k + f& \text{in } \Omega, \\[2pt]
			-\mathcal L_2 v_k=\mu_1\,b\,|u_k|^{\beta_2-1}u_k + (\mu_k'-\mu_1)\,b\,|u_k|^{\beta_2-1}u_k + g& \text{in } \Omega, \\[2pt]
			u_k=v_k=0& \text{ on }\partial\Omega.
		\end{cases}
	\end{equation}
	Since $(u_k,v_k)\in V_+^\circ\times V_+^\circ$, the additional terms
	\[
	(\lambda_k'-\lambda_1)\,a\,|v_k|^{\beta_1-1}v_k\ge0,\qquad
	(\mu_k'-\mu_1)\,b\,|u_k|^{\beta_2-1}u_k\ge0
	\]
	are nonnegative and not identically zero. Applying $T_1,T_2$ to \eqref{eq:split} and using positivity/monotonicity,
	\[
	u_k \ \ge\ T_1\!\big(\lambda_1 a\,|v_k|^{\beta_1-1}v_k+f\big),\qquad
	v_k \ \ge\ T_2\!\big(\mu_1 b\,|u_k|^{\beta_2-1}u_k+g\big).
	\]
	Hence $(u_k,v_k)$ is a \emph{strict supersolution} of the nonhomogeneous system at the \emph{curve point} $(\lambda_1,\mu_1)$.
	Taking $(0,0)$ as a subsolution (since $f,g\ge0$ and $T_i$ are positive), the Lemma 2.1 of \cite{MR1765542} would produce a genuine solution at $(\lambda_1,\mu_1)$—which contradicts the Step~1. Therefore, Case (I) is impossible.
	
	\medskip
	\noindent Case (II): $(-u_k,-v_k)\in V_+^\circ\times V_+^\circ$ eventually.
	This is precisely the antimaximum alternative: for $k\ge k_0$,
	\[
	-u_k>0,\ \ -v_k>0\ \ \text{in }\Omega,\qquad
	\partial_\nu u_k>0,\ \ \partial_\nu v_k>0\ \ \text{on }\partial\Omega,
	\]
	i.e., $-u_k,-v_k\in V_+^\circ$. But this contradicts our contradictory assumption (used to launch the blow-up branch) that \emph{for every $k$ at least one of $-u_k$ or $-v_k$ fails to belong to $V_+^\circ$}. Hence the assumption leading to \eqref{eq:norm-det} must be false, and the antimaximum conclusion holds.
\end{proof}

\section{The Nonhomogeneous Problem in $\mathcal{R}_1$}\label{s8}

This section is important because the region of validity of {\bf (WMP)} in the case $\alpha_1\neq0$ or $\alpha_2\neq0$ lies outside the first quadrant, while $\mathcal R_1$ is an essential region for the development of the theory. Thus, here we investigate qualitative properties of the sets of parameters
$(\lambda,\mu)\in\mathbb R_+^2$ for which the coupled boundary value problem
\begin{equation}\label{eq:new-system-ab}
\begin{cases}
-\,\mathcal L_1 u = \lambda\,a(x)\,u^{\alpha_1}v^{\beta_1} + f_0(x) & \text{in }\Omega,\\[2pt]
-\,\mathcal L_2 v = \mu\,b(x)\,v^{\alpha_2}u^{\beta_2}+ g_0(x) & \text{in }\Omega,\\[2pt]
u=0=v & \text{on }\partial\Omega,
\end{cases}
\end{equation}
admits nonnegative or positive solutions, depending on the regularity and
sign of the source terms $f_0$ and $g_0$.

We introduce the following admissible sets of parameters:
\[
\begin{aligned}
G^{a,b}_{\alpha_1,\beta_1,\alpha_2,\beta_2}(\Omega)
:= \Big\{(\lambda,\mu)\in\mathbb R_+^2:\;&
\text{for any } f_0,g_0\in L^p(\Omega)\ \mbox{with } f_0,g_0\ge 0\ \mbox{a.e. in }\Omega,\\
&\eqref{eq:new-system-ab}\ \text{admits a nonnegative solution}\Big\},
\end{aligned}
\]
\[
\begin{aligned}
O^{a,b}_{\alpha_1,\beta_1,\alpha_2,\beta_2}(\Omega)
:= \Big\{(\lambda,\mu)\in\mathbb R_+^2:\;&
\text{for any } f_0,g_0\in C(\overline\Omega)\ \text{with } f_0,g_0\ge 0\ \text{in }\Omega,\\
&\eqref{eq:new-system-ab}\ \text{admits a nonnegative solution}\Big\},
\end{aligned}
\]
\[
\begin{aligned}
\widetilde O^{a,b}_{\alpha_1,\beta_1,\alpha_2,\beta_2}(\Omega)
:= \Big\{(\lambda,\mu)\in\mathbb R_+^2:\;&
\text{for some } f_0,g_0\in C(\overline\Omega)\ \text{with } f_0,g_0>0\ \text{in }\Omega,\\
&\eqref{eq:new-system-ab}\ \text{admits a positive solution}\Big\}.
\end{aligned}
\]

By definition, these sets satisfy the inclusions
\[
G^{a,b}_{\alpha_1,\beta_1,\alpha_2,\beta_2}(\Omega)\subset
O^{a,b}_{\alpha_1,\beta_1,\alpha_2,\beta_2}(\Omega)\subset
\widetilde O^{a,b}_{\alpha_1,\beta_1,\alpha_2,\beta_2}(\Omega).
\]

\begin{lemma}\label{lem:small-eps-existence}
Let $k\in L^p(\Omega)$ be a nonnegative function. Assume that $0\le \alpha_1,\alpha_2<1$, $\beta_1,\beta_2>0$, and (\ref{cond1}). There exists a positive constant $\varepsilon_*>0$ such that for every
$\varepsilon_0\in(0,\varepsilon_*)$ and every pair of nonnegative
Carath\'eodory functions
$f,g:\Omega\times[0,\infty)\times[0,\infty)\to[0,\infty)$ satisfying
\begin{equation}\label{eq:growth-new}
f(x,t,s)\le \varepsilon_0\,a(x)\,t^{\alpha_1}s^{\beta_1}+k(x),
\qquad
g(x,t,s)\le \varepsilon_0\,b(x)\,s^{\alpha_2}t^{\beta_2}+k(x),
\end{equation}
for all $x\in\Omega$ and $t,s\ge0$, the boundary value problem
\begin{equation}\label{eq:coupled-system}
\begin{cases}
-\mathcal L_1 u = f(x,u,v) & \text{in } \Omega,\\[2pt]
-\mathcal L_2 v = g(x,u,v) & \text{in } \Omega,\\[2pt]
u = v = 0 & \text{on } \partial\Omega,
\end{cases}
\end{equation}
admits a nonnegative solution $(u,v)$. Thus $O^{a,b}_{\alpha_1,\beta_1,\alpha_2,\beta_2}(\Omega)$ is nonempty. 
\end{lemma}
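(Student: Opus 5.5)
The plan is to obtain the solution as a fixed point of a compact map on an order interval (or a ball in a cone), using the homogeneity relation \eqref{cond1} to produce a scaled supersolution. Set $s=\beta_1/(1-\alpha_1)=(1-\alpha_2)/\beta_2$ as in \eqref{eq:def-r-en}. I look for a supersolution of the form $(\bar u,\bar v)=(R\,e_1,\,R^{1/s}e_2)$, where $e_i:=S_i(1+a+b+k)\in V_+^\circ$ solves $-\mathcal L_i e_i=1+a+b+k$ with zero boundary values. This is possible since $a,b,k\in L^p(\Omega)$, $p>n$, and the Scalar Weak and Strong Maximum Principles hold.

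\textbf{Supersolution check.} Using \eqref{eq:growth-new} and monotonicity in $(t,s)$ of the bounds, it suffices to have
\[
R(1+a+b+k)\ge \varepsilon_0 a\,R^{\alpha_1}\|e_1\|_\infty^{\alpha_1}R^{\beta_1/s}\|e_2\|_\infty^{\beta_1}+k .
\]
By \eqref{cond1} we have $\alpha_1+\beta_1/s=\alpha_1+(1-\alpha_1)=1$. Thus the nonlinear term scales exactly like $R$, and the inequality holds for all $R\ge1$ provided $\varepsilon_0\|e_1\|_\infty^{\alpha_1}\|e_2\|_\infty^{\beta_1}\le1$. For the second equation the exponent of $R$ is $\alpha_2/s+\beta_2=(1/s)\bigl(\alpha_2+(1-\alpha_2)\bigr)=1/s$, so we need $R^{1/s}(1+a+b+k)\ge \varepsilon_0 b\,R^{1/s}\|e_2\|_\infty^{\alpha_2}\|e_1\|_\infty^{\beta_2}+k$. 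This holds for $R\ge1$ if $\varepsilon_0\|e_2\|_\infty^{\alpha_2}\|e_1\|_\infty^{\beta_2}\le1$. Accordingly, I define
\[
\varepsilon_*:=\min\bigl\{\|e_1\|_\infty^{-\alpha_1}\|e_2\|_\infty^{-\beta_1},\ \|e_2\|_\infty^{-\alpha_2}\|e_1\|_\infty^{-\beta_2}\bigr\}.
\]
This constant depends only on $a,b,k,\Omega$ and the operators. The pair $(0,0)$ is a subsolution because $f,g\ge0$.

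\textbf{Fixed point.} Since $f,g$ are only Carath\'eodory and need not be monotone, I will not use monotone iteration. Instead, I apply Schauder's theorem on the closed convex set
\[
\mathcal K=\{(w,z)\in C(\overline\Omega)^2:\ 0\le w\le \bar u,\ 0\le z\le\bar v\}.
\]
The map is $\Phi(w,z)=\bigl(S_1 f(\cdot,w,z),\,S_2 g(\cdot,w,z)\bigr)$, where $S_i$ is the resolvent extended to $L^p$ data. For $(w,z)\in\mathcal K$ the growth bound gives $0\le f(\cdot,w,z)\le \varepsilon_0 a\bar u^{\alpha_1}\bar v^{\beta_1}+k\le -\mathcal L_1\bar u$. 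The Scalar Weak Maximum Principle then yields $0\le S_1f(\cdot,w,z)\le\bar u$, and similarly for the second component, so $\Phi(\mathcal K)\subset\mathcal K$. Continuity of $\Phi$ in $C(\overline\Omega)^2$ follows from the Carath\'eodory property together with the $L^p$ domination $f\le \varepsilon_0 a\|\bar u\|_\infty^{\alpha_1}\|\bar v\|_\infty^{\beta_1}+k$ and dominated convergence. Compactness comes from the $W^{2,p}$ estimates and the embedding $W^{2,p}\hookrightarrow C(\overline\Omega)$. A fixed point is then a nonnegative strong solution in $(W^{2,p}(\Omega))^2$.

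\textbf{Conclusion and main obstacle.} For the final claim, take $f=\lambda a\,t^{\alpha_1}s^{\beta_1}+f_0$ and $g=\mu b\,s^{\alpha_2}t^{\beta_2}+g_0$ with $k:=f_0+g_0$ and $0<\lambda,\mu<\varepsilon_*$. This shows that $(\lambda,\mu)\in O^{a,b}$; however, $\varepsilon_*$ depends on $k$, which varies over all admissible data in the definition of $O^{a,b}$. I expect this to be the main subtlety. To handle it, I will replace $e_i$ by $e_i:=S_i(1+a+b)$ and absorb $k$ by instead taking $\bar u=R e_1+S_1k$ and $\bar v=R^{1/s}e_2+S_2k$. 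Then $\varepsilon_*$ depends only on $a,b$, because for $R$ large the cross terms from $S_ik$ are dominated: the powers of $R$ are at most $1$ and $1/s$ respectively, and one can enlarge $R$ to absorb the extra factors. Since $\|\bar u\|_\infty\le R\|e_1\|_\infty+\|S_1k\|_\infty$, the required inequality becomes $\varepsilon_0\|e_1\|_\infty^{\alpha_1}\|e_2\|_\infty^{\beta_1}(1+o(1))\le1$ as $R\to\infty$. This gives a $k$-independent threshold after replacing $\varepsilon_*$ by half of the value above.
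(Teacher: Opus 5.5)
Your argument is correct, and it takes a different route from the paper's.

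\textbf{The paper's route.} The paper builds no supersolution. It embeds the problem in the homotopy $-\mathcal L_1 z=s\,f(x,u,v)$, $-\mathcal L_2 w=s\,g(x,u,v)$, $s\in[0,1]$. For every fixed point it derives scalar inequalities for the sup norms, $u_0\le\varepsilon_0a_0u_0^{\alpha_1}v_0^{\beta_1}+k_0$ and $v_0\le\varepsilon_0b_0v_0^{\alpha_2}u_0^{\beta_2}+k_0$. It then splits into cases according to whether $u_0,v_0$ exceed $2k_0$. Condition \eqref{cond1} enters only when both are large: there it cancels the power of $u_0$ and leaves $1\le C\varepsilon_0^{1+\beta_1/(1-\alpha_2)}$, with $C$ depending only on $\|a\|_{L^p},\|b\|_{L^p}$. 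Existence then follows from homotopy invariance of the Leray--Schauder degree in cones.

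\textbf{Your route.} You use \eqref{cond1} instead to make the ordered supersolution $(Re_1,R^{1/s}e_2)$ scale exactly, since $\alpha_1+\beta_1/s=1$ and $\alpha_2/s+\beta_2=1/s$. You then apply Schauder's theorem on the order interval $[0,\bar u]\times[0,\bar v]$.

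\textbf{What your route buys.}
\begin{itemize}
\item Nonnegativity is automatic because $\mathcal K$ lies in the cone. The paper needs degree in cones, or implicitly an extension of $f,g$ to negative arguments, which it does not spell out.
\item Non-monotone Carath\'eodory nonlinearities cause no difficulty.
\item The threshold $\varepsilon_*$ is explicit, and you get a pointwise bound on the solution.
\item Only the scalar weak maximum principle and linear $W^{2,p}$ theory are used.
\item Your handling of the $k$-dependence is cleaner. The conclusion that $O^{a,b}_{\alpha_1,\beta_1,\alpha_2,\beta_2}(\Omega)$ is nonempty needs a threshold independent of $k=f_0+g_0$. Your modified supersolution $(Re_1+S_1k,\;R^{1/s}e_2+S_2k)$ with $e_i=S_i(1+a+b)$ gives exactly that.
\end{itemize}
In the paper the decisive case does yield a $k$-independent threshold. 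However, the mixed case $u_0\ge 2k_0>v_0$ is written as if $\varepsilon_0$ had to be small relative to $k_0$. That is unnecessary, since the inequality $u_0^{1-\alpha_1}\le 2\varepsilon_0a_0(2k_0)^{\beta_1}$ already bounds $u_0$.

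\textbf{What the paper's route buys.} It gives an a priori bound for every nonnegative solution, indeed for all fixed points along the homotopy, not only for the one you construct inside the order interval. The lemma as stated does not need this.
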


\begin{proof}
For $s\in[0,1]$ and $(u,v)\in C(\overline{\Omega})$, consider the operator $H_{f,g}(s,u,v)=(z,w)$,
where $(z,w)$ is the unique solution of
\[
\begin{cases}
-\,\mathcal L_1 z = s\,f(x,u,v)& \text{in } \Omega, \\[2pt]
-\,\mathcal L_2 w = s\,g(x,u,v)& \text{in } \Omega, \\[2pt]
z=w=0 & \text{on }\partial\Omega.
\end{cases}
\]
By the $L^p$--theory and Sobolev embedding, $H_{f,g}:[0,1]\times C(\overline{\Omega})\to C(\overline{\Omega})$ is well-defined, continuous, and compact.

Assume that $(u,v)\in C(\overline{\Omega})$ is a fixed point of $H_{f,g}(s,\cdot)$ for some
$s\in[0,1]$, that is, $H_{f,g}(s,u,v)=(u,v)$. Then $(u,v)$ satisfies
\[
\begin{cases}
-\,\mathcal L_1 u = s\,f(x,u,v)& \text{in } \Omega, \\[2pt]
-\,\mathcal L_2 v = s\,g(x,u,v)& \text{in } \Omega, \\[2pt]
u=v=0 & \text{on }\partial\Omega.
\end{cases}
\]
Using the $L^p$ estimates and absorbing $s$ into the constant, there exists
$c>0$ such that
\[
\|u\|_{L^\infty(\Omega)} \le c\,\|f(x,u,v)\|_{L^p(\Omega)}, \qquad
\|v\|_{L^\infty(\Omega)} \le c\,\|g(x,u,v)\|_{L^p(\Omega)}.
\]
By \eqref{eq:growth-new}, we obtain
\[
\|u\|_{L^\infty(\Omega)}
\le c\,\varepsilon_0\|a\|_{L^p(\Omega)}\,\|u\|_{L^\infty(\Omega)}^{\alpha_1}
      \|v\|_{L^\infty(\Omega)}^{\beta_1}
    + c\|k\|_{L^p(\Omega)},
\]
\[
\|v\|_{L^\infty(\Omega)}
\le c\,\varepsilon_0\|b\|_{L^p(\Omega)}\,\|u\|_{L^\infty(\Omega)}^{\beta_2}
      \|v\|_{L^\infty(\Omega)}^{\alpha_2}
    + c\|k\|_{L^p(\Omega)}.
\]
Setting
\[
u_0:=\|u\|_{L^\infty(\Omega)}, \qquad
v_0:=\|v\|_{L^\infty(\Omega)}, \qquad
x_0:=\max\{u_0,v_0\},
\]
and
\[
a_0:=c\|a\|_{L^p(\Omega)}, \qquad b_0:=c\|b\|_{L^p(\Omega)}, \qquad k_0:=c\|k\|_{L^p(\Omega)},
\]
we can rewrite the above inequalities as
\begin{equation}\label{eq:ineq-UV}
u_0 \le \varepsilon_0 a_0 u_0^{\alpha_1}v_0^{\beta_1}+k_0,
\qquad
v_0 \le \varepsilon_0 b_0 v_0^{\alpha_2}u_0^{\beta_2}+k_0.
\end{equation}

If $x_0\le 2k_0$, then $u_0\le 2k_0$ and $v_0\le 2k_0$, and hence
$\|(u,v)\|_X\le 2k_0$.

Assume next that $u_0\ge 2k_0$ and $v_0<2k_0$. Since $k_0-0\le u_0/2$, we infer
\[
\frac{u_0}{2}\le \varepsilon_0 a_0 u_0^{\alpha_1}v_0^{\beta_1}
           \le \varepsilon_0 a_0 u_0^{\alpha_1}(2k_0)^{\beta_1},
\]
which yields
\[
u_0^{1-\alpha_1}\le 2\varepsilon_0 a_0 (2k_0)^{\beta_1}.
\]
For $\varepsilon_0>0$ sufficiently small, this implies $u_0<2k_0$, a contradiction.
The case $v_0\ge 2k_0$ and $u_0<2k_0$ is treated analogously (using the second
inequality in \eqref{eq:ineq-UV}) and leads to the same contradiction.

Finally, suppose that $u_0\ge 2k_0$ and $v_0\ge 2k_0$. Then $k_0\le u_0/2$ and $k_0\le v_0/2$,
and we obtain from \eqref{eq:ineq-UV}
\[
u_0^{1-\alpha_1}\le 2\varepsilon_0 a_0 v_0^{\beta_1},
\qquad
v_0^{1-\alpha_2}\le 2\varepsilon_0 b_0 u_0^{\beta_2}.
\]
From the second inequality,
\[
v_0 \le (2\varepsilon_0 b_0)^{\frac{1}{1-\alpha_2}}
     u_0^{\frac{\beta_2}{1-\alpha_2}},
\]
hence
\[
v_0^{\beta_1}
\le (2\varepsilon_0 b_0)^{\frac{\beta_1}{1-\alpha_2}}
    u_0^{\frac{\beta_1\beta_2}{1-\alpha_2}}.
\]
Substituting into the first inequality gives
\[
u_0^{1-\alpha_1}
\le 2\varepsilon_0 a_0\,
(2\varepsilon_0 b_0)^{\frac{\beta_1}{1-\alpha_2}}
u_0^{\frac{\beta_1\beta_2}{1-\alpha_2}}
= C\,\varepsilon_0^{\,1+\frac{\beta_1}{1-\alpha_2}}
u_0^{\frac{\beta_1\beta_2}{1-\alpha_2}},
\]
for some constant $C>0$ depending only on $a_0$ and $b_0$. Therefore,
\[
u_0^{\,1-\alpha_1-\frac{\beta_1\beta_2}{1-\alpha_2}}
\le C\,\varepsilon_0^{\,1+\frac{\beta_1}{1-\alpha_2}}.
\]
Using the condition \eqref{cond1},
\[
1-\alpha_1-\frac{\beta_1\beta_2}{1-\alpha_2}
=1-\alpha_1-\frac{(1-\alpha_1)(1-\alpha_2)}{1-\alpha_2}
=0,
\]
and thus we obtain
\[
1 \le C\,\varepsilon_0^{\,1+\frac{\beta_1}{1-\alpha_2}}.
\]
Choosing $\varepsilon_0>0$ sufficiently small, this is impossible; hence this
case cannot occur.

We conclude that there exists
\[
M = M\!\left(k,\|a\|_{L^p(\Omega)},\|b\|_{L^p(\Omega)}\right) > 0
\]
such that
\[
\|(u,v)\|_X < M
\]
for every fixed point of $H_{f,g}(s,\cdot)$, uniformly for $s\in[0,1]$.
The homotopy invariance of the Leray-Schauder degree in cones implies the
existence of a nonnegative solution to system \eqref{eq:coupled-system}.
\end{proof}

\begin{lemma}\label{lem:OequalsOtilde_new}
	Assume the condition (\ref{cond1}) with $0\le \alpha_1,\alpha_2<1$.
	Then the sets
	\[
	O^{a,b}_{\alpha_1,\beta_1,\alpha_2,\beta_2}(\Omega)
	\quad\text{and}\quad
	\widetilde O^{a,b}_{\alpha_1,\beta_1,\alpha_2,\beta_2}(\Omega)
	\]
	are equal.
\end{lemma}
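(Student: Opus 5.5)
Since $O\subset\widetilde O$ by definition, only $\widetilde O\subset O$ needs proof. Fix $(\lambda,\mu)\in\widetilde O^{a,b}_{\alpha_1,\beta_1,\alpha_2,\beta_2}(\Omega)$. Then there are $\bar f,\bar g\in C(\overline\Omega)$ with $\bar f,\bar g>0$ in $\Omega$, together with a positive solution $(\bar u,\bar v)$ of \eqref{eq:new-system-ab} for these data. We must produce a nonnegative solution for arbitrary nonnegative $f_0,g_0\in C(\overline\Omega)$. The strategy is to build a supersolution from $(\bar u,\bar v)$ by the scaling adapted to the homogeneity \eqref{cond1}, take $(0,0)$ as subsolution, and run monotone iteration for this cooperative system.

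\textbf{Scaling step.} For $t>0$ set $(u_t,v_t):=(t\bar u,\,t^{r}\bar v)$ with $r=(1-\alpha_1)/\beta_1=\beta_2/(1-\alpha_2)$. Condition \eqref{cond1} gives
\[
-\mathcal L_1u_t=\lambda a\,u_t^{\alpha_1}v_t^{\beta_1}+t\bar f,\qquad -\mathcal L_2v_t=\mu b\,v_t^{\alpha_2}u_t^{\beta_2}+t^r\bar g .
\]
Hence $(u_t,v_t)$ is a supersolution for data $(f_0,g_0)$ as soon as $t\bar f\ge f_0$ and $t^r\bar g\ge g_0$ in $\Omega$.

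\textbf{Main obstacle: boundary behaviour.} The inequalities $t\bar f\ge f_0$, $t^r\bar g\ge g_0$ hold for large $t$ when $\bar f,\bar g$ are bounded below by a positive constant on $\overline\Omega$. However, $\bar f>0$ only in the open set $\Omega$, so $\bar f$ may vanish on $\partial\Omega$ while $f_0$ does not. To handle this, I would instead compare the solutions of the linear problems, which are well behaved at the boundary by Hopf. Let $w_1=S_1(f_0)$ and $w_2=S_2(g_0)$. Since $\bar u\in V_+^\circ$ (its right-hand side is $\ge\bar f>0$), and since $w_i\in C^{1,\eta}_0$, we get $w_1\le C\bar u$ and $w_2\le C\bar v$ for some $C$. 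We then look for a supersolution of the form $(t\bar u+w_1,\ t^r\bar v+w_2)$. The extra reaction terms created by adding $w_1,w_2$ are controlled because $t\bar u+w_1\le (t+C)\bar u$, and similarly for the second component. So it suffices to dominate
\[
\lambda a\big[(t+C)^{\alpha_1}(t^r+C)^{\beta_1}-t^{\alpha_1}t^{r\beta_1}\big]\bar u^{\alpha_1}\bar v^{\beta_1}
\quad\text{by}\quad t\bar f .
\]
This is where I expect the real difficulty. The bracket is $O(t^{1-\epsilon})$ for some $\epsilon>0$ when $\alpha_1>0$ or $r\beta_1<1$, but it may not be sublinear in general. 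The alternative I would try first is to use $\bar f,\bar g$ only in an interior compact set $K$, and to treat a boundary strip with a small-domain or Lemma~\ref{lem:small-eps-existence} type argument. If neither works, I would approximate $f_0,g_0$ by data compactly supported in $\Omega$, obtain solutions from the supersolution $(t\bar u,t^r\bar v)$, and pass to the limit using a uniform bound that also comes from $(t\bar u,t^r\bar v)$ with $t$ independent of the approximation. Such a uniform $t$ exists if $f_0\le C\bar f$, $g_0\le C\bar g$ on the supports.

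\textbf{Conclusion via monotone iteration.} Given an ordered pair, subsolution $(0,0)$ and supersolution $(U,V)$, define iterates by
\[
-\mathcal L_1u_{k+1}=\lambda a\,u_k^{\alpha_1}v_k^{\beta_1}+f_0,\qquad -\mathcal L_2v_{k+1}=\mu b\,v_k^{\alpha_2}u_k^{\beta_2}+g_0,
\]
starting from $(0,0)$. Since $\lambda,\mu\ge0$ the right-hand sides are nondecreasing in $(u,v)$, so the Scalar Weak Maximum Principle keeps the iterates nondecreasing and below $(U,V)$. The $W^{2,p}$ bounds and compactness used in Lemma~\ref{lem:existJ_en} then yield a limit in $C^1(\overline\Omega)$, which is a nonnegative strong solution. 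Hence $(\lambda,\mu)\in O^{a,b}_{\alpha_1,\beta_1,\alpha_2,\beta_2}(\Omega)$.
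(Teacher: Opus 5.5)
\textbf{Where your proposal matches the paper.} Your scaling step is the paper's argument in different form. The paper first solves the problem for data $0\le f_0\le f_1$, $0\le g_0\le g_1$, using $(0,0)$ as subsolution and the given positive solution $(u_1,v_1)$ as supersolution (Lemma 2.1 of \cite{MR1765542}, i.e.\ your monotone iteration). It then replaces the data by $(\varepsilon f_0,\varepsilon^{r}g_0)$ and rescales the solution by $(\varepsilon^{-1},\varepsilon^{-r})$. With $t=1/\varepsilon$ this is exactly your supersolution $(t\bar u,t^{r}\bar v)$.

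The paper simply asserts that $\varepsilon$ can be chosen with $\varepsilon f_0\le f_1$ and $\varepsilon^{r}g_0\le g_1$ in $\Omega$. That is, it tacitly treats $f_1,g_1$ as bounded away from zero (compare Theorem~\ref{theo1}, where the data are taken positive on $\overline\Omega$). Under that reading, your scaling step plus the monotone iteration is already a complete proof, identical to the paper's.

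\textbf{The gap.} Under the literal definition (positivity only in the open set $\Omega$), the obstacle you isolate is real, and it is a gap in the paper's proof too. However, your proposal does not close it.
\begin{itemize}
\item In the ansatz $(t\bar u+w_1,\,t^{r}\bar v+w_2)$, sublinearity is not the issue: the bracket equals $t\bigl[(1+C/t)^{\alpha_1}(1+Ct^{-r})^{\beta_1}-1\bigr]=o(t)$ because $r>0$. The issue is pointwise. You must absorb a multiple of $a\bar u^{\alpha_1}\bar v^{\beta_1}$, which behaves like $a\,d^{\alpha_1+\beta_1}$ with $d=\operatorname{dist}(\cdot,\partial\Omega)$, into $t\bar f$. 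Since $\bar f$ may vanish on $\partial\Omega$ to any order, no $t$ works.
\item The approximation by compactly supported data fails for the same reason. On $K_j\uparrow\Omega$ the constants $\max_{K_j}f_0/\min_{K_j}\bar f$ blow up, so the supersolutions $(t_j\bar u,t_j^{r}\bar v)$ give no uniform bound.
\end{itemize}

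\textbf{The missing idea.} Use $(\bar u,\bar v)$ only to locate $(\lambda,\mu)$, and build the supersolution from the principal eigenfunction instead.
\begin{enumerate}
\item For $\lambda,\mu>0$, let $(\lambda_1,\mu_1)\in\mathcal C_1$ lie on the ray through $(\lambda,\mu)$, with positive eigenfunction $(\phi,\psi)$.
\item Run the $\sup\{s:\bar u\ge s\phi,\ \bar v\ge s^{r}\psi\}$ argument of Proposition~\ref{prop:ray-cutoff} under the assumption $\lambda\ge\lambda_1$, $\mu\ge\mu_1$. It needs only $\bar f,\bar g>0$ in $\Omega$ and yields a contradiction, so $\lambda<\lambda_1$ and $\mu<\mu_1$.
\item Take $U=t\phi+S_1(f_0)$ and $V=t^{r}\psi+S_2(g_0)$. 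Since $S_1(f_0)\le C\phi$ and $S_2(g_0)\le C\psi$ by Hopf, the supersolution inequalities reduce to
\[
\lambda_1 t\ \ge\ \lambda\,(t+C)^{\alpha_1}(t^{r}+C)^{\beta_1},\qquad \mu_1 t^{r}\ \ge\ \mu\,(t^{r}+C)^{\alpha_2}(t+C)^{\beta_2}.
\]
By \eqref{cond1} the right-hand sides are $(1+o(1))\lambda t$ and $(1+o(1))\mu t^{r}$, so both inequalities hold for large $t$.
\end{enumerate}
The point is that the available gap $(\lambda_1-\lambda)t\,a\phi^{\alpha_1}\psi^{\beta_1}$ now has the same boundary profile as the term to be absorbed, which is exactly what fails with $\bar f$.

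Your monotone iteration from $(0,0)$ then finishes the proof. On the axes $\lambda=0$ or $\mu=0$, the system decouples into a linear and a sublinear scalar problem, which are solved directly.
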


\begin{proof}
	Let $(\lambda,\mu)\in \widetilde O^{a,b}_{\alpha_1,\beta_1,\alpha_2,\beta_2}(\Omega)$.
	By definition, there exist functions $f_1,g_1\in C(\overline\Omega)$ with
	$f_1,g_1>0$ in $\Omega$ such that the boundary value problem
	\[
	\begin{cases}
		-\,\mathcal L_1 u = \lambda\,a(x)\,u^{\alpha_1}v^{\beta_1} + f_1(x)
		& \text{in }\Omega,\\[2pt]
		-\,\mathcal L_2 v = \mu\,b(x)\,v^{\alpha_2}u^{\beta_2} + g_1(x)
		& \text{in }\Omega,\\[2pt]
		u=v=0 & \text{on }\partial\Omega
	\end{cases}
	\]
	admits a positive solution $(u_1,v_1)$.
	
	Let $f_0,g_0\in C(\overline\Omega)$ satisfy $0\le f_0\le f_1$ and
	$0\le g_0\le g_1$ in $\Omega$. Then $(0,0)$ is a subsolution and $(u_1,v_1)$
	is a positive supersolution for the problem with data $(f_0,g_0)$.
	By the Lemma 2.1 of \cite{MR1765542}, the boundary value problem
	\[
	\begin{cases}
		-\,\mathcal L_1 u = \lambda\,a(x)\,u^{\alpha_1}v^{\beta_1} + f_0(x)
		& \text{in }\Omega,\\[2pt]
		-\,\mathcal L_2 v = \mu\,b(x)\,v^{\alpha_2}u^{\beta_2} + g_0(x)
		& \text{in }\Omega,\\[2pt]
		u=v=0 & \text{on }\partial\Omega
	\end{cases}
	\]
	admits a nonnegative solution.
	
	Now let $f_0,g_0\in C(\overline\Omega)$ be arbitrary nonnegative functions. Consider the auxiliary problem
	\begin{equation}\label{eq:eps-problem-new}
		\begin{cases}
			-\,\mathcal L_1 u_\varepsilon
			= \lambda\,a(x)\,u_\varepsilon^{\alpha_1}v_\varepsilon^{\beta_1}
			+ \varepsilon\,f_0(x)
			& \text{in }\Omega,\\[2pt]
			-\,\mathcal L_2 v_\varepsilon
			= \mu\,b(x)\,v_\varepsilon^{\alpha_2}u_\varepsilon^{\beta_2}
			+ \varepsilon^{\frac{1-\alpha_1}{\beta_1}}\,g_0(x)
			& \text{in }\Omega,\\[2pt]
			u_\varepsilon=v_\varepsilon=0
			& \text{on }\partial\Omega.
		\end{cases}
	\end{equation}
	Choose $\varepsilon>0$ sufficiently small so that
	\[
	\varepsilon\,f_0\le f_1
	\qquad\text{and}\qquad
	\varepsilon^{\frac{1-\alpha_1}{\beta_1}}\,g_0\le g_1
	\qquad\text{in }\Omega.
	\]
	By the previous step, \eqref{eq:eps-problem-new} admits a nonnegative solution
	$(u_\varepsilon,v_\varepsilon)$.
	
	Define
	\[
	u:=\varepsilon^{-1}u_\varepsilon,
	\qquad
	v:=\varepsilon^{-\frac{1-\alpha_1}{\beta_1}}v_\varepsilon.
	\]
	Using the linearity of $\mathcal L_1$ and the first equation in
	\eqref{eq:eps-problem-new}, we obtain in $\Omega$
	\begin{align*}
		-\,\mathcal L_1 u
		&=\varepsilon^{-1}(-\,\mathcal L_1 u_\varepsilon)\\
		&=\lambda\,a(x)u^{\alpha_1}v^{\beta_1}
		+ f_0(x),
	\end{align*}
since $u_\varepsilon=\varepsilon u$ and
	$v_\varepsilon=\varepsilon^{\frac{1-\alpha_1}{\beta_1}}v$.
	
	Similarly, using the linearity of $\mathcal L_2$ and the second equation in
	\eqref{eq:eps-problem-new}, we get
	\begin{align*}
		-\,\mathcal L_2 v
		&=\varepsilon^{-\frac{1-\alpha_1}{\beta_1}}(-\,\mathcal L_2 v_\varepsilon)\\
		&=\mu\,b(x)v^{\alpha_2}u^{\beta_2}
		+ g_0(x).
	\end{align*}
	
	Therefore $(u,v)$ solves the original problem with data $(f_0,g_0)$, and so
	$(\lambda,\mu)\in O^{a,b}_{\alpha_1,\beta_1,\alpha_2,\beta_2}(\Omega)$.
	
	The reverse inclusion
	\[
	O^{a,b}_{\alpha_1,\beta_1,\alpha_2,\beta_2}(\Omega)
	\subset
	\widetilde O^{a,b}_{\alpha_1,\beta_1,\alpha_2,\beta_2}(\Omega)
	\]
	is immediate from the definitions. This concludes the proof.
\end{proof}

\begin{lemma}\label{lem:O-open}
	The set
	\[
	O^{a,b}_{\alpha_1,\beta_1,\alpha_2,\beta_2}(\Omega)\subset\mathbb R_+^2
	\]
	is open.
\end{lemma}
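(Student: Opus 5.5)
To prove that $O:=O^{a,b}_{\alpha_1,\beta_1,\alpha_2,\beta_2}(\Omega)$ is open, I will use Lemma~\ref{lem:OequalsOtilde_new}, which identifies $O$ with $\widetilde O$. It then suffices to show the following. Given $(\lambda,\mu)\in O$, every $(\lambda',\mu')\in\mathbb R_+^2$ sufficiently close to $(\lambda,\mu)$ admits \emph{some} pair $f',g'\in C(\overline\Omega)$ with $f',g'>0$ for which \eqref{eq:new-system-ab} has a positive solution.

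The plan is to perturb the parameters slightly upward and absorb the error into the source terms. Fix $(\lambda,\mu)\in O$ and take $f_0=g_0\equiv1$. By definition of $O$, problem \eqref{eq:new-system-ab} with parameters $(\lambda+\delta,\mu+\delta)$ should have a nonnegative solution, but this requires knowing that $(\lambda+\delta,\mu+\delta)\in O$, which is circular. So I argue directly at $(\lambda,\mu)$ instead.

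Let $(u,v)$ be a nonnegative solution for $(\lambda,\mu)$ with sources $f_0=g_0\equiv 1$. By the Scalar Strong Maximum Principle and Hopf's lemma (the right-hand sides are $\ge 1$), we have $u,v\in V_+^\circ$. Since $u,v\in W^{2,p}\subset C(\overline\Omega)$, the quantities $M_1:=\|a u^{\alpha_1}v^{\beta_1}\|_{L^\infty}$ and $M_2:=\|b v^{\alpha_2}u^{\beta_2}\|_{L^\infty}$ appear at first sight to be the natural ones to control. However, $a,b$ are only in $L^p$, not $L^\infty$, so I should not rely on sup norms of $a,b$.

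Instead I use a pointwise comparison. For $(\lambda',\mu')$ with $\lambda'<\lambda+\delta$ and $\mu'<\mu+\delta$,
\[
-\mathcal L_1 u=\lambda' a u^{\alpha_1}v^{\beta_1}+\big(1+(\lambda-\lambda')a u^{\alpha_1}v^{\beta_1}\big).
\]
The bracket is bounded below by $1-\delta a u^{\alpha_1}v^{\beta_1}$, and this need not be positive where $a$ is unbounded. This unboundedness of $a$ and $b$ is the main obstacle.

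To overcome it, I will replace exact solutions by supersolutions. The pair $(u,v)$ is a supersolution at $(\lambda',\mu')$ with sources $(f_0',g_0')$ provided
\[
(\lambda'-\lambda)\,a u^{\alpha_1}v^{\beta_1}\le 1-f_0' \quad\text{and}\quad (\mu'-\mu)\,b v^{\alpha_2}u^{\beta_2}\le 1-g_0'.
\]
I take $f_0',g_0'>0$ continuous. This is easy when $\lambda'\le\lambda$ and $\mu'\le\mu$: choose $f_0'=g_0'=1$, and then $(0,0)$ is a subsolution, so Lemma 2.1 of \cite{MR1765542} gives a nonnegative solution. That solution is positive by the strong maximum principle, which shows $(\lambda',\mu')\in\widetilde O=O$. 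So $O$ is downward closed in $\mathbb R_+^2$.

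For upward perturbations I use the scaling $\tau u$, $\tau^{r}v$, which preserves the homogeneity of the nonlinearities. Under condition \eqref{cond1} with $r=(1-\alpha_1)/\beta_1$, setting $\tilde u=\tau u$ and $\tilde v=\tau^r v$ gives
\[
-\mathcal L_1\tilde u=\lambda a\tilde u^{\alpha_1}\tilde v^{\beta_1}+\tau, \qquad -\mathcal L_2\tilde v=\mu b\tilde v^{\alpha_2}\tilde u^{\beta_2}+\tau^r.
\]
Scaling alone does not change the parameters, so I need a genuine slack. I will get it from the sources, running the argument of Lemma~\ref{lem:OequalsOtilde_new} in reverse. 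Let $w_1=S_1(1)$ and $w_2=S_2(1)$, where $S_i$ is the resolvent from Section~2, and solve the problem at $(\lambda,\mu)$ with the larger sources $F:=1+\|\cdot\|$ replaced by the functions $-\mathcal L_1$ applied to suitable positive multiples. Concretely, I choose as sources $f_0=\lambda a^{\ast}$-type weights. More precisely, I would use the solution $(U,V)$ at $(\lambda,\mu)$ with sources $(1+a,\,1+b)$. These sources are in $L^p$, and the point of this choice is that they dominate $a u^{\alpha_1}v^{\beta_1}$ up to a constant because $u,v$ are bounded.

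Then for $\lambda'-\lambda$ and $\mu'-\mu$ small, the extra terms $(\lambda'-\lambda)a U^{\alpha_1}V^{\beta_1}\le \tfrac12 a$ are absorbed, so $(U,V)$ is a supersolution at $(\lambda',\mu')$ with sources $(1+\tfrac12 a,\,1+\tfrac12 b)\ge 1$. Sub-/supersolutions with $(0,0)$ then give a positive solution at $(\lambda',\mu')$ with positive sources. The remaining step is to verify that sources in $L^p$ rather than $C(\overline\Omega)$ suffice, or to apply the data-reduction argument of Lemma~\ref{lem:OequalsOtilde_new} with $L^p$ data, and I expect this to be the delicate point. Finally, in the existence step (that a solution with sources $(1+a,1+b)$ exists at $(\lambda,\mu)$), I would scale the solution with continuous data as in Lemma~\ref{lem:OequalsOtilde_new} after approximating $a$ from below by continuous functions.
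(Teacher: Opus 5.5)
Your diagnosis is right, and it catches something the paper's own proof glosses over. The paper takes a solution $(u_0,v_0)$ with positive continuous data, lowers the sources by $\eta$, and absorbs $(\lambda-\lambda_0)\,a\,u_0^{\alpha_1}v_0^{\beta_1}\le\eta$. It justifies this by asserting that $a\,u_0^{\alpha_1}v_0^{\beta_1}$ is bounded because $u_0,v_0$ are continuous, which tacitly requires $a,b\in L^\infty(\Omega)$. Your downward-closedness step is fine. So is your final absorption step, and it is not delicate at all. Once $(U,V)$ exists, it is a supersolution at $(\lambda',\mu')$ for the constant data $(1,1)$ (since $1+\tfrac12 a\ge 1$). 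The sub/supersolution lemma and Lemma~\ref{lem:OequalsOtilde_new} then put $(\lambda',\mu')$ in $O$.

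\textbf{The gap.} The missing step is the one you leave for the end: producing, for $(\lambda,\mu)\in O$, a solution $(U,V)$ with data $(1+a,1+b)$. Membership in $O$ only gives solvability for continuous data, and your remedy does not bridge this.
\begin{itemize}
\item The rescaling in Lemma~\ref{lem:OequalsOtilde_new} only transfers solvability to data dominated by a multiple of data you already handle. No continuous function dominates $\varepsilon a$ when $a$ is unbounded.
\item Approximating $a$ by continuous $a_k$ gives solutions $(U_k,V_k)$ with data $(1+a_k,1+b_k)$. (Approximation from below is not even possible in $L^p$ in general.) Passing to the limit needs $\sup_k\bigl(\|U_k\|_{L^\infty(\Omega)}+\|V_k\|_{L^\infty(\Omega)}\bigr)<\infty$, which nothing in your plan supplies.
\end{itemize}
Obtaining that bound is the real content, and it needs a blow-up argument:
\begin{itemize}
\item normalize as in Theorem~\ref{thm:apriori-general};
\item obtain a nonnegative nontrivial solution of the homogeneous system at $(\lambda,\mu)$, so $(\lambda,\mu)\in\mathcal C_1$;
\item exclude this by the comparison with the principal eigenfunction from Proposition~\ref{prop:ray-cutoff}, which already works for $t=C_1$ because $f_0>0$.
\end{itemize}
Quoting Lemma~\ref{lemma1} instead would be circular. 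The disjointness $O\cap\mathcal C_1=\emptyset$ it invokes is only obtained via Corollary~\ref{existence}, which depends on the present lemma.

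\textbf{A shorter repair.} You can skip the data $(1+a,1+b)$ entirely.
\begin{itemize}
\item The same comparison shows that each $(\lambda,\mu)\in O$ satisfies $\lambda<\lambda_1$ and $\mu<\mu_1$ for some $(\lambda_1,\mu_1)\in\mathcal C_1$.
\item Take any $(\lambda',\mu')\in\mathbb R_+^2$ with $\lambda'<\lambda_1$, $\mu'<\mu_1$, and let $(\varphi,\psi)$ be a principal eigenfunction at $(\lambda_1,\mu_1)$. Then $\bigl(M\varphi+S_1(1),\,M^r\psi+S_2(1)\bigr)$ is a supersolution with data $(1,1)$ for $M$ large, so $(\lambda',\mu')\in\widetilde O=O$.
\item This works because $S_1(1)\le C\varphi$, $S_2(1)\le C\psi$, and
\[
(M+C)^{\alpha_1}(M^r+C)^{\beta_1}=M(1+o(1)),\qquad (M^r+C)^{\alpha_2}(M+C)^{\beta_2}=M^r(1+o(1)).
\]
\end{itemize}
This argument needs no bound on $a$ or $b$.
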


\begin{proof}
	Take $(\lambda_0,\mu_0)\in
	O^{a,b}_{\alpha_1,\beta_1,\alpha_2,\beta_2}(\Omega)$.
	Fix arbitrary functions $f_0,g_0\in C(\overline\Omega)$ such that
	$f_0,g_0>0$ in $\Omega$.
	By definition of
	$O^{a,b}_{\alpha_1,\beta_1,\alpha_2,\beta_2}(\Omega)$,
	the boundary value problem
	\begin{equation}\label{eq:open-prob0-new}
		\begin{cases}
			-\,\mathcal L_1 u
			= \lambda_0\,a(x)\,u^{\alpha_1}v^{\beta_1} + f_0(x)
			& \text{in }\Omega,\\[2pt]
			-\,\mathcal L_2 v
			= \mu_0\,b(x)\,v^{\alpha_2}u^{\beta_2} + g_0(x)
			& \text{in }\Omega,\\[2pt]
			u=v=0 & \text{on }\partial\Omega
		\end{cases}
	\end{equation}
	admits a nonnegative solution $(u_0,v_0)$.
	
	Choose $\eta>0$ so small that
	\[
	f_0-\eta>0
	\quad\text{and}\quad
	g_0-\eta>0
	\qquad \text{in }\Omega.
	\]
	
	We claim that there exists $\varepsilon>0$ such that, for every
	$(\lambda,\mu)\in\mathbb R_+^2$ satisfying
	$|\lambda-\lambda_0|<\varepsilon$ and $|\mu-\mu_0|<\varepsilon$,
	the pair $(u_0,v_0)$ is a positive supersolution of
	\begin{equation}\label{eq:open-prob1-new}
		\begin{cases}
			-\,\mathcal L_1 u
			= \lambda\,a(x)\,u^{\alpha_1}v^{\beta_1} + f_0(x)-\eta
			& \text{in }\Omega,\\[2pt]
			-\,\mathcal L_2 v
			= \mu\,b(x)\,v^{\alpha_2}u^{\beta_2} + g_0(x)-\eta
			& \text{in }\Omega,\\[2pt]
			u=v=0 & \text{on }\partial\Omega.
		\end{cases}
	\end{equation}
	
	Indeed, since $(u_0,v_0)$ solves \eqref{eq:open-prob0-new}, we have in $\Omega$
	\[
	-\,\mathcal L_1 u_0
	=\lambda_0\,a(x)\,u_0^{\alpha_1}v_0^{\beta_1}+f_0(x)
	\ge \lambda\,a(x)\,u_0^{\alpha_1}v_0^{\beta_1}+f_0(x)-\eta
	\]
	provided that
	\[
	(\lambda-\lambda_0)\,a(x)\,u_0^{\alpha_1}v_0^{\beta_1}\le \eta
	\qquad\text{in }\Omega.
	\]
	
	Similarly,
	\[
	-\,\mathcal L_2 v_0
	=\mu_0\,b(x)\,v_0^{\alpha_2}u_0^{\beta_2}+g_0(x)
	\ge \mu\,b(x)\,v_0^{\alpha_2}u_0^{\beta_2}+g_0(x)-\eta
	\]
	provided that
	\[
	(\mu-\mu_0)\,b(x)\,v_0^{\alpha_2}u_0^{\beta_2}\le \eta
	\qquad\text{in }\Omega.
	\]
	
	Since $a,b$ are fixed and $(u_0,v_0)\in C(\overline\Omega)\times C(\overline\Omega)$,
	the functions
	\[
	a\,u_0^{\alpha_1}v_0^{\beta_1},
	\qquad
	b\,v_0^{\alpha_2}u_0^{\beta_2}
	\]
	are bounded in $\Omega$.
	Hence we may choose $\varepsilon>0$ so small that
	\[
	|\lambda-\lambda_0|\,
	\|a\,u_0^{\alpha_1}v_0^{\beta_1}\|_{L^\infty(\Omega)}\le \eta,
	\qquad
	|\mu-\mu_0|\,
	\|b\,v_0^{\alpha_2}u_0^{\beta_2}\|_{L^\infty(\Omega)}\le \eta.
	\]
	
	On the other hand, $(0,0)$ is a subsolution of \eqref{eq:open-prob1-new} since
	$f_0-\eta>0$ and $g_0-\eta>0$ in $\Omega$.
	By Lemma 2.1 of \cite{MR1765542},
	problem \eqref{eq:open-prob1-new} admits a positive solution for every
	$(\lambda,\mu)$ with
	$|\lambda-\lambda_0|<\varepsilon$ and $|\mu-\mu_0|<\varepsilon$.
	
	Therefore,
	\[
	(\lambda,\mu)\in
	\widetilde O^{a,b}_{\alpha_1,\beta_1,\alpha_2,\beta_2}(\Omega).
	\]
	Using the assumed equality
	\[
	O^{a,b}_{\alpha_1,\beta_1,\alpha_2,\beta_2}(\Omega)
	=
	\widetilde O^{a,b}_{\alpha_1,\beta_1,\alpha_2,\beta_2}(\Omega),
	\]
	we conclude that
	$(\lambda,\mu)\in
	O^{a,b}_{\alpha_1,\beta_1,\alpha_2,\beta_2}(\Omega)$.
	
	Since $f_0,g_0>0$ were arbitrary, it follows that
	$O^{a,b}_{\alpha_1,\beta_1,\alpha_2,\beta_2}(\Omega)$
	is open in $\mathbb R_+^2$.
\end{proof}

\begin{proposition}\label{prop:ray-cutoff}
	Fix $c>0$. Then there exists a constant $t_0=t_0(c)>0$ such that
	\[
	(t,ct)\notin O^{a,b}_{\alpha_1,\beta_1,\alpha_2,\beta_2}(\Omega)
	\qquad \text{for every } t\ge t_0.
	\]
	
	Equivalently, there exist functions $f_0,g_0\in C(\overline\Omega)$ with
	$f_0,g_0>0$ in $\Omega$ such that the boundary value problem
	\begin{equation}\label{eq:Pt}
		\begin{cases}
			-\,\mathcal L_1 u = t\,a(x)\,u^{\alpha_1}v^{\beta_1}+f_0(x)
			& \text{in }\Omega,\\[2pt]
			-\,\mathcal L_2 v = ct\,b(x)\,v^{\alpha_2}u^{\beta_2}+g_0(x)
			& \text{in }\Omega,\\[2pt]
			u=v=0 & \text{on }\partial\Omega,
		\end{cases}
	\end{equation}
	admits no nonnegative solution.
\end{proposition}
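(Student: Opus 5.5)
We argue by contradiction along the ray and compare with the principal curve, fixing once and for all the data $f_0\equiv g_0\equiv 1$. Along the ray $\mu=c\lambda$ the quantity $\lambda^{1/\theta}(c\lambda)^{1/\zeta}$, with $\theta=\sqrt{\beta_1(1-\alpha_1)}$ and $\zeta=\sqrt{\beta_2(1-\alpha_2)}$, is strictly increasing in $\lambda$. Hence the ray meets $\mathcal C_1$ at exactly one point $(t_1,ct_1)$. We take $t_0:=t_1$, or a slightly larger value if that is more convenient, and show that for $t\ge t_0$ problem \eqref{eq:Pt} with $f_0=g_0=1$ has no nonnegative solution.

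Suppose $(u,v)\ge0$ solves \eqref{eq:Pt} for some $t\ge t_1$. Since $f_0,g_0>0$, the right-hand sides are nonnegative and nontrivial, so the Scalar Strong Maximum Principle and Hopf's lemma give $u,v\in V_+^\circ$. Moreover
\[
-\mathcal L_1u\ge t_1 a\,u^{\alpha_1}v^{\beta_1}+1 \quad\text{and}\quad -\mathcal L_2v\ge ct_1 b\,v^{\alpha_2}u^{\beta_2}+1,
\]
so $(u,v)$ is a strict supersolution at the curve point $(t_1,ct_1)$. Let $(\varphi,\psi)\in V_+^\circ\times V_+^\circ$ be the principal eigenfunction given by Theorem \ref{teor2}. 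We run the sweeping argument already used in Proposition \ref{propos2.15}, with $r=(1-\alpha_1)/\beta_1$. Set
\[
\Gamma=\{\gamma>0:\ u>\gamma\varphi \text{ and } v>\gamma^{r}\psi \text{ in }\Omega\}.
\]
This set is nonempty because $u,v\in V_+^\circ$ (Lemma \ref{nonempty}). It is bounded above since $\varphi,\psi\in V_+^\circ$. Put $\gamma^*=\sup\Gamma$.

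The scaling $(\gamma\varphi,\gamma^r\psi)$ is again an eigenfunction for $(t_1,ct_1)$ by the homogeneity relation \eqref{cond1}. The monotonicity of $s\mapsto s^{\alpha_i}$ and $s\mapsto s^{\beta_i}$ together with $u\ge\gamma^*\varphi$ and $v\ge(\gamma^*)^r\psi$ then give
\[
-\mathcal L_1(u-\gamma^*\varphi)\ge t_1a\big(u^{\alpha_1}v^{\beta_1}-(\gamma^*\varphi)^{\alpha_1}((\gamma^*)^r\psi)^{\beta_1}\big)+1\ge 1>0,
\]
and analogously for the second component. The Scalar Strong Maximum Principle and Hopf's lemma yield $u-\gamma^*\varphi\in V_+^\circ$ and $v-(\gamma^*)^r\psi\in V_+^\circ$. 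Therefore $\gamma^*+\varepsilon\in\Gamma$ for small $\varepsilon>0$, contradicting the definition of $\gamma^*$.

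The only delicate point is the boundary of $\Gamma$: one must check that $\gamma^*$ is finite and positive and that the interior-of-cone perturbation is legitimate. Both follow from Lemma \ref{nonempty}, since $V_+^\circ$ is open in $V$. Thus \eqref{eq:Pt} has no nonnegative solution for $t\ge t_1$, so $(t,ct)\notin O^{a,b}_{\alpha_1,\beta_1,\alpha_2,\beta_2}(\Omega)$.

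The argument also yields the equivalent formulation of the statement. If $(t,ct)\in O^{a,b}_{\alpha_1,\beta_1,\alpha_2,\beta_2}(\Omega)$, then the data $f_0=g_0=1$ would admit a nonnegative solution, which we have just excluded. Conversely, if the data $f_0=g_0=1$ admit no nonnegative solution, then by definition $(t,ct)\notin O^{a,b}_{\alpha_1,\beta_1,\alpha_2,\beta_2}(\Omega)$.
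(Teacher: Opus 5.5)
Your proof is correct and follows essentially the paper's argument. The paper also sweeps the positive solution from below by the rescaled principal eigenpair $(s\phi,s^{r}\psi)$ at the curve point on the ray (taking $C_1=\lambda_1$, $C_2=\mu_1$, $c=\mu_1/\lambda_1$), uses $f_0,g_0>0$ together with the Scalar Strong Maximum Principle and Hopf's lemma to push past $s^*=\sup$, and phrases this as a contradiction along a sequence $t_k\to\infty$ with $t_k>C_1$ and $ct_k>C_2$. You instead argue directly for every $t\ge t_1$, with the explicit threshold $t_0=t_1$ and the fixed data $f_0=g_0=1$.
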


\begin{proof}
	Fix $c>0$ and assume that there exist functions
	\[
	\phi,\psi\in W^{2,p}(\Omega)
	\]
	and constants $C_1=C_1(c)>0$, $C_2=C_2(c)>0$ such that
	\begin{equation}\label{8.9}
		\phi>0,\ \psi>0 \quad \text{in }\Omega,
		\qquad
		\phi=\psi=0 \quad \text{on }\partial\Omega,
	\end{equation}
	and
	\begin{equation}\label{eq:barrier-ineq-2}
		-\mathcal L_1\phi \le C_1\,a(x)\,\phi^{\alpha_1}\psi^{\beta_1}
		\quad \text{in }\Omega,
		\qquad
		-\mathcal L_2\psi \le C_2\,b(x)\,\psi^{\alpha_2}\phi^{\beta_2}
		\quad \text{in }\Omega.
	\end{equation}

For instance, one may take
$C_1=\lambda_1$ and $C_2=\mu_1$,
with $c=\frac{\mu_1}{\lambda_1}$,
$(\lambda_1,\mu_1)\in\mathcal C_1$,
and $(\phi,\psi)$ the eigenfunction pair associated with
$(\lambda_1,\mu_1)$.
	
	Fix once and for all functions $f_0,g_0\in C(\overline\Omega)$ such that
	\begin{equation}\label{eq:fg-positive}
		f_0>0,\qquad g_0>0\qquad\text{in }\Omega.
	\end{equation}
	
	Assume by contradiction that there exists a sequence $t_k\to+\infty$
	such that $(t_k,ct_k)\in O^{a,b}_{\alpha_1,\beta_1,\alpha_2,\beta_2}(\Omega)$.
	By the definition of $O^{a,b}_{\alpha_1,\beta_1,\alpha_2,\beta_2}(\Omega)$
	and the choice \eqref{eq:fg-positive}, for each $k$ the problem
	\eqref{eq:Pt} with $t=t_k$ admits a nonnegative solution $(u_k,v_k)$.
	Since $f_0,g_0>0$ in $\Omega$, the Scalar Strong Maximum Principle yields
	\[
		u_k>0,\quad v_k>0\quad\text{in }\Omega,\qquad u_k=v_k=0\quad\text{on }\partial\Omega,
	\]
	and Hopf's lemma implies
	\[
		\partial_\nu u_k<0,\qquad \partial_\nu v_k<0\qquad\text{on }\partial\Omega.
	\]

	Since
	\[
		r=\frac{1-\alpha_1}{\beta_1}=\frac{\beta_2}{1-\alpha_2},
	\]
	we have
	\begin{equation}\label{eq:theta-identities}
		\alpha_1+r\beta_1=1,
		\qquad
		\beta_2+r\alpha_2=r.
	\end{equation}
	
	Now, for each $k$, set
	\[
		S_k:=\Big\{\,s>0:\ u_k\ge s\,\phi\ \text{and}\ v_k\ge s^{r}\,\psi
		\ \text{in }\Omega\Big\}.
	\]
	By Hopf's lemma, the set $S_k$ is nonempty.
	Define
	\[
		s_k^*:=\sup S_k\in(0,+\infty).
	\]
	By definition, for every $0<s<s_k^*$ we have
	\[
		u_k\ge s\,\phi,\qquad v_k\ge s^r\,\psi\quad\text{in }\Omega.
	\]
	
	Letting $s\uparrow s_k^*$ and using continuity, we obtain
	\begin{equation}\label{eq:ineq-at-snstar}
		u_k\ge s_k^*\,\phi,\qquad v_k\ge (s_k^*)^r\,\psi\quad\text{in }\Omega.
	\end{equation}
	
	From \eqref{eq:ineq-at-snstar} and monotonicity of $t\mapsto t^\gamma$ on
	$[0,+\infty)$ we deduce
	\[
	u_k^{\alpha_1}v_k^{\beta_1}
	\ge
	(s_k^*)^{\alpha_1}\phi^{\alpha_1}\big((s_k^*)^r\psi\big)^{\beta_1}
	=
	(s_k^*)^{\alpha_1+r\beta_1}\phi^{\alpha_1}\psi^{\beta_1}.
	\]
	Using $\alpha_1+r\beta_1=1$ from \eqref{eq:theta-identities}, we obtain
	\begin{equation}\label{eq:lower-1}
		u_k^{\alpha_1}v_k^{\beta_1}\ge s_k^*\,\phi^{\alpha_1}\psi^{\beta_1}
		\quad\text{in }\Omega.
	\end{equation}
	Similarly,
	\[
	v_k^{\alpha_2}u_k^{\beta_2}
	\ge
	\big((s_k^*)^r\psi\big)^{\alpha_2}(s_k^*)^{\beta_2}\phi^{\beta_2}
	=
	(s_k^*)^{r\alpha_2+\beta_2}\psi^{\alpha_2}\phi^{\beta_2},
	\]
	and using $r\alpha_2+\beta_2=r$ from \eqref{eq:theta-identities} we get
	\begin{equation}\label{eq:lower-2}
		v_k^{\alpha_2}u_k^{\beta_2}\ge (s_k^*)^r\,\psi^{\alpha_2}\phi^{\beta_2}
		\quad\text{in }\Omega.
	\end{equation}
	
	Now, define
	\[
		U_k:=u_k-s_k^*\phi,\qquad V_k:=v_k-(s_k^*)^r\psi.
	\]
	By \eqref{eq:ineq-at-snstar} we have $U_k\ge0$ and $V_k\ge0$ in $\Omega$, and
	$U_k=V_k=0$ on $\partial\Omega$.
	Using \eqref{eq:Pt} (with $t=t_k$) and linearity of $\mathcal L_1,\mathcal L_2$,
	we compute in $\Omega$,
	\begin{align}
		-\mathcal L_1 U_k
		&=
		t_k a(x)u_k^{\alpha_1}v_k^{\beta_1}+f_0(x) - s_k^*\big(-\mathcal L_1\phi\big),
		\label{eq:L1Un}\\
		-\mathcal L_2 V_k
		&=
		ct_k b(x)v_k^{\alpha_2}u_k^{\beta_2}+g_0(x) - (s_k^*)^r\big(-\mathcal L_2\psi\big).
		\label{eq:L2Vn}
	\end{align}
	Using \eqref{eq:barrier-ineq-2} in \eqref{eq:L1Un} and then \eqref{eq:lower-1},
	we obtain
	\begin{align}
		-\mathcal L_1 U_k
		&\ge
		t_k a(x)u_k^{\alpha_1}v_k^{\beta_1}+f_0(x)
		- s_k^* C_1\,a(x)\phi^{\alpha_1}\psi^{\beta_1}\notag\\
		&\ge
		t_k a(x)\,s_k^*\,\phi^{\alpha_1}\psi^{\beta_1}+f_0(x)
		- s_k^* C_1\,a(x)\phi^{\alpha_1}\psi^{\beta_1}\notag\\
		&=
		s_k^*\,a(x)\phi^{\alpha_1}\psi^{\beta_1}\,(t_k-C_1)+f_0(x).
		\label{eq:Un-positive}
	\end{align}
	Similarly, using \eqref{eq:barrier-ineq-2} in \eqref{eq:L2Vn} and then
	\eqref{eq:lower-2}, we get
	\begin{align}
		-\mathcal L_2 V_k
		&\ge
		ct_k b(x)v_k^{\alpha_2}u_k^{\beta_2}+g_0(x)
		- (s_k^*)^r C_2\,b(x)\psi^{\alpha_2}\phi^{\beta_2}\notag\\
		&\ge
		ct_k b(x)\,(s_k^*)^r\,\psi^{\alpha_2}\phi^{\beta_2}+g_0(x)
		- (s_k^*)^r C_2\,b(x)\psi^{\alpha_2}\phi^{\beta_2}\notag\\
		&=
		(s_k^*)^r\,b(x)\psi^{\alpha_2}\phi^{\beta_2}\,(ct_k-C_2)+g_0(x).
		\label{eq:Vn-positive}
	\end{align}
	Since $t_k\to+\infty$, we may choose $k_0$ such that for all $k\ge k_0$,
	\[
		t_k>C_1\qquad\text{and}\qquad ct_k>C_2.
	\]
	Then \eqref{eq:Un-positive}--\eqref{eq:Vn-positive} and \eqref{eq:fg-positive}
	imply that for every $k\ge k_0$,
	\[
		-\mathcal L_1 U_k>0,\qquad -\mathcal L_2 V_k>0\quad\text{in }\Omega,
		\qquad
		U_k=V_k=0\quad\text{on }\partial\Omega.
	\]
	By the Scalar Strong Maximum Principle and Hopf's lemma, for $k\ge k_0$ we have
	\[
		U_k>0,\quad V_k>0\quad\text{in }\Omega,
		\qquad
		\partial_\nu U_k<0,\quad \partial_\nu V_k<0\quad\text{on }\partial\Omega.
	\]
	
	Fix $k\ge k_0$. Since $U_k>0$ in $\Omega$, $U_k=0$ on $\partial\Omega$, and
	$\partial_\nu U_k<0$ on $\partial\Omega$, while $\phi>0$ in $\Omega$,
	$\phi=0$ on $\partial\Omega$, and $\partial_\nu\phi<0$ on $\partial\Omega$,
	it follows from Hopf's lemma and the boundary regularity that there exists
	$m_k>0$ such that
	\[
	U_k\ge m_k\,\phi \quad \text{in }\Omega.
	\]
	In particular,
	\[
	u_k=s_k^*\phi+U_k\ge (s_k^*+m_k)\phi \qquad \text{in }\Omega.
	\]
	
	Similarly, since $V_k>0$ in $\Omega$, $V_k=0$ on $\partial\Omega$, and
	$\partial_\nu V_k<0$ on $\partial\Omega$, while $\psi>0$ in $\Omega$,
	$\psi=0$ on $\partial\Omega$, and $\partial_\nu\psi<0$ on $\partial\Omega$,
	there exists $\ell_k>0$ such that
	\[
	V_k\ge \ell_k\,\psi \quad \text{in }\Omega,
	\]
	and hence
	\[
	v_k=(s_k^*)^r\psi+V_k
	\ge\big((s_k^*)^r+\ell_k\big)\psi
	\quad\text{in }\Omega.
	\]
	
	Since the function $t\mapsto t^r$ is continuous and strictly increasing
	on $[0,+\infty)$, there exists $\varepsilon_k\in(0,m_k)$ such that
	\[
	(s_k^*+\varepsilon_k)^r\le (s_k^*)^r+\ell_k.
	\]
	Then
	\[
	u_k\ge (s_k^*+\varepsilon_k)\phi \quad \text{in }\Omega,
	\]
	and
	\[
	v_k\ge (s_k^*+\varepsilon_k)^r\psi \quad\text{in }\Omega.
	\]
	In particular, $s_k^*+\varepsilon_k\in S_k$, contradicting
	$s_k^*=\sup S_k$. This contradiction proves the result.
	
	Finally, observe that $C_1=\lambda_1$ and $C_2=\mu_1$ with $c=\frac{\mu_1}{\lambda_1}$ and $(\lambda_1,\mu_1)\in \mathcal{C}_1$ are the best constants, because by Theorem \ref{MP}, there exists no pair of functions satisfying \eqref{8.9} and \eqref{eq:barrier-ineq-2} in $\mathcal{R}_1$.  
\end{proof}

Let $(\lambda_1, \mu_1) \in \mathcal{C}_1$. We denote by $(\varphi,\psi)$ a positive eigenfunction pair associated to $(\lambda_1, \mu_1)$. Set
\[
\varphi_0=\varphi,\ \ \psi_0=\left(\frac{\lambda_1}{\lambda_0}\right)^{\frac{1}{\beta_1}}\psi,\ \ \ \lambda_0=\lambda_1^{\frac{1-\alpha_2}{1-\alpha_2+\beta_1}}\mu_1^{\frac{\beta_1}{1-\alpha_2+\beta_1}}.
\]

Thus $(\varphi_0,\psi_0)$ is a solution of the problem
\[
	\begin{cases}
		-\,\mathcal{L}_1 u=\lambda_0\,a(x)\,\vert u\vert^{\alpha_1-1}u\vert v\vert^{\beta_1-1}v & \text{in }\Omega,\\[2pt]
		-\,\mathcal{L}_2 v=\lambda_0\,b(x)\,\vert v\vert^{\alpha_2-1}v\vert u\vert^{\beta_2-1}u & \text{in }\Omega,\\[2pt]
		u=v=0 & \text{on }\partial\Omega.
	\end{cases}
	\]

This produces a spectral curve $\Lambda_{\lambda_0}$ parametrized by
\[
\Lambda_{\lambda_0}(\lambda)=(\lambda,\mu(\lambda)),
\]
where $\mu(\lambda)=\frac{\lambda_0^{\frac{1-\alpha_2+\beta_1}{\beta_1}}}{\lambda^{\frac{1-\alpha_2}{\beta_1}}}$, $\lambda>0$.

Moreover, the curve may be reparametrized as
\[
\Lambda_{\lambda_0}(c)=(\lambda_1(c),\mu_1(c)),
\]
where $\mu_1(c) = c\lambda_1(c)$, for $c=\frac{\mu_1(c)}{\lambda_1(c)}= \left(\frac{\lambda_0}{\lambda_1(c)}\right)^{\frac{1-\alpha_2+\beta_1}{\beta_1}}$.

Thus, by (\ref{eq:Lambda-conds}) and (\ref{eq:rho-rel-main}), we have

\[
\tilde{t}_c:=\lambda_1(c)=\frac{\lambda_0}{c^{\frac{\beta_1}{1-\alpha_2+\beta_1}}}=\frac{\rho_1^{-\frac{\beta_1\beta_2}{1-\alpha_2+\beta_1}}}{c^{\frac{\beta_1}{1-\alpha_2+\beta_1}}}=\frac{\rho_2^{-\frac{\beta_1(1-\alpha_2)}{1-\alpha_2+\beta_1}}}{c^{\frac{\beta_1}{1-\alpha_2+\beta_1}}}.
\]

In particular, when $\,\mathcal{L}_1=\,\mathcal{L}_2$, $a(x)=b(x)$ and $\alpha_1=\alpha_2=\beta_1=\beta_2=\frac{1}{2}$, we have

\[
\lambda_1(-\,\mathcal{L}_1-\lambda a(x))=\lambda_0=\rho_1^{-\frac{1}{4}}=\rho_2^{-\frac{1}{4}}\ \ \ \ \ \text{and} \ \ \ \ \ \tilde{t}_c=\frac{\lambda_1(-\,\mathcal{L}_1-\lambda a(x))}{c^{\frac{1}{2}}}.
\]

This way, one recovers the notation from \cite{MR1765542},
\[
\lambda_1(c)=\tilde{t}_c,\  \mu_1(c)=c\tilde{t}_c.
\]

Therefore, $\mathcal{C}_1=\{\Lambda_{\lambda_0}(c): c>0\}$.

For each $c>0$, define
\begin{equation}
t_c^{\ast} = \sup\{t>0 : (t,ct) \in O^{a,b}_{\alpha_1,\beta_1,\alpha_2,\beta_2}(\Omega)\}. \label{3.8}
\end{equation}
According to Lemma \ref{lem:small-eps-existence} and Proposition \ref{prop:ray-cutoff}, $t_c^{\ast}$ is a well-defined positive number.

\begin{theorem} \label{theo1} For each $c>0$ the numbers $t_c^{\ast}$ and $\tilde{t}_c$ are equal.
\end{theorem}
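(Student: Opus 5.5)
We prove the two inequalities $t_c^\ast\le \tilde t_c$ and $t_c^\ast\ge \tilde t_c$ separately.

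\emph{Upper bound $t_c^\ast\le\tilde t_c$.} This is essentially the argument of Proposition~\ref{prop:ray-cutoff} made sharp. Take $(\lambda_1,\mu_1)=(\tilde t_c,c\tilde t_c)\in\mathcal C_1$ with positive eigenfunction $(\phi,\psi)\in V_+^\circ\times V_+^\circ$, and put $C_1=\tilde t_c$, $C_2=c\tilde t_c$ in \eqref{eq:barrier-ineq-2}. Suppose $(t,ct)\in O^{a,b}_{\alpha_1,\beta_1,\alpha_2,\beta_2}(\Omega)$ with $t\ge\tilde t_c$. Fix $f_0,g_0>0$ in $C(\overline\Omega)$; then there is a nonnegative solution $(u,v)$, which is positive by the Scalar Strong Maximum Principle. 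Repeat the sweeping argument with $s^\ast=\sup\{s>0: u\ge s\phi,\ v\ge s^r\psi\}$.

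In \eqref{eq:Un-positive}--\eqref{eq:Vn-positive} the coefficients $t-C_1\ge0$ and $ct-C_2\ge0$ are now only nonnegative. This is still enough, because the strictly positive terms $f_0,g_0$ remain and give $-\mathcal L_1U>0$, $-\mathcal L_2V>0$. Hopf's lemma then contradicts the maximality of $s^\ast$. Hence $(t,ct)\notin O$ for $t\ge\tilde t_c$, and since $O$ is open along the ray (Lemma~\ref{lem:O-open}), $t_c^\ast\le\tilde t_c$.

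\emph{Lower bound $t_c^\ast\ge\tilde t_c$.} Fix $0<t<\tilde t_c$, so $(t,ct)\in\mathcal R_1$. We must produce a nonnegative solution for every nonnegative $f_0,g_0\in C(\overline\Omega)$. By Lemma~\ref{lem:OequalsOtilde_new} it suffices to find \emph{some} positive $f_1,g_1$ and a positive solution, or equivalently a positive strict supersolution, and then apply Lemma~2.1 of \cite{MR1765542} with $(0,0)$ as subsolution.

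Take the eigenpair $(\phi,\psi)$ at $(\tilde t_c,c\tilde t_c)$ and set $(\bar u,\bar v)=(\phi,\psi)$. Then
\[
-\mathcal L_1\phi-t\,a\phi^{\alpha_1}\psi^{\beta_1}=(\tilde t_c-t)\,a\,\phi^{\alpha_1}\psi^{\beta_1}=:f_1>0,
\]
and similarly $g_1:=c(\tilde t_c-t)\,b\,\psi^{\alpha_2}\phi^{\beta_2}>0$ in $\Omega$. So $(\phi,\psi)$ is itself a positive solution of \eqref{eq:new-system-ab} with data $(f_1,g_1)$, which puts $(t,ct)\in\widetilde O=O$. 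Therefore $t_c^\ast\ge t$ for every $t<\tilde t_c$, and so $t_c^\ast\ge\tilde t_c$.

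\emph{Main obstacle.} The delicate point is a regularity mismatch: $f_1,g_1$ are in $L^p$, not $C(\overline\Omega)$, since $a,b\in L^p$. They also vanish on $\partial\Omega$, whereas Lemma~\ref{lem:OequalsOtilde_new} asks for continuous data with $\varepsilon f_0\le f_1$. I would handle this by comparing only with continuous $f_0,g_0$ and using $\operatorname{ess\,inf}a>0$: then $f_1\ge \delta\,\phi^{\alpha_1}\psi^{\beta_1}$, a positive continuous function.

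This still fails near $\partial\Omega$, where $f_1$ degenerates. To fix it, I would instead take the supersolution $(\phi+\tau w_1,\ \psi+\tau^{r}w_2)$ with $w_i=S_i(1)$ and $\tau$ small, or alternatively choose a positive eigenpair on a slightly larger smooth domain $\Omega'\supset\overline\Omega$ (with $(t,ct)$ still below its curve by continuity of $\Lambda_0$ in the domain). The second choice makes $\phi,\psi$ bounded below on $\overline\Omega$, hence makes $f_1,g_1$ bounded below by a positive constant. I would try the enlarged-domain construction first.
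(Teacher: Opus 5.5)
Your argument is correct, provided you use your first repair in the lower bound. It takes a genuinely different route from the paper.

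The paper never uses the eigenpair at $(\tilde t_c,c\tilde t_c)$ as a barrier. It takes $t_k\uparrow t_c^\ast$ with positive solutions of \eqref{3.5} for fixed data $f_0,g_0>0$ on $\overline\Omega$. It first rules out boundedness of $(u_k,v_k)$: a bounded subsequence would give a solution at $t_c^\ast$, and Lemmas~\ref{lem:OequalsOtilde_new} and~\ref{lem:O-open} would then push $O$ beyond $t_c^\ast$. It then normalizes the blowing-up sequence to get a nonnegative nontrivial eigenpair at $(t_c^\ast,ct_c^\ast)$. Uniqueness of the principal eigenvalue on the ray forces $t_c^\ast=\tilde t_c$, giving both inequalities at once.

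You replace compactness and the openness lemma by the explicit eigenpair from Theorem~\ref{teor2}. The sweeping of Proposition~\ref{prop:ray-cutoff} with the non-strict condition $t\ge\tilde t_c$ gives $t_c^\ast\le\tilde t_c$; you are right that $f_0,g_0>0$ supply the strictness. A supersolution built from the eigenpair gives $(t,ct)\in O$ for every $t<\tilde t_c$.

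What your route buys: it is more elementary, and it does not need Lemma~\ref{lem:small-eps-existence} to know $t_c^\ast>0$. It also yields $O\cap\{(t,ct):t>0\}=\{(t,ct):0<t<\tilde t_c\}$, which is Corollary~\ref{existence}, in one stroke. What the paper's route buys: it locates $t_c^\ast$ intrinsically without any barrier, and it is the template reused for Corollary~\ref{closeness} and Lemma~\ref{lemma1}. Openness of $O$ is not needed for your upper bound, since a subset of $(0,\tilde t_c)$ has supremum at most $\tilde t_c$.

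On the lower bound, $(\phi,\psi)$ itself does not place $(t,ct)$ in $\widetilde O$, and neither repair does as written. Because $a,b\in L^p$, the defect terms are not continuous. In the enlarged-domain version, $\phi,\psi$ also no longer vanish on $\partial\Omega$, so you only get a supersolution.

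So do not route through $\widetilde O$; run the argument of Lemma~\ref{lem:OequalsOtilde_new} directly on a supersolution. Your first repair does exactly this. Since $\phi,\psi\in V_+^\circ$ and $w_i=S_i(1)\in C^1$ vanish on $\partial\Omega$, there is $C$ with $w_1\le C\phi$ and $w_2\le C\psi$. Then $\bar u=\phi+\tau w_1$, $\bar v=\psi+\tau^r w_2$ satisfy
\[
-\mathcal L_1\bar u-t\,a\,\bar u^{\alpha_1}\bar v^{\beta_1}\ \ge\ \big(\tilde t_c-t(1+C\tau)^{\alpha_1}(1+C\tau^r)^{\beta_1}\big)\,a\,\phi^{\alpha_1}\psi^{\beta_1}+\tau\ \ge\ \tau
\]
for $\tau$ small, and likewise the second defect is $\ge\tau^r$.

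For continuous $f_0,g_0\ge0$:
\begin{enumerate}
\item Choose $\varepsilon$ with $\varepsilon f_0\le\tau$ and $\varepsilon^r g_0\le\tau^r$.
\item Apply Lemma~2.1 of \cite{MR1765542} between $(0,0)$ and $(\bar u,\bar v)$ for the data $(\varepsilon f_0,\varepsilon^r g_0)$.
\item Rescale by $(\varepsilon^{-1},\varepsilon^{-r})$, using $\alpha_1+r\beta_1=1$ and $r\alpha_2+\beta_2=r$.
\end{enumerate}

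The enlarged-domain alternative would additionally need extensions of the coefficients and weights to $\Omega'$, and the scalar maximum principle there. It would also need continuity of $\Lambda_0$ under outer domain perturbations. None of these is available in the paper, so drop that option.
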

\begin{proof} Let $c>0$. By the definition (\ref{3.8}), there is a sequence $(t_k)_{k \geq 1}$ of positive numbers converging to $t_c^{\ast}$ and such that $(t_k,ct_k) \in O^{a,b}_{\alpha_1,\beta_1,\alpha_2,\beta_2}(\Omega)$ for all $k \geq 1$. Therefore, given a fixed pair $(f_0,g_0) \in (C(\overline{\Omega}))^2$ with $f_0, g_0 > 0$ in $\overline{\Omega}$, the system
\begin{equation}\label{3.5}
\left\{
\begin{array}{llll}
-\,\mathcal{L}_1 u=t_k a(x) \vert u\vert^{\alpha_1-1}u\vert v\vert^{\beta_1-1}v +f_0(x) & {\rm in} \ \ \Omega,\\
-\,\mathcal{L}_2 v=ct_k b(x) \vert v\vert^{\alpha_2-1}v\vert u\vert^{\beta_2-1}u +g_0(x) & {\rm in} \ \ \Omega,\\
u=v=0 & {\rm on} \ \ \partial\Omega
\end{array}
\right.
\end{equation}
possesses a positive strong solution $(u_k,v_k)$ for each $k$. We claim that $\Vert(u_k,v_k)\Vert_X \rightarrow +\infty$ as $k \rightarrow + \infty$. Otherwise, there is $C > 0$ such that, up to a subsequence, we get $\| u_k \|_{L^\infty(\Omega)}, \| v_k \|_{L^\infty(\Omega)} \leq C$ for all $k \geq 1$. Using $L^p$ estimate in each equation of the system (\ref{3.5}), we have $$\| u_k \|_{W^{2,p}(\Omega)}, \| v_k \|_{W^{2,p}(\Omega)} \leq C_0$$ for some positive constant $C_0$ independent of $k$. Since $\Omega$ is bounded, by Arzelá-Ascoli theorem, up to a subsequence, we obtain the uniform convergence $u_k \rightarrow u$ and $v_k \rightarrow v$ locally in $\Omega$. Applying $L^p$ estimates, we have
\begin{eqnarray*}
&& \Vert u_m - u_k \Vert_{W^{2,p}(\Omega)}\leq c_1\left(\vert t_m - t_k \vert + \Vert u_m^{\alpha_1}v_m^{\beta_1} - u_k^{\alpha_1}v_k^{\beta_1} \Vert_{L^p(\Omega)}\right), \\
&& \Vert v_m - v_k \Vert_{W^{2,p}(\Omega)}\leq c_1 \left(\vert t_m - t_k \vert + \Vert v_m^{\alpha_2}u_m^{\beta_2} - v_k^{\alpha_2}u_k^{\beta_2} \Vert_{L^p(\Omega)}\right),
\end{eqnarray*}
for some positive constant $c_1$ independent of $k$. Then, $u_k \rightarrow u$  and $v_k \rightarrow v$ in $W^{2,p}(\Omega)$, so that $(u,v)$ is a positive strong solution of the system

\[
\left\{
\begin{array}{llll}
-\,\mathcal{L}_1 u= t_c^{\ast} a(x)  \vert u\vert^{\alpha_1-1}u\vert v\vert^{\beta_1-1}v +f_0(x) & {\rm in} \ \ \Omega,\\
-\,\mathcal{L}_2 v=c t_c^{\ast} b(x) \vert v\vert^{\alpha_2-1}v\vert u\vert^{\beta_2-1}u +g_0(x) & {\rm in} \ \ \Omega,\\
u=v=0 & {\rm on} \ \ \partial\Omega.
\end{array}
\right.
\]
So, thanks to Lemma \ref{lem:OequalsOtilde_new}, we have $(t_c^{\ast},c t_c^{\ast}) \in O^{a,b}_{\alpha_1,\beta_1,\alpha_2,\beta_2}(\Omega)$, and to Lemma \ref{lem:O-open}, we get $(t_c^{\ast} + \varepsilon,c(t_c^{\ast} + \varepsilon)) \in O^{a,b}_{\alpha_1,\beta_1,\alpha_2,\beta_2}(\Omega)$ for sufficiently small $\varepsilon>0$, contradicting the definition of $t_c^{\ast}$. This proves the statement.

Define

\[
\varphi_k\ =\ \frac{u_k}{\| u_k \|_{L^\infty(\Omega)}\ +\ \| v_k \|^{r}_{L^\infty(\Omega)}} \ \ \ \ {\rm and}\ \ \ \ \psi_k\ =\ \frac{v_k}{\left(\| u_k \|_{L^\infty(\Omega)}\ +\ \| v_k \|^{r}_{L^\infty(\Omega)}\right)^{\frac{1}{r}}}.
\]
Notice that $\| \varphi_k \|_{L^\infty(\Omega)}+ \| \psi_k \|^{r}_{L^\infty(\Omega)}= 1$. Moreover, we obtain
\[
\left\{
\begin{array}{llll}
-\,\mathcal{L}_1 \varphi_k = t_k a(x) \varphi_k^{\alpha_1}\psi_k^{\beta_1} + f_0(x) \left(\| u_k \|_{L^\infty(\Omega)}\ +\ \| v_k \|^{r}_{L^\infty(\Omega)}\right)^{-1} & {\rm in} \ \ \Omega,\\
-\,\mathcal{L}_2 \psi_k = ct_k b(x) \psi_{k}^{\alpha_2}\varphi_k^{\beta_2} + g_0(x) \left(\| u_k \|_{L^\infty(\Omega)} + \| v_k \|^{r}_{L^\infty(\Omega)}\right)^{-\frac{1}{r}} & {\rm in} \ \ \Omega,\\
\varphi_k =\psi_k =0 & {\rm on} \ \ \partial\Omega.
\end{array}
\right.
\]
Proceeding as above, up to a subsequence, we have $\varphi_k \rightarrow \varphi$ and $\psi_k \rightarrow \psi$ in $W^{2,p}(\Omega)$. Then, $\| \varphi \|_{L^\infty(\Omega)} + \| \psi \|^{r}_{L^\infty(\Omega)} = 1$ and $(\varphi,\psi) \in (W^{2,p}(\Omega))^2$ is a strong solution of the system

\[
\left\{
\begin{array}{llll}
-\,\mathcal{L}_1 \varphi = t_c^{\ast} a(x) \varphi^{\alpha_1}\psi^{\beta_1}  & {\rm in} \ \ \Omega,\\
-\,\mathcal{L}_2 \psi =c t_c^{\ast} b(x) \psi^{\alpha_2}\varphi_k^{\beta_2}  & {\rm in} \ \ \Omega,\\
\varphi = \psi = 0 & {\rm on} \ \ \partial\Omega.
\end{array}
\right.
\]
So, by the Scalar Strong Maximum Principle, it follows that $(t_c^{\ast}, c t_c^{\ast}) \in \mathcal{C}_1$. This completes the proof.
\end{proof}

\begin{corollary} \label{closeness}
The set of principal eigenvalues $\mathcal{C}_1$ is closed in $\R_+^2$.
\end{corollary}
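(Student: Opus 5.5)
The plan is a direct sequential argument. Take $(\lambda_k,\mu_k)\in\mathcal C_1$ with $(\lambda_k,\mu_k)\to(\lambda,\mu)\in(\R_+^*)^2$, and show that $(\lambda,\mu)\in\mathcal C_1$. The quickest route uses the explicit description of $\mathcal C_1$ from Theorem~\ref{teor2}. There it is the level set
\[
\{(\lambda,\mu)\in(\R_+^*)^2:\ \lambda^{1/\theta}\mu^{1/\zeta}=\Lambda_0\},\qquad \theta=\sqrt{\beta_1(1-\alpha_1)},\quad \zeta=\sqrt{\beta_2(1-\alpha_2)},
\]
of the continuous map $(\lambda,\mu)\mapsto\lambda^{1/\theta}\mu^{1/\zeta}$ on $(\R_+^*)^2$. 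This map is continuous and $\Lambda_0$ is a fixed constant, so the preimage of $\{\Lambda_0\}$ is relatively closed in $(\R_+^*)^2$.

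The closure must also be taken in $\R_+^2$, which contains the two semi-axes. So I would next check that no point of $\mathcal C_1$ accumulates on $\{\lambda=0\}\cup\{\mu=0\}$ at a finite point. If $\lambda_k\to0$ with $\mu_k$ bounded, then $\lambda_k^{1/\theta}\mu_k^{1/\zeta}\to0\neq\Lambda_0$, which is impossible. Hence any limit point lying in $\R_+^2$ has both coordinates positive, and by the previous paragraph it lies on $\mathcal C_1$.

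To stay in the spirit of Section~\ref{s8}, I would also give, or at least sketch, a spectral proof that does not use the formula. Normalize positive eigenfunctions $(\varphi_k,\psi_k)$ by $\|\varphi_k\|_{L^\infty(\Omega)}+\|\psi_k\|^{r}_{L^\infty(\Omega)}=1$, as in the proof of Theorem~\ref{theo1}. The $L^p$ estimates then bound the sequence in $W^{2,p}(\Omega)$, and the compact embedding gives a limit $(\varphi,\psi)$. This limit is nonnegative, keeps the normalization, and solves \eqref{1.3} at $(\lambda,\mu)$. Since it is nontrivial, a Scalar Strong Maximum Principle argument as in Proposition~\ref{psim2} makes both components positive, and then \eqref{prop:uniqueness} gives $(\lambda,\mu)\in\mathcal C_1$.

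The main obstacle in the spectral route is excluding a limit in which one component vanishes. Suppose $\psi\equiv0$. Then the first equation gives $-\mathcal L_1\varphi=0$, so $\varphi\equiv0$ by the Scalar Weak Maximum Principle, which contradicts the normalization. The case $\varphi\equiv0$ is symmetric. The formula-based argument avoids this issue entirely, so I would present it as the proof.
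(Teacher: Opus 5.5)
Your proposal is correct, but the argument you choose to present is not the one the paper uses.

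The paper proves the corollary by the compactness route you only sketch. It takes positive eigenfunctions $(\varphi_k,\psi_k)$ normalized by $\|(\varphi_k,\psi_k)\|_X=1$ and uses $L^p$ estimates, as in Theorem~\ref{theo1} with $f_0=g_0=0$, to get $W^{2,p}$ bounds. It then extracts a convergent subsequence, passes to the limit, and invokes the Scalar Strong Maximum Principle to obtain a positive eigenfunction at $(\lambda,\mu)$.

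Your main argument instead reads closedness off the explicit description in Theorem~\ref{teor2} together with \eqref{prop:uniqueness}. There $\mathcal C_1$ is the level set $\{\lambda^{1/\theta}\mu^{1/\zeta}=\Lambda_0\}$ of a continuous function on $(\R_+^*)^2$, and this function extends continuously by $0\neq\Lambda_0$ to finite points of the semi-axes. So $\mathcal C_1$ is closed in $\R_+^2$ whichever convention for $\R_+^2$ is meant.

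What each route buys:
\begin{itemize}
\item Your route is shorter and makes the corollary almost a tautology once Section~3 is in place. However, it leans entirely on the Krein--Rutman construction of $\Lambda_0$ and on identifying all principal eigenvalues with that level set.
\item The paper's route uses only elliptic estimates and the maximum principle. It shows directly that the set of principal eigenvalues is closed without knowing the shape of the curve, which fits the Section~8 viewpoint where $\mathcal C_1$ is recovered as $\partial O^{a,b}_{\alpha_1,\beta_1,\alpha_2,\beta_2}(\Omega)$.
\end{itemize}

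Your sketch of the spectral route is slightly more careful than the paper's written proof. The normalization only guarantees $(\varphi,\psi)\not\equiv(0,0)$. Your observation that $\psi\equiv0$ forces $-\mathcal L_1\varphi=0$, hence $\varphi\equiv0$ (and symmetrically), is exactly what is needed before the Strong Maximum Principle gives positivity of both components. The same coupling argument also rules out limits on the coordinate axes, which the paper leaves implicit.
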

\begin{proof} Let $((\lambda_k, \mu_k))_{k \geq 1} \subset \mathcal{C}_1$ be a sequence converging to $(\lambda,\mu)$. For each $k$, consider a positive eigenfunction pair $(\varphi_k, \psi_k)$ associated to $(\lambda_k, \mu_k)$ such that $||(\varphi_k, \psi_k)||_X = 1$. Proceeding in a similar way as in the proof of Theorem \ref{theo1} with $f_0 = 0 = g_0$ and using $L^p$ estimates, we obtain, up to a subsequence, that $\varphi_k \rightarrow \varphi$ and $\psi_k \rightarrow \psi$ in $W^{2,p}(\Omega)$. Letting $k \rightarrow + \infty$ and applying the Scalar Strong Maximum Principle, we obtain that $(\varphi, \psi) \in (W^{2,p}(\Omega))^2$ is a positive eigenfunction pair associated to $(\lambda,\mu)$, that is, $(\lambda,\mu) \in \mathcal{C}_1$ as desired.
\end{proof}

\begin{corollary} \label{existence}
The sets $O^{a,b}_{\alpha_1,\beta_1,\alpha_2,\beta_2}(\Omega)$ and $\{(t, ct):\; c>0 \ {\rm and}\ 0 < t < \tilde{t}_c\}$ are equal. In particular, we have $\mathcal{C}_1 = \partial O^{a,b}_{\alpha_1,\beta_1,\alpha_2,\beta_2}(\Omega)$ and $$\mathcal{R}_1 = O^{a,b}_{\alpha_1,\beta_1,\alpha_2,\beta_2}(\Omega).$$
\end{corollary}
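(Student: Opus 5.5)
The proof will rest on Theorem~\ref{theo1}, which says that along each ray $\{(t,ct):t>0\}$ the supremum $t_c^\ast$ of admissible parameters in $O^{a,b}_{\alpha_1,\beta_1,\alpha_2,\beta_2}(\Omega)$ equals the curve parameter $\tilde t_c$. I will first show that the set $O:=O^{a,b}_{\alpha_1,\beta_1,\alpha_2,\beta_2}(\Omega)$ is star-shaped along rays, then identify it with $\{(t,ct):c>0,\ 0<t<\tilde t_c\}$, and finally read off the topological statements.

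\emph{Step 1: downward closedness on rays.} Let $(t,ct)\in O$ and $0<t'<t$. Fix continuous $f_0,g_0>0$, and let $(u,v)$ be the nonnegative solution at $(t,ct)$. Since $t'a u^{\alpha_1}v^{\beta_1}\le t a u^{\alpha_1}v^{\beta_1}$ and $ct'b v^{\alpha_2}u^{\beta_2}\le ct b v^{\alpha_2}u^{\beta_2}$, the pair $(u,v)$ is a supersolution at $(t',ct')$. The pair $(0,0)$ is a subsolution because $f_0,g_0\ge0$. Lemma 2.1 of \cite{MR1765542}, applied as in Lemma~\ref{lem:OequalsOtilde_new}, then gives a nonnegative solution. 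The Scalar Strong Maximum Principle upgrades it to a positive one, so $(t',ct')\in\widetilde O=O$ by Lemma~\ref{lem:OequalsOtilde_new}.

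\emph{Step 2: the upper endpoint is excluded.} By definition $t_c^\ast=\sup\{t:(t,ct)\in O\}$. Step 1 shows that the ray section of $O$ is an interval $(0,t_c^\ast)$ or $(0,t_c^\ast]$. Lemma~\ref{lem:O-open} excludes the endpoint: if $(t_c^\ast,ct_c^\ast)\in O$, then $(t_c^\ast+\varepsilon,c(t_c^\ast+\varepsilon))\in O$ for small $\varepsilon>0$, contradicting maximality. Hence the ray section is exactly $(0,t_c^\ast)=(0,\tilde t_c)$ by Theorem~\ref{theo1}. Since every point of $\mathbb R_+^2$ with positive coordinates lies on exactly one ray, this gives $O=\{(t,ct):c>0,\ 0<t<\tilde t_c\}$.

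\emph{Step 3: identification with $\mathcal R_1$ and $\mathcal C_1$.} The parametrization $\mathcal C_1=\{(\tilde t_c,c\tilde t_c):c>0\}$ was established just before Theorem~\ref{theo1}. The point $(t,ct)$ lies below $\mathcal C_1$ exactly when $t^{1/\theta}(ct)^{1/\zeta}<\Lambda_0$, where $\theta=\sqrt{\beta_1(1-\alpha_1)}$ and $\zeta=\sqrt{\beta_2(1-\alpha_2)}$ as in Theorem~\ref{lower}. The left-hand side is strictly increasing in $t$ and equals $\Lambda_0$ at $t=\tilde t_c$, so this condition is equivalent to $t<\tilde t_c$. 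Hence $O=\mathcal R_1$ within the open quadrant.

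The function $c\mapsto\tilde t_c$ is continuous, explicitly a power of $c$. Therefore $O$ is the open region under the graph of a continuous function in polar-type coordinates, and its boundary inside the open quadrant is the graph, that is, $\mathcal C_1$. Corollary~\ref{closeness} confirms that this curve is closed.

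\emph{Main obstacle.} The step I expect to be hardest is the boundary convention. The definition of $O$ uses $\mathbb R_+^2$, which may contain the axes, whereas $\mathcal R_1$ and the identity $\mathcal C_1=\partial O$ are intended relative to the open quadrant. I would interpret $\partial O$ relative to $(\mathbb R_+^\ast)^2$. Points on the axes can be handled separately by a decoupled scalar argument if needed, but they do not affect the stated equality with $\mathcal R_1$.
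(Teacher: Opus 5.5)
Your proposal is correct and is essentially the paper's argument. Openness of $O:=O^{a,b}_{\alpha_1,\beta_1,\alpha_2,\beta_2}(\Omega)$ (Lemma~\ref{lem:O-open}) together with the definition of $t_c^\ast$ gives $O\subset\{(t,ct):c>0,\ 0<t<t_c^\ast\}$. The sub/supersolution descent from some $(t_1,ct_1)\in O$ with $t_1>t$, followed by $\widetilde O^{a,b}_{\alpha_1,\beta_1,\alpha_2,\beta_2}(\Omega)=O$ and Theorem~\ref{theo1}, gives the reverse inclusion. Your Step~3 and your remark about reading $\partial O$ relative to the open quadrant only make explicit what the paper compresses into ``in particular''; the paper's own inclusion into rays with $c>0$ tacitly uses the same reading, since axis points would otherwise lie in $O$ when $\mathbb R_+$ contains $0$.
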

\begin{proof} Thanks to Lemma \ref{lem:O-open}, $O^{a,b}_{\alpha_1,\beta_1,\alpha_2,\beta_2}(\Omega)$ is open in $\R_+^2$. So, by the definition of $t_c^{\ast}$, we get $O^{a,b}_{\alpha_1,\beta_1,\alpha_2,\beta_2}(\Omega) \subset \{(t, ct):\; c>0 \ {\rm and}\ 0 < t < t_c^{\ast}\}$.

Conversely, let $(t, ct)$ with $c > 0$ and $0 < t < \tilde{t}_c$. Again, by the definition of $t_c^{\ast}$ there exists $t < t_1 < t_c^{\ast}$ such that $(t_1, ct_1) \in O^{a,b}_{\alpha_1,\beta_1,\alpha_2,\beta_2}(\Omega)$. Then, the system

\[
\left\{
\begin{array}{llll}
-\,\mathcal{L}_1 u=t_1 a(x) u^{\alpha_1}v^{\beta_1} + 1 & {\rm in} \ \ \Omega,\\
-\,\mathcal{L}_2 v=ct_1 b(x) v^{\alpha_2}u^{\beta_2} + 1 & {\rm in} \ \ \Omega,\\
u=v=0 & {\rm on} \ \ \partial\Omega
\end{array}
\right.
\]
possesses a positive strong solution $(u_1,v_1) \in (W^{2,p}(\Omega))^2$. Note that $(u_1,v_1)$ is a positive strong supersolution of

\[
\left\{
\begin{array}{llll}
-\,\mathcal{L}_1 u=t a(x) u^{\alpha_1}v^{\beta_1} + 1 & {\rm in} \ \ \Omega,\\
-\,\mathcal{L}_2 v=ct b(x) v^{\alpha_2}u^{\beta_2} + 1 & {\rm in} \ \ \Omega,\\
u=v=0 & {\rm on} \ \ \partial\Omega.
\end{array}
\right.
\]
Since $(0,0)$ is a strong subsolution of the latter problem, applying Lemma 2.1 of \cite{MR1765542} we obtain a positive strong solution for the above system, so that $(t, ct) \in \tilde{O}^{a,b}_{\alpha_1,\beta_1,\alpha_2,\beta_2}(\Omega)$. Therefore, by Lemma \ref{lem:OequalsOtilde_new}, $(t, ct) \in O^{a,b}_{\alpha_1,\beta_1,\alpha_2,\beta_2}(\Omega)$. This completes the proof.
\end{proof}

\begin{lemma}\label{lemma1}
 The sets $G_{\alpha_1,\beta_1,\alpha_2,\beta_2}^{a,b}(\Omega)$ and $O_{\alpha_1,\beta_1,\alpha_2,\beta_2}^{a,b}(\Omega)$ are equal.
\end{lemma}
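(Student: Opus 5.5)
The inclusion $G^{a,b}_{\alpha_1,\beta_1,\alpha_2,\beta_2}(\Omega)\subset O^{a,b}_{\alpha_1,\beta_1,\alpha_2,\beta_2}(\Omega)$ holds by definition, since continuous data belong to $L^p(\Omega)$. For the reverse inclusion we fix $(\lambda,\mu)\in O^{a,b}_{\alpha_1,\beta_1,\alpha_2,\beta_2}(\Omega)$ and arbitrary nonnegative $f_0,g_0\in L^p(\Omega)$. The aim is to build an ordered pair of sub- and supersolutions for \eqref{eq:new-system-ab} and then apply Lemma 2.1 of \cite{MR1765542}, as in Lemmas \ref{lem:OequalsOtilde_new} and \ref{lem:O-open}.

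\textbf{Step 1.} We first replace the $L^p$ data by positive continuous data. Let $w_1:=S_1(f_0)$ and $w_2:=S_2(g_0)$, understood in $W^{2,p}(\Omega)\cap W^{1,p}_0(\Omega)$. These are nonnegative and belong to $C^{1,\eta}(\overline\Omega)$. We cannot dominate $f_0$ pointwise by a continuous function, so we will not try to. Instead we use that $O^{a,b}_{\alpha_1,\beta_1,\alpha_2,\beta_2}(\Omega)$ is open (Lemma \ref{lem:O-open}), which gives $\delta>0$ with $((1+\delta)\lambda,(1+\delta)\mu)\in O^{a,b}_{\alpha_1,\beta_1,\alpha_2,\beta_2}(\Omega)$. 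Taking the data $1,1$, we obtain a positive solution $(U,V)$ of the system with parameters $((1+\delta)\lambda,(1+\delta)\mu)$ and right-hand sides $+1$. By Hopf's lemma, $U,V\in V_+^\circ$.

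\textbf{Step 2.} We look for a supersolution of the form $(\bar u,\bar v)=(\tau U+w_1,\ \tau^{r}V+w_2)$ with $r=(1-\alpha_1)/\beta_1$ and $\tau$ large. Since $w_1,w_2\in C^1_0(\overline\Omega)$ and $U,V\in V_+^\circ$, there is a constant $K$ with $w_1\le KU$ and $w_2\le KV$. Hence $\bar u\le(\tau+K)U$ and $\bar v\le(\tau^r+K)V$. Using the homogeneity identities $\alpha_1+r\beta_1=1$ and $\beta_2+r\alpha_2=r$ from \eqref{eq:theta-identities}, we get
\[
\lambda a\,\bar u^{\alpha_1}\bar v^{\beta_1}\le \lambda a\,(\tau+K)^{\alpha_1}(\tau^r+K)^{\beta_1}U^{\alpha_1}V^{\beta_1}=\lambda a\,\tau\,(1+o(1))\,U^{\alpha_1}V^{\beta_1}
\]
as $\tau\to\infty$, and similarly for the second component with the factor $\tau^r(1+o(1))$. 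On the other hand,
\[
-\mathcal L_1\bar u=\tau(1+\delta)\lambda aU^{\alpha_1}V^{\beta_1}+\tau+f_0 .
\]
Choosing $\tau$ so large that $(1+o(1))\le 1+\delta$ gives $-\mathcal L_1\bar u\ge\lambda a\bar u^{\alpha_1}\bar v^{\beta_1}+f_0$, and the second inequality follows in the same way. The pair $(0,0)$ is a subsolution because $f_0,g_0\ge0$. Lemma 2.1 of \cite{MR1765542}, whose monotone iteration only needs $L^p$ right-hand sides and gives a $W^{2,p}$ solution, then yields a nonnegative solution with $0\le u\le\bar u$ and $0\le v\le\bar v$. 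Therefore $(\lambda,\mu)\in G^{a,b}_{\alpha_1,\beta_1,\alpha_2,\beta_2}(\Omega)$.

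\textbf{Main obstacle.} The hard part is the comparison $w_i\le K\cdot(\text{interior function})$ together with the asymptotic absorption in Step 2. This requires the strict gain $\delta$ from openness, because the $1$-homogeneity of the nonlinearity gives no slack by itself. When $\lambda=0$ or $\mu=0$ the corresponding equation decouples and becomes linear, which is simpler. We must also check that Lemma 2.1 of \cite{MR1765542} applies with $L^p$ data and with the sublinear, non-Lipschitz term $u^{\alpha_1}$ at $u=0$. If it does not, we will instead run the monotone scheme from the supersolution downward, using the monotonicity of the right-hand side in $(u,v)$, in the way Lemma \ref{lem:existJ_en} was proved.
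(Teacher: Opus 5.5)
Your proof is correct, and it takes a genuinely different route from the paper's. The paper approximates $(f_0,g_0)$ in $L^p(\Omega)$ by nonnegative continuous data $(f_k,g_k)$ and takes the nonnegative solutions $(u_k,v_k)$ supplied by the definition of $O$. It then gets a uniform $L^\infty$ bound by contradiction: a blow-up normalization as in the proof of Theorem~\ref{theo1} would produce a nonnegative nontrivial solution of the homogeneous system, hence $(\lambda,\mu)\in\mathcal C_1$. This is impossible because $O\cap\mathcal C_1=\emptyset$ (Corollary~\ref{existence}), and $L^p$ estimates then pass to a $W^{2,p}$ limit.

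You use only the local fact that $O$ is open (Lemma~\ref{lem:O-open}) to gain the factor $1+\delta$. You turn it into the explicit supersolution $(\tau U+S_1f_0,\ \tau^rV+S_2g_0)$ via the interior-of-the-cone bounds $S_1f_0\le KU$, $S_2g_0\le KV$ and the identities $\alpha_1+r\beta_1=1$, $\beta_2+r\alpha_2=r$. The downward monotone iteration you list as a fallback is in fact all that is needed. The right-hand sides are nondecreasing in $(u,v)$, so it requires neither a Lipschitz condition at $0$ nor continuous data.

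Your route dispenses with density, compactness and the global spectral information $O\cap\mathcal C_1=\emptyset$. It also gives the explicit bound $0\le u\le \tau U+S_1f_0$, $0\le v\le \tau^rV+S_2g_0$. The paper's route needs no sub/supersolution principle for $L^p$ data, since the definition of $O$ is only applied to continuous data, and it recycles the blow-up machinery of Theorem~\ref{theo1}.

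One remark: both arguments ultimately rest on Lemma~\ref{lem:O-open}, yours directly and the paper's through Theorem~\ref{theo1} and Corollary~\ref{existence}. The proof of that lemma treats $a\,u_0^{\alpha_1}v_0^{\beta_1}$ as bounded, which is only guaranteed when $a,b\in L^\infty(\Omega)$. Your device is more robust here. For $(\lambda,\mu)$ strictly below $\mathcal C_1$, the positive eigenfunction at the point of $\mathcal C_1$ on the ray through $(\lambda,\mu)$ can replace $(U,V)$. It supplies the factor $1+\delta$ with no boundedness assumption on the weights.
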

\begin{proof} By definition, we have $G_{\alpha_1,\beta_1,\alpha_2,\beta_2}^{a,b}(\Omega)\subset O_{\alpha_1,\beta_1,\alpha_2,\beta_2}^{a,b}(\Omega)$. Let $ (\lambda, \mu) \in O_{\alpha_1,\beta_1,\alpha_2,\beta_2}^{a,b}(\Omega)$. Fix $ f_0, g_0 \in L^p(\Omega) $ with $ f_0, g_0 \geq 0 $ in $ \Omega$. Let $(f_k,g_k)_k$ be sequences in $X$ such that  $f_k \rightarrow f_0 $ and $ g_k \rightarrow g_0 $ in $ L^p(\Omega)$. Let $ (u_k,v_k) $ be a nonnegative solution of the problem
\[
\left\{
\begin{array}{llll}
-\,\mathcal{L}_1 u = \lambda a(x) u^{\alpha_1}v^{\beta_1} +f_k(x) & {\rm in} \ \ \Omega,\\
-\,\mathcal{L}_2 v = \mu b(x) v^{\alpha_2}u^{\beta_2} +g_k(x)& {\rm in} \ \ \Omega,\\
u= v=0 & {\rm on} \ \ \partial\Omega.
\end{array}
\right.
\]
We assert that $ (u_k,v_k)_k $ is bounded in $X$. Otherwise, module a subsequence, we get $ \| u_k \|_{L^\infty(\Omega)} + \| v_k \|_{L^\infty(\Omega)} \rightarrow + \infty$. Argumenting Proceeding as in the proof of Theorem \ref{theo1}, we derive $ (\lambda,\mu) \in \mathcal{C}_1$. But this is a contradiction, because $\mathcal{C}_1$ and $O_{\alpha_1,\beta_1,\alpha_2,\beta_2}^{a,b}(\Omega)$ are disjoint. Then, $(u_k,v_k)_k $ is bounded in $ X$. Again proceeding as in Theorem \ref{theo1}, we have $ (u_k,v_k)_k $ converges to $ (u,v) $ in $(W^{2,p}(\Omega))^2$ and $ (u,v) $ satisfies (\ref{eq:new-system-ab}). So, $ (\lambda,\mu) \in G_{\alpha_1,\beta_1,\alpha_2,\beta_2}^{a,b}(\Omega)$.
\end{proof}

Conclusion
\[
G^{a,b}_{\alpha_1,\beta_1,\alpha_2,\beta_2}(\Omega)=
O^{a,b}_{\alpha_1,\beta_1,\alpha_2,\beta_2}(\Omega)=
\widetilde O^{a,b}_{\alpha_1,\beta_1,\alpha_2,\beta_2}(\Omega)=\mathcal{R}_1.
\]

\begin{proposition} For each pair $ (\lambda, \mu) \in \mathcal{R}_1$ and  $(f_0, g_0) \in (L^p(\Omega))^2$, the system (\ref{eq:new-system-ab}) possesses a unique nonnegative solution.
\end{proposition}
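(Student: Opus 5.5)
Existence is already available: by the Conclusion, $\mathcal R_1=G^{a,b}_{\alpha_1,\beta_1,\alpha_2,\beta_2}(\Omega)$. Hence for every $(\lambda,\mu)\in\mathcal R_1$ and every nonnegative $(f_0,g_0)\in(L^p(\Omega))^2$, system \eqref{eq:new-system-ab} has a nonnegative solution. (Nonnegativity of $f_0,g_0$ is implicit in the statement, as in Theorem~\ref{E1}.) The remaining task is uniqueness within the class of nonnegative solutions. I would prove it with a sweeping/comparison argument in the spirit of Proposition~\ref{prop:ray-cutoff}, using the homogeneity identities $\alpha_1+r\beta_1=1$ and $\beta_2+r\alpha_2=r$ from \eqref{eq:theta-identities}, where $r=(1-\alpha_1)/\beta_1$.

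Let $(u_1,v_1)$ and $(u_2,v_2)$ be nonnegative solutions. I first dispose of the trivial cases. If $f_0=g_0=0$, the zero pair solves the system. A nonzero nonnegative solution would be a positive eigenfunction for $(\lambda,\mu)\notin\mathcal C_1$: the Scalar Strong Maximum Principle makes a nontrivial component positive, the coupling then makes the other component positive, and property \eqref{prop:uniqueness} rules this out. So uniqueness holds in that case. Now suppose $f_0+g_0\not\equiv0$. The same coupling argument gives $u_i,v_i\in V_+^\circ$.

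Next I define
\[
\Gamma:=\{s\in(0,1]:\ u_1\ge s\,u_2,\ v_1\ge s^r v_2\ \text{in }\Omega\}.
\]
This set is nonempty by Hopf's lemma, since both pairs lie in $V_+^\circ\times V_+^\circ$. Let $s^*=\sup\Gamma$ and suppose, for contradiction, that $s^*<1$. Setting $U=u_1-s^*u_2$ and $V=v_1-(s^*)^rv_2$, the homogeneity identities give
\[
-\mathcal L_1U\ \ge\ \lambda a\big(u_1^{\alpha_1}v_1^{\beta_1}-s^*u_2^{\alpha_1}v_2^{\beta_1}\big)+(1-s^*)f_0\ \ge\ (1-s^*)f_0\ \ge0,
\]
because $u_1^{\alpha_1}v_1^{\beta_1}\ge (s^*)^{\alpha_1+r\beta_1}u_2^{\alpha_1}v_2^{\beta_1}=s^*u_2^{\alpha_1}v_2^{\beta_1}$. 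Similarly, $-\mathcal L_2V\ge(1-(s^*)^r)g_0\ge0$. If $U\not\equiv0$, the Scalar Strong Maximum Principle and Hopf give $U\in V_+^\circ$. If $U\equiv0$, the first inequality forces $f_0\equiv0$ and also $u_1^{\alpha_1}v_1^{\beta_1}=s^*u_2^{\alpha_1}v_2^{\beta_1}$, which forces $V\equiv0$ wherever $\beta_1>0$. That in turn forces $g_0\equiv0$, contradicting $f_0+g_0\not\equiv0$. The same reasoning applies to $V$. So $U,V\in V_+^\circ$, and, as at the end of the proof of Proposition~\ref{prop:ray-cutoff}, $s^*+\varepsilon\in\Gamma$ for some small $\varepsilon>0$. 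This contradicts the definition of $s^*$. Hence $s^*=1$, so $u_1\ge u_2$ and $v_1\ge v_2$, and exchanging the two pairs gives equality.

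The main obstacle is the degenerate step where the strict positivity coming from the source must propagate through the coupling, which arises when $U$ or $V$ vanishes identically. I would handle it as sketched, using that $a,b$ have positive essential infimum and $u_i,v_i>0$ in $\Omega$. Notably, this argument never uses $(\lambda,\mu)\in\mathcal R_1$ for uniqueness. The sublinear structure $\alpha_1+r\beta_1=1$ together with the positive forcing $(1-s^*)f_0$ is enough, while membership in $\mathcal R_1$ is needed only for existence.
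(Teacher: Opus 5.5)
Your approach is the paper's: existence from $\mathcal R_1=G^{a,b}_{\alpha_1,\beta_1,\alpha_2,\beta_2}(\Omega)$, then uniqueness by sweeping one solution under the other with the scaling $(s,s^r)$, the identities $\alpha_1+r\beta_1=1$ and $\beta_2+r\alpha_2=r$, and the forcing term $(1-s^*)f_0$. Once both solutions are known to lie in $V_+^\circ\times V_+^\circ$, your version is correct, including the treatment of $U\equiv0$ or $V\equiv0$, and it is cleaner than the paper's.

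\textbf{The gap.} It is the sentence ``the same coupling argument gives $u_i,v_i\in V_+^\circ$''. In the homogeneous case the coupling works because a vanishing component kills the right-hand side of the other equation. With a source present this no longer happens. Take $f_0\ge0$ with $f_0\not\equiv0$, $g_0\equiv0$ and $\alpha_2>0$, and let $w_0$ solve $-\mathcal L_1w_0=f_0$ with $w_0=0$ on $\partial\Omega$. Then $(w_0,0)$ solves \eqref{eq:new-system-ab}: the first equation holds because $0^{\beta_1}=0$, and the second because $0^{\alpha_2}=0$. Now take $(u_1,v_1)=(w_0,0)$ and a positive $(u_2,v_2)$. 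Your set $\Gamma$ is empty, since no $s>0$ gives $0\ge s^r v_2$, so the exchange step fails. Symmetrically, $(0,z_0)$ with $-\mathcal L_2 z_0=g_0$ is a solution when $f_0\equiv0$ and $\alpha_1>0$.

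\textbf{Why it cannot be patched.} In this situation a positive solution also exists:
\begin{itemize}
\item $(w_0,\mu^{1/(1-\alpha_2)}J_2(w_0))$ is a subsolution.
\item The positive solution with data $(f_0,1)$, supplied by $\mathcal R_1=G^{a,b}_{\alpha_1,\beta_1,\alpha_2,\beta_2}(\Omega)$, is a supersolution. It lies above the subsolution by scalar comparison together with the monotonicity of $J_2$.
\item The sub/supersolution method used in Section~8 then gives a solution with $v\ge\mu^{1/(1-\alpha_2)}J_2(w_0)>0$.
\end{itemize}
So uniqueness among nonnegative solutions genuinely fails. The statement, and the ``moreover'' part of Theorem~\ref{E1}, are false as written in this edge case.

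The paper's proof has the same hole: it simply starts from ``two positive solutions''. What both arguments actually establish is uniqueness among positive solutions. Equivalently, they establish uniqueness among nonnegative solutions under the extra hypothesis $f_0\not\equiv0$ and $g_0\not\equiv0$, or when the equation whose source vanishes has $\alpha_j=0$, so that it still forces positivity of its unknown. Add such a hypothesis, or restrict the uniqueness class, and your proof goes through.
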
 
\begin{proof}
Given $(\lambda, \mu)$ in $\mathcal{R}_1$, we divide the proof into two cases. First, suppose that $(0,0)$ is a solution of the problem (\ref{eq:new-system-ab}). If the problem (\ref{eq:new-system-ab}) possesses a positive solution, then the pair $ (\lambda, \mu)$ belongs to $\mathcal{C}_1$, which is a contradiction. Now assume that $(0,0)$ is not a solution of (\ref{eq:new-system-ab}) and suppose on the contrary that the problem (\ref{eq:new-system-ab}) possesses two positive solutions $(u_1,v_1)$ and $(u_2,v_2)$. Consider the set $ \Gamma =\{\gamma>0 :u_1 > \gamma u_2\ {\rm and}\ v_1 > \gamma^{r} v_2\ {\rm in}\ \Omega\}$ and define $\gamma^{\ast}:=\sup \Gamma$. Permuting $(u_1,v_1)$ and $(u_2,v_2)$ in the definition of $\Gamma$, if necessary, we can assume that $\gamma^{\ast}\leq 1$. We assert that $ u_1 \equiv \gamma^{\ast} u_2 $ and $v_1 \equiv \gamma^{\ast r} v_2$ in $\Omega$. Otherwise, if $ u_1 \not\equiv \gamma^{\ast} u_2$ or $ v_1 \not\equiv \gamma^{\ast r} v_2$ in $\Omega$, by the Scalar Strong Maximum Principle in problem
\begin{eqnarray*}
-\,\mathcal{L}_1 (u_1 - \gamma^{\ast} u_2) &=& \lambda a(x)u_1^{\alpha_1}v_1^{\beta_1} - \gamma^{\ast} \lambda a(x)u_2^{\alpha_1} v_2^{\beta_1} +(1-\gamma^{\ast})f_0(x)\geq 0, \\
-\,\mathcal{L}_2 (v_1 - \gamma^{\ast r} v_2) &=& \mu b(x) v_1^{\alpha_2}u_1^{\beta_2} - \gamma^{\ast r} \mu b(x) v_2^{\alpha_2}u_2^{\beta_2}+(1 - \gamma^{\ast r})g_0(x)\geq 0,
\end{eqnarray*}
we obtain $u_1 > (\gamma^{\ast} + \varepsilon) u_2$ or $v_1 > ({\gamma^{\ast}} + \varepsilon)^{r} v_2$ in $\Omega$ for sufficiently small $\varepsilon>0$. Then, by the Scalar Strong Maximum Principle again, we obtain $u_1 > (\gamma^{\ast} + \varepsilon) u_2$ and $v_1 > {(\gamma^{\ast}} + \varepsilon)^{r} v_2$ in $\Omega$ for $\varepsilon \sim 0$, a contradiction. To finish the proof, it is sufficient to show that $ \gamma^{\ast}=1$. Assume on the contrary that $\gamma^{\ast} \in (0,1)$. If $f_0 \not\equiv 0$ in $\Omega$, applying Scalar Strong Maximum Principle in problem 
\[
-\,\mathcal{L}_1 (u_1 - \gamma^{\ast} u_2)=(1 - \gamma^{\ast})f_0(x),
\]
we have $u_1 > (\gamma^{\ast} + \varepsilon) u_2$ in $\Omega\ $ for sufficiently small $\varepsilon>0$, which is a contradiction. If $g_0 \not\equiv 0$ in $\Omega$, the reasoning is analogous. Then, we get $\gamma^{\ast}=1$.
\end{proof}

\section{Proof of Theorem \ref{ABP}}

Let $(f,g) \in (L^p(\Omega))^2$ satisfy $f, g \geq 0$ and $f+g\not\equiv 0$ in $\Omega$, and let $(u,v) \in (W^{2,p}(\Omega))^2$ be a nonnegative nontrivial solution of \eqref{abp}. Since $(\lambda, \mu) \in {\cal R}_1$, by Theorem \ref{E1}, there is a unique positive solution $(z,w) \in (W^{2,p}(\Omega))^2$ of the system

\[
\left\{
\begin{array}{llll}
-{\cal L}_1z = \lambda a(x) z^{\alpha_1} w^{\beta_1} + f(x) & {\rm in} \ \ \Omega,\\
-{\cal L}_2w = \mu b(x) w^{\alpha_2} z^{\beta_2} + g(x) & {\rm in} \ \ \Omega,\\
z=0,\ w=0 & {\rm on} \ \ \partial\Omega.
\end{array}
\right.
\]
Moreover, by Theorem \ref{thm:apriori-general}, we obtain
\begin{eqnarray*}
\|z\|_{L^\infty(\Omega)}+\|w\|_{L^\infty(\Omega)}^{\,s}
		\;\le\;
		A\Big(
		\|f\|_{L^{p}(\Omega)}
		+\|g\|_{L^{p}(\Omega)}^{\,s}
		\Big).
\end{eqnarray*}

In order to finish the proof, it suffices to prove that $u \leq z$ and $v\leq w$ in $\Omega$. Assume by contradiction that $u > z$ or $v > w$ somewhere in $\Omega$. Consider the set $\Gamma := \{\gamma>0 : z > \gamma u\ {\rm and} \ w > \gamma^{r} v\ {\rm in} \ \Omega\}$. Note that $\Gamma$ is nonempty by Hopf's Lemma and is also bounded above. Define $\gamma^{\ast}=\sup \Gamma > 0$. Hence, $z \geq \gamma^{\ast} u$ and $w \geq \gamma^{\ast r} v$ in $\Omega$. Note that  $\gamma^{\ast} < 1$. We define
\begin{eqnarray*}
&& f_1(x):= {\cal L}_1 z + \lambda a(x) z^{\alpha_1}w^{\beta_1}, \ \ g_1(x):= {\cal L}_1 u + \lambda a(x) u^{\alpha_1}v^{\beta_1},\\
&& f_2(x):= {\cal L}_2 w + \mu b(x) w^{\alpha_2} z^{\beta_2}, \ \ g_2(x):= {\cal L}_2 v + \mu b(x) v^{\alpha_2}u^{\beta_2}.
\end{eqnarray*}
Thus, $f_1,f_2\leq 0$, $f_1 \leq g_1$ and $f_2 \leq g_2$ in $\Omega$. Then, since $\gamma^{\ast} < 1$, we have

\[
\left\{
\begin{array}{lll}
{\cal L}_1(z - \gamma^{\ast} u) &=& \lambda a(x) \left[(\gamma^{\ast} u)^{\alpha_1} (\gamma^{\ast r} v)^{\beta_1} - z^{\alpha_1}w^{\beta_1}\right] + f_1(x) - \gamma^{\ast} g_1(x)\\
&\leq& f_1(x) - \gamma^{\ast} g_1(x) \leq (1 - \gamma^{\ast}) f_1(x) \leq 0 \ \ {\rm in}\ \Omega\ {\rm a.e.}\\
{\cal L}_2(w - \gamma^{\ast r} v) &=& \mu b(x) \left[(\gamma^{\ast r} v)^{\alpha_2}(\gamma^{\ast} u)^{\beta_2} - w^{\alpha_2}z^{\beta_2}\right] + f_2(x) - \gamma^{\ast r} g_2(x)\\
&\leq& f_2(x) - \gamma^{\ast\beta} g_2(x) \leq (1 - \gamma^{\ast\beta}) f_2(x) \leq 0 \ \ {\rm in}\ \Omega\ {\rm a.e.}
\end{array}
\right. 	
\]
Then, using that ${\cal L}_1$ and ${\cal L}_2$ satisfy the Scalar Strong Maximum Principle in $\Omega$, we have $z \geq (\gamma^{\ast} + \varepsilon) u$ and $w \geq (\gamma^{\ast}+ \varepsilon)^r v$ in $\Omega$ for any $\varepsilon > 0$ small enough, contradicting the definition of $\gamma^{\ast}$. 

So, $u \leq z$ and $v \leq w$ in $\Omega$. Combining this fact with the above estimate, we finally obtain \eqref{estABP}, which completes the proof.

\noindent\section*{Acknowledgments}
\addcontentsline{toc}{section}{Acknowledgments}

\noindent E.J.F. Leite has been partially supported by CNPq-Grant 303019/2025-5. The first author gratefully acknowledges the invitation of Prof. Edir Leite to visit UFSCar, and thanks the department for its warm hospitality. He also thanks the Department of Mathematics at UFRR for granting leave to carry out his postdoctoral research.

We are thankful to Professor Gustavo Ferron Madeira for useful discussions, suggestions and corrections of abstract.

\vspace{0.5em}

\textbf{Data availability} \quad Data sharing not applicable to this article as no datasets were generated or analyzed during the current study. 

\section*{Declarations}

\textbf{Conflict of interest} \quad The authors declare that they have no conflict of interest.

\printbibliography
	
\end{document}